\newtheorem{te}{Theorem}[section]
\newtheorem{example}{Example}
\newtheorem{definition}[te]{Definition}
\newtheorem{os}[te]{Remark}
\newtheorem{prop}[te]{Proposition}
\newtheorem{lem}[te]{Lemma}
\newtheorem{coro}[te]{Corollary}
\numberwithin{equation}{section}
\begin{document}
	
	\title[]{S-invariant and S-multinvariant functions and some symmetry groups of algebraic sieves }
	\author{Francesco Maltese}
\maketitle

\begin{abstract}
	In this article we introduced algebraic sieves, i.e. selection procedures on a given finite set to extract a particular subset. Such procedures are performed by finite groups acting on the set. They are called sieves because there are certain sets of numbers which, with appropriate groups, can select, for example, a set of primes, think of the famous Eratosthenes sieve. In this article we have given a general definition of algebraic sieves. And we also introduced the notion of invariant and multi-invariant functions, certain permutations on the sieve set which, in the invariant case, commute with the action of a given sieve-selecting group and the automorphism of that group, and multi-invariants which commute with all groups and their respective automorphisms. By means of such functions we have given symmetries on such sieves. In particular, we studied certain groups of symmetries of invariant functions. Then, using such notions, we studied a particular example, the Goldbach sieve, where the selector groups are dihedral groups and the selected set consists of the primes and, in some cases, the numbers $1$ and $N-1=p$, with $N$ being even and p prime, which satisfies the Goldbach conjecture for $N$. We have shown that one of these symmetry groups is isomorphic to a subgroup or affinity group of the ring of integers modulo N with N an integer even $\mathbb{Z}_{N}$.
	
	\bigskip
	
		\textit{Algebraic Sieve, Invariant function, Multinvariant function, Sieve symmetry groups, Dihedral Sieve, Goldbach Sieve.	}
		
	\end{abstract}

\section*{Contents}

\subsection*{1.Introduction ...............................................................................................................................2}
\subsection*{1.1.S-invariant functions.................................................................................................................2}
\subsection*{2.Permutation Induced  ..................................................................................................................6}
\subsection*{3.The algebraic structure of $\widehat{{Aut(G)}^{S}}$ ..............................................................................................9}
\subsection*{3.1.Examples of $\widehat{{Aut(G)}^{S}}$................................................................................................................19}

\subsection*{4.Algebraic Sieves .........................................................................................................................27}

\subsection*{4.1.Example of algebraic sieves
	.....................................................................................................28}
\subsection*{5.Multi-invariant functions
	...........................................................................................................30}

\subsection*{6.Some symmetry groups of an algebraic sieve
	..............................................................................31}

\subsection*{7.The group $\widehat{Aut(D_n)}^{\mathcal{C}(D_{p,q}^{2n}; x_{0,q} )}$ 
	.......................................................................................................31}

\subsection*{8.Methods of calculation for the group $\mathcal{G}_{N}$......................................................................................35}

\subsection*{9.Final conclusions and conjectures................................................................................................42}

\subsection*{References......................................................................................................................................43}

	\section{Introduction}
	The following article will analyse the concept of numerical sieves, which are methods for selecting a specific numerical subset from a given list of numbers. For instance, the sieve constructed by Eratosthenes (see [9] in the bibliography) is a notable and historically significant example. This sieve is able to select all prime numbers less than a specified integer, denoted by $N$. However, we will not employ, in the study of sieves, the analytic number theory approach used in [9] and many other texts on analytic number theory. Instead, we will adopt a perspective from finite group theory, which allows us to take a more algebraic approach. This is the reason for the name $Algebraic \hspace{0.1cm} Sieve$, and it is precisely through the lens of group actions that we will examine the sieve and this will be examined in greater detail in Section 3. We will therefore attempt to abstract the concept of the sieve in a purely insiemistic context. From this perspective, the aim is to study some of the symmetries of algebraic sieves, which will be discussed in Section 4.
	
	In concrete terms, the first step will be to analyse groups of symmetries generated by particular permutations on the set in which the sieve operates. These permutations must commute the action of a group that selects the elements of the sieve, together with its automorphism. Such permutations are called $S-invariants$. For details of this, see an extensive study dealt with in this article in sections 1, 2 and 3. Subsequently, groups of permutations that consider all the actions of groups involving the sieve are referred to as $S$-multinvariant functions, studied in section 5. For further details, refer to Section 6. In particular, Section 6 will demonstrate that it may be more convenient to study $S$-invariant functions than $S$-multinvariant functions, as they are more effective at obtaining information about certain sieves.
	The prerequisites for these latter topics are the basics of group theory, which are outlined in the bibliography from  [1] to [6]. 
	
	\quad
	
	Section 4 will present an example of an algebraic sieve, designated a dihedral sieve due to the action of dihedral groups on its set. For further insight into dihedral groups, see reference [1] in the bibliography. The definition of a dihedral sieve can be found in definition 4.4. In particular, this section will focus on the Goldbach sieve (see Definition 4.5) on the set of all integers modulo N, where N is a certain even integer.
	
	\quad
	
	In Section 7, we will undertake a detailed examination of the structure of the Goldbach dihedral sieve symmetry group,which has been induced by the action of the dihedral group  responsible for selecting the even and odd numbers of $\mathbb{Z}_N$ for a specific even integer $N$  through its action on the sieve.
	
	\quad
	
	Section 8 presents an attempt to provide non-general numerical methods for calculating the group studied in Section 7 and to construct algorithms for these methods. Section 9 presents an attempt to reformulate Goldbach's strong conjecture in the language of algebraic sieves via the symmetry group studied in Sections 7 and 8.

	\subsection{S-invariant functions}

	\begin{definition}
  Let us consider a finite set not empty $X$ and bijective application  $f: X \rightarrow X$. We shall assume that a finite group $G$ acts on $X$ in the following way:
  
  $$	G\times X \longrightarrow X $$ 
  	
  $$	(g,x)\longrightarrow g \cdot x$$

and let be a $\phi $ an automorphism of $G$.

We call $f$ an application $\phi$-invariant if $\forall g \in G $ and $\forall x \in X$, we get $f(g\cdot x)=\phi(g)\cdot f(x)$

	\end{definition}

It can note that if $f$ is $\phi$-invariant then we get $f(Orb_{G} (x_{0})) = Orb_{G} (f(x_{0}))$. In fact , being $f$ bijective, $f(x)=f(y) \iff x=y$ and since that 
$g\cdot x_{0} =g_{1} \cdot x_{1} \iff \phi(g)\cdot f( x_{0}) =\phi( g_{1}) \cdot f( x_{1}) $ then ${g_{1}}^{-1} g\cdot x_{0}=x_{1}  \iff 
{\phi(g_{1})}^{-1} \phi (g ) \cdot f( x_{0} )= f( x_{1} ) $.
 
In this case we can review $f$ as, once fixed $ x_{0}  \in Orb_{G} (x_{0} )$, $f(g\cdot x_{0} )=\phi(g)\cdot f(x_{0} )$. And we can do it for every $Orb_{G}(x)$.
	
So we can redefine a $f$ as   $\phi-invariant$ in this way
	\begin{definition}	
Fixed $x_{0}^{i} \in X$ for $i=1,2,\dots,t$  so that $X=\bigsqcup_{i=1}^{t} Orb_{G}(x_{0}^{i})$. A bijective application  $f: X \rightarrow X$ is said to be a $\phi-invariant$ if,  $\forall g \in G$, we have that $f(g\cdot x_{0}^{i})=\phi(g)g_{ij}\cdot x_{0}^{j}$ if $f(x_{0}^{i})=g_{ij}\cdot x_{0}^{j}$ for $i=1,2,\dots,t$ and for  appropiate 
$j \in {1,2,\dots,t}$ and $g_{ij} \in G$.

\end{definition}

Using the definition (1.2) we can note that $ g_{1}\cdot x_{0}^{i}=g_{2}\cdot x_{0}^{i} \iff f(g_{1}\cdot x_{0}^{i})=f(g_{2}\cdot x_{0}^{i}) \iff
 \phi( g_{1})g_{ij}\cdot x_{0}^{j}=\phi(g_{2})g_{ij}\cdot x_{0}^{j}$ for which $ g_{2}^{-1}g_{1}\cdot x_{0}^{i}=x_{0}^{i} \iff \phi( g_{2}^{-1}g_{1})g_{ij}\cdot x_{0}^{j}=g_{ij}\cdot x_{0}^{j}$ .

Thus $g_{2}^{-1}g_{1} \in Stab_{G}(x_{0}^{i}) \iff \phi(g_{2}^{-1}g_{1}) \in Stab_{G}(g_{ij}\cdot  
 x_{0}^{j})=g_{ij}Stab_{G}(x_{0}^{j})g_{ij}^{-1}=Stab_{G}(x_{0}^{j})^{g_{ij}}$.

Therefore we found the following necessary condition for $\phi$ automorphism of a $\phi$-invariance function. Then we have the following proposition.

\begin{prop}
If f $\in Sym(X)$ (i.e set of bijective applications from X to itself) is a $\phi-invariant$ then $\phi(Stab_{G}(x_{0}^{i}))=Stab_{G}(x_{0}^{j})^{g_{ij}}$ fixed 
$x_{0}^{i}$ for $i=1,2,\dots,t$ 
\end{prop}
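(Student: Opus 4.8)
The plan is to run the chain of equivalences already foreshadowed in the paragraph preceding the statement, taking care to extract a genuine equality of subgroups rather than merely an inclusion, and using crucially that $\phi$ is an automorphism and not just a homomorphism.

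First I would fix $i\in\{1,2,\dots,t\}$ and use Definition 1.2 to write $f(x_{0}^{i})=g_{ij}\cdot x_{0}^{j}$ for the appropriate $j$ and $g_{ij}\in G$, so that $f(g\cdot x_{0}^{i})=\phi(g)g_{ij}\cdot x_{0}^{j}$ for every $g\in G$. Since $f\in Sym(X)$ is a bijection, for any $g_{1},g_{2}\in G$ we have $g_{1}\cdot x_{0}^{i}=g_{2}\cdot x_{0}^{i}$ if and only if $f(g_{1}\cdot x_{0}^{i})=f(g_{2}\cdot x_{0}^{i})$, i.e. if and only if $\phi(g_{1})g_{ij}\cdot x_{0}^{j}=\phi(g_{2})g_{ij}\cdot x_{0}^{j}$. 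Rearranging, and using that $\phi$ is a homomorphism so that $\phi(g_{2}^{-1}g_{1})=\phi(g_{2})^{-1}\phi(g_{1})$, this says $g_{2}^{-1}g_{1}\in Stab_{G}(x_{0}^{i})$ if and only if $\phi(g_{2}^{-1}g_{1})\in Stab_{G}(g_{ij}\cdot x_{0}^{j})$.

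Next I would invoke the standard identity for stabilizers along an orbit, $Stab_{G}(g_{ij}\cdot x_{0}^{j})=g_{ij}\,Stab_{G}(x_{0}^{j})\,g_{ij}^{-1}=Stab_{G}(x_{0}^{j})^{g_{ij}}$, and then specialize the equivalence above by taking $g_{2}=e$ and letting $h=g_{1}$ range over $G$. This yields: $h\in Stab_{G}(x_{0}^{i})$ if and only if $\phi(h)\in Stab_{G}(x_{0}^{j})^{g_{ij}}$. In particular $\phi\big(Stab_{G}(x_{0}^{i})\big)\subseteq Stab_{G}(x_{0}^{j})^{g_{ij}}$.

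Finally, to promote this inclusion to the desired equality I would use surjectivity of $\phi$: given $k\in Stab_{G}(x_{0}^{j})^{g_{ij}}$, write $k=\phi(h)$ with $h\in G$; since $\phi(h)\in Stab_{G}(x_{0}^{j})^{g_{ij}}$, the equivalence forces $h\in Stab_{G}(x_{0}^{i})$, hence $k\in\phi\big(Stab_{G}(x_{0}^{i})\big)$. Therefore $\phi\big(Stab_{G}(x_{0}^{i})\big)=Stab_{G}(x_{0}^{j})^{g_{ij}}$ for each $i$, which is the claim. The only steps requiring genuine care are the recollection of the conjugacy formula for point stabilizers and this last passage, where one must not confuse the one-directional implication coming from $\phi$-invariance with the full equality; the bijectivity of both $f$ and $\phi$ is exactly what closes that gap, and the rest is a routine unwinding of the definitions.
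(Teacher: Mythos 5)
Your proposal is correct and follows essentially the same route as the paper, whose justification for this proposition is the chain of equivalences in the paragraph immediately preceding the statement: bijectivity of $f$ gives $g_{2}^{-1}g_{1}\in Stab_{G}(x_{0}^{i})\iff\phi(g_{2}^{-1}g_{1})\in Stab_{G}(g_{ij}\cdot x_{0}^{j})=Stab_{G}(x_{0}^{j})^{g_{ij}}$. The only difference is that you make explicit the final step (using surjectivity of $\phi$ to upgrade the inclusion to an equality), which the paper leaves implicit.
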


Now let’s see how the inverse of a $\phi$-invariant function  $f$ can be represented as an appropriate function  $ \phi^{-1}$-invariant  $f^{-1}$.

\begin{prop}
The inverse function $f^{-1}$ of a  $\phi$-invariant function  $f$ can be represented as an function  $ \phi^{-1}$-invariant, fixed  $x_{0}^{j} \in X$ for $j=1,2,\dots,t$. 

\begin{equation}
	f^{-1}(g\cdot x_{0}^{j}  )= \phi^{-1}(g)\phi^{-1}(g_{ij}^{-1})\cdot x_{0}^{i}
\end{equation}

if $f(x_{0}^{i})=g_{ij}\cdot x_{0}^{j}$ for appropriate $ i \in {1,2,\dots,t}$ and $ g_{ij} \in G$. In addition you have
that  $f^{-1}( x_{0}^{j}  )= \phi^{-1}(g_{ij}^{-1})\cdot x_{0}^{i}$.
\end{prop}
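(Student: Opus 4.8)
The plan is to read the formula off directly from the defining relation of $\phi$-invariance, namely $f(g\cdot x)=\phi(g)\cdot f(x)$, by first establishing the ``global'' version of Definition 1.1 for $f^{-1}$ with $\phi$ replaced by $\phi^{-1}$, and then specializing to the chosen orbit representatives to identify the coefficients.

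First I would record a structural remark: since $f$ is bijective and $f(Orb_{G}(x))=Orb_{G}(f(x))$ (the computation noted just after Definition 1.1), $f$ carries each orbit onto an orbit; because $f$ is injective the images of distinct orbits are distinct, and because $f$ is surjective they exhaust $X$. Hence $f$ induces a permutation $\sigma$ of $\{1,\dots,t\}$ with $f(Orb_{G}(x_{0}^{i}))=Orb_{G}(x_{0}^{\sigma(i)})$. In particular, for each fixed $j$ there is a \emph{unique} index $i=\sigma^{-1}(j)$ and an element $g_{ij}\in G$ with $f(x_{0}^{i})=g_{ij}\cdot x_{0}^{j}$, so that the data entering (1.1) are unambiguous once $x_{0}^{j}$ is fixed.

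Next I would check that $f^{-1}$ is $\phi^{-1}$-invariant in the sense of Definition 1.1. Since $\phi$ is an automorphism, so is $\phi^{-1}$. Given $y\in X$ and $g\in G$, write $y=f(x)$ and apply $\phi$-invariance with the element $\phi^{-1}(g)$:
\[
f\bigl(\phi^{-1}(g)\cdot x\bigr)=\phi\bigl(\phi^{-1}(g)\bigr)\cdot f(x)=g\cdot y .
\]
Applying $f^{-1}$ to both sides gives $f^{-1}(g\cdot y)=\phi^{-1}(g)\cdot f^{-1}(y)$. This already proves $f^{-1}$ is $\phi^{-1}$-invariant, and, since $f^{-1}$ is an honest function, it also shows that no well-definedness question arises in (1.1).

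Finally I would compute the coefficients. Fix $j$, and let $i$ and $g_{ij}$ be as above, so $f(x_{0}^{i})=g_{ij}\cdot x_{0}^{j}$. Applying $f^{-1}$ and the identity just proved,
\[
x_{0}^{i}=f^{-1}\bigl(g_{ij}\cdot x_{0}^{j}\bigr)=\phi^{-1}(g_{ij})\cdot f^{-1}(x_{0}^{j}),
\]
so that $f^{-1}(x_{0}^{j})=\phi^{-1}(g_{ij})^{-1}\cdot x_{0}^{i}=\phi^{-1}(g_{ij}^{-1})\cdot x_{0}^{i}$, using that $\phi^{-1}$ is a homomorphism; this is the last assertion of the statement. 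Then for arbitrary $g\in G$,
\[
f^{-1}\bigl(g\cdot x_{0}^{j}\bigr)=\phi^{-1}(g)\cdot f^{-1}(x_{0}^{j})=\phi^{-1}(g)\,\phi^{-1}(g_{ij}^{-1})\cdot x_{0}^{i},
\]
which is exactly (1.1); comparing with Definition 1.2 shows $f^{-1}$ is a $\phi^{-1}$-invariant function whose ``$g_{ji}$'' coefficient is $\phi^{-1}(g_{ij}^{-1})$ (consistently with Proposition 1.3 applied to $\phi^{-1}$). The only real care needed is the index bookkeeping — that the $i$ paired with a given $j$ is $\sigma^{-1}(j)$ — together with the routine fact that $\phi^{-1}$ is an automorphism, so $\phi^{-1}(g_{ij}^{-1})=\phi^{-1}(g_{ij})^{-1}$; I do not expect any substantive obstacle beyond this.
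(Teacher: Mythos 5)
Your proof is correct, and it takes a genuinely different route from the paper's. The paper treats the displayed formula as a \emph{candidate} for $f^{-1}$ and verifies by direct computation that it is a two-sided inverse of $f$ (checking $f^{-1}f=\mathbb{I}_X$ and $ff^{-1}=\mathbb{I}_X$ on each orbit), and then separately verifies that $\phi^{-1}$ satisfies the stabilizer condition of Proposition 1.3. You instead \emph{derive} the formula: from $f(\phi^{-1}(g)\cdot x)=g\cdot f(x)$ you obtain the global relation $f^{-1}(g\cdot y)=\phi^{-1}(g)\cdot f^{-1}(y)$ in one line, which already shows $f^{-1}$ is $\phi^{-1}$-invariant in the sense of Definition 1.1, and then you read off the coefficient $\phi^{-1}(g_{ij}^{-1})$ by evaluating at $y=x_{0}^{j}$. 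Your route is shorter and explains \emph{why} the formula must be what it is, rather than merely confirming it; it also makes the stabilizer verification redundant, since that condition follows from $\phi^{-1}$-invariance via Proposition 1.3 itself. The paper's computation, on the other hand, double-checks consistency of the index bookkeeping (which element $g_{ij}$ pairs with which $j$), a point you handle instead with your preliminary remark that $f$ induces a permutation of the orbit indices. Both arguments are complete.
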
	
\begin{proof}
Let's see that function of (1.1) satisfies the following conditions $ f^{-1}f=\mathbb{I}_{X}$ and $ f f^{-1}=\mathbb{I}_{X}$.

$$ f^{-1}f(g\cdot x_{0}^{i}  )=f^{-1}(\phi(g)g_{ij} x_{0}^{j})=\phi^{-1}(\phi(g)g_{ij})\phi^{-1}(g_{ij}^{-1})\cdot x_{0}^{i}$$

$$                                                           = g \phi ^{-1}(g_{ij})\phi^{-1}(g_{ij}^{-1})\cdot x_{0}^{i}$$

$$                                                           =g \cdot x_{0}^{i}$$.

So we showed that $ f^{-1}f=\mathbb{I}_{X}$ now check $ f f^{-1}=\mathbb{I}_{X}$.

$$ f f^{-1}(g\cdot x_{0}^{j})=f(\phi^{-1}(g)\phi^{-1}(g_{ij}^{-1}) x_{0}^{i})=\phi(\phi^{-1}(g)\phi^{-1}(g_{ij}^{-1}))g_{ij}\cdot x_{0}^{j}$$

$$                                                           = g g_{ij}^{-1}g_{ij}\cdot x_{0}^{j}$$

$$                                                           = g\cdot x_{0}^{j}$$.

To complete the demonstration, verify that $\phi^{-1}$ has the necessary condition of proposition (1.3).

$$ \phi^{-1}( Stab_{G}(x_{0}^{j} ))=\phi^{-1}({Stab_{G}(x_{0}^{j} )}^{g_{ij}g_{ij}^{-1}})$$

$$                                 ={\phi^{-1}({Stab_{G}(x_{0}^{j} )}^{g_{ij}})}^{\phi^{-1}(g_{ij}^{-1})}$$

$$                                  ={\phi^{-1}(\phi(Stab_{G}(x_{0}^{i} ))}^{\phi^{-1}(g_{ij}^{-1})}$$

$$                                  ={Stab_{G}(x_{0}^{i} )}^{\phi^{-1}(g_{ij}^{-1})}$$

\end{proof}	
	
 And now let’s investigate how two functions $\phi$-invariant and $\psi$-functions behave through the composition.
 
\begin{prop}
	If two functions, $f$ and $h$, are resptictively $\phi$-invariant and $\psi$-invariant then $fh$ is $\phi\psi$-invariant.
\end{prop}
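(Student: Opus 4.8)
The plan is to verify the defining functional equation for $\phi\psi$-invariance directly from the definitions of $\phi$-invariance and $\psi$-invariance. Recall that $f$ being $\phi$-invariant means $f(g\cdot x)=\phi(g)\cdot f(x)$ for all $g\in G$ and $x\in X$, and similarly $h$ being $\psi$-invariant means $h(g\cdot x)=\psi(g)\cdot h(x)$. So the task reduces to computing $(fh)(g\cdot x)$ and showing it equals $(\phi\psi)(g)\cdot (fh)(x)$, where I interpret $fh$ as the composition $f\circ h$ and $\phi\psi$ as the composition $\phi\circ\psi$ of automorphisms (both of which are again elements of $\mathrm{Sym}(X)$ and $\mathrm{Aut}(G)$ respectively, so the statement at least typechecks).

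First I would note that $fh$ is a bijection, being a composition of bijections, so it is a legitimate candidate. Then the core computation is the short chain
\begin{equation}
(f\circ h)(g\cdot x)=f\bigl(h(g\cdot x)\bigr)=f\bigl(\psi(g)\cdot h(x)\bigr)=\phi(\psi(g))\cdot f(h(x))=(\phi\circ\psi)(g)\cdot (f\circ h)(x),
\end{equation}
where the second equality uses $\psi$-invariance of $h$, the third uses $\phi$-invariance of $f$ applied to the group element $\psi(g)\in G$ and the point $h(x)\in X$, and the last is just rewriting. Since this holds for every $g\in G$ and every $x\in X$, the composition $fh$ satisfies the definition of a $\phi\psi$-invariant function.

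For completeness I would also remark that $\phi\psi$ is indeed an automorphism of $G$ (the composition of two automorphisms), so the conclusion is well-posed, and one could optionally check that the necessary condition of Proposition 1.3 is consistent with this — but since we have exhibited $fh$ as satisfying the stronger pointwise identity, Proposition 1.3 follows automatically rather than needing separate verification. I do not anticipate a genuine obstacle here: the only subtlety is bookkeeping about the order of composition (making sure $\phi\psi$ means the same compositional order as $fh$, so that the identity $f(\psi(g)\cdot y)=\phi(\psi(g))\cdot f(y)$ is applied with the arguments in the right slots), and being careful that the group element fed into $f$'s invariance relation is $\psi(g)$ and not $g$ itself. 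Once the conventions are fixed, the proof is the one-line display above.
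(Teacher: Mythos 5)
Your proof is correct and follows essentially the same route as the paper's: compose, peel off the $\psi$-invariance of $h$, then apply the $\phi$-invariance of $f$ to the element $\psi(g)$. The only difference is that the paper phrases the computation in the orbit-representative notation of Definition 1.2 (explicitly producing the element $g_{ik}''=\phi(g_{ij}')g_{jk}$) and then separately verifies the stabilizer condition $\phi\psi(Stab_{G}(x_{0}^{i}))=Stab_{G}(x_{0}^{k})^{g_{ik}''}$, whereas you work pointwise with Definition 1.1 and correctly observe that the stabilizer condition is automatic from Proposition 1.3 once the pointwise identity is established.
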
  	
\begin{proof}
We know by hypothesis that it exists $g_{ij}$, $g_{ij}^{'} \in G$ so that $$ f(g\cdot x_{0}^{i})=\phi(g)g_{ij}\cdot x_{0}^{j} \quad h(g\cdot x_{0}^{i})=\psi(g)g_{ij}^{'}\cdot x_{0}^{j}$$
for  appropiate $j \in {1,2,\dots,t}$ and for each $i \in {1,2,\dots,t}$.

Thus the compositions between $f$ and $h$ yields $$ fh(g\cdot x_{0}^{i})=f(\psi(g)g_{ij}^{'}\cdot x_{0}^{j})=\phi\psi(g)\phi(g_{ij}^{'})g_{jk}\cdot x_{0}^{k}$$
for  appropiate $k \in {1,2,\dots,t}$.

To simplify the notation we place $g_{ik}^{''}=\phi(g_{ij}^{'})g_{jk}$ and so that $fh$ can rewrite as $fh(g\cdot x_{0}^{i})=\phi\psi(g)g_{ik}^{''}\cdot x_{0}^{k}$.

Now we have to prove that $fh$ is well defined through stabilizers

$$ \phi\psi(Stab_{G}(x_{0}^{i} ))=\phi({Stab_{G}(x_{0}^{j} )}^{g_{ij}^{'}})=\phi({Stab_{G}(x_{0}^{j} )})^{\phi(g_{ij}^{'})}$$

$$                                                                         =\phi(g_{ij}^{'})g_{jk} {Stab_{G}(x_{0}^{k} )}g_{jk}^{-1}{\phi(g_{ij}^{'})}^{-1}$$

$$                                                                         =\phi(g_{ij}^{'})g_{jk} {Stab_{G}(x_{0}^{k} )}{((\phi(g_{ij}^{'})g_{jk})}^{-1}$$

$$  ={Stab_{G}(x_{0}^{k} )}^{\phi(g_{ij}^{'})g_{jk}}= {Stab_{G}(x_{0}^{k} )}^{g_{ik}^{''}}  $$.

\end{proof}

We have seen the necessary condition of a $\phi$ automorphism for a $\phi$-invariant function, now we will show that the proposition thesis (1.3) is a sufficient condition for a $\phi$-invariant function.

\begin{prop}
There is at least one $\phi$-invariant function $f\neq\mathbb{I}_{X}  \iff \phi(Stab_{G}(x_{0}^{i}))=Stab_{G}(x_{0}^{j})^{g_{ij}}  \hspace{0.25cm} \forall x_{0}^{i}$ representative of the orbits of $X$ under the action of group $G$
\end{prop}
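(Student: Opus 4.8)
The plan is to prove both directions of the biconditional. The forward implication ($\Rightarrow$) is already essentially established: Proposition 1.3 shows that any $\phi$-invariant $f$ forces $\phi(Stab_G(x_0^i)) = Stab_G(x_0^j)^{g_{ij}}$, so I would simply invoke it, noting that the hypothesis $f \neq \mathbb{I}_X$ is not even needed for this direction (it only matters for making the statement non-vacuous).

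For the reverse implication ($\Leftarrow$), the idea is constructive: assume the stabilizer condition holds, i.e.\ for each representative $x_0^i$ there is some index $j = \sigma(i)$ and some element $g_{ij} \in G$ with $\phi(Stab_G(x_0^i)) = Stab_G(x_0^{\sigma(i)})^{g_{ij}}$. First I would argue that the assignment $i \mapsto \sigma(i)$ can be taken to be a permutation of $\{1,\dots,t\}$: since $\phi$ is an automorphism it permutes the conjugacy classes of subgroups, and the stabilizers of the $t$ orbit representatives fall into these classes, so the induced map on representatives is a bijection (here one may need to choose representatives compatibly, or observe that orbits with conjugate stabilizers have equal size so the matching is forced to be one-to-one). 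Then I would \emph{define} $f$ on each orbit by the formula suggested in Definition 1.2, namely $f(g \cdot x_0^i) := \phi(g) g_{i,\sigma(i)} \cdot x_0^{\sigma(i)}$, and check three things: (a) $f$ is well-defined, i.e.\ independent of the choice of $g$ representing the point $g \cdot x_0^i$ — this is exactly where the stabilizer hypothesis $\phi(Stab_G(x_0^i)) = Stab_G(x_0^{\sigma(i)})^{g_{ij}}$ is used, via the chain of equivalences already displayed in the text before Proposition 1.3; (b) $f$ is a bijection — injectivity on each orbit follows from the same computation, surjectivity follows because $\sigma$ is a permutation so every orbit $Orb_G(x_0^{\sigma(i)})$ is hit; (c) $f$ is $\phi$-invariant — immediate from the defining formula since $f(h \cdot (g \cdot x_0^i)) = f((hg)\cdot x_0^i) = \phi(hg) g_{i,\sigma(i)} \cdot x_0^{\sigma(i)} = \phi(h) \cdot f(g \cdot x_0^i)$.

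Finally I must ensure $f \neq \mathbb{I}_X$. If the constructed $f$ happens to be the identity, I would modify the construction: the freedom lies in the choice of the elements $g_{ij}$ (any element of the coset realizing the conjugation works, so I can replace $g_{ij}$ by $g_{ij}s$ for $s \in Stab_G(x_0^{\sigma(i)})$) and, if $\phi \neq \mathrm{id}$ or some orbit has nontrivial stabilizer or $t \geq 2$ with a nontrivial $\sigma$, one of these choices yields $f \neq \mathbb{I}_X$. The degenerate case to rule out or acknowledge is when $G$ acts freely and transitively-ish in a way that forces $f = \mathbb{I}_X$; I expect the intended reading is that at least one genuine choice exists, and I would state precisely under what circumstances a nontrivial $f$ is guaranteed.

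The main obstacle I anticipate is part (a), the well-definedness of $f$, together with the bookkeeping needed to guarantee that $i \mapsto \sigma(i)$ is a bona fide permutation rather than merely a function — the latter requires a genuine (if short) argument that $\phi$ acting on stabilizer-conjugacy-classes restricts to a bijection on the finite set of orbit representatives, using that conjugate stabilizers give equinumerous orbits. Everything else is a direct unwinding of Definition 1.2 and the equivalences displayed just before Proposition 1.3.
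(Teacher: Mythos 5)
Your proof follows essentially the same route as the paper's: the forward direction is delegated to Proposition 1.3, and the backward direction defines $f(g\cdot x_{0}^{i}):=\phi(g)g_{ij}\cdot x_{0}^{j}$ and verifies bijectivity, with well-definedness and injectivity on each orbit both coming from the equivalence $h^{-1}g\in Stab_{G}(x_{0}^{i})\iff\phi(h^{-1}g)\in Stab_{G}(x_{0}^{j})^{g_{ij}}$ displayed before Proposition 1.3. Your checks (a)--(c) are exactly the content of the paper's proof; your remark that $f\neq\mathbb{I}_{X}$ needs a separate word addresses a point the paper passes over entirely.

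The one place where your sketch does not hold up is the claim that $i\mapsto\sigma(i)$ is automatically a permutation, and in particular the justification ``orbits with conjugate stabilizers have equal size, so the matching is forced to be one-to-one.'' Equal sizes do not force an injective matching. Concretely: let $O_{1},O_{2}$ be two orbits whose stabilizers lie in one conjugacy class $c$ of subgroups, let $O_{3}$ be an orbit with stabilizer in a class $c'$, and let $\phi$ interchange $c$ and $c'$. Then the pointwise condition ``$\forall i\ \exists j, g_{ij}$ with $\phi(Stab_{G}(x_{0}^{i}))=Stab_{G}(x_{0}^{j})^{g_{ij}}$'' holds with $\sigma(1)=\sigma(2)=3$, yet no $\phi$-invariant bijection can exist: any such $f$ satisfies $\phi(Stab_{G}(x))=Stab_{G}(f(x))$, so it would have to map $O_{1}\sqcup O_{2}$ into $O_{3}$, which has half the cardinality. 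Such configurations do occur (take $G=S_{6}$ acting on $G/H_{1}\sqcup G/H_{1}\sqcup G/H_{2}$, where $H_{1},H_{2}$ are the two non-conjugate subgroups of order $2$ interchanged by the outer automorphism). So under the literal pointwise reading the stated equivalence is false; the right-hand side has to be read as asserting the existence of a bijective system $i\mapsto j(i)$ together with the elements $g_{ij}$, which is how both you and the paper actually use it in the construction. You are ahead of the paper in noticing that something must be said here -- its injectivity argument only treats two points of the same orbit and never rules out collisions between orbits -- but the resolution has to come from strengthening the reading of the hypothesis, not from the counting argument you propose.
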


\begin{proof}
By proposition (1.3)we already demonstrated the implication $\Rightarrow$). So now we have to prove the inverse implication( $\Leftarrow$.

Given a $\phi \in Aut(G) $ so that $\phi(Stab_{G}(x_{0}^{i}))=Stab_{G}(x_{0}^{j})^{g_{ij}}$ then we can define an application $f(g\cdot x_{0}^{i})=\phi(g)g_{ij}\cdot x_{0}^{j}$.
And now to finish the demonstration we have to prove that $f$ is bijective.
In fact $f$  is injective because $\phi(g)g_{ij}\cdot x_{0}^{j}=\phi(h)g_{ij}\cdot x_{0}^{j} \Rightarrow \phi(h^{-1}g)\cdot g_{ij}\cdot x_{0}^{j}=g_{ij}\cdot x_{0}^{j} $,  thus $\phi(h^{-1}g) \in Stab_{G}(x_{0}^{j})^{g_{ij}}$, by hypothesis $\phi(h^{-1}g) \in \phi(Stab_{G}(x_{0}^{i})) $ and   since that $ h^{-1}g=\phi^{-1}\phi(h^{-1}g)$ we get that $h^{-1}g \in \phi^{-1}( \phi(Stab_{G}(x_{0}^{i})))=Stab_{G}(x_{0}^{i})$ and finally we get $ h^{-1}g\cdot x_{0}^{i}= x_{0}^{i} \Rightarrow g\cdot x_{0}^{i}=h\cdot x_{0}^{i}$.

On the other hand $f$ is also surjective in that if $g\cdot x_{0}^{j} \in Orb_{G} (x_{0}^{j})$, we can see $g\cdot x_{0}^{j}$ as $ gg_{ij}^{-1}g_{ij}\cdot x_{0}^{j}$
then $g\cdot x_{0}^{j}=\phi (\phi^{-1}(gg_{ij}^{-1}))g_{ij}\cdot x_{0}^{j}$. So we found an element of the counter-image of $g\cdot x_{0}^{j}$ that is $\phi^{-1}(gg_{ij}^{-1})\cdot x_{0}^{i}$, in fact $f(\phi^{-1}(gg_{ij}^{-1})\cdot x_{0}^{i})=g\cdot  x_{0}^{i}$  .

\end{proof}

The set of automorphism $\phi$ which satisfies Proposition (1.6) forms a soubgroup of $Aut(G)$. In fact if $ \phi(Stab_{G}(x_{0}^{i}))=Stab_{G}(x_{0}^{j})^{g_{ij}}$ then $\phi^{-1}(Stab_{G}(x_{0}^{j})^{g_{ij}})=Stab_{G}(x_{0}^{i}) \Rightarrow \phi^{-1}(Stab_{G}(x_{0}^{j}))^{\phi^{-1}(g_{ij})}= Stab_{G}(x_{0}^{i}) $ thus $\phi^{-1}(Stab_{G}(x_{0}^{j}))= Stab_{G}(x_{0}^{i})^{\phi^{-1}(g_{ij}^{-1})}$.
It follows from this that at every $\psi$ of Proposition (1.6) $\psi^{-1}$ also satisfies Proposition (1.6), and it is also easy to see that $ id_{G} \in G$ satisfies Proposition 1.6 since $id_{G}(Stab_{G}(x_{0}^{i}))=Stab_{G}(x_{0}^{i})$.
Then if $\phi$ and $\psi$ satisfy Proposition (1.6) it is verified that $\phi\psi$ satisfies (1.6).

In fact $\phi\psi(Stab_{G}(x_{0}^{i}))=\phi(Stab_{G}(x_{0}^{j})^{g_{ij}})=\phi(Stab_{G}(x_{0}^{j}))^{\phi(g_{ij})}=Stab_{G}(x_{0}^{k})^{g_{ik}}$ where $g_{ik}=\phi(g_{ij})g_{jk}^{'}$ for an opportune $g_{jk}^{'}$ so that $\phi(Stab_{G}(x_{0}^{j}))=Stab_{G}(x_{0}^{k})^{g'_{jk}}$.

We then determined the subgroup
\begin{equation}
	 \{\phi \in Aut(G)| \phi \hspace{0.25cm}  satisfies \hspace{0.25cm} (1.6)  \}
\end{equation}
that we will denote ${Aut(G)}^{S}$ i.e the group of the automorphism that send stabilizers to other stabilizers with $S:=(Stab_{G}(x_{0}^{i}))_{i=1}^{t}$ that is, the ordered t-uple that by components has the stabilizers of $G$ which we can name as the sobgroup of automorphism $S$-invariants.

From (1.2) we can rephrase Proposition (1.6) as follows:

\begin{coro}
	There is at least one $\phi$-invariant function  $f$  $\iff \phi \in {Aut(G)}^{S} \leq Aut(G).$
\end{coro}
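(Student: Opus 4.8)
The plan is to package the chain of implications already established in the surrounding discussion into the stated equivalence, treating Corollary~1.8 as essentially a bookkeeping restatement of Proposition~1.6 modulo the definition of ${Aut(G)}^{S}$. Concretely, I would first recall that by construction ${Aut(G)}^{S}=\{\phi\in Aut(G)\mid \phi \text{ satisfies }(1.6)\}$, so the assertion ``$\phi\in{Aut(G)}^{S}$'' is \emph{by definition} the same as ``$\phi(Stab_{G}(x_{0}^{i}))=Stab_{G}(x_{0}^{j})^{g_{ij}}$ for every representative $x_{0}^{i}$''. Thus the content of the corollary is exactly Proposition~1.6, rephrased; the only genuinely new ingredient is the verified fact (established in the paragraph preceding equation~(1.3), the one numbered here as the definition of ${Aut(G)}^{S}$) that this set is actually a subgroup of $Aut(G)$, which justifies writing $\leq Aut(G)$.

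Next I would carry out the two directions explicitly for clarity, even though each is a citation. For ($\Rightarrow$): suppose there is a $\phi$-invariant function $f$. If $f=\mathbb{I}_X$ then $\phi=id_G$ works (and $id_G\in{Aut(G)}^S$ as noted, since it fixes every stabilizer), while if $f\neq\mathbb{I}_X$ then Proposition~1.6 gives directly that $\phi$ satisfies $(1.6)$, hence $\phi\in{Aut(G)}^S$. Actually, more cleanly: Proposition~1.3 already shows that \emph{any} $\phi$-invariant $f$ (identity or not) forces $\phi(Stab_{G}(x_{0}^{i}))=Stab_{G}(x_{0}^{j})^{g_{ij}}$, so $\phi\in{Aut(G)}^S$ without any case distinction. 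For ($\Leftarrow$): if $\phi\in{Aut(G)}^S$, then $\phi$ satisfies $(1.6)$, and the sufficiency direction of Proposition~1.6 constructs a $\phi$-invariant function $f$ via $f(g\cdot x_{0}^{i})=\phi(g)g_{ij}\cdot x_{0}^{j}$, which is shown there to be bijective; this $f$ witnesses the right-hand side.

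Finally I would append the observation that ${Aut(G)}^{S}$ is indeed a subgroup: closure under inverses and products, and membership of $id_G$, are precisely the computations displayed in the paragraph following the proof of Proposition~1.6 (using that conjugation distributes over $\phi$ and that a product of stabilizer-conjugates is again a stabilizer-conjugate after re-choosing the conjugating elements $g_{ij}$). I do not expect any real obstacle here: the statement is a corollary in the literal sense, and the only thing to be careful about is not to double-count the trivial case $f=\mathbb{I}_X$ versus $f\neq\mathbb{I}_X$ — the cleanest route is to invoke Proposition~1.3 (necessity, valid for all $\phi$-invariant $f$) for one direction and the construction inside Proposition~1.6 for the other, so that the hypothesis $f\neq\mathbb{I}_X$ in Proposition~1.6 is never actually needed for the corollary as stated.
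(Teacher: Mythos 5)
Your proposal is correct and follows essentially the same route as the paper, which presents this corollary purely as a rephrasing of Proposition~1.6 via the definition~(1.2) of ${Aut(G)}^{S}$, with the subgroup property $ {Aut(G)}^{S}\leq Aut(G)$ already established in the preceding paragraph. Your extra care in using Proposition~1.3 to cover the case $f=\mathbb{I}_{X}$ (which Proposition~1.6 formally excludes) is a small but welcome tightening of what the paper leaves implicit.
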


Furthermore, for Propositions (1.3) , (1.4), (1.5) $ \{f | f \hspace{0.25cm} \phi-invariant   \} \leq Sym(X)$ which we can name as $\widehat{{Aut(G)}^{S}}$ i.e the sobgroup of functions $S$-invariants.

\begin{os}
As ${Aut(G)}^{S}$   is defined, we have that the subgroup of $Aut(G)$ of internal automorphisms $I(G)$ ( to see a specific definition see definition 2.28 of[1]) is a subgroup of ${Aut(G)}^{S}$ . In fact	given a $\gamma_{g} \in I(G)$ then $\gamma_{g}(Stab_{G}(x_{0}^{i}))=Stab_{G}(x_{0}^{i})^{g}$ with $g_{ii}=g$ for each $i$.

Moreover, the corresponding $\gamma_{g}$-invariants are the representations of the elements of $G$ on the set $X$ i.e., the maps $f_{g}:x \rightarrow g\cdot x$. 

In fact if $f$ is a  $\gamma_{g}$-invariant then by definition 
$$  f(h\cdot x_{0}^{i} )=\gamma_{g}(h)g\cdot x_{0}^{i} $$
$$   f(h\cdot x_{0}^{i} )=ghg^{-1}g\cdot x_{0}^{i}$$
$$ f(h\cdot x_{0}^{i} )=gh\cdot x_{0}^{i}=g\cdot (h\cdot x_{0}^{i}).$$

By posing $h\cdot x_{0}^{i}=x$ we obtain $f=f_{g}$.

Conversely, gaven $f_{g}:x \rightarrow g\cdot x$ there exist appropiate $h \in G$ and $x_{0}^{i} \in X$ such that $ x= h\cdot x_{0}^{i}$ .

 Thus
 $$ f_{g}(h\cdot x_{0}^{i}) =g\cdot (h\cdot x_{0}^{i})=gh\cdot x_{0}^{i}=ghg^{-1}g\cdot x_{0}^{i}=\gamma_{g}(h)g\cdot x_{0}^{i}.$$

\end{os}

So we conclude that 

\begin{prop}
 A function $f$ is $\gamma_{g}$-invariant  with $\gamma_{g} \in I(G)  \iff  f=f_{g}.$  
	
\end{prop}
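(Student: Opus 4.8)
The plan is to prove the two implications separately, and in fact both directions have essentially been carried out already in the Remark preceding the statement; the proof is just a matter of assembling those computations into a clean argument. I would begin by recalling the hypothesis: $\gamma_g \in I(G)$ is the inner automorphism $h \mapsto ghg^{-1}$, and $f_g \colon X \to X$ is the permutation $x \mapsto g \cdot x$ (which is a bijection with inverse $f_{g^{-1}}$ since $G$ acts on $X$).

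For the implication ($\Leftarrow$), I would verify directly that $f_g$ satisfies the defining relation of a $\gamma_g$-invariant function from Definition 1.1, namely $f_g(h \cdot x) = \gamma_g(h) \cdot f_g(x)$ for all $h \in G$, $x \in X$. This is the short computation
$$f_g(h \cdot x) = g \cdot (h \cdot x) = (gh) \cdot x = (ghg^{-1}g) \cdot x = (ghg^{-1}) \cdot (g \cdot x) = \gamma_g(h) \cdot f_g(x),$$
using only associativity of the group action. Since $f_g$ is bijective, it is a $\gamma_g$-invariant function.

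For the implication ($\Rightarrow$), suppose $f$ is $\gamma_g$-invariant. Using the orbit decomposition $X = \bigsqcup_{i=1}^t Orb_G(x_0^i)$ and the form of inner automorphisms on stabilizers (as noted in the Remark, $\gamma_g(Stab_G(x_0^i)) = Stab_G(x_0^i)^g$, so one may take $g_{ij}$ with $j = i$ and $g_{ii} = g$ in Definition 1.2), the defining relation reads $f(h \cdot x_0^i) = \gamma_g(h) g \cdot x_0^i = ghg^{-1}g \cdot x_0^i = gh \cdot x_0^i = g \cdot (h \cdot x_0^i)$. Since every $x \in X$ can be written as $x = h \cdot x_0^i$ for suitable $i$ and $h \in G$, this shows $f(x) = g \cdot x = f_g(x)$ for all $x$, i.e. $f = f_g$.

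The only point requiring a word of care — and the closest thing to an obstacle — is the choice of orbit representatives in the $\gamma_g$-invariant case: one must note that $\gamma_g$ fixes each orbit setwise (since $g \cdot Orb_G(x_0^i) = Orb_G(x_0^i)$), so that the index $j$ in Definition 1.2 can indeed be taken equal to $i$ with $g_{ii} = g$; this is exactly what is established in the Remark and what makes the computation above valid for a consistent choice of representatives. With that observation in hand, both directions are immediate, and the proposition follows by combining them.
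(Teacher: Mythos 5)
Your proof is correct and follows essentially the same route as the paper: both directions are the one-line computation $g\cdot(h\cdot x)=(ghg^{-1})\cdot(g\cdot x)$ read in the two directions, together with the observation (already made in Remark 1.8) that $\gamma_{g}(Stab_{G}(x_{0}^{i}))=Stab_{G}(x_{0}^{i})^{g}$ so one may take $g_{ii}=g$ in Definition 1.2. Your explicit note that $\gamma_{g}$ fixes each orbit setwise, justifying $j=i$, is a worthwhile clarification but does not change the argument.
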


To conclude this section, we can note that in some cases a $\phi \in Aut(G)^{S}$ can define more than one $f \hspace{0.1cm} \phi$-invariant function.
In fact, $ \phi(Stab_{G}(x_{0}^{i}))={Stab_{G}(x_{0}^{j})}^{g_{ij}}={Stab_{G}(x_{0}^{j})}^{g_{ij}n_{j}}$ where $n_{j} \in N_{G}(Stab_{G}(x_{0}^{j}))$.

Then if $n_{j} \notin Stab_{G}(x_{0}^{j})$ then exsists another function $f\hspace{0.1cm} \phi$-invariant with ${g_{ij}}^{'}=g_{ij}n_{j}$. Otherwise if $n_{j} \in Stab_{G}(x_{0}^{j})$ we can still consider the  element $g_{ij}$ that maps $f:x_{0}^{i}\rightarrow g_{ij}\cdot x_{0}^{j}$ since $g_{ij}n_{j}\cdot x_{0}^{j}=g_{ij}\cdot x_{0}^{j}$.

Therefore we can assert that

\begin{prop}
Given $f$ is a $\phi$-invariant, it is unique if $N_{G}(Stab_{G}(x_{0}^{j}))=Stab_{G}(x_{0}^{j}) \hspace{0.25cm} \forall x_{0}^{j}.$
	
\end{prop}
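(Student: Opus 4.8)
The plan is to prove the statement by leveraging the discussion immediately preceding it, which already identifies exactly how non-uniqueness can arise. Recall that a $\phi$-invariant function $f$ is completely determined by the data $\phi$ together with, for each representative $x_0^i$, a choice of index $j$ and an element $g_{ij}\in G$ with $f(x_0^i)=g_{ij}\cdot x_0^j$, subject to $\phi(Stab_G(x_0^i))=Stab_G(x_0^j)^{g_{ij}}$. The index $j$ is forced by $\phi$ (it is the unique orbit whose stabilizer is conjugate, via some element, to the image of $Stab_G(x_0^i)$ under $\phi$), so the only freedom is in the choice of $g_{ij}$. Thus uniqueness of $f$ is equivalent to uniqueness, for each $i$, of the coset determining where $x_0^i$ is sent.

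First I would make precise the ambiguity in $g_{ij}$. Suppose $g_{ij}$ and $g_{ij}'$ both satisfy $\phi(Stab_G(x_0^i))=Stab_G(x_0^j)^{g_{ij}}=Stab_G(x_0^j)^{g_{ij}'}$. Then $Stab_G(x_0^j)^{g_{ij}(g_{ij}')^{-1}}=Stab_G(x_0^j)$, i.e. $g_{ij}(g_{ij}')^{-1}=:n_j\in N_G(Stab_G(x_0^j))$, so $g_{ij}=n_j g_{ij}'$. (One must be slightly careful about conjugation conventions, but this is routine.) Conversely any such $n_j$ produces an admissible $g_{ij}$. Hence the set of admissible "target elements" for $x_0^i$ is a single right coset of $N_G(Stab_G(x_0^j))$-translates, and two such choices $g_{ij}$, $g_{ij}'=n_j^{-1}g_{ij}$ give the \emph{same} function precisely when they send $x_0^i$ to the same point, i.e. when $g_{ij}\cdot x_0^j=g_{ij}'\cdot x_0^j$, which happens iff $g_{ij}^{-1}g_{ij}'\in Stab_G(x_0^j)$, i.e. iff $n_j\in Stab_G(x_0^j)$.

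Now I would close the argument: if $N_G(Stab_G(x_0^j))=Stab_G(x_0^j)$ for every $j$, then every admissible $n_j$ lies in $Stab_G(x_0^j)$, so by the previous paragraph all admissible choices of $g_{ij}$ yield the same value $f(x_0^i)$, for every $i$; since $f$ is determined on the representatives via $f(g\cdot x_0^i)=\phi(g)g_{ij}\cdot x_0^j$ and this formula depends only on the point $g_{ij}\cdot x_0^j$ (by the well-definedness established in Proposition 1.6), the function $f$ is uniquely determined by $\phi$. This gives the claimed implication. I expect the main subtlety — rather than a true obstacle — to be bookkeeping: checking that the formula $f(g\cdot x_0^i)=\phi(g)g_{ij}\cdot x_0^j$ really does depend only on the \emph{point} $g_{ij}\cdot x_0^j$ and not on the group element $g_{ij}$ used to name it, so that replacing $g_{ij}$ by $n_j g_{ij}$ with $n_j\in Stab_G(x_0^j)$ changes nothing; this is exactly the computation already carried out in the proof of Proposition 1.6, so it can be invoked rather than repeated. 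It is also worth remarking that the condition is sufficient but, as the preceding paragraph in the text shows by exhibiting $g_{ij}'=g_{ij}n_j$ with $n_j\notin Stab_G(x_0^j)$, it is essentially sharp.
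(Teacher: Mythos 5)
Your argument is essentially the paper's own: the text preceding the proposition identifies the ambiguity in a $\phi$-invariant $f$ as the freedom to replace $g_{ij}$ by $g_{ij}n_j$ with $n_j\in N_G(Stab_G(x_0^j))$, and observes that when $N_G(Stab_G(x_0^j))=Stab_G(x_0^j)$ this replacement satisfies $g_{ij}n_j\cdot x_0^j=g_{ij}\cdot x_0^j$ and so does not change $f$; your proof is the same computation, spelled out slightly more carefully via the coset description. The one place you go beyond the paper is the parenthetical claim that the target index $j$ is \emph{uniquely} forced by $\phi$: this is exactly the tacit assumption the paper also makes, and it is worth noting that it can fail even under the hypothesis of the proposition, since two distinct orbits can have conjugate (and individually self-normalizing) stabilizers --- the paper's own example of $D_n$ acting on $\mathbb{Z}_{2n}$ with $n$ odd has $S=(\langle\sigma\rangle,\langle\rho\sigma\rangle)$ with each stabilizer equal to its normalizer, yet admits the nontrivial $\mathbb{I}_{D_n}$-invariant $T_n$ exchanging the two orbits in addition to the identity; so your proof shares the paper's gap rather than introducing a new one.
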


\quad

\section{Permutations induced}

Once the representatives of the orbits have been fixed, one can revise a $\phi$-invariant f in terms of permutations on the indices of the orbits. This implies that an ordering on the representatives must be provided. In concrete terms given a $\phi \in Aut(G)^{S}$ to it is associated an appropriate permutation $\gamma^{\phi}$ on the sets of indices $I= \{1,2,3,4,...,i,...,t\}$ ordering the representatives $x_{0}^{i}$ such that

$$ f(g\cdot x_{0}^{i} )= \phi(g)g_{i\gamma^{\phi}(i) }\cdot x_{0}^{\gamma^{\phi}(i)}. $$

We can call such permutation $\gamma^{\phi}$ as permutation $\phi$-induced .

It might happen that there exists another $\phi$-induced permutation $\sigma^{\phi}$ where $\sigma^{\phi}(i) \neq \gamma^{\phi}(i)$ for some $ i\in I$.In this case we'll have that $\phi((Stab_{G}(x_{0}^{i}))={Stab_{G}(x_{0}^{\sigma^{\phi}(i)})}^{g_{i\sigma^{\phi}(i)}}$ and $\phi((Stab_{G}(x_{0}^{i}))={Stab_{G}(x_{0}^{\gamma^{\phi}(i)})}^{g_{i\gamma^{\phi}(i)}} \Rightarrow {Stab_{G}(x_{0}^{\sigma^{\phi}(i)})}^{g_{i\sigma^{\phi}(i)}}={Stab_{G}(x_{0}^{\gamma^{\phi}(i)})}^{g_{i\gamma^{\phi}(i)}} \Rightarrow Stab_{G}(x_{0}^{\sigma^{\phi}(i)})= {Stab_{G}(x_{0}^{\gamma^{\phi}(i)})}^{{g^{-1}}_{i\sigma^{\phi}(i)}g_{i\gamma^{\phi}(i)}} \Rightarrow |Cl(Stab_{G}(x_{0}^{\gamma^{\phi}(i)}))|> 1 $ for some $i \in I$. In this context, such a set refers to the conjugate class of a group. For a detailed explanation, refer to Definition 2.47 in [1].From this we can deduce the following property

\begin{prop}
	
There is more than one permutation $\phi$-induced $\gamma^{\phi}$ then $|Cl(Stab_{G}(x_{0}^{\gamma^{\phi}(i)}))|>1$ for some $i \in I.$
	
\end{prop}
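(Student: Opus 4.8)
The statement is essentially the contrapositive-in-disguise of the chain of implications already spelled out in the paragraph immediately preceding the proposition, so the plan is to organize that discussion into a clean argument. First I would fix a representative $x_0^i$ and suppose $\gamma^\phi$ and $\sigma^\phi$ are two distinct $\phi$-induced permutations with $\gamma^\phi(i)\neq\sigma^\phi(i)$. By the defining property of a $\phi$-induced permutation together with Proposition 1.3 (the stabilizer condition), both $\phi(\mathrm{Stab}_G(x_0^i))={\mathrm{Stab}_G(x_0^{\sigma^\phi(i)})}^{g_{i\sigma^\phi(i)}}$ and $\phi(\mathrm{Stab}_G(x_0^i))={\mathrm{Stab}_G(x_0^{\gamma^\phi(i)})}^{g_{i\gamma^\phi(i)}}$ hold.

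Equating the two right-hand sides and conjugating by ${g_{i\sigma^\phi(i)}}^{-1}$ gives
\[
\mathrm{Stab}_G(x_0^{\sigma^\phi(i)})={\mathrm{Stab}_G(x_0^{\gamma^\phi(i)})}^{{g_{i\sigma^\phi(i)}}^{-1}g_{i\gamma^\phi(i)}},
\]
so $\mathrm{Stab}_G(x_0^{\sigma^\phi(i)})$ and $\mathrm{Stab}_G(x_0^{\gamma^\phi(i)})$ are conjugate subgroups of $G$. The remaining point is that they are genuinely distinct subgroups: this follows because $\sigma^\phi(i)\neq\gamma^\phi(i)$ means $x_0^{\sigma^\phi(i)}$ and $x_0^{\gamma^\phi(i)}$ are representatives of two different orbits, hence lie in different orbits, and a subgroup equality $\mathrm{Stab}_G(x_0^{\sigma^\phi(i)})=\mathrm{Stab}_G(x_0^{\gamma^\phi(i)})$ is possible a priori — so here I would note that even if the stabilizers happened to coincide as subgroups, the conjugating element $w:={g_{i\sigma^\phi(i)}}^{-1}g_{i\gamma^\phi(i)}$ would still normalize them; but what we actually need is only that the conjugacy class $\mathrm{Cl}(\mathrm{Stab}_G(x_0^{\gamma^\phi(i)}))$ contains $\mathrm{Stab}_G(x_0^{\sigma^\phi(i)})$ as a member, and if these two subgroups were equal then $\gamma^\phi$ and $\sigma^\phi$ would not in fact differ at $i$ in the sense that matters (one could relabel using the same representative). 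Thus $\mathrm{Stab}_G(x_0^{\sigma^\phi(i)})\neq\mathrm{Stab}_G(x_0^{\gamma^\phi(i)})$, and since both are conjugate, the conjugacy class $\mathrm{Cl}(\mathrm{Stab}_G(x_0^{\gamma^\phi(i)}))$ has more than one element, i.e. $|\mathrm{Cl}(\mathrm{Stab}_G(x_0^{\gamma^\phi(i)}))|>1$.

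\textbf{Anticipated main obstacle.} The only delicate step is the one just flagged: ruling out that the two distinct index values $\gamma^\phi(i)$ and $\sigma^\phi(i)$ nevertheless carry the \emph{same} stabilizer subgroup. I would handle this by being precise about what "another $\phi$-induced permutation" means — namely that the data $(g_{i\gamma^\phi(i)})$ and $(g_{i\sigma^\phi(i)})$ define genuinely different $\phi$-invariant functions, not merely a cosmetic reindexing — and then observing that if $\mathrm{Stab}_G(x_0^{\sigma^\phi(i)})=\mathrm{Stab}_G(x_0^{\gamma^\phi(i)})$ the orbits $Orb_G(x_0^{\gamma^\phi(i)})$ and $Orb_G(x_0^{\sigma^\phi(i)})$ would be $G$-isomorphic and the choice between $\gamma^\phi$ and $\sigma^\phi$ would not be forced by the stabilizer condition, contradicting the hypothesis that both are \emph{induced} (hence constrained) permutations differing at $i$. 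Everything else is the routine conjugation bookkeeping already displayed in the text, so the proof is short once this definitional point is pinned down.
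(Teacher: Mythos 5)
Your argument follows the same route as the paper's own proof, which is the chain of implications displayed in the paragraph immediately before the proposition: equate the two expressions for $\phi(Stab_{G}(x_{0}^{i}))$, conjugate by ${g_{i\sigma^{\phi}(i)}}^{-1}$, and conclude that $Stab_{G}(x_{0}^{\sigma^{\phi}(i)})$ and $Stab_{G}(x_{0}^{\gamma^{\phi}(i)})$ are conjugate. Up to that point you reproduce the paper exactly, and you correctly isolate the one step the paper passes over in silence: to conclude $|Cl(Stab_{G}(x_{0}^{\gamma^{\phi}(i)}))|>1$ one needs the two stabilizers to be \emph{distinct} subgroups, not merely conjugate to one another.

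The problem is that your patch for that step does not hold. The hypothesis $\gamma^{\phi}(i)\neq\sigma^{\phi}(i)$ only says that the two target representatives lie in different orbits, and different orbits can carry literally the same stabilizer subgroup. For instance, let $G$ act trivially on a two-point set: both stabilizers equal $G$, both the identity and the transposition of the two points are $\mathbb{I}_{G}$-invariant bijections, so there are two genuinely different induced permutations in the sense of the definition; yet $Stab_{G}(x_{0}^{j})=G\unlhd G$ for every $j$, so every conjugacy class of stabilizers is a singleton and the conclusion fails. Your claim that equal stabilizers would make $\gamma^{\phi}$ and $\sigma^{\phi}$ ``not differ at $i$ in the sense that matters'' is therefore not correct: they differ exactly in the sense the definition requires, being induced by distinct $\phi$-invariant functions, and no relabelling of representatives can merge two distinct orbits. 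So the distinctness of the subgroups cannot be derived from the stated hypotheses; it would need an additional assumption (for example, that distinct orbits never share a stabilizer). Note that this gap is inherited from, not introduced by, your write-up --- the paper's own derivation writes the implication $Stab_{G}(x_{0}^{\sigma^{\phi}(i)})={Stab_{G}(x_{0}^{\gamma^{\phi}(i)})}^{w}\Rightarrow|Cl(Stab_{G}(x_{0}^{\gamma^{\phi}(i)}))|>1$ without addressing the case of equality --- but flagging the issue and then waving it away does not close it.
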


However, as the function $Stab_{G}(x_{0}^{\gamma^{\phi}(i)})$ is defined, the converse is also true. 

\begin{prop}
	
	There is more than one permutation $\phi$-induced $\gamma^{\phi} \iff |Cl(Stab_{G}(x_{0}^{\gamma^{\phi}(i)}))|>1$ for some $i \in I.$
	
\end{prop}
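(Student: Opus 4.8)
The implication $(\Rightarrow)$ is exactly Proposition 2.1, which has already been established, so the whole task is the converse $(\Leftarrow)$. As in the computation preceding Proposition 2.1, I read the hypothesis ``$|Cl(Stab_{G}(x_{0}^{\gamma^{\phi}(i)}))|>1$ for some $i\in I$'' as the assertion that among the stabilizers $Stab_{G}(x_{0}^{1}),\dots,Stab_{G}(x_{0}^{t})$ (the components of $S$) there are two, say $Stab_{G}(x_{0}^{a})$ and $Stab_{G}(x_{0}^{b})$ with $a\neq b$, that are $G$-conjugate; since $\gamma^{\phi}$ is a permutation of $I$ this is equivalent to the displayed form. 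The plan is then to manufacture out of $\gamma^{\phi}$ a second $\phi$-induced permutation by interchanging the destinations of the two orbits $Orb_{G}(x_{0}^{a})$ and $Orb_{G}(x_{0}^{b})$.

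Concretely, fix $c\in G$ with $Stab_{G}(x_{0}^{a})=Stab_{G}(x_{0}^{b})^{c}$, put $i_{1}=(\gamma^{\phi})^{-1}(a)$ and $i_{2}=(\gamma^{\phi})^{-1}(b)$ (distinct, since $\gamma^{\phi}$ is injective), and define $\sigma=(a\ b)\circ\gamma^{\phi}\in Sym(I)$, so that $\sigma(i_{1})=b$, $\sigma(i_{2})=a$ and $\sigma(i)=\gamma^{\phi}(i)$ for every other index. Since $\phi\in Aut(G)^{S}$ and $\gamma^{\phi}$ is $\phi$-induced we have $\phi(Stab_{G}(x_{0}^{i}))=Stab_{G}(x_{0}^{\gamma^{\phi}(i)})^{g_{i\gamma^{\phi}(i)}}$ for all $i$. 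Substituting $Stab_{G}(x_{0}^{a})=Stab_{G}(x_{0}^{b})^{c}$ into the case $i=i_{1}$ gives $\phi(Stab_{G}(x_{0}^{i_{1}}))=Stab_{G}(x_{0}^{b})^{\,c\,g_{i_{1}a}}$, and using $Stab_{G}(x_{0}^{b})=Stab_{G}(x_{0}^{a})^{c^{-1}}$ in the case $i=i_{2}$ gives $\phi(Stab_{G}(x_{0}^{i_{2}}))=Stab_{G}(x_{0}^{a})^{\,c^{-1}g_{i_{2}b}}$, while for $i\neq i_{1},i_{2}$ the original identity is unchanged. Hence, on setting $g'_{i_{1}b}=c\,g_{i_{1}a}$, $g'_{i_{2}a}=c^{-1}g_{i_{2}b}$ and $g'_{i\sigma(i)}=g_{i\gamma^{\phi}(i)}$ otherwise, one has $\phi(Stab_{G}(x_{0}^{i}))=Stab_{G}(x_{0}^{\sigma(i)})^{g'_{i\sigma(i)}}$ for every $i\in I$.

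At this point I would invoke the sufficiency half of Proposition 1.6: since $\phi$ carries every $Stab_{G}(x_{0}^{i})$ to a conjugate of $Stab_{G}(x_{0}^{\sigma(i)})$, the rule $f'(g\cdot x_{0}^{i})=\phi(g)\,g'_{i\sigma(i)}\cdot x_{0}^{\sigma(i)}$ defines a well-defined injective self-map of $X$ (the well-definedness and injectivity arguments are those in the proof of Proposition 1.6, with $g'_{i\sigma(i)}$ in place of $g_{i\gamma^{\phi}(i)}$), and it is surjective because $\sigma\in Sym(I)$, so the orbits $Orb_{G}(x_{0}^{\sigma(i)})$, $i\in I$, again exhaust $X$. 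Thus $f'\in Sym(X)$ is $\phi$-invariant, its associated $\phi$-induced permutation is $\sigma$, and $\sigma\neq\gamma^{\phi}$ because $a\neq b$; this produces the required second $\phi$-induced permutation. (Iterating this transposition move along the equivalence relation ``$i\sim i'$ iff $Stab_{G}(x_{0}^{i})$ and $Stab_{G}(x_{0}^{i'})$ are $G$-conjugate'' would in fact show that the $\phi$-induced permutations are precisely the $\sigma\in Sym(I)$ agreeing with $\gamma^{\phi}$ class-by-class up to a permutation inside each class, so their total number is $\prod_{C}|C|!$ over the $\sim$-classes $C$; but for the proposition a single transposition is enough.)

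I expect the only real point of substance — everything else being conjugation bookkeeping — to be the passage from a merely ``combinatorially legal'' reshuffling of the orbit targets to an honest $\phi$-invariant bijection of $X$ realizing it; this is exactly where Proposition 1.6 is indispensable, and it is what forces the modified conjugators $g'_{i_{1}b}$, $g'_{i_{2}a}$ to be chosen so that the identity $\phi(Stab_{G}(x_{0}^{i}))=Stab_{G}(x_{0}^{\sigma(i)})^{g'_{i\sigma(i)}}$ holds exactly, not merely up to conjugacy. A convention slip between $H^{g}=g^{-1}Hg$ and $gHg^{-1}$ would invalidate the formulas $g'_{i_{1}b}=c\,g_{i_{1}a}$ and $g'_{i_{2}a}=c^{-1}g_{i_{2}b}$, so that is the computation to verify with care.
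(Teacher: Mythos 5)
Your construction in the converse direction is essentially the right one, and it supplies an argument that the paper itself omits: the paper disposes of the $(\Leftarrow)$ implication with the single sentence ``as the function $Stab_{G}(x_{0}^{\gamma^{\phi}(i)})$ is defined, the converse is also true'' and gives no construction at all. Your route --- compose $\gamma^{\phi}$ with the transposition $(a\ b)$, transport the conjugators through $Stab_{G}(x_{0}^{a})=Stab_{G}(x_{0}^{b})^{c}$, and invoke the sufficiency half of Proposition 1.6 to realize the new permutation by an actual $\phi$-invariant bijection --- is exactly how the gap should be filled.

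There is, however, one genuine flaw: your opening claim that the hypothesis ``$|Cl(Stab_{G}(x_{0}^{\gamma^{\phi}(i)}))|>1$ for some $i$'' is \emph{equivalent} to ``there exist $a\neq b$ in $I$ with $Stab_{G}(x_{0}^{a})$ and $Stab_{G}(x_{0}^{b})$ conjugate'' is false, and everything after that point proves the proposition only under the second hypothesis. Since $\gamma^{\phi}$ is a bijection of $I$, the literal hypothesis says merely that \emph{some} stabilizer $Stab_{G}(x_{0}^{j})$ is non-normal in $G$; it neither implies nor is implied by the existence of two distinct indices with conjugate stabilizers. For instance, $S_{3}$ acting naturally on three points has $t=1$ and $|Cl(Stab_{G}(x_{0}^{1}))|=3>1$, yet $Sym(I)$ is trivial and there is only one induced permutation, so the literal $(\Leftarrow)$ fails; conversely, two regular orbits of $\mathbb{Z}_{2}$ give two distinct indices with equal (normal, trivial) stabilizers and two distinct $\mathbb{I}_{G}$-induced permutations while $|Cl|=1$ throughout. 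So the statement you actually prove is the (correct, and almost certainly intended) variant with the hypothesis ``two distinct orbit representatives have conjugate stabilizers''; the proposition as literally printed does not admit a proof, and your equivalence step is where this is papered over. Separately, the computation you yourself flagged as the one to check does come out the other way under the paper's convention $H^{g}=gHg^{-1}$ (see the derivation before Proposition 1.3): one gets $\phi(Stab_{G}(x_{0}^{i_{1}}))=Stab_{G}(x_{0}^{b})^{g_{i_{1}a}c}$, so the corrected conjugators are $g'_{i_{1}b}=g_{i_{1}a}c$ and $g'_{i_{2}a}=g_{i_{2}b}c^{-1}$ rather than $c\,g_{i_{1}a}$ and $c^{-1}g_{i_{2}b}$; this is cosmetic, since only the existence of some conjugator is needed for Proposition 1.6.
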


From proposition (2.2) with short logical steps, the following corollary can be deduced.

\begin{coro}
	There is only one permutation $\phi$-induced $\gamma^{\phi} \iff |Cl(Stab_{G}(x_{0}^{\gamma^{\phi}(i)}))|=1     \hspace{0.25 cm}              \forall i \in I$ namely $Stab_{G}(x_{0}^{\gamma^{\phi}(i)}) \unlhd G          \hspace{0.25cm} \forall i \in I$ 
\end{coro}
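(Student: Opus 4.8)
The plan is to derive this corollary directly from Proposition (2.2) by taking contrapositives, so essentially all the mathematical content is already in place and the proof is a short logical exercise. First I would record the statement of Proposition (2.2): there is more than one $\phi$-induced permutation $\iff |Cl(Stab_{G}(x_{0}^{\gamma^{\phi}(i)}))|>1$ for some $i\in I$. Negating the left-hand side gives ``there is exactly one $\phi$-induced permutation $\gamma^{\phi}$'' (existence of at least one being guaranteed by Corollary (1.7) together with the discussion opening Section 2, so ``not more than one'' is the same as ``exactly one''). Negating the right-hand side turns the existential quantifier into a universal one and flips the strict inequality: it is not the case that $|Cl(Stab_{G}(x_{0}^{\gamma^{\phi}(i)}))|>1$ for some $i$ exactly when $|Cl(Stab_{G}(x_{0}^{\gamma^{\phi}(i)}))|\le 1$ for all $i\in I$, i.e.\ $|Cl(Stab_{G}(x_{0}^{\gamma^{\phi}(i)}))|=1$ for all $i\in I$, since a conjugacy class is always non-empty.

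The second step is to identify the condition $|Cl(H)|=1$ with normality. For any subgroup $H\le G$, the conjugacy class of $H$ in the poset of subgroups has size one precisely when $gHg^{-1}=H$ for all $g\in G$, that is, precisely when $H\unlhd G$. Applying this with $H=Stab_{G}(x_{0}^{\gamma^{\phi}(i)})$ for each $i$ yields the reformulation $Stab_{G}(x_{0}^{\gamma^{\phi}(i)})\unlhd G$ for all $i\in I$, which is the final clause of the corollary. Stitching the two equivalences together gives: exactly one $\phi$-induced permutation $\iff |Cl(Stab_{G}(x_{0}^{\gamma^{\phi}(i)}))|=1$ for all $i\in I$ $\iff Stab_{G}(x_{0}^{\gamma^{\phi}(i)})\unlhd G$ for all $i\in I$.

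There is essentially no obstacle here; the only point deserving a line of care is the reading of ``only one'' versus ``not more than one''. I would briefly note that since a $\phi$-induced permutation exists whenever $\phi\in Aut(G)^{S}$ (Corollary (1.7)), ``only one'' and ``at most one'' coincide, so that negating Proposition (2.2) really does produce the stated equivalence rather than a vacuous one. I would also remark, for cleanliness, that the index $i$ ranging over $I$ and $\gamma^{\phi}(i)$ ranging over $I$ are interchangeable since $\gamma^{\phi}$ is a permutation of $I$, so the quantifier ``$\forall i\in I$'' applied to $Stab_{G}(x_{0}^{\gamma^{\phi}(i)})$ is the same as ``for every representative $x_{0}^{j}$''. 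Beyond these remarks the proof is the one-line contrapositive of Proposition (2.2) followed by the elementary translation $|Cl(H)|=1 \iff H\unlhd G$.
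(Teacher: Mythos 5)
Your proposal is correct and is precisely the argument the paper intends: the paper states that the corollary follows from Proposition (2.2) ``with short logical steps,'' and those steps are exactly the negation of both sides of the biconditional, the observation that existence (Corollary (1.7)) upgrades ``at most one'' to ``exactly one,'' and the identification of $|Cl(H)|=1$ with $H\unlhd G$. Nothing is missing.
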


It can be observed that to a permutation $\gamma^{\phi}$ induced there exists its $\psi$-induced inverse for some $\psi \in Aut(G)^{S}$. In fact,
$$ \phi(Stab_{G}(x_{0}^{i}))={Stab_{G}(x_{0}^{\gamma^{\phi}(i)})}^{g_{i\gamma^{\phi}(i)}} \Rightarrow {\phi}^{-1}({Stab_{G}(x_{0}^{\gamma^{\phi}(i)})}^{g_{i\gamma^{\phi}(i)}})=Stab_{G}(x_{0}^{i}), $$

therefore

$$\phi^{-1}(Stab_{G}(x_{0}^{\gamma^{\phi}(i)}))=Stab_{G}(x_{0}^{i})^{{(\phi^{-1}(g_{i\gamma^{\phi}(i)})})^{-1}}.$$

Assuming $\gamma^{\phi}(i)=j$ and $g_{j{\gamma}^{\phi^{-1}}(j)}={{{\phi}^{-1}(g_{i\gamma^{\phi}(i)}^{-1})}} $, the previous equality becomes

$$  \phi^{-1}(Stab_{G}(x_{0}^{j}))={Stab_{G}(x_{0}^{\gamma^{\phi^{-1}(j)}})^{g_{j{\gamma^{\phi}}^{-1}(j)}}} .$$

We have then shown the following proposition.

\begin{prop}
 Given a $\phi$-induced permutation $\gamma^{\phi}$ the inverse is a $\phi^{-1}$- induced permutation i.e. exists a permutation  $\overline{\gamma}^{\phi^{-1}}$ such that
 
  \begin{equation}
  (\gamma^{\phi})^{-1}= \overline{\gamma}^{\phi^{-1}}.
  \end{equation}
  
\end{prop}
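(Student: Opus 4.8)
The plan is to establish the identity $(\gamma^{\phi})^{-1}=\overline{\gamma}^{\phi^{-1}}$ by exhibiting, for a given $\phi$-induced permutation $\gamma^{\phi}$, an explicit $\phi^{-1}$-induced permutation and checking that it is the set-theoretic inverse of $\gamma^{\phi}$. First I would recall that by Corollary~1.9 (or directly by Proposition~1.6), $\phi\in Aut(G)^{S}$ is equivalent to the existence of a $\phi$-invariant function, and that $\phi^{-1}\in Aut(G)^{S}$ as well, since $Aut(G)^{S}$ was shown to be a subgroup of $Aut(G)$. So a $\phi^{-1}$-induced permutation exists; the content of the statement is that among the possibly several $\phi^{-1}$-induced permutations one can always choose the honest inverse of $\gamma^{\phi}$.

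The key steps, in order, are as follows. (1) Start from the defining relation $\phi(Stab_{G}(x_{0}^{i}))={Stab_{G}(x_{0}^{\gamma^{\phi}(i)})}^{g_{i\gamma^{\phi}(i)}}$ for each index $i\in I$. (2) Apply $\phi^{-1}$ to both sides and conjugate, obtaining, as in the computation preceding the statement, $\phi^{-1}(Stab_{G}(x_{0}^{j}))={Stab_{G}(x_{0}^{i})}^{(\phi^{-1}(g_{i\gamma^{\phi}(i)}))^{-1}}$ where $j=\gamma^{\phi}(i)$; this exhibits, for the index $j=\gamma^{\phi}(i)$, a target index $i$ with an associated conjugating element $g_{j(\gamma^{\phi})^{-1}(j)}:=\phi^{-1}(g_{i\gamma^{\phi}(i)}^{-1})$. (3) Observe that as $i$ ranges over $I$, the index $j=\gamma^{\phi}(i)$ also ranges over all of $I$ (since $\gamma^{\phi}$ is a permutation), so the assignment $j\mapsto (\gamma^{\phi})^{-1}(j)$ is defined for every $j\in I$ and is visibly a bijection on $I$ — hence it is a legitimate permutation of $I$, which I would name $\overline{\gamma}^{\phi^{-1}}$. (4) By construction $\overline{\gamma}^{\phi^{-1}}$ satisfies the defining relation for a $\phi^{-1}$-induced permutation with the conjugators $g_{j(\gamma^{\phi})^{-1}(j)}$ just specified, so it is indeed $\phi^{-1}$-induced. (5) Finally, by definition $\overline{\gamma}^{\phi^{-1}}(\gamma^{\phi}(i))=(\gamma^{\phi})^{-1}(\gamma^{\phi}(i))=i$ for all $i$, which is exactly the equality $(\gamma^{\phi})^{-1}=\overline{\gamma}^{\phi^{-1}}$ claimed in~(2.1).

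The only genuinely delicate point — and the place I expect to have to be careful — is step (3)–(4): I must make sure that the map $j\mapsto (\gamma^{\phi})^{-1}(j)$ together with the conjugating element $\phi^{-1}(g_{i\gamma^{\phi}(i)}^{-1})$ really does verify the definition of a $\phi^{-1}$-induced permutation \emph{consistently over all orbits at once}, not just orbit by orbit. Since $\gamma^{\phi}$ is a bijection of the finite index set $I$, each $j$ arises from a unique $i$, so there is no ambiguity in the choice of conjugator, and the verification reduces to the single-line stabilizer computation already displayed in the excerpt. There is a minor subtlety that a $\phi^{-1}$-induced permutation need not be unique (Proposition~2.2), but the statement only asserts that \emph{some} $\phi^{-1}$-induced permutation equals $(\gamma^{\phi})^{-1}$, and the construction above produces exactly that one; so uniqueness is not needed. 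I would close by noting that applying the same construction to $\phi^{-1}$ and $\overline{\gamma}^{\phi^{-1}}$ recovers $\gamma^{\phi}$, which is consistent and confirms the symmetry of the relation.
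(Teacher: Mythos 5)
Your proposal is correct and follows essentially the same route as the paper: starting from $\phi(Stab_{G}(x_{0}^{i}))={Stab_{G}(x_{0}^{\gamma^{\phi}(i)})}^{g_{i\gamma^{\phi}(i)}}$, applying $\phi^{-1}$, setting $j=\gamma^{\phi}(i)$ and taking $\phi^{-1}(g_{i\gamma^{\phi}(i)}^{-1})$ as the conjugator to exhibit $(\gamma^{\phi})^{-1}$ as $\phi^{-1}$-induced. Your steps (3)--(5), spelling out that $j\mapsto(\gamma^{\phi})^{-1}(j)$ is a well-defined bijection of $I$ and that uniqueness of induced permutations is not needed, only make explicit what the paper leaves implicit.
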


Now we prove the following result.

\begin{prop}
	
Given $\gamma^{\phi}$ and $\sigma^{\psi}$ respectively $\phi$-induced and $\psi$-induced permutations their product is a $\phi\psi$-induced pemutation i.e. exists a permutation  ${\gamma\sigma}^{\phi\psi}$ such that
\begin{equation}	
\gamma^{\phi}\sigma^{\psi}=\gamma\sigma^{\phi\psi}. 
	\end{equation}	

\end{prop}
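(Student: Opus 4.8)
The plan is to deduce this from Proposition 1.5 together with the way induced permutations are read off from $\phi$-invariant functions: composing a function $f$ that realizes $\gamma^\phi$ with a function $h$ that realizes $\sigma^\psi$ produces a $\phi\psi$-invariant function whose induced permutation is exactly the composition $\gamma^\phi\sigma^\psi$.

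First I would fix the ordering of the representatives $x_0^1,\dots,x_0^t$ once and for all. Since $\gamma^\phi$ is, by definition, a permutation induced by some $\phi$-invariant function, I may choose $f\in\widehat{{Aut(G)}^{S}}$ with $f(g\cdot x_0^i)=\phi(g)g_{i\gamma^\phi(i)}\cdot x_0^{\gamma^\phi(i)}$ for suitable $g_{i\gamma^\phi(i)}\in G$; likewise I choose a $\psi$-invariant $h$ with $h(g\cdot x_0^i)=\psi(g)g'_{i\sigma^\psi(i)}\cdot x_0^{\sigma^\psi(i)}$ for suitable $g'_{i\sigma^\psi(i)}\in G$. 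Note that $\phi,\psi\in Aut(G)^{S}$, and since $Aut(G)^{S}$ is a subgroup we have $\phi\psi\in Aut(G)^{S}$, so it is legitimate to speak of $\phi\psi$-induced permutations.

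Next I would apply Proposition 1.5 to obtain that $fh$ is $\phi\psi$-invariant, and then rerun the computation from its proof with the roles $j=\sigma^\psi(i)$ and $k=\gamma^\phi(\sigma^\psi(i))$:
\[
 fh(g\cdot x_0^i)=f\bigl(\psi(g)g'_{i\sigma^\psi(i)}\cdot x_0^{\sigma^\psi(i)}\bigr)=\phi\psi(g)\,\phi\bigl(g'_{i\sigma^\psi(i)}\bigr)\,g_{\sigma^\psi(i),\,\gamma^\phi\sigma^\psi(i)}\cdot x_0^{\gamma^\phi\sigma^\psi(i)}.
\]
Putting $g''_{i,\gamma^\phi\sigma^\psi(i)}:=\phi\bigl(g'_{i\sigma^\psi(i)}\bigr)\,g_{\sigma^\psi(i),\,\gamma^\phi\sigma^\psi(i)}$, this becomes $fh(g\cdot x_0^i)=\phi\psi(g)\,g''_{i,\gamma^\phi\sigma^\psi(i)}\cdot x_0^{\gamma^\phi\sigma^\psi(i)}$, which is precisely the defining form of a $\phi\psi$-invariant function whose associated permutation on $I$ sends $i\mapsto\gamma^\phi(\sigma^\psi(i))$. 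Hence $\gamma^\phi\sigma^\psi$ is a $\phi\psi$-induced permutation, and writing it $\gamma\sigma^{\phi\psi}$ yields the asserted identity $\gamma^\phi\sigma^\psi=\gamma\sigma^{\phi\psi}$. (If the opposite multiplication convention is intended, one simply interchanges the roles of $f$ and $h$.)

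I do not anticipate a genuine obstacle here. The only bookkeeping point — that $g''_{i,\gamma^\phi\sigma^\psi(i)}$ does satisfy $\phi\psi(Stab_G(x_0^i))=Stab_G(x_0^{\gamma^\phi\sigma^\psi(i)})^{g''_{i,\gamma^\phi\sigma^\psi(i)}}$, so that $fh$ is genuinely $\phi\psi$-invariant — is already carried out verbatim inside the proof of Proposition 1.5 and need only be cited. The one thing to keep straight is the logical status of being "$\phi$-induced": I must start from an actual $\phi$-invariant function realizing $\gamma^\phi$ (and similarly for $\sigma^\psi$), not from the bare permutation; and, in view of Proposition 2.2, the conclusion should be read as "$\gamma^\phi\sigma^\psi$ is one $\phi\psi$-induced permutation", not as a uniqueness claim.
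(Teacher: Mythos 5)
Your proposal is correct and follows essentially the same route as the paper: both compose a $\phi$-invariant $f$ realizing $\gamma^\phi$ with a $\psi$-invariant $h$ realizing $\sigma^\psi$, invoke Proposition 1.5 for the $\phi\psi$-invariance of $fh$, and read off the induced permutation $i\mapsto\gamma^\phi(\sigma^\psi(i))$ with the same definition of the new elements $g''_{i,\gamma^\phi\sigma^\psi(i)}=\phi(g'_{i\sigma^\psi(i)})g_{\sigma^\psi(i),\gamma^\phi\sigma^\psi(i)}$. Your added remarks on the stabilizer bookkeeping and on the non-uniqueness caveat are consistent with what the paper leaves implicit.
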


\begin{proof}
	Given two functions $f$ and $h$ respectively $\phi$-invariant and $\psi$-invariant associated  to $\gamma^{\phi}$ and $\sigma^{\psi}$ respectively. By proposition (1.5) we have that $fh$ is $\phi \psi$-invariant so we must look for $\gamma^{\phi}\sigma^{\psi}$ in that function
	$$fh(g\cdot x_{0}^{i}) =f(h(g\cdot x_{0}^{i}) ))= f( \psi(g)g_{i\sigma^{\psi}(i)}\cdot x_{0}^{\sigma^{\psi}(i)} )  $$
	
	$$                  fh(g\cdot x_{0}^{i})      = \phi\psi(g)\phi(g_{i\sigma^{\psi}(i)})g_{\sigma^{\psi}(i) \gamma^{\phi}\sigma^{\psi}(i)}\cdot x_{0}^{\gamma^{\phi}\sigma^{\psi}(i)}.$$

	Assuming $g_{i \gamma\sigma^{\phi\psi}(i)}=\phi(g_{i\sigma^{\psi}(i)})g_{\sigma^{\psi}(i) \gamma^{\phi}\sigma^{\psi}(i)}$ we get the thesis.
\end{proof}

	Combining propositions 2.4 and 2.5 we have that the set

$$   \{ \gamma \in Sym(I)| \gamma=\gamma^{\phi} \hspace{0.25cm} \phi \in {Aut(G)}^{S}   \}    $$

is a soubgrup of $Sym(I)$ that we will denote in this way $ {Sym(I)}^{Aut(G)^S}$ i.e., the group of permutations induced on the representatives of the orbits.

\quad

We now turn to the following subset of $ {Sym(I)}^{Aut(G)^S}$ namely $ {Sym(I)}^{\{\mathbb{I}_{G}\}}$,  if $ {Sym(I)}^{\{ \mathbb{I}_{G}\} }=\{\mathbb{I}_{I}\}$ is obviously a soubgroup of  $ {Sym(I)}^{Aut(G)^S}$. We want to see if this also happens in the case that 
$ {Sym(I)}^{\{\mathbb{I}_{G} \}} \supset \{\mathbb{I}_{I}\} $ for the purpose we prove the following lemma.

\begin{lem}
	
Given a $\mathbb{I}_{G}$-induced permutation $\gamma^{\mathbb{I}_{G}}$, the inverse is a $\mathbb{I}_{G}$- induced permutation  that we can denote according to (2.2) $\overline{\gamma}^{\mathbb{I}_{G}}$
	
\end{lem}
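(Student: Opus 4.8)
The plan is to apply Proposition 2.4 with the special choice $\phi = \mathbb{I}_G$, and then observe that the construction there keeps us inside the $\mathbb{I}_G$-induced permutations. Concretely, let $\gamma^{\mathbb{I}_G}$ be a permutation induced by the identity automorphism, so that there is an $\mathbb{I}_G$-invariant function $f$ with $f(g\cdot x_0^i) = g\, g_{i\gamma^{\mathbb{I}_G}(i)}\cdot x_0^{\gamma^{\mathbb{I}_G}(i)}$. By Proposition 1.4, $f^{-1}$ is $\phi^{-1}$-invariant with $\phi = \mathbb{I}_G$; since $\mathbb{I}_G^{-1} = \mathbb{I}_G$, the function $f^{-1}$ is again $\mathbb{I}_G$-invariant. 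Hence $f^{-1}$ itself induces some permutation, and by the uniqueness-of-inverse bookkeeping already carried out before Proposition 2.4, that induced permutation is exactly $(\gamma^{\mathbb{I}_G})^{-1}$.

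The key steps, in order, are: (i) recall that $\gamma^{\mathbb{I}_G}$ being induced means a corresponding $\mathbb{I}_G$-invariant $f$ exists (Corollary 1.7); (ii) invoke Proposition 1.4 to write $f^{-1}$ explicitly, noting $\mathbb{I}_G^{-1}=\mathbb{I}_G$, so that $f^{-1}(g\cdot x_0^{j}) = g\, g_{ij}^{-1}\cdot x_0^{i}$ whenever $f(x_0^i) = g_{ij}\cdot x_0^{j}$; (iii) read off from this formula the permutation $\overline{\gamma}^{\mathbb{I}_G}$ induced by $f^{-1}$, namely the one sending $\gamma^{\mathbb{I}_G}(i) \mapsto i$, i.e. $\overline{\gamma}^{\mathbb{I}_G} = (\gamma^{\mathbb{I}_G})^{-1}$; (iv) conclude that $\overline{\gamma}^{\mathbb{I}_G} \in \mathrm{Sym}(I)^{\{\mathbb{I}_G\}}$, which is precisely the assertion. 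Alternatively, this is just Proposition 2.4 restricted to $\phi = \mathbb{I}_G$: the inverse of a $\phi$-induced permutation is $\phi^{-1}$-induced, and $\mathbb{I}_G^{-1} = \mathbb{I}_G$.

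I expect the only subtlety — hardly an obstacle — to be the well-definedness issue flagged around Proposition 2.2: a given $\phi$ may induce several permutations, so one must be slightly careful about what "the inverse is $\mathbb{I}_G$-induced" means. The clean reading is that for every $\mathbb{I}_G$-induced $\gamma^{\mathbb{I}_G}$ there exists an $\mathbb{I}_G$-induced permutation equal to its inverse; this is exactly what the formula for $f^{-1}$ in Proposition 1.4 delivers, since $f^{-1}$ is a genuine $\mathbb{I}_G$-invariant function and hence its induced permutation lies in $\mathrm{Sym}(I)^{\{\mathbb{I}_G\}}$. So the proof is essentially a one-line specialization of Proposition 2.4 together with the observation that the identity automorphism is its own inverse, and I would present it in that form, spelling out the displayed formula for $f^{-1}$ only to make the identification $\overline{\gamma}^{\mathbb{I}_G} = (\gamma^{\mathbb{I}_G})^{-1}$ transparent.
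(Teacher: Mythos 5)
Your proposal is correct and follows essentially the same route as the paper: the paper's proof likewise takes the $\mathbb{I}_{G}$-invariant $f$ associated to $\gamma^{\mathbb{I}_{G}}$, applies Proposition 1.4 with $\mathbb{I}_{G}^{-1}=\mathbb{I}_{G}$ to write $f^{-1}(g\cdot x_{0}^{j})=g\,g_{i\gamma^{\mathbb{I}_{G}}(i)}^{-1}\cdot x_{0}^{(\gamma^{\mathbb{I}_{G}})^{-1}(j)}$, and reads off that the permutation induced by $f^{-1}$ is $(\gamma^{\mathbb{I}_{G}})^{-1}$. Your closing remark that this is just Proposition 2.4 specialized to $\phi=\mathbb{I}_{G}$ matches the paper's own framing via (2.2).
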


\begin{proof}
	
Existence by hypothesis of the $\gamma^{\mathbb{I}_{G}}$ permutation implies existence of an associated nontrivial function $\mathbb{I}_{G}$-invariant $ f$ so defined 	$f(g\cdot x_{0}^{i})=\mathbb{I}_{G}(g) g_{i \gamma^{\mathbb{I}_{G}}(i)}\cdot x_{0}^{\gamma^{\mathbb{I}_{G}}(i)}=gg_{i \gamma^{\mathbb{I}_{G}}(i)}\cdot x_{0}^{\gamma^{\mathbb{I}_{G}}(i)}$.

By proposition (1.4) 

$$ f^{-1}(g\cdot x_{0}^{\gamma^{\mathbb{I}_{G}}(i)} )=\mathbb{I}_{G}^{-1}(g)\mathbb{I}_{G}^{-1}(g^{-1}_{i \gamma^{\mathbb{I}_{G}}(i)})\cdot x_{0}^{i}        . $$

From here we get

$$ f^{-1}(g\cdot x_{0}^{\gamma^{\mathbb{I}_{G}}(i)} )= \mathbb{I}_{G}(g)g^{-1}_{i \gamma^{\mathbb{I}_{G}}(i)}\cdot  x_{0}^{(\gamma^{\mathbb{I}_{G}})^{-1}\gamma^{\mathbb{I}_{G}}(i)} . $$

By positing $j=\gamma^{\mathbb{I}_{G}}(i)$  and $g_{i {(\gamma^{\mathbb{I}_{G}})}^{-1}(i)}=g^{-1}_{i \gamma^{\mathbb{I}_{G}}(i)}$,  we have

$$f^{-1}(g\cdot x_{0}^{j} )=gg_{i {(\gamma^{\mathbb{I}_{G}})}^{-1}(i)}\cdot x_{0}^{{(\gamma^{\mathbb{I}_{G}})}^{-1}(j)}  $$
i.e, the thesis.
	
\end{proof}

\quad

Using proposition (2.4) in the case $\phi=\mathbb{I}_{G}$ and $\psi=\mathbb{I}_{G}$, by lemma 2.6 we get that 

    \begin{equation}
	{Sym(I)}^{\{\mathbb{I}_{G} \}} \leq  {Sym(I)}^{Aut(G)^S}	
    \end{equation}

 The ${Sym(I)}^{\{\mathbb{I}_{G} \}}$  group is not merely a subgroup of  ${Sym(I)}^{Aut(G)^S}$.  Indeed, we observe that for each $\gamma^{\phi}$ permutations $\phi$-induced, considering a $\sigma^{\mathbb{I}_{G}}$, we have that

$$  \gamma^{\phi}\sigma^{\mathbb{I}_{G}}\overline{\gamma}^{\phi^{-1}}=\gamma\sigma^{\phi\mathbb{I}_{G} }\overline{\gamma}^{\phi^{-1}}=\gamma\sigma^{\phi }\overline{\gamma}^{\phi^{-1}} =       $$

$$                                                            = \gamma\sigma\overline{\gamma}^{\phi\phi^{-1}}=\gamma\sigma\overline{\gamma}^{\mathbb{I}_{G}}  .$$

From this we conclude that

\begin{equation}		{Sym(I)}^{\{\mathbb{I}_{G} \}} \unlhd  {Sym(I)}^{Aut(G)^S}.
\end{equation}

\begin{os}

It is important to highlight that despite the fact that ${Sym(I)}^{\{\mathbb{I}_{G} \}} $ we have defined it from the identity automorphism $\mathbb{I}_{G}$,  there may be $\phi$-induced permutations with $\phi \neq \mathbb{I}_{G}$  that belong to that group. In fact, all this we can see from the fact that it may happen that, given two induced permutations $ \gamma^{\phi}$ and $\sigma^{\psi}$, where $\phi$ and $\psi$ are two distinct automorphisms, we have that $\gamma^{\phi}=\sigma^{\psi}$. This implies that $\gamma^{\phi}{(\sigma^{\psi})}^{-1}=\mathbb{I}_{I}$ and therefore by (2.2) we get that $\gamma\overline{\sigma}^{\phi{\psi}^{-1}}=\mathbb{I}_{I} $.	On the other hand as an example we have that the permutations $\sigma^{\gamma_{g}}$ induced by the internal automorphisms $\gamma_{g}$ are equal to the identity $\mathbb{I}_{I} $ ( for that see remark 1.8). Also from that example, we can obtain two distinct automorphisms $\gamma_{g}\phi$ and $\phi$, where two induced permutations $\tau^{\gamma_{g}\phi}$ and $\sigma^{\phi}$ are equal.
		
\end{os}

Now starting from the quotient group $ {Sym(I)}^{Aut(G)^S}/{Sym(I)}^{\{\mathbb{I}_{G} \}}$ we find that there is a relationship between  $ {Sym(I)}^{Aut(G)^S}/{Sym(I)}^{\{\mathbb{I}_{G} \}}$   and the $Aut(G)^S$  group through the following map
		
		$$\hspace{0.8cm} Aut(G)^S \stackrel{F}{\longrightarrow}  {Sym(I)}^{Aut(G)^S}/{Sym(I)}^{\{\mathbb{I}_{G} \}}$$
			\quad
		$$   \phi  \mapsto \gamma^{\phi}{Sym(I)}^{\{\mathbb{I}_{G} \}}                 . $$
		
\quad
		
First of all, it is good to point out that the map $F$ is well-defined ; that is such the $\phi$ automorphism, although it might generate more than one induced	$\gamma^{\phi}$ permutation, nevertheless it will be mapped into the one class $\gamma^{\phi}{Sym(I)}^{\{\mathbb{I}_{G} \}}$ and this assured to us by properties (2.1) and (2.2).
	The map $F$ is also a homorphism of groups in fact $F(\phi\psi)=\gamma^{\phi\psi}{Sym(I)}^{\{\mathbb{I}_{G} \}}$ and it is known that there are $\sigma^{\phi}$ and $\tau^{\psi}$ such that
		
		$$ \gamma^{\phi\psi}{Sym(I)}^{\{\mathbb{I}_{G} \}}=\sigma^{\phi}\tau^{\psi}{Sym(I)}^{\{\mathbb{I}_{G} \}}=\sigma^{\phi}{Sym(I)}^{\{\mathbb{I}_{G} \}}\tau^{\psi}{Sym(I)}^{\{\mathbb{I}_{G} \}}   $$
		 and then
		 
		 $$F(\phi\psi)=F(\phi)F(\psi).$$
		 
Furthermore, the map $F$ is obviously suriective homomorphism so we obtain that by the first isomorphism theorem for groups

\begin{equation}
Aut(G)^S/kerF \cong {Sym(I)}^{Aut(G)^S}/{Sym(I)}^{\{\mathbb{I}_{G} \}}.
\end{equation}		 
		 
\begin{os}
To conclude section 2, let us analyze the $kerF$ subgroup. Given that $$kerF=\{ \phi \in Aut(G)^S |  \gamma^{\phi} \in {Sym(I)}^{\{\mathbb{I}_{G} \}} \hspace{0.2cm} \forall \gamma^{\phi}   \}.$$ We can deduce based on observation (2.7) that the set $I(G) \leq kerF$. In particular, any permutation $\gamma_{g}$-induced coincides with the identity $\{\mathbb{I}_{I} \} $, from which it follows that  $I(G)\unlhd Aut(G)^S$, which can be easily proved from the definition of internal automorphism.
\quad
Another interesting group that contains $I(G)$ and is contained $kerF$, is the subgroup fixing orbits i.e. such a subgroup contains all automorphisms that send a stabilizer to its conjugate . Such a subgroup we can denote as $Aut(G)^O$ with $O:=(Orb_{G}(x_{0}^{i}))_{i=1}^{t}$. Moreover, by the same argument made earlier for $I(G)$, it can be easily shown that $Aut(G)^O \unlhd Aut(G)^S$ . In conclusion we can summarize it thus

\begin{equation}
\quad
\hspace{0.5cm} I(G)\unlhd Aut(G)^O \unlhd kerF, \hspace{0.2cm} Aut(G)^O \unlhd Aut(G)^S .		
\end{equation}

\end{os}

\quad\quad

\section{The algebraic structure of $\widehat{{Aut(G)}^{S}}$}	 

\quad

We proceed to introduce a new topic, which is to rewrite S-invariant functions in a more compact manner in order to streamline the notation. 

\quad

First we denote by $S_i:=Stab_G(x_{0}^{i})$ and by $N_i:=N_G(S_i) \hspace{0.2cm} \forall i \in I$ then by $N:=(N_i)_{i=1}^{t}$; whereas by denoting $n \in N$ we say that $n=(n_i)_{i=1}^{t}$ with $n_i \in N_i$. Next we arbitrarily fix the elements $\gamma^\phi\neq\mathbb{I}_{I}  \in {Sym(I)}^{Aut(G)^S}$, associated with $\phi \in Aut(G)^{S}$.  For each element $\gamma^\phi \sigma^{\mathbb{I}_{G}} \in {Sym(I)}^{Aut(G)^S} $ we again arbitrarily fix the representatives $g_{i \gamma^\phi\sigma^{\mathbb{I}_{G}}(i)}\neq 1 $ of the classes $ G/N_{\gamma^\phi\sigma^{\mathbb{I}_{G}}(i)} $ and again fix the element $1$ as a representative of the class $N_i$. We will then denote by $g_{ \gamma^\phi\sigma^{\mathbb{I}_{G}}}n S=(g_{j_{i} \gamma^\phi\sigma^{\mathbb{I}_{G}}(j_{i})} n_i S_i)_{i=1}^{t}$ where $\gamma^\phi\sigma^{\mathbb{I}_{G}}(j_{i})=i$.

\quad

Note also that it can happen that $\phi\neq \mathbb{I}_{G}$ is such that $\phi(S_i)=S_i$ for every $i$ or that $\phi(S_i)=S_{\gamma(i)}$ for some $i$. In the first case we choose $g_{i,\gamma^\phi(i)}$, which we denote by $g_{\phi,i,\mathbb{I}_{I}(i)}$ and set $g_{\phi,i,\mathbb{I}_{I}(i)}=1$, in the second case we simply set $g_{i,\gamma^\phi(i)}=1$. On the other hand, if for example $\phi(S_i)=S_{i}^{g}$, for some $g\neq 1$, we will denote this element as $g_{\phi,i,\mathbb{I}_{I}(i)}\neq 1$ for some $i$ (e.g. $\gamma_{g}$-invariant functions or, more generally, orbit-fixing functions), and the element $g_{\phi,i,\mathbb{I}_{I}(i)}$ is arbitrarily fixed among the classes $G/N_i$.

\quad\quad

Now we can start compacting the notation of a $\phi$-invariant function, but we need to further compact the following scripts: from now on in this section, unless you specify otherwise, we will use $x_0$ and $g$ to denote the following ordered $t$-uple $x_0: =(x_{0}^{i})_{i=1}^{t}$ and $g:=(g_i)_{i=1}^{t}$ with $g_i \in G$, in particular we will denote $\bar{g}$ a constant t-uple i. e.g. $\bar{g}=(g)_{i=1}^{t}$, for example with $\bar{1}$ we will denote $\bar{1}=(1)_{i=1}^{t}$. Therefore $\phi(g)=( \phi(g_i))_{i=1}^{t}$, also for $g\cdot x_0$ we will understand $g\cdot x_0:=(g_i\cdot\ x_{0}^{i})_{i=1}^{t}$. 
Finally, we move on to the notation change on a $\phi$-invariant $f$ function such that

 $\phi(S)={S_{ \gamma^\phi\sigma^{\mathbb{I}_{G}}}}^{g_{ \gamma^\phi\sigma^{\mathbb{I}_{G}}}n_{ \gamma^\phi\sigma^{\mathbb{I}_{G}}}}$(i.e, with notation we adopted $\phi(S_{i})={S_{ \gamma^\phi\sigma^{\mathbb{I}_{G}}(i)}}^{g_{i \gamma^\phi\sigma^{\mathbb{I}_{G}}(i)}n_{ \gamma^\phi\sigma^{\mathbb{I}_{G}}(i)}}$)

             $$f=f_{g_{ \gamma^\phi\sigma^{\mathbb{I}_{G}}}n S}$$
             
             $$f_{g_{ \gamma^\phi\sigma^{\mathbb{I}_{G}}}n S}(g\cdot x_0)=\phi(g)g_{ \gamma^\phi\sigma^{\mathbb{I}_{G}}}n_{ \gamma^\phi\sigma^{\mathbb{I}_{G}}}\cdot x_{0}^{\gamma^\phi\sigma^{\mathbb{I}_{G}}}$$
 
 where the last equality means the following equality between ordered t-uples
 
  $$f_{g_{ \gamma^\phi\sigma^{\mathbb{I}_{G}}}n S}(g_i\cdot x_{0}^{i})=\phi(g_i)g_{i \gamma^\phi\sigma^{\mathbb{I}_{G}}(i)}n_{ \gamma^\phi\sigma^{\mathbb{I}_{G}}(i)}\cdot x_{0}^{\gamma^\phi\sigma^{\mathbb{I}_{G}}(i)} \hspace{0.2cm} \forall i \in I .$$           		

\quad

In the new writing of this $\phi$-invariant function, we have put a subscript with the elements $g_{ \gamma^\phi\sigma^{\mathbb{I}_{G}}}n S$ to emphasize the invariance from the stabilizers $S$ elements and the dependence instead on $g_{ \gamma^\phi\sigma^{\mathbb{I}_{G}}}$   and $n$. In fact, as $n$ varies and as $\gamma^\phi\sigma^{\mathbb{I}_{G}}$  varies, we will have different $\phi$-invariant functions (for this see last part of section 1  and Proposition 1.10).
In the case where the functions are $\phi$-invariant with $\phi\neq\mathbb{I}_{G}$  but belong to $ker F$, so as not to confuse them with $\mathbb{I}_{G}$-invariant functions, we will denote them to emphasize the dependence on $\phi$; thus $f_{\phi, g_{\sigma^{\mathbb{I}_{G}}}nS}$ will be written specifically in particularly for the fixing stabilizers we will write $f_{\phi,nS}$ , while more generally for these fixing orbits we will denote them by $f_{\phi,g_{\mathbb{I}_{I}}nS}$.
\quad

We will now show that the set of functions $f_{nS}$ forms a normal subgroup of $\widehat{{Aut(G)}^{S}}$ .
\quad
 First we show that the functions  $f_{nS}$ form a subgroup of $\widehat{{Aut(G)}^{S}}$ ; in fact
 
 $$(f_{nS})^{-1}(g\cdot x_0)=\mathbb{I}_{G}^{-1}(g)\mathbb{I}_{G}^{-1}(n^{-1})\cdot x_0=gn^{-1}\cdot x_0$$
 
 and therefore
 
 \begin{equation}
 	(f_{nS})^{-1}=f_{n^{-1}S}
 \end{equation}

and then

$$f_{nS}f_{\tilde{n}S}(g\cdot x_0)=f_{nS}(g\tilde{n}\cdot x_0)=\mathbb{I}_{G}(g\tilde{n})n\cdot x_0= $$

$$        =g\tilde{n}n \cdot x_0= f_{\tilde{n}nS}   $$

thus

\begin{equation}
	f_{nS}f_{\tilde{n}S}=f_{\tilde{n}nS}
\end{equation}              
 
and since $\mathbb{I}_X$ can be rewritten as $f_{S}$ we have shown that

\begin{equation}	
	\{  f_{nS} | n \in N \}=<f_{nS}> \leq \widehat{{Aut(G)}^{S}}
\end{equation}

\quad

Now we prove a normality of such a soubgroup.

\begin{prop}
\quad	
$<f_{nS}> \unlhd \widehat{{Aut(G)}^{S}}$	
\end{prop}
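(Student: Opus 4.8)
The plan is to verify normality directly: given an arbitrary $f\in\widehat{{Aut(G)}^{S}}$ and a generator $f_{nS}$, I will compute the conjugate $f f_{nS} f^{-1}$ and show it is again of the form $f_{\tilde n S}$ for a suitable $\tilde n=(\tilde n_i)_{i=1}^{t}\in N$. Since $\langle f_{nS}\rangle$ consists precisely of the functions $f_{nS}$ with $n\in N$, this places the conjugate in $\langle f_{nS}\rangle$; as $f$ and $n$ are arbitrary, this yields $\langle f_{nS}\rangle\unlhd\widehat{{Aut(G)}^{S}}$.

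First I would fix notation for $f$. Being an element of $\widehat{{Aut(G)}^{S}}$, $f$ is $\phi$-invariant for some $\phi\in Aut(G)^{S}$; I fix one induced permutation $\gamma:=\gamma^{\phi}\sigma^{\mathbb{I}_{G}}$ together with elements $a_i\in G$ (namely $a_i=g_{i\gamma(i)}n_{\gamma(i)}$) so that $f(g\cdot x_0^{i})=\phi(g)\,a_i\cdot x_0^{\gamma(i)}$ and, by Proposition $1.3$, $\phi(S_i)=S_{\gamma(i)}^{\,a_i}$ for all $i\in I$. By Proposition $1.4$, after reindexing via $j=\gamma(i)$, the inverse is the $\phi^{-1}$-invariant function $f^{-1}(g\cdot x_0^{j})=\phi^{-1}(g)\,\phi^{-1}\!\big(a_{\gamma^{-1}(j)}^{-1}\big)\cdot x_0^{\gamma^{-1}(j)}$.

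Next I would compute $f f_{nS} f^{-1}$ on a representative $g\cdot x_0^{j}$, applying in turn $f^{-1}$, then $f_{nS}(h\cdot x_0^{k})=h\,n_k\cdot x_0^{k}$, then $f$. The index $\gamma^{-1}(j)$ produced by $f^{-1}$ is preserved by $f_{nS}$ and then carried back to $j$ by $f$, so the induced permutation of the composite is the identity; and after cancelling $\phi\big(\phi^{-1}(a^{-1})\big)=a^{-1}$ one is left with $\big(f f_{nS} f^{-1}\big)(g\cdot x_0^{j})=g\,\big(a_{\gamma^{-1}(j)}^{-1}\,\phi(n_{\gamma^{-1}(j)})\,a_{\gamma^{-1}(j)}\big)\cdot x_0^{j}$. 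Setting $\tilde n_j:=a_{\gamma^{-1}(j)}^{-1}\phi(n_{\gamma^{-1}(j)})a_{\gamma^{-1}(j)}$, it remains to check $\tilde n_j\in N_j=N_G(S_j)$: since $n_{\gamma^{-1}(j)}\in N_G(S_{\gamma^{-1}(j)})$ and $\phi$ is an automorphism, $\phi(n_{\gamma^{-1}(j)})$ normalises $\phi(S_{\gamma^{-1}(j)})=S_{j}^{\,a_{\gamma^{-1}(j)}}$, hence conjugating by $a_{\gamma^{-1}(j)}^{-1}$ shows $\tilde n_j$ normalises $S_j$. Therefore $f f_{nS} f^{-1}=f_{\tilde n S}\in\langle f_{nS}\rangle$, as required.

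I expect the only delicate point to be this last verification — namely that the ``twist'' of the conjugated function lands in the normaliser $N_G(S_j)$ rather than merely in $G$, equivalently that conjugation by $f$ cannot promote a function of the special form $f_{nS}$ to a genuinely more general $\mathbb{I}_{G}$-invariant function $f_{g_{\sigma^{\mathbb{I}_{G}}}nS}$ with a nontrivial induced permutation. Everything else is bookkeeping with $\gamma$ and $\gamma^{-1}$, which must be tracked carefully but holds no surprises. One could instead quote Proposition $1.5$ to see at once that the conjugate is $\mathbb{I}_{G}$-invariant, but the explicit computation is still needed to confirm that its induced permutation is trivial and its twist normalises the stabilisers, so I would carry out the computation regardless.
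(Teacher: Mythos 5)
Your proposal is correct and follows essentially the same route as the paper: both compute the conjugate $f f_{nS} f^{-1}$ explicitly, observe that the induced permutation cancels, and reduce to checking that the resulting twist $a^{-1}\phi(n)a$ normalises the stabilisers. Your verification of that last point (via $\phi(N_G(S_{\gamma^{-1}(j)}))=N_G(\phi(S_{\gamma^{-1}(j)}))=N_G(S_j^{a})$) is in fact more explicit than the paper's, which argues only that the conjugate must again be $\mathbb{I}_G$-invariant.
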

\begin{proof}
	
Given an $f_{g_{ \gamma^\phi\sigma^{\mathbb{I}_{G}}}n S} \in \widehat{{Aut(G)}^{S}}$  and an $f_{\tilde{n}S} \in <f_{nS}>$ we get

$$ f_{g_{ \gamma^\phi\sigma^{\mathbb{I}_{G}}}n S}f_{\tilde{n}S}(f_{g_{ \gamma^\phi\sigma^{\mathbb{I}_{G}}}n S})^{-1}(g\cdot x_0)= f_{g_{ \gamma^\phi\sigma^{\mathbb{I}_{G}}}n S}f_{\tilde{n}S}(\phi^{-1}(g)\phi^{-1}((g_{ \gamma^\phi\sigma^{\mathbb{I}_{G}}}n_{\gamma^\phi\sigma^{\mathbb{I}_{G}}})^{-1}) \cdot x_{0}^{(\gamma^\phi\sigma^{\mathbb{I}_{G}})^{-1}})=$$

$$ =f_{g_{ \gamma^\phi\sigma^{\mathbb{I}_{G}}}n S}(\phi^{-1}(g)\phi^{-1}((g_{ \gamma^\phi\sigma^{\mathbb{I}_{G}}}n_{\gamma^\phi\sigma^{\mathbb{I}_{G}}})^{-1}) \tilde{n}_{( \gamma^\phi\sigma^{\mathbb{I}_{G}})^{-1}}\cdot x_{0}^{(\gamma^\phi\sigma^{\mathbb{I}_{G}})^{-1}}) =   $$

$$           = g(g_{ \gamma^\phi\sigma^{\mathbb{I}_{G}}}n_{ \gamma^\phi\sigma^{\mathbb{I}_{G}}} )^{-1}\phi(\tilde{n}_{( \gamma^\phi\sigma^{\mathbb{I}_{G}})^{-1}})g_{ \gamma^\phi\sigma^{\mathbb{I}_{G}}}n_{ \gamma^\phi\sigma^{\mathbb{I}_{G}}}\cdot x_{0}^{\gamma^\phi\sigma^{\mathbb{I}_{G}}(\gamma^\phi\sigma^{\mathbb{I}_{G}})^{-1}}=g(g_{ \gamma^\phi\sigma^{\mathbb{I}_{G}}}n_{ \gamma^\phi\sigma^{\mathbb{I}_{G}}} )^{-1}\phi(\tilde{n}_{( \gamma^\phi\sigma^{\mathbb{I}_{G}})^{-1}})g_{ \gamma^\phi\sigma^{\mathbb{I}_{G}}}n_{ \gamma^\phi\sigma^{\mathbb{I}_{G}}}\cdot x_0 .$$

It remains for us to prove that $(g_{ \gamma^\phi\sigma^{\mathbb{I}_{G}}}n_{ \gamma^\phi\sigma^{\mathbb{I}_{G}}} )^{-1}\phi(\tilde{n}_{( \gamma^\phi\sigma^{\mathbb{I}_{G}})^{-1}})g_{ \gamma^\phi\sigma^{\mathbb{I}_{G}}}n_{ \gamma^\phi\sigma^{\mathbb{I}_{G}}} \in N$; since $f_{\tilde{n}S}$ is associated with the $\mathbb{I}_{G}$ automorphism, the conjugate of $f_{\tilde{n}S}$ via $f_{g_{ \gamma^\phi\sigma^{\mathbb{I}_{G}}}n S} $  is also associated with the $\mathbb{I}_{G}$ automorphism so that 

$$S=S^{(g_{ \gamma^\phi\sigma^{\mathbb{I}_{G}}}n_{ \gamma^\phi\sigma^{\mathbb{I}_{G}}} )^{-1}\phi(\tilde{n}_{( \gamma^\phi\sigma^{\mathbb{I}_{G}})^{-1}})g_{ \gamma^\phi\sigma^{\mathbb{I}_{G}}}n_{ \gamma^\phi\sigma^{\mathbb{I}_{G}}}}.$$

 Therefore there exists an $n' \in N$ such that 
 \begin{equation}
 	 n'=(g_{ \gamma^\phi\sigma^{\mathbb{I}_{G}}}n_{ \gamma^\phi\sigma^{\mathbb{I}_{G}}} )^{-1}\phi(\tilde{n}_{( \gamma^\phi\sigma^{\mathbb{I}_{G}})^{-1}})g_{ \gamma^\phi\sigma^{\mathbb{I}_{G}}}n_{ \gamma^\phi\sigma^{\mathbb{I}_{G}}}
 \end{equation} 
 so 
 \begin{equation}
 	 f_{g_{ \gamma^\phi\sigma^{\mathbb{I}_{G}}}n S}f_{\tilde{n}S}(f_{g_{ \gamma^\phi\sigma^{\mathbb{I}_{G}}}n S})^{-1}=f_{n'S}
 \end{equation}

 hence the thesis.

\end{proof} 

We conclude the study of the subgroup $<f_{nS}>$ by demonstrating the following property.

\begin{prop}
	\quad	
	$	<f_{nS}>\cong \prod_{i=1}^{t} N_{i}/S_{i}=\prod_{i=1}^{t} N_{G}(Stab_{G}(x_{0}^{i}))/Stab_{G}(x_{0}^{i})$. 
\end{prop}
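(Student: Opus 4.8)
The plan is to produce an explicit surjective group homomorphism from the (external) direct product $\prod_{i=1}^{t}N_{i}$ onto $\langle f_{nS}\rangle$ whose kernel is $\prod_{i=1}^{t}S_{i}$, and then to conclude via the first isomorphism theorem together with the canonical identification $\bigl(\prod_{i=1}^{t}N_{i}\bigr)/\bigl(\prod_{i=1}^{t}S_{i}\bigr)\cong\prod_{i=1}^{t}N_{i}/S_{i}$. The latter is legitimate because $S_{i}\unlhd N_{i}=N_{G}(S_{i})$ for every $i\in I$, so each $N_{i}/S_{i}$ is a group and $\prod_{i=1}^{t}S_{i}\unlhd\prod_{i=1}^{t}N_{i}$.

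First I would define
$$\Psi\colon\prod_{i=1}^{t}N_{i}\longrightarrow Sym(X),\qquad \Psi(n)=f_{n^{-1}S},$$
where for $n=(n_{i})_{i=1}^{t}$ we set $n^{-1}=(n_{i}^{-1})_{i=1}^{t}\in\prod_{i=1}^{t}N_{i}$. Applying the composition rule (3.2), namely $f_{nS}f_{\tilde{n}S}=f_{\tilde{n}nS}$, gives $\Psi(n)\Psi(\tilde{n})=f_{n^{-1}S}f_{\tilde{n}^{-1}S}=f_{\tilde{n}^{-1}n^{-1}S}=f_{(n\tilde{n})^{-1}S}=\Psi(n\tilde{n})$, so $\Psi$ is a homomorphism of groups. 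The inversion is inserted only to repair orientation: by (3.2) the bare assignment $n\mapsto f_{nS}$ reverses products and is merely an anti-homomorphism (equivalently one could work with the opposite group of $\prod_{i=1}^{t}N_{i}$, which is isomorphic to it via $n\mapsto n^{-1}$). Since $n\mapsto n^{-1}$ is a bijection of $\prod_{i=1}^{t}N_{i}$, the image of $\Psi$ equals $\{f_{mS}\mid m\in N\}=\langle f_{nS}\rangle$ by (3.3).

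Next I would identify $\ker\Psi$. Recall that $f_{mS}$ acts componentwise by $f_{mS}(g\cdot x_{0})=gm\cdot x_{0}$ and that $\mathbb{I}_{X}=f_{S}$. Hence $\Psi(n)=\mathbb{I}_{X}$ forces $n_{i}^{-1}\cdot x_{0}^{i}=x_{0}^{i}$, i.e. $n_{i}^{-1}\in Stab_{G}(x_{0}^{i})=S_{i}$, i.e. $n_{i}\in S_{i}$, for every $i$; conversely any such $n$ clearly maps to $\mathbb{I}_{X}$. Therefore $\ker\Psi=\prod_{i=1}^{t}S_{i}$. (This is exactly the phenomenon already recorded in the discussion leading to Proposition 1.10: two functions $f_{nS}$ and $f_{\tilde{n}S}$ coincide precisely when $n_{i}S_{i}=\tilde{n}_{i}S_{i}$ for all $i$.) The first isomorphism theorem now gives $\langle f_{nS}\rangle\cong\bigl(\prod_{i=1}^{t}N_{i}\bigr)/\bigl(\prod_{i=1}^{t}S_{i}\bigr)$, and distributing the quotient across the direct product yields $\langle f_{nS}\rangle\cong\prod_{i=1}^{t}N_{i}/S_{i}=\prod_{i=1}^{t}N_{G}(Stab_{G}(x_{0}^{i}))/Stab_{G}(x_{0}^{i})$, which is the claim.

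The argument involves no real obstacle; the only subtlety is bookkeeping about the direction of composition — resolved above by precomposing with inversion — together with the well-definedness of the passage from $nS$ to $f_{nS}$, which is subsumed in the computation of $\ker\Psi$.
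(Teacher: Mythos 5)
Your proof is correct and is essentially the same argument as the paper's: the paper defines the inverse correspondence $\mathcal{I}\colon\langle f_{nS}\rangle\to\prod_{i=1}^{t}N_{i}/S_{i}$, $f_{nS}\mapsto n^{-1}S$, using the same inversion twist to repair the product-reversal in (3.2), and verifies well-definedness, injectivity, surjectivity and the homomorphism property directly. Your repackaging — lifting the map to $\prod_{i=1}^{t}N_{i}$ and invoking the first isomorphism theorem so that well-definedness and injectivity collapse into a single kernel computation — is a tidier presentation of the identical isomorphism, not a different route.
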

\begin{proof}
Given the following map

$$	\hspace{0.8cm}<f_{nS}>\stackrel{\mathcal{I}}{\longrightarrow}\prod_{i=1}^{t} N_{i}/S_{i}$$
\quad
$$\hspace{3cm} f_{nS}\mapsto n^{-1}S=(n_{i}^{-1}S_{i})_{i=1}^{t} ,$$

the strategy is to prove that $\mathcal{I}$ is a isomorphism of group: first it is a well-defined and injective	map.

$$ f_{nS}= f_{\tilde{n}S} \Rightarrow (f_{\tilde{n}S})^{-1}f_{nS} =\mathbb{I}_{X}=f_{S}$$

by (3.1) and (3.2)
$$f_{{\tilde{n}}^{-1}S}f_{nS}=f_{n{\tilde{n}}^{-1}S}=f_{S} .$$

This implies $n{\tilde{n}}^{-1}=s$ for a $ s \in S$ then $ n=s\tilde{n}$.
\quad
From this you get

$$ \mathcal{I}(f_{nS})=n^{-1}S=(s\tilde{n})^{-1}S={\tilde{n}}^{-1}s^{-1}S=$$
$$={\tilde{n}}^{-1}S=\mathcal{I}(f_{\tilde{n}S}).$$

So the map $\mathcal{I}$ is well defined; for injectivity is enough , in that case, logically proceed back the steps made so far.

\quad

The suriettivity of $\mathcal{I}$, as the map is defined, is immediate. So to conclude the demonstration we show that $\mathcal{I}$ is a homomorphism of groups.

$$\mathcal{I}(f_{nS}f_{\tilde{n}S})=\mathcal{I}(f_{\tilde{n}nS})=$$

 $$=(\tilde{n}n)^{-1}S=n^{-1}{\tilde{n}}^{-1}S=n^{-1}S{\tilde{n}}^{-1}S=\mathcal{I}(f_{nS})\mathcal{I}(f_{\tilde{n}S}).$$

\end{proof}

Proposition (3.2) leads us to an immediate consequence.

\begin{coro}
	Unless isomorphisms $$ \prod_{i=1}^{t} N_{G}(Stab_{G}(x_{0}^{i}))/Stab_{G}(x_{0}^{i})\unlhd  \widehat{{Aut(G)}^{S}} .$$
\end{coro}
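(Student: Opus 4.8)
The plan is simply to chain together the two preceding propositions about the subgroup $\langle f_{nS}\rangle$. First I would recall from Proposition (3.1) that $\langle f_{nS}\rangle \unlhd \widehat{{Aut(G)}^{S}}$, and from Proposition (3.2) that the map $\mathcal{I}\colon \langle f_{nS}\rangle \to \prod_{i=1}^{t} N_{G}(Stab_{G}(x_{0}^{i}))/Stab_{G}(x_{0}^{i})$, $f_{nS}\mapsto n^{-1}S$, is a group isomorphism. The point to make explicit is what ``up to isomorphism'' means for a normality assertion: normality is not an intrinsic property of an abstract group but of a subgroup inside an ambient group, so what is literally normal in $\widehat{{Aut(G)}^{S}}$ is the concrete subgroup $\langle f_{nS}\rangle$, and the product $\prod_{i=1}^{t} N_i/S_i$ inherits this status only once it is identified with $\langle f_{nS}\rangle$ through $\mathcal{I}^{-1}$.

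So the body of the argument is: transport the group structure of $\prod_{i=1}^{t} N_i/S_i$ to $\widehat{{Aut(G)}^{S}}$ along $\mathcal{I}^{-1}$, obtaining $\langle f_{nS}\rangle$ as a faithful isomorphic copy sitting inside $\widehat{{Aut(G)}^{S}}$; then apply Proposition (3.1) verbatim to conclude that this copy is normal. If one prefers a more hands-on phrasing, one can instead observe directly that for every $f_{g_{\gamma^{\phi}\sigma^{\mathbb{I}_{G}}}nS}\in\widehat{{Aut(G)}^{S}}$ and every $f_{\tilde n S}\in\langle f_{nS}\rangle$ the conjugate lies back in $\langle f_{nS}\rangle$ — this is exactly equation (3.6) in the proof of Proposition (3.1) — and then push the containment through $\mathcal{I}$, using that $\mathcal{I}$ is an isomorphism to conclude that the corresponding conjugation action on $\prod_{i=1}^{t} N_i/S_i$ is internal.

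I do not expect any real obstacle here; the statement is a formal corollary, and the only thing that needs a line of care is the remark above that a normality statement phrased ``up to isomorphism'' should be read as ``$\prod_{i=1}^{t} N_i/S_i$ is isomorphic to a normal subgroup of $\widehat{{Aut(G)}^{S}}$,'' namely to $\langle f_{nS}\rangle$. Once that convention is fixed, the proof is one sentence: combine Propositions (3.1) and (3.2).
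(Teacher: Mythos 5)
Your proposal is correct and matches the paper's intent exactly: the paper presents this corollary as an immediate consequence of Proposition (3.2) combined with the normality established in Proposition (3.1), which is precisely the chain you describe. Your added remark on how to read a normality assertion "up to isomorphism" is a reasonable clarification but not a departure from the paper's argument.
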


Now we study the quotient group $\widehat{{Aut(G)}^{S}}/<f_{nS}>$; we try to consider the set of elements $<f_{nS}>f_{g_{\sigma^{\mathbb{I}_{G}}}S}$ as sigma $\sigma^{\mathbb{I}_{G}}$  varies in $Sym(I)^{\{\mathbb{I}_{G}  \}}$ . Such a set is a subgroup of $\widehat{{Aut(G)}^{S}}/<f_{nS}>$  in fact

$$<f_{nS}>f_{g_{\sigma^{\mathbb{I}_{G}}}S}<f_{nS}>f_{g_{\tau^{\mathbb{I}_{G}}}S}=<f_{nS}>f_{g_{\sigma^{\mathbb{I}_{G}}}S}f_{g_{\tau^{\mathbb{I}_{G}}}S}.$$

So let's calculate $f_{g_{\sigma^{\mathbb{I}_{G}}}S}f_{g_{\tau^{\mathbb{I}_{G}}}S}.$

$$f_{g_{\sigma^{\mathbb{I}_{G}}}S}f_{g_{\tau^{\mathbb{I}_{G}}}S}(g\cdot x_0)=f_{g_{\sigma^{\mathbb{I}_{G}}}S}(gg_{\tau^{\mathbb{I}_{G}}}\cdot x_{0}^{\tau^{\mathbb{I}_{G}}} )=$$

$$=gg_{\tau^{\mathbb{I}_{G}}}g_{\sigma^{\mathbb{I}_{G}}}\cdot x_{0}^{\sigma^{\mathbb{I}_{G}}\tau^{\mathbb{I}_{G}}}=gg_{\tau^{\mathbb{I}_{G}}}g_{\sigma^{\mathbb{I}_{G}}}\cdot x_{0}^{\sigma\tau^{\mathbb{I}_{G}}}.$$

Since we had fixed from the beginning of the section ,arbitrarily,  an element $g_{\sigma\tau^{\mathbb{I}_{G}}}$, associated with the map $f_{g_{\sigma\tau^{\mathbb{I}_{G}}}S}$, then we will have that

\begin{equation}
	g_{\tau^{\mathbb{I}_{G}}}g_{\sigma^{\mathbb{I}_{G}}}=g_{\sigma\tau^{\mathbb{I}_{G}}}n_{\sigma\tau^{\mathbb{I}_{G}}}	 
\end{equation}

for an appropriate $n_{\sigma\tau^{\mathbb{I}_{G}}} \in N_{\sigma\tau^{\mathbb{I}_{G}}}$.

\quad

So we have that

$$f_{g_{\sigma^{\mathbb{I}_{G}}}S}f_{g_{\tau^{\mathbb{I}_{G}}}S}(g\cdot x_0)=gg_{\sigma\tau^{\mathbb{I}_{G}}}n_{\sigma\tau^{\mathbb{I}_{G}}}\cdot x_{0}^{\sigma\tau^{\mathbb{I}_{G}}}=$$

	$$=f_{nS}f_{g_{\sigma\tau^{\mathbb{I}_{G}}}S}(g\cdot x_0)$$
	\quad
	and thus
	\begin{equation}
		f_{g_{\sigma^{\mathbb{I}_{G}}}S}f_{g_{\tau^{\mathbb{I}_{G}}}S}=f_{nS}f_{g_{\sigma\tau^{\mathbb{I}_{G}}}S}.
	\end{equation}
Moreover, by (3.7) we have that

\begin{equation}
<f_{nS}>f_{g_{\sigma^{\mathbb{I}_{G}}}S}<f_{nS}>f_{g_{\tau^{\mathbb{I}_{G}}}S}=<f_{nS}>f_{g_{\sigma\tau^{\mathbb{I}_{G}}}S}.
\end{equation} 

It remains to prove the existence of the inverse: since

$$(f_{g_{\sigma^{\mathbb{I}_{G}}}S})^{-1}=f_{(g_{\sigma^{\mathbb{I}_{G}}})^{-1}S}=f_{\tilde{n}S}f_{g_{\bar{\sigma}^{\mathbb{I}_{G}}}S}.$$

 for an opportune $f_{\tilde{n}S} \in <f_{nS}>$.
 
 Hence
 
 \begin{equation}
 	(<f_{nS}>f_{g_{\sigma^{\mathbb{I}_{G}}}S})^{-1}=<f_{nS}>f_{g_{\bar{\sigma}^{\mathbb{I}_{G}}}S}.
 \end{equation}

Finally, we have proved the following result.

\begin{prop}
	\quad
	$\{ <f_{nS}> f_{g_{\sigma^{\mathbb{I}_{G}}}S}	\}=< <f_{nS}> f_{g_{\sigma^{\mathbb{I}_{G}}}S}	>\leq \widehat{{Aut(G)}^{S}}/<f_{nS}>.$
\end{prop}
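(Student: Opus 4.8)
The plan is to verify the subgroup criterion directly for the set $\mathcal{H}:=\{\, <f_{nS}>f_{g_{\sigma^{\mathbb{I}_{G}}}S}\ :\ \sigma^{\mathbb{I}_{G}}\in {Sym(I)}^{\{\mathbb{I}_{G}\}}\,\}$ sitting inside the quotient $\widehat{{Aut(G)}^{S}}/<f_{nS}>$. Concretely one checks three things: that $\mathcal{H}$ contains the identity coset, that $\mathcal{H}$ is closed under products, and that $\mathcal{H}$ is closed under inversion; once these hold, $\mathcal{H}$ is a subgroup, and being closed under the group operations it coincides with the subgroup it generates. Essentially all of the work has already been carried out in the computations immediately preceding the statement, so the proof is mostly a matter of assembling them.

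For the identity, recall that ${Sym(I)}^{\{\mathbb{I}_{G}\}}$ is itself a group (Lemma 2.6 together with Propositions 2.4 and 2.5), hence $\mathbb{I}_{I}\in {Sym(I)}^{\{\mathbb{I}_{G}\}}$; since the fixed representative attached to $\mathbb{I}_{I}$ was chosen to be $1$, the associated function is $f_{g_{\mathbb{I}_{I}}S}=f_{S}=\mathbb{I}_{X}$, so $<f_{nS}>f_{g_{\mathbb{I}_{I}}S}=<f_{nS}>\in\mathcal{H}$, which is the neutral element of the quotient. For closure I would invoke equation (3.8): the product of the two cosets attached to $\sigma^{\mathbb{I}_{G}}$ and $\tau^{\mathbb{I}_{G}}$ equals the coset attached to $\sigma\tau^{\mathbb{I}_{G}}$, and the latter is again an $\mathbb{I}_{G}$-induced permutation by Propositions 2.4 and 2.5, so the product lies in $\mathcal{H}$. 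The substance behind (3.8) is the function-level identity (3.7), which in turn rests on (3.6), namely that the product $g_{\tau^{\mathbb{I}_{G}}}g_{\sigma^{\mathbb{I}_{G}}}$ of the two arbitrarily fixed representatives differs from the fixed representative $g_{\sigma\tau^{\mathbb{I}_{G}}}$ only by an element $n_{\sigma\tau^{\mathbb{I}_{G}}}\in N_{\sigma\tau^{\mathbb{I}_{G}}}$, a fact one verifies by observing that $g_{\tau^{\mathbb{I}_{G}}}g_{\sigma^{\mathbb{I}_{G}}}$ and $g_{\sigma\tau^{\mathbb{I}_{G}}}$ conjugate the tuple $S$ of stabilizers onto the same tuple.

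For inverses I would apply Proposition 1.4 in the case $\phi=\mathbb{I}_{G}$, which gives $(f_{g_{\sigma^{\mathbb{I}_{G}}}S})^{-1}=f_{(g_{\sigma^{\mathbb{I}_{G}}})^{-1}S}$; since the inverse permutation $\bar{\sigma}^{\mathbb{I}_{G}}$ is again $\mathbb{I}_{G}$-induced by Lemma 2.6 and the tuple $(g_{\sigma^{\mathbb{I}_{G}}})^{-1}$ represents the same family of $N$-classes as the fixed representative $g_{\bar{\sigma}^{\mathbb{I}_{G}}}$, one may write $(g_{\sigma^{\mathbb{I}_{G}}})^{-1}$ as $g_{\bar{\sigma}^{\mathbb{I}_{G}}}$ corrected by an element of $N$; then (3.1)--(3.2) rewrite $f_{(g_{\sigma^{\mathbb{I}_{G}}})^{-1}S}=f_{\tilde{n}S}f_{g_{\bar{\sigma}^{\mathbb{I}_{G}}}S}$ for a suitable $f_{\tilde{n}S}\in<f_{nS}>$, and passing to cosets one obtains exactly (3.9), $(<f_{nS}>f_{g_{\sigma^{\mathbb{I}_{G}}}S})^{-1}=<f_{nS}>f_{g_{\bar{\sigma}^{\mathbb{I}_{G}}}S}\in\mathcal{H}$. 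Combining the three points yields the proposition, including the identification $\mathcal{H}=<\, <f_{nS}>f_{g_{\sigma^{\mathbb{I}_{G}}}S}\,>$.

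I expect the only genuinely delicate point to be the bookkeeping of the arbitrarily fixed coset representatives $g_{\sigma^{\mathbb{I}_{G}}}$ and of the normalizer corrections: one must be careful that the equalities (3.6)--(3.9) are independent of those choices once one has reduced modulo $<f_{nS}>$, which is precisely why that quotient is formed. Everything else is a routine translation between the functions $f_{\bullet S}$ and the composition laws (3.1) and (3.2) established earlier in this section.
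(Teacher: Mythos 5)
Your proposal is correct and follows essentially the same route as the paper: the paper establishes the proposition precisely by the chain (3.6)--(3.9), i.e.\ closure of the cosets under products via $g_{\tau^{\mathbb{I}_{G}}}g_{\sigma^{\mathbb{I}_{G}}}=g_{\sigma\tau^{\mathbb{I}_{G}}}n_{\sigma\tau^{\mathbb{I}_{G}}}$ and closure under inverses via $(f_{g_{\sigma^{\mathbb{I}_{G}}}S})^{-1}=f_{\tilde{n}S}f_{g_{\bar{\sigma}^{\mathbb{I}_{G}}}S}$. Your only additions are the explicit identity-coset check and a slightly fuller justification of (3.6), both of which the paper leaves implicit.
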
 

\quad

For what has been said about  the $<<f_{nS}>f_{g_{\sigma^{\mathbb{I}_{G}}}S}>$ subgroup, we note another important property which proves the following result.

\begin{prop}
\quad		$< <f_{nS}> f_{g_{\sigma^{\mathbb{I}_{G}}}S}	>\unlhd \widehat{{Aut(G)}^{S}}/<f_{nS}>$
	
\end{prop}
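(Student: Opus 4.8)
The plan is to verify the normality criterion directly. Since $<f_{nS}>\unlhd\widehat{{Aut(G)}^{S}}$ by Proposition 3.1, the quotient $\widehat{{Aut(G)}^{S}}/<f_{nS}>$ is a group in which conjugation is induced by conjugation in $\widehat{{Aut(G)}^{S}}$. Fix an arbitrary coset $<f_{nS}>f$, represented by a $\phi$-invariant function $f=f_{g_{\gamma^{\phi}\sigma^{\mathbb{I}_{G}}}nS}\in\widehat{{Aut(G)}^{S}}$ with $\phi\in Aut(G)^{S}$, and an arbitrary element of the subgroup $<<f_{nS}>f_{g_{\sigma^{\mathbb{I}_{G}}}S}>$; by Proposition 3.4 the latter is just a coset $<f_{nS}>f_{g_{\tau^{\mathbb{I}_{G}}}S}$ with $\tau^{\mathbb{I}_{G}}\in Sym(I)^{\{\mathbb{I}_{G}\}}$. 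Then
$$\big(<f_{nS}>f\big)\big(<f_{nS}>f_{g_{\tau^{\mathbb{I}_{G}}}S}\big)\big(<f_{nS}>f\big)^{-1}=<f_{nS}>\big(f\,f_{g_{\tau^{\mathbb{I}_{G}}}S}\,f^{-1}\big),$$
so it suffices to show that $f\,f_{g_{\tau^{\mathbb{I}_{G}}}S}\,f^{-1}$ lies in some coset $<f_{nS}>f_{g_{\rho^{\mathbb{I}_{G}}}S}$ with $\rho^{\mathbb{I}_{G}}\in Sym(I)^{\{\mathbb{I}_{G}\}}$, since by Proposition 3.4 those cosets are precisely the elements of $<<f_{nS}>f_{g_{\sigma^{\mathbb{I}_{G}}}S}>$.

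The heart of the argument is that $f\,f_{g_{\tau^{\mathbb{I}_{G}}}S}\,f^{-1}$ is again associated with the identity automorphism. Indeed $f$ is $\phi$-invariant, $f_{g_{\tau^{\mathbb{I}_{G}}}S}$ is $\mathbb{I}_{G}$-invariant, and by Proposition 1.4 the inverse $f^{-1}$ is $\phi^{-1}$-invariant; applying Proposition 1.5 twice, the composite $f\,f_{g_{\tau^{\mathbb{I}_{G}}}S}\,f^{-1}$ is $(\phi\cdot\mathbb{I}_{G}\cdot\phi^{-1})$-invariant, i.e. $\mathbb{I}_{G}$-invariant. In particular it has an $\mathbb{I}_{G}$-induced permutation, which by definition belongs to $Sym(I)^{\{\mathbb{I}_{G}\}}$; call it $\rho^{\mathbb{I}_{G}}$. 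Conceptually this is exactly the normality $Sym(I)^{\{\mathbb{I}_{G}\}}\unlhd Sym(I)^{Aut(G)^{S}}$ of (2.4), now transported from induced permutations to functions.

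To conclude, one rewrites this $\mathbb{I}_{G}$-invariant function in the compact form $f_{g_{\rho^{\mathbb{I}_{G}}}n'S}$ for a suitable $n'\in N$, and, evaluating on $g\cdot x_{0}$ exactly as in the computation preceding (3.7), obtains the factorization $f_{g_{\rho^{\mathbb{I}_{G}}}n'S}=f_{n'S}\,f_{g_{\rho^{\mathbb{I}_{G}}}S}$ with $f_{n'S}\in<f_{nS}>$. Hence $f\,f_{g_{\tau^{\mathbb{I}_{G}}}S}\,f^{-1}\in<f_{nS}>f_{g_{\rho^{\mathbb{I}_{G}}}S}$, so the conjugated coset equals $<f_{nS}>f_{g_{\rho^{\mathbb{I}_{G}}}S}$, which is an element of $<<f_{nS}>f_{g_{\sigma^{\mathbb{I}_{G}}}S}>$. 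Since $f$ and $\tau^{\mathbb{I}_{G}}$ were arbitrary, this proves $<<f_{nS}>f_{g_{\sigma^{\mathbb{I}_{G}}}S}>\unlhd\widehat{{Aut(G)}^{S}}/<f_{nS}>$.

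The only real obstacle is the bookkeeping with the compact $t$-uple notation and the nested subscripts $\gamma^{\phi}\sigma^{\mathbb{I}_{G}}$: one has to make sure that conjugation by $f$ genuinely preserves the class of $\mathbb{I}_{G}$-invariant functions and that the outcome really does admit the normal form $f_{n'S}f_{g_{\rho^{\mathbb{I}_{G}}}S}$. The cleanest way to sidestep coordinates entirely — and the route I would actually write out — is purely structural: Propositions 1.4 and 1.5 show that conjugation by any element of $\widehat{{Aut(G)}^{S}}$ carries $\mathbb{I}_{G}$-invariant functions to $\mathbb{I}_{G}$-invariant functions, and Proposition 3.4 together with $<f_{nS}>\unlhd\widehat{{Aut(G)}^{S}}$ identifies the image in $\widehat{{Aut(G)}^{S}}/<f_{nS}>$ of the set of $\mathbb{I}_{G}$-invariant functions with the subgroup $<<f_{nS}>f_{g_{\sigma^{\mathbb{I}_{G}}}S}>$; normality then follows immediately.
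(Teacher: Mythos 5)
Your proof is correct and rests on the same key fact as the paper's: conjugating an $\mathbb{I}_{G}$-invariant function by a $\phi$-invariant one yields a $(\phi\,\mathbb{I}_{G}\,\phi^{-1})=\mathbb{I}_{G}$-invariant function (equivalently, the normality $(2.4)$ of $Sym(I)^{\{\mathbb{I}_{G}\}}$ transported to functions), after which one writes the conjugate in the normal form $f_{n'S}f_{g_{\rho^{\mathbb{I}_{G}}}S}$. The paper reaches the same conclusion by first decomposing the conjugating element as in $(3.10)$ and carrying out the evaluation on $g\cdot x_{0}$ explicitly, which is just an unrolled instance of Propositions $1.4$ and $1.5$ that you instead cite directly.
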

\begin{proof}
Before beginning the demonstration, it is appropriate to make a few preliminary remarks: given a generic element $f_{g_{ \gamma^\phi\sigma^{\mathbb{I}_{G}}}n S} \in \widehat{{Aut(G)}^{S}}$ it can be rewritten in the following way

$$ f_{g_{ \gamma^\phi\sigma^{\mathbb{I}_{G}}}n S}= f_{nS}f_{g_{ \gamma^\phi\sigma^{\mathbb{I}_{G}}}S}.$$

In turn, the element $f_{g_{ \gamma^\phi\sigma^{\mathbb{I}_{G}}}S}$ can be rewritten in such a way, for an opportune $f_{\tilde{n}S} \in <f_{nS}> $.

$$
	f_{g_{ \gamma^\phi\sigma^{\mathbb{I}_{G}}}S}=f_{\tilde{n}S}f_{g_{ \gamma^\phi} S}	f_{g_{\sigma^{\mathbb{I}_{G}}}S}.
$$
 To show this it suffices to use argumentation similar to that made in (3.7).
 In the end, if we place $n'=\tilde{n}n$, we will get
 \begin{equation}
 	f_{g_{ \gamma^\phi\sigma^{\mathbb{I}_{G}}}n S}=f_{n'S}f_{g_{ \gamma^\phi} S}	f_{g_{\sigma^{\mathbb{I}_{G}}}S}.
 \end{equation}
Based on the observations done so far, added the fact of normality of $<f_{nS}>$, we have that 

$$<f_{nS}>f_{g_{ \gamma^\phi\tau^{\mathbb{I}_{G}}}n' S}<f_{nS}>f_{g_{\sigma^{\mathbb{I}_{G}}}S}<f_{nS}>(f_{g_{ \gamma^\phi\tau^{\mathbb{I}_{G}}}n' S})^{-1}=$$ 

$$=<f_{nS}>f_{g_{ \gamma^\phi} S}	f_{g_{\tau^{\mathbb{I}_{G}}}S}<f_{nS}>f_{g_{\sigma^{\mathbb{I}_{G}}}S}<f_{nS}>(f_{g_{ \gamma^\phi} S}	f_{g_{\tau^{\mathbb{I}_{G}}}S})^{-1}=$$

$$=<f_{nS}>f_{g_{ \gamma^\phi} S}f_{g_{\tau^{\mathbb{I}_{G}}}S}f_{g_{\sigma^{\mathbb{I}_{G}}}S}	(f_{g_{ \gamma^\phi} S}f_{g_{\tau^{\mathbb{I}_{G}}}S})^{-1}=<f_{nS}>f_{g_{ \gamma^\phi}} f_{g_{\tau^{\mathbb{I}_{G}}}S}f_{g_{\sigma^{\mathbb{I}_{G}}}S}(f_{g_{\tau^{\mathbb{I}_{G}}}S})^{-1}(f_{g_{ \gamma^\phi} S})^{-1}.$$

And since  $f_{g_{\tau^{\mathbb{I}_{G}}}S}f_{g_{\sigma^{\mathbb{I}_{G}}}S}(f_{g_{\tau^{\mathbb{I}_{G}}}S})^{-1}$ is an element of type $f_{g_{\tilde{\sigma}^{\mathbb{I}_{G}}}S}$ modulo $<f_{nS}>$ ; to conclude the demonstration, just calculate $f_{g_{ \gamma^\phi}S}f_{g_{\tilde{\sigma}^{\mathbb{I}_{G}}}S}(f_{g_{ \gamma^\phi} S})^{-1}$ and see that it is an element $f_{g_{\tilde{\tau}^{\mathbb{I}_{G}}}S}$ modulo $<f_{nS}>$.

$$f_{g_{ \gamma^\phi}S}f_{g_{\tilde{\sigma}^{\mathbb{I}_{G}}}S}(f_{g_{ \gamma^\phi} S})^{-1}(g\cdot x_0)=g(g_{ \gamma^\phi})^{-1}\phi(g_{\tilde{\sigma}^{\mathbb{I}_{G}}})g_{ \gamma^\phi}x_{0}^{\gamma^\phi\tilde{\sigma}^{\mathbb{I}_{G}}(\gamma^\phi)^{-1}}.$$
\quad

We know that $\gamma^\phi\tilde{\sigma}^{\mathbb{I}_{G}}(\gamma^\phi)^{-1}=\tilde{\tau}^{\mathbb{I}_{G}} \in Sim(I)^{\{{\mathbb{I}_{G}}\}}$  so

 $$(g_{ \gamma^\phi})^{-1}\phi(g_{\tilde{\sigma}^{\mathbb{I}_{G}}})g_{ \gamma^\phi}=g_{\tilde{\tau}^{\mathbb{I}_{G}}}\tilde{n}_{\tilde{\tau}^{\mathbb{I}_{G}}},$$
 
 for some $\tilde{n} \in N$.
 
 \quad
 Therefore
 
 $$f_{g_{ \gamma^\phi}S}f_{g_{\tilde{\sigma}^{\mathbb{I}_{G}}}S}(f_{g_{ \gamma^\phi} S})^{-1}=f_{\tilde{n}S}f_{g_{\tilde{\tau}^{\mathbb{I}_{G}}}S }.$$

And we  finished.

\end{proof}

In truth, there is an additional property regarding the subgroup $ < <f_{nS}> f_{g_{\sigma^{\mathbb{I}_{G}}}S}	>$.

\begin{prop}
\quad $	< <f_{nS}> f_{g_{\sigma^{\mathbb{I}_{G}}}S}	> \cong Sim(I)^{\{{\mathbb{I}_{G}}\}}$
\end{prop}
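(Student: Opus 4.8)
The plan is to write down an explicit isomorphism and check it is one. Put $H:=<\,<f_{nS}>f_{g_{\sigma^{\mathbb{I}_{G}}}S}\,>$ and define
$$\Psi\colon Sym(I)^{\{\mathbb{I}_{G}\}}\longrightarrow H,\qquad \sigma^{\mathbb{I}_{G}}\longmapsto <f_{nS}>f_{g_{\sigma^{\mathbb{I}_{G}}}S},$$
where, for a given $\sigma^{\mathbb{I}_{G}}\in Sym(I)^{\{\mathbb{I}_{G}\}}$, the symbol $f_{g_{\sigma^{\mathbb{I}_{G}}}S}$ denotes any $\mathbb{I}_{G}$-invariant function whose induced permutation is $\sigma^{\mathbb{I}_{G}}$; such a function exists precisely because $Sym(I)^{\{\mathbb{I}_{G}\}}$ is, by construction, the set of permutations induced by $\mathbb{I}_{G}$-invariant functions. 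I will then verify, in this order, that $\Psi$ is well defined, a homomorphism, surjective, and injective.

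For the well-definedness and the injectivity the crucial remark — and the only genuinely delicate step — is that an $\mathbb{I}_{G}$-invariant function $h$ whose induced permutation is $\mathbb{I}_{I}$ must lie in $<f_{nS}>$. Indeed, such an $h$ satisfies $h(g\cdot x_{0}^{i})=g\,g_{ii}\cdot x_{0}^{i}$ for every $i$, and applying the stabilizer condition of Proposition 1.3 with $\phi=\mathbb{I}_{G}$ gives $S_{i}=\mathbb{I}_{G}(S_{i})=S_{i}^{g_{ii}}$, so $g_{ii}\in N_{i}$; hence $h=f_{nS}$ with $n:=(g_{ii})_{i=1}^{t}\in N$. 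Granting this, well-definedness follows: a bijection has a single induced permutation (the value $h(x_{0}^{i})$ lies in exactly one orbit $Orb_{G}(x_{0}^{j})$, which forces $j$), so if $f_{g_{\sigma^{\mathbb{I}_{G}}}S}$ and $f'_{g_{\sigma^{\mathbb{I}_{G}}}S}$ both induce $\sigma^{\mathbb{I}_{G}}$, then by Proposition 1.4, Proposition 1.5 and the composition rule for induced permutations established in the proof of Proposition 2.5 the function $f'_{g_{\sigma^{\mathbb{I}_{G}}}S}\,(f_{g_{\sigma^{\mathbb{I}_{G}}}S})^{-1}$ is $\mathbb{I}_{G}$-invariant with induced permutation $\sigma^{\mathbb{I}_{G}}(\sigma^{\mathbb{I}_{G}})^{-1}=\mathbb{I}_{I}$, hence lies in $<f_{nS}>$; therefore $<f_{nS}>f_{g_{\sigma^{\mathbb{I}_{G}}}S}=<f_{nS}>f'_{g_{\sigma^{\mathbb{I}_{G}}}S}$. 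Symmetrically, if $\Psi(\sigma^{\mathbb{I}_{G}})=<f_{nS}>$ then $f_{g_{\sigma^{\mathbb{I}_{G}}}S}\in <f_{nS}>$, and since every element of $<f_{nS}>$ is an $f_{nS}$, which fixes each orbit and so has induced permutation $\mathbb{I}_{I}$, uniqueness of the induced permutation gives $\sigma^{\mathbb{I}_{G}}=\mathbb{I}_{I}$; thus $\ker\Psi=\{\mathbb{I}_{I}\}$.

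It remains to note that $\Psi$ is a surjective homomorphism. Surjectivity is immediate from Proposition 3.4: $H$ consists exactly of the cosets $<f_{nS}>f_{g_{\rho^{\mathbb{I}_{G}}}S}$ with $\rho^{\mathbb{I}_{G}}\in Sym(I)^{\{\mathbb{I}_{G}\}}$, each of which is $\Psi(\rho^{\mathbb{I}_{G}})$. The homomorphism property is equations (3.7)–(3.8): from $f_{g_{\sigma^{\mathbb{I}_{G}}}S}f_{g_{\tau^{\mathbb{I}_{G}}}S}=f_{nS}f_{g_{\sigma\tau^{\mathbb{I}_{G}}}S}$ together with normality of $<f_{nS}>$ (Proposition 3.1) one gets $(<f_{nS}>f_{g_{\sigma^{\mathbb{I}_{G}}}S})(<f_{nS}>f_{g_{\tau^{\mathbb{I}_{G}}}S})=<f_{nS}>f_{g_{\sigma\tau^{\mathbb{I}_{G}}}S}$, and reading off the induced permutation in that identity shows $f_{g_{\sigma\tau^{\mathbb{I}_{G}}}S}$ induces $\sigma^{\mathbb{I}_{G}}\tau^{\mathbb{I}_{G}}$, i.e. $\Psi(\sigma^{\mathbb{I}_{G}})\Psi(\tau^{\mathbb{I}_{G}})=\Psi(\sigma^{\mathbb{I}_{G}}\tau^{\mathbb{I}_{G}})$. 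Combining the four points, $\Psi$ is an isomorphism and $H\cong Sym(I)^{\{\mathbb{I}_{G}\}}$. The main obstacle, as indicated, is isolating and proving the key remark that the $\mathbb{I}_{G}$-invariant functions with trivial induced permutation are exactly the elements of $<f_{nS}>$; everything else is bookkeeping with (3.1), (3.2), (3.7) and (3.8).
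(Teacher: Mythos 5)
Your proof is correct and establishes the same isomorphism as the paper, only written in the opposite direction: the paper defines $R\colon <\,<f_{nS}>f_{g_{\sigma^{\mathbb{I}_{G}}}S}\,>\to Sym(I)^{\{\mathbb{I}_{G}\}}$ sending a coset to its induced permutation, while you define the inverse map $\Psi$. The one substantive difference is in how well-definedness is handled. The paper's $R$ is automatically single-valued because the element $g_{\sigma^{\mathbb{I}_{G}}}$ was fixed once and for all at the start of Section 3 as a chosen representative of a class of $G/N$, so $\sigma^{\mathbb{I}_{G}}\mapsto f_{g_{\sigma^{\mathbb{I}_{G}}}S}$ is literally a function and the paper only needs uniqueness of the orbit representatives to conclude injectivity. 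Your $\Psi$ instead allows an arbitrary $\mathbb{I}_{G}$-invariant function inducing $\sigma^{\mathbb{I}_{G}}$, so you must prove representative-independence; your key lemma --- that an $\mathbb{I}_{G}$-invariant function with trivial induced permutation satisfies $S_{i}=S_{i}^{g_{ii}}$, hence $g_{ii}\in N_{i}$ and the function is an $f_{nS}$ --- does exactly that, and it is the correct and complete argument (your observation that a single bijection determines a unique induced permutation, as opposed to a single automorphism, is the right distinction to draw). What your version buys is robustness: the isomorphism is seen not to depend on the arbitrary choices of the $g_{\sigma^{\mathbb{I}_{G}}}$, whereas the paper's argument, read literally, does lean on those choices. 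The remaining steps (homomorphism via (3.7)--(3.8) and normality of $<f_{nS}>$, surjectivity via Proposition 3.4, triviality of the kernel) match the paper's.
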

\begin{proof}
The isomorphism we want to prove is as follows.
	$$	\hspace{0.8cm}< <f_{nS}> f_{g_{\sigma^{\mathbb{I}_{G}}}S}	>\stackrel{R}{\longrightarrow}Sim(I)^{\{{\mathbb{I}_{G}}\}}$$
	\quad
	$$\hspace{0.2cm} <f_{nS}> f_{g_{\sigma^{\mathbb{I}_{G}}}S} \mapsto \sigma^{\mathbb{I}_{G}} .$$

The map $R$ is well-defined: in fact if

$$f_{g_{\sigma^{\mathbb{I}_{G}}}S}=f_{g_{\tau^{\mathbb{I}_{G}}}S} \Rightarrow \sigma^{\mathbb{I}_{G}}=\tau^{\mathbb{I}_{G}}.$$

This is due to the fact that $x_{0}^{i}$ representatives are unique, unless there are equivalence relations due to group action on $X$.
\qquad

Let’s move on to the injectivity of R.

$$ R(<f_{nS}>f_{g_{\sigma^{\mathbb{I}_{G}}}S})=R(<f_{nS}>f_{g_{\tau^{\mathbb{I}_{G}}}S})\Rightarrow \sigma^{\mathbb{I}_{G}}=\tau^{\mathbb{I}_{G}} \Rightarrow x_{0}^{\sigma^{\mathbb{I}_{G}}}=x_{0}^{\tau^{\mathbb{I}_{G}}}, g_{\sigma^{\mathbb{I}_{G}}}=g_{\tau^{\mathbb{I}_{G}}} \Rightarrow f_{g_{\sigma^{\mathbb{I}_{G}}}S}=f_{g_{\tau^{\mathbb{I}_{G}}}S}.$$

The fact that $R$ is surjective is immediate so that $R$ is bijective. There remains only one to prove that it is a homomorphism

$$R(<f_{nS}>f_{g_{\sigma^{\mathbb{I}_{G}}}S}<f_{nS}>f_{g_{\tau^{\mathbb{I}_{G}}}S})=R(<f_{nS}>f_{g_{\sigma^{\mathbb{I}_{G}}}S}f_{g_{\tau^{\mathbb{I}_{G}}}S})=$$
$$=R(<f_{nS}>f_{\tilde{n}S}f_{g_{\sigma\tau^{\mathbb{I}_{G}}}S})=\sigma\tau^{\mathbb{I}_{G}}=\sigma^{\mathbb{I}_{G}}\tau^{\mathbb{I}_{G}}=     $$
$$=R(<f_{nS}>f_{g_{\sigma^{\mathbb{I}_{G}}}S})R(<f_{nS}>f_{g_{\tau^{\mathbb{I}_{G}}}S})  $$

\end{proof}

This result leads us to an immediate consequence.

\begin{coro}
	Unless isomorphisms $$Sim(I)^{\{{\mathbb{I}_{G}}\}} \unlhd  \widehat{{Aut(G)}^{S}}/\prod_{i=1}^{t} N_{G}(Stab_{G}(x_{0}^{i}))/Stab_{G}(x_{0}^{i}) $$
\end{coro}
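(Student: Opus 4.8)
The plan is to read this corollary as a purely formal consequence of Proposition 3.2, Corollary 3.3, Proposition 3.5 and Proposition 3.6, so the whole proof amounts to assembling these four statements and being careful about the phrase \emph{unless isomorphisms}. First I would recall from Proposition 3.2 that the map $\mathcal{I}$ is an isomorphism $<f_{nS}>\;\xrightarrow{\ \sim\ }\;\prod_{i=1}^{t} N_G(Stab_G(x_{0}^{i}))/Stab_G(x_{0}^{i})$, and from Corollary 3.3 that $<f_{nS}>\unlhd \widehat{{Aut(G)}^{S}}$. Consequently the quotient $\widehat{{Aut(G)}^{S}}/\prod_{i=1}^{t} N_i/S_i$ appearing in the statement is well defined and is, via the isomorphism induced by $\mathcal{I}$, literally the same group as $\widehat{{Aut(G)}^{S}}/<f_{nS}>$; I would state this identification explicitly at the outset so that the rest of the argument can be carried out inside $\widehat{{Aut(G)}^{S}}/<f_{nS}>$.

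Next I would invoke Proposition 3.5, which gives $H:=\,<<f_{nS}>f_{g_{\sigma^{\mathbb{I}_{G}}}S}>\ \unlhd\ \widehat{{Aut(G)}^{S}}/<f_{nS}>$, and Proposition 3.6, which furnishes the explicit isomorphism $R\colon H\to Sim(I)^{\{\mathbb{I}_{G}\}}$. The only genuinely ``mathematical'' step left is the elementary observation that being a normal subgroup is preserved under group isomorphism: if $H\unlhd Q$ and $H\cong H'$ as abstract groups, then $H'$ is isomorphic to a normal subgroup of $Q$; applying this with $Q=\widehat{{Aut(G)}^{S}}/<f_{nS}>$, $H$ as above and $H'=Sim(I)^{\{\mathbb{I}_{G}\}}$ yields $Sim(I)^{\{\mathbb{I}_{G}\}}\unlhd \widehat{{Aut(G)}^{S}}/<f_{nS}>$ up to isomorphism. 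Transporting this back through $\mathcal{I}$ rewrites the denominator as $\prod_{i=1}^{t} N_G(Stab_G(x_{0}^{i}))/Stab_G(x_{0}^{i})$, which is exactly the claimed statement.

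I do not expect a real obstacle: all the substantive work was done in Propositions 3.2, 3.5, 3.6 and Corollary 3.3. The single point deserving a careful sentence is the interpretation of ``unless isomorphisms'': I would make explicit that (i) the quotient by $\prod_i N_i/S_i$ is to be understood through the isomorphism $\mathcal{I}$ of Proposition 3.2 (equivalently, Corollary 3.3), and (ii) $Sim(I)^{\{\mathbb{I}_{G}\}}$ sits inside that quotient only up to the isomorphism $R$ of Proposition 3.6 applied to the concrete normal subgroup $<<f_{nS}>f_{g_{\sigma^{\mathbb{I}_{G}}}S}>$. Spelling out these two identifications, plus the one-line remark that isomorphisms carry normal subgroups to normal subgroups, constitutes the entire proof.
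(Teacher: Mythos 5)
Your proposal is correct and follows exactly the route the paper intends: the corollary is stated there as an immediate consequence of Proposition 3.5 (normality of $<<f_{nS}>f_{g_{\sigma^{\mathbb{I}_{G}}}S}>$ in $\widehat{{Aut(G)}^{S}}/<f_{nS}>$), Proposition 3.6 (its isomorphism with $Sim(I)^{\{\mathbb{I}_{G}\}}$), and the identification of $<f_{nS}>$ with $\prod_{i=1}^{t} N_{G}(Stab_{G}(x_{0}^{i}))/Stab_{G}(x_{0}^{i})$ from Proposition 3.2 and Corollary 3.3. Your extra care in spelling out the two identifications behind ``unless isomorphisms'' only makes explicit what the paper leaves implicit.
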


In the analysis of the  group $\widehat{{Aut(G)}^{S}}$, we begin to study a final quotient group $(\widehat{{Aut(G)}^{S}}/<f_{nS}>)/<<f_{nS}>f_{g_{\sigma^{\mathbb{I}_{G}}}S}>$:given the following corrispondence 

\quad

$$	\hspace{0.8cm}Aut(G)^{S}\stackrel{\mathcal{C}}{\longrightarrow}(\widehat{{Aut(G)}^{S}}/<f_{nS}>)/<<f_{nS}>f_{g_{\sigma^{\mathbb{I}_{G}}}S}>$$
\quad
$$\hspace{0.7cm} \phi \mapsto <f_{nS}>f_{g_{\gamma^\phi}S}<<f_{nS}>f_{g_{\sigma^{\mathbb{I}_{G}}}S}> .$$

 It is easy to check, since the corrispondence is defined, that $ \mathcal{C}$ is well-defined, and then it's immediate to prove that $ \mathcal{C}$ is surjective . 
 
 \quad 
 
Let's now prove that $ \mathcal{C}$ is homomorphism:

$$ \mathcal{C}(\phi\psi)=<f_{nS}>f_{g_{\gamma^{\phi\psi}}S}<<f_{nS}>f_{g_{\sigma^{\mathbb{I}_{G}}}S}>=<f_{nS}>f_{g_{\lambda^{\phi}}S} <f_{nS}>f_{g_{\tau^{\psi}}S}<<f_{nS}>f_{g_{\sigma^{\mathbb{I}_{G}}}S}>=$$

$$=<f_{nS}>f_{g_{\lambda^{\phi}}S}<<f_{nS}>f_{g_{\sigma^{\mathbb{I}_{G}}}S}><f_{nS}>f_{g_{\tau^{\psi}}S}<<f_{nS}>f_{g_{\sigma^{\mathbb{I}_{G}}}S}>=\mathcal{C}(\phi)\mathcal{C}(\psi)   . $$ 

Where $\lambda^{\phi}$ and $\tau^{\psi} \in Sim(I)^{{Aut(G)}^{S}}$ are such that $\gamma^{\phi\psi}=\lambda^{\phi}\tau^{\psi}$.

\quad

As a result we have the following isomorphism.

\begin{equation}
Aut(G)^{S}/Ker\mathcal{C}\cong (\widehat{{Aut(G)}^{S}}/<f_{nS}>)/<<f_{nS}>f_{g_{\sigma^{\mathbb{I}_{G}}}S}>
\end{equation}

and	that is

\begin{equation}
	Aut(G)^{S}/Ker\mathcal{C}\cong (\widehat{{Aut(G)}^{S}}/\prod_{i=1}^{t} N_{G}(Stab_{G}(x_{0}^{i}))/Stab_{G}(x_{0}^{i}))/Sim(I)^{\{{\mathbb{I}_{G}}\}}. 
\end{equation}

We now describe the $Ker\mathcal{C}$ subgroup in detail by proving the following result.

\begin{prop}
	\quad $Ker\mathcal{C}=\{ \phi \in Aut(G)^{S} |   \phi(S)=S , \phi(g)=gs \hspace{0.2cm} \forall g \in (G)^{t} \hspace{0.2cm} with \hspace{0.2cm} s \in S     \}$
\end{prop}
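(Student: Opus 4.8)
The strategy is to unwind the definition of the composite quotient map $\mathcal{C}$ and trace what it means for $\phi$ to land in the trivial class. Recall that $\mathcal{C}(\phi)=\langle f_{nS}\rangle f_{g_{\gamma^\phi}S}\langle\langle f_{nS}\rangle f_{g_{\sigma^{\mathbb{I}_G}}S}\rangle$, so $\phi\in\mathrm{Ker}\,\mathcal{C}$ means precisely that $\langle f_{nS}\rangle f_{g_{\gamma^\phi}S}$ lies in the subgroup $\langle\langle f_{nS}\rangle f_{g_{\sigma^{\mathbb{I}_G}}S}\rangle$ of $\widehat{Aut(G)^S}/\langle f_{nS}\rangle$. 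By Proposition~3.4 (the isomorphism $R$), that subgroup is exactly the image of $Sim(I)^{\{\mathbb{I}_G\}}$, so the condition forces $\gamma^\phi$ to be an $\mathbb{I}_G$-induced permutation; equivalently $f_{g_{\gamma^\phi}S}$ agrees modulo $\langle f_{nS}\rangle$ with some $\mathbb{I}_G$-invariant function $f_{g_{\sigma^{\mathbb{I}_G}}S}$. First I would record this reduction carefully.

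Next I would show this forces $\phi(S)=S$, i.e.\ $\gamma^\phi=\mathbb{I}_I$ can be taken as the induced permutation. The point is that if $f_{g_{\gamma^\phi}S}=f_{\tilde n S}f_{g_{\sigma^{\mathbb{I}_G}}S}$ then applying both sides to $g\cdot x_0$ and comparing the orbit-index of the output (the representatives $x_0^i$ being distinct across orbits) gives $\gamma^\phi=\sigma^{\mathbb{I}_G}$, hence $\gamma^\phi\in Sim(I)^{\{\mathbb{I}_G\}}$; but a $\phi$-induced permutation equal to an $\mathbb{I}_G$-induced one already tells us, reading off stabilizers via $\phi(S_i)=S_{\gamma^\phi(i)}^{g_{i\gamma^\phi(i)}}$ and $S_i=S_{\gamma^\phi(i)}^{g'_{i\gamma^\phi(i)}}$, that $S_{\gamma^\phi(i)}$ is conjugate to $S_i$; and since $x_0^i$ are orbit representatives with $X=\bigsqcup Orb_G(x_0^i)$, conjugate stabilizers among distinct representatives would force $i=\gamma^\phi(i)$ anyway when the representatives are chosen so that distinct orbits have non-conjugate stabilizers — more simply, since $\gamma^\phi$ can be chosen to be $\mathbb{I}_I$ in its coset, $\phi(S_i)=S_i^{g_{ii}}$ for all $i$. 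Then, comparing again $f_{g_{\gamma^\phi}S}=f_{\mathbb{I}_I\text{-type}}$ modulo $\langle f_{nS}\rangle$ shows the conjugating elements $g_{ii}$ can be absorbed, yielding $\phi(S_i)=S_i$ for every $i$, i.e.\ $\phi(S)=S$.

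Then I would extract the second condition $\phi(g)=gs$ with $s\in S$. With $\phi(S)=S$ in hand, $f=f_{g_{\gamma^\phi}S}$ with $\gamma^\phi=\mathbb{I}_I$, and by hypothesis $f$ is congruent modulo $\langle f_{nS}\rangle$ to $\mathbb{I}_X=f_S$ (the identity coset of $\langle\langle f_{nS}\rangle f_{g_{\sigma^{\mathbb{I}_G}}S}\rangle$, since the only $\mathbb{I}_G$-induced permutation hit is forced to be $\mathbb{I}_I$ once stabilizers are fixed). So $f=f_{nS}$ for some $n\in N$. Writing out $f(g\cdot x_0)$ two ways: as a $\phi$-invariant function it equals $\phi(g)\,g_{\mathbb{I}_I}\cdot x_0=\phi(g)\cdot x_0$ (since $g_{\phi,i,\mathbb{I}_I(i)}$ was set to $1$ in the $\phi(S_i)=S_i$ case), and as $f_{nS}$ it equals $g\,n\cdot x_0$. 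Hence $\phi(g)\cdot x_0=gn\cdot x_0$ componentwise, so $\phi(g_i)^{-1}g_in_i\in Stab_G(x_0^i)=S_i$ for every $i$; since $n_i\in N_i$ normalizes $S_i$, this gives $\phi(g_i)=g_i n_i s_i'$ for suitable $s_i'\in S_i$, i.e.\ $\phi(g)=gs$ with $s\in S$ (after absorbing $n$ into the stabilizer part, using that $f=f_{nS}$ represents the same coset as $f_S$ forces $n\in S$). Conversely, for the reverse inclusion I would check directly that any $\phi$ with $\phi(S)=S$ and $\phi(g)=gs$, $s\in S$, induces $\gamma^\phi=\mathbb{I}_I$ and $f_{g_{\gamma^\phi}S}=f_S\in\langle f_{nS}\rangle$, so $\mathcal{C}(\phi)$ is trivial.

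\textbf{Main obstacle.} The delicate point is the bookkeeping around the representatives $g_{ij}$ and the normalizer elements $n_i$: one must be careful that "$f_{g_{\gamma^\phi}S}\in\langle\langle f_{nS}\rangle f_{g_{\sigma^{\mathbb{I}_G}}S}\rangle$" really collapses all the way to "$f$ equals $f_{nS}$" rather than merely "$f$ equals $f_{nS}f_{g_{\sigma^{\mathbb{I}_G}}S}$ for some nontrivial $\sigma^{\mathbb{I}_G}$" — this is where one invokes that $\gamma^\phi$ has been fixed and that, once $\phi(S_i)=S_i$ holds for all $i$, the induced permutation in the chosen system is genuinely $\mathbb{I}_I$, so the $f_{g_{\sigma^{\mathbb{I}_G}}S}$ factor contributes nothing beyond $\langle f_{nS}\rangle$. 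Keeping the two conditions $\phi(S)=S$ and $\phi(g)=gs$ logically separated while they are in fact entangled through the conjugating elements is the part that needs the most care.
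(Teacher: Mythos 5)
Your proposal is correct and follows essentially the same route as the paper: reduce membership in $Ker\mathcal{C}$ to the identity $f_{g_{\gamma^{\phi}}S}=f_{nS}f_{g_{\sigma^{\mathbb{I}_{G}}}S}$, evaluate at the representatives (i.e.\ at $g=\bar{1}$) to force $\gamma^{\phi}=\sigma^{\mathbb{I}_{G}}$ and relate the conjugating elements, deduce $\phi(S)=S$, then compare $\phi(g)\cdot x_{0}=g\tilde{n}\cdot x_{0}$ at $g=\bar{1}$ to force $\tilde{n}\in S$ and hence $\phi(g)=gs$, with the reverse inclusion checked directly. The bookkeeping concerns you flag are exactly the points the paper also has to handle, and your treatment of them is consistent with its argument.
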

\begin{proof}
	If $\phi \in Ker\mathcal{C}$ then necessarily, as $\mathcal{C}$ is defined, we will have that $f_{g_{\gamma^{\phi}}S}=f_{nS}f_{g_{\sigma^{\mathbb{I}_{G}}}S}$ for some $n \in N$ and $\sigma^{\mathbb{I}_{G}} \in Sim(I)^{\{{\mathbb{I}_{G}}\}} $.  Thus we get that $\phi(g)g_{\gamma^{\phi}}\cdot x_{0}^{\gamma^{\phi}}=gg_{\sigma^{\mathbb{I}_{G}}}n_{\sigma^{\mathbb{I}_{G}}}\cdot x_{0}^{\sigma^{\mathbb{I}_{G}}} \hspace{0.2cm}\forall g \in (G)^{t} $, namely, we will have for $g=\bar{1}$ that $g_{\gamma^{\phi}}\cdot x_{0}^{\gamma^{\phi}}=g_{\sigma^{\mathbb{I}_{G}}}n_{\sigma^{\mathbb{I}_{G}}}\cdot x_{0}^{\sigma^{\mathbb{I}_{G}}}$; since the representatives $x_{0}^{i}$ once fixed are unique module of the action on G, 
	this implies that $\gamma^{\phi}=\sigma^{\mathbb{I}_{G}}$(in particular, we note that $Ker\mathcal{C}\leq KerF$ ) and so $g_{\gamma^{\phi}}=g_{\sigma^{\mathbb{I}_{G}}}n_{\sigma^{\mathbb{I}_{G}}}s_{\sigma^{\mathbb{I}_{G}}}$ for some $s_{\sigma^{\mathbb{I}_{G}}} \in S_{\sigma^{\mathbb{I}_{G}}}$; we infer that
	
	$$\phi(S)=S_{\gamma^{\phi}}^{g_{\gamma^{\phi}}}=S_{\sigma^{\mathbb{I}_{G}}}^{g_{\gamma^{\phi}}}=S_{\sigma^{\mathbb{I}_{G}}}^{g_{\sigma^{\mathbb{I}_{G}}}}=S.$$
	
	Now if it is not $ g_{\gamma^{\phi}}$ that coincides with  $\bar{1}$ there will still exist another $\phi$-invariant function of the type $f_{\phi,S}$; since we have just seen that $\phi$ fixes the stabilizers. And since $\phi \in Ker\mathcal{C}$ we have that $f_{\phi,S}=f_{\tilde{n}S} $ for some $\tilde{n} \in N$.
		
	\quad
	Thus
	
	$$ \phi(g)\cdot x_{0}=g\tilde{n}\cdot x_{0}.$$
	
	Especially for $g=\bar{1}$, we get.
	
	$$ x_0= \tilde{n}\cdot x_{0}.$$
	
	This implies $\tilde{n}=s$ for some $s \in S$, and for therefore
	\begin{equation}
	f_ {\phi,S}=f_{S}.
	\end{equation}
	 
	And the only possibility for $\phi$ to satisfy (3.13) is that for every $g \in (G)^t$ there exists an $s \in S$ such that $\phi(g)=gs$.
	
	\quad
	So we have shown 
	\begin{equation}
			 Ker\mathcal{C}\subseteq\{ \phi \in Aut(G)^{S} |   \phi(S)=S , \phi(g)=gs \hspace{0.2cm} \forall g \in (G)^{t} \hspace{0.2cm} with \hspace{0.2cm} s \in S     \}.     
\end{equation}
	Showing the inverse inclusion to (3.14), because of the way the set is defined at the second member, is almost immediate. 
	Hence we obtain the desired equality.
	
\end{proof}

\quad

The automorphisms $\phi \in \ker\mathcal{C}$ can be called $quasi-identical\hspace{0.2cm} atuomorphisms \hspace{0.2cm}  over \hspace{0.2cm} S$ and we can call the set \begin{equation}
	Ker\mathcal{C}:=\mathbb{I}_{G}^{S}.
\end{equation}

Summarizing, we have that 
\begin{equation}
\mathbb{I}_{G}^{S} \leq  KerF\hspace{0.2cm} and \hspace{0.2cm} that \hspace{0.2cm} \mathbb{I}_{G}^{S} \unlhd Aut(G)^S ,\hspace{0.2cm}  Aut(G)^{S}/\mathbb{I}_{G}^{S} \cong (\widehat{{Aut(G)}^{S}}/\prod_{i=1}^{t} N_{G}(Stab_{G}(x_{0}^{i}))/Stab_{G}(x_{0}^{i}))/Sim(I)^{\{{\mathbb{I}_{G}}\}} 
\end{equation}

\qquad

Returning to the last isomorphism of (3.16), the optimal situation would be that $$\widehat{{Aut(G)}^{S}}/\prod_{i=1}^{t} N_{G}(Stab_{G}(x_{0}^{i})/Stab_{G}(x_{0}^{i}) \cong Sim(I)^{\{{\mathbb{I}_{G}}\}} \rtimes({Aut(G)}^{S}/\mathbb{I}_{G}^{S} )$$ i.e. a suitable semi-direct product. But in general things are not always so, and this may depend on the choice of $\gamma^\phi$ elements of $Sim(I)^{Aut(G)^{S}}$, which we have so far set arbitrarily if such $\gamma^\phi \neq \mathbb{I}_{I}$ or if $\phi \notin \mathbb{I}_{G}^{S} $; our intention is to add appropriate assumptions about such choices of elements, but before describing these assumptions it is necessary to index the group $Aut(G)^{S}=\{ \phi_{i}\}$ for $i \in \tilde{I}=\{1,2,...,i,...,\tilde{t}\}$, in particular we pose  $\phi_{0}=\mathbb{I}_{G}$,and in addition, as we will see later, it is necessary to set $\gamma_{i}^{\phi_{i}}=\mathbb{I}_{I}$ and $g_{\gamma_{i}^{\phi_{i}}}=\bar1 \hspace{0.2cm} \forall \phi_{i} \in \mathbb{I}_{G}^{S} $. Now we are ready to describe the following hypothesis.

\qquad

\begin{description}
	\item[Assumption (I)]

It is possible to fix the following elements $\gamma_{i}^{\phi_{i}}, \gamma_{j}^{\phi_{j}}\in Sim(I)^{Aut(G)^{S}}$, such that we get
\begin{itemize}
	\item 
$\gamma_{i}^{\phi_{i}}\gamma_{j}^{\phi_{j}}=\gamma_{i_{j}}^{\phi_{i_{j}}}$
\end{itemize}
for some $i_{j} \in \tilde{I}$ with $\phi_{i_{j}}=\phi_{i}\phi_{j}$.
\begin{itemize}
\item
$(\gamma_{i}^{\phi_{i}})^{-1}=\gamma_{\bar{i}}^{\phi_{\bar{i}}}$
\end{itemize}
for some $\bar{i} \in \tilde{I}$ with $\phi_{\bar{i}}={\phi_{i}}^{-1}$.
\end{description}
\quad

If I adopt hypothesis (I) the set $\{<f_{nS}>f_{g^{\gamma_{i}^{\phi_{i}}}S}| \phi_{i} \in Aut(G)^{S} \}$ is first a subgroup of $Aut(G)^S/<f_{nS}>$ and then is isomorphic to $Aut(G)^S/{\mathbb{I}_G}^S$. In fact

$$<f_{nS}>f_{g_{\gamma_{i}^{\phi_{i}}}S}<f_{nS}>f_{g_{\gamma_{j}^{\phi_{j}}}S}=<f_{nS}>f_{g_{\gamma_{i}^{\phi_{i}}}S}f_{g_{\gamma_{j}^{\phi_{j}}}S}=$$

$$=<f_{nS}>f_{g_{\gamma_{i}^{\phi_{i}}\gamma_{j}^{\phi_{j}}}S}=<f_{nS}>f_{g_{\gamma_{i_{j}}^{\phi_{i_{j}}}S}}$$

and then

$$(<f_{nS}>f_{g_{\gamma_{i}^{\phi_{i}}}S})^{-1}=<f_{nS}>(f_{g_{\gamma_{i}^{\phi_{i}}}S})^{-1}=<f_{nS}>f_{g_{(\gamma_{i}^{\phi_{i}})^{-1}}S}=<f_{nS}>f_{g_{\gamma_{\bar{i}}^{\phi_{\bar{i}}}}S}.$$

And now we show the isomorphism between $Aut(G)^S/{\mathbb{I}_G}^S$  and the group
$<<f_{nS}>f_{g_{\gamma_{i}^{\phi_{i}}}S}>$  via the following map 

\quad

$$	\hspace{0.8cm}Aut(G)^{S}\stackrel{\tilde{\mathcal{C}}}{\longrightarrow}<<f_{nS}>f_{g_{\gamma_{i}^{\phi_{i}}}S}>$$
\quad
$$\hspace{1cm} \phi_{i} \mapsto <f_{nS}>f_{g_{\gamma_{i}^{\phi_{i}}}S} .$$

It is soon said that the map is well defined and surjective to how $\gamma_{i}^{\phi_{i}}$ were chosen in hypothesis (I); furthermore $\tilde{\mathcal{C}}$ is a homomorphism

$$\tilde{\mathcal{C}}(\phi_{i}\phi_{j})=\tilde{\mathcal{C}}(\phi_{i_{j}})=$$

$$=<f_{nS}>f_{g_{\gamma_{i_{j}}^{\phi_{i_{j}}}}S}=<f_{nS}>f_{g_{\gamma_{i}^{\phi_{i}}}S}f_{g_{\gamma_{j}^{\phi_{j}}}S}$$

$$=<f_{nS}>f_{g_{\gamma_{i}^{\phi_{i}}}S}<f_{nS}>f_{g_{\gamma_{j}^{\phi_{j}}}S}=\tilde{\mathcal{C}}(\phi_{i})\tilde{\mathcal{C}}(\phi_{j})$$

To satisfay isomorphism, it's sufficient to demonstrate that $Ker\tilde{\mathcal{C}}=\mathbb{I}_{G}^{S}$; if $\phi_{i} \in Ker\tilde{\mathcal{C}}$ we have that $f_{g_{\gamma_{i}^{\phi_{i}}}S}=f_{nS}$ implies that $g_{\gamma_{j}^{\phi_{j}}} \in N$, for how we chose $g_{\gamma_{i}^{\phi_{i}}}$  at the beginning of section 3, we have necessarily that $g_{\gamma_{i}^{\phi_{i}}}=\bar{1}$. So we have that  $f_{g_{\gamma_{i}^{\phi_{i}}}S}=f_{S}$ but $\phi_{i}\neq \mathbb{I}_{G}$; so we must have $\phi_{i}(g)=gs$ for some $s \in S$ that is $\phi_{i} \in \mathbb{I}_{G}^{S}$.In addition to how we chose  for $\phi_{i} \in \mathbb{I}_{G}^{S}$ we have $f_{g_{\gamma_{i}^{\phi_{i}}}S}=f_{S}$ for which we get the desired result. 

\begin{equation}
	<<f_{nS}>f_{g_{\gamma_{i}^{\phi_{i}}}S}>\cong Aut(G)^S/{\mathbb{I}_G}^S 
\end{equation}

And now we can prove the following assertion. 

\begin{prop}
	If assumption (I) holds, then $\widehat{{Aut(G)}^{S}}/<f_{nS}>\cong <<f_{nS}>f_{g_{\sigma^{\mathbb{I}_{G}}}S}>\rtimes <<f_{nS}>f_{g_{\gamma_{i}^{\phi_{i}}}S}>$ that is, unless isomorphisms $\widehat{{Aut(G)}^{S}}/\prod_{i=1}^{t} N_{G}(Stab_{G}(x_{0}^{i}))/Stab_{G}(x_{0}^{i}) \cong Sim(I)^{\{{\mathbb{I}_{G}}\}} \rtimes({Aut(G)}^{S}/\mathbb{I}_{G}^{S} )$ 
	 for an opportune semi-product
\end{prop}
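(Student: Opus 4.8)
The plan is to read the asserted isomorphism as the transport, along the identifications already established — namely $<f_{nS}>\cong\prod_{i=1}^{t}N_{G}(Stab_{G}(x_{0}^{i}))/Stab_{G}(x_{0}^{i})$ (Proposition 3.2), $N:=<<f_{nS}>f_{g_{\sigma^{\mathbb{I}_{G}}}S}>\cong Sim(I)^{\{{\mathbb{I}_{G}}\}}$ (Proposition 3.6), and $K:=<<f_{nS}>f_{g_{\gamma_{i}^{\phi_{i}}}S}>\cong {Aut(G)}^{S}/\mathbb{I}_{G}^{S}$ (equation (3.17)) — of the internal statement that the finite group $H:=\widehat{{Aut(G)}^{S}}/<f_{nS}>$ is the semidirect product $N\rtimes K$, with $K$ acting on $N$ by conjugation inside $H$. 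To establish $H=N\rtimes K$ I would verify: (a) $N\unlhd H$; (b) $K\leq H$; and (c) the composite homomorphism $\theta:K\hookrightarrow H\to H/N$, inclusion followed by the quotient map, is an isomorphism. Granting (a)--(c): $N\cap K=\ker\theta$ is trivial, and since $\theta$ is onto, $H/N=\theta(K)=(NK)/N$, hence $NK=H$; therefore $H=N\rtimes K$.

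Conditions (a) and (b) are already in hand: $N\unlhd H$ is Proposition 3.5, and the fact that $K$ is a subgroup of $H$, equal to the set $\{<f_{nS}>f_{g_{\gamma_{i}^{\phi_{i}}}S}\mid\phi_{i}\in {Aut(G)}^{S}\}$, was proved — under Assumption (I) — in the computations leading to (3.17). For (c) I would first check that $\theta$ is onto. By the decomposition (3.10), read with the representatives $\gamma_{i}^{\phi_{i}}$ fixed in Assumption (I), an arbitrary element of $\widehat{{Aut(G)}^{S}}$ has the form $f_{n'S}\,f_{g_{\gamma_{i}^{\phi_{i}}}S}\,f_{g_{\sigma^{\mathbb{I}_{G}}}S}$ with $\phi_{i}\in {Aut(G)}^{S}$, $\sigma^{\mathbb{I}_{G}}\in Sim(I)^{\{{\mathbb{I}_{G}}\}}$, $n'\in N$; so modulo $<f_{nS}>$ it reads $\bigl(<f_{nS}>f_{g_{\gamma_{i}^{\phi_{i}}}S}\bigr)\bigl(<f_{nS}>f_{g_{\sigma^{\mathbb{I}_{G}}}S}\bigr)$, and since $<f_{nS}>f_{g_{\sigma^{\mathbb{I}_{G}}}S}\in N$ its image in $H/N$ is $\theta\bigl(<f_{nS}>f_{g_{\gamma_{i}^{\phi_{i}}}S}\bigr)$. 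Hence $\theta$ is surjective. It is then bijective, being a surjection between finite groups of equal order: $|K|=|{Aut(G)}^{S}/\mathbb{I}_{G}^{S}|$ by (3.17), while the map $\mathcal{C}$ and the first isomorphism theorem (equations (3.11)--(3.16)) give $H/N\cong {Aut(G)}^{S}/Ker\mathcal{C}={Aut(G)}^{S}/\mathbb{I}_{G}^{S}$, so $|H/N|=|{Aut(G)}^{S}/\mathbb{I}_{G}^{S}|$. Thus $\theta$ is an isomorphism.

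With (a)--(c) established, $H=N\rtimes K$ internally, the action of $K$ on $N$ being conjugation in $H$. Transporting this along the three isomorphisms of Propositions 3.2, 3.6 and of (3.17) — which carry $<f_{nS}>$, $N$, $K$ onto $\prod_{i=1}^{t}N_{G}(Stab_{G}(x_{0}^{i}))/Stab_{G}(x_{0}^{i})$, $Sim(I)^{\{{\mathbb{I}_{G}}\}}$ and ${Aut(G)}^{S}/\mathbb{I}_{G}^{S}$ respectively — turns $H=N\rtimes K$ into
$$\widehat{{Aut(G)}^{S}}/\prod_{i=1}^{t}N_{G}(Stab_{G}(x_{0}^{i}))/Stab_{G}(x_{0}^{i})\ \cong\ Sim(I)^{\{{\mathbb{I}_{G}}\}}\rtimes\bigl({Aut(G)}^{S}/\mathbb{I}_{G}^{S}\bigr),$$
the semidirect product being the one induced by that conjugation action, as required by the statement.

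Points (a), (b) and the transport are routine — essentially re-packagings of first-isomorphism-theorem identities already proved in Section 3 — so I expect the only real obstacle to be the careful use of Assumption (I) in two places. First, it is what guarantees that $K=\{<f_{nS}>f_{g_{\gamma_{i}^{\phi_{i}}}S}\}$ is genuinely closed under products and inverses (the relations $\gamma_{i}^{\phi_{i}}\gamma_{j}^{\phi_{j}}=\gamma_{i_{j}}^{\phi_{i_{j}}}$ and $(\gamma_{i}^{\phi_{i}})^{-1}=\gamma_{\bar i}^{\phi_{\bar i}}$). Second, it underlies the order count $|K|=|{Aut(G)}^{S}/\mathbb{I}_{G}^{S}|$, i.e. the identity $Ker\tilde{\mathcal{C}}=\mathbb{I}_{G}^{S}$, which uses the normalisation $\gamma_{i}^{\phi_{i}}=\mathbb{I}_{I}$, $g_{\gamma_{i}^{\phi_{i}}}=\bar1$ for $\phi_{i}\in\mathbb{I}_{G}^{S}$; without Assumption (I) the map $\theta$ would fail to be injective, the discrepancy being governed by $KerF\setminus\mathbb{I}_{G}^{S}$.
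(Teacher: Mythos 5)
Your proposal is correct and follows essentially the same route as the paper: an internal semidirect product verification inside $\widehat{{Aut(G)}^{S}}/<f_{nS}>$, resting on the normality of $<<f_{nS}>f_{g_{\sigma^{\mathbb{I}_{G}}}S}>$ (Proposition 3.6), on Assumption (I) making $\{<f_{nS}>f_{g_{\gamma_{i}^{\phi_{i}}}S}\}$ a subgroup isomorphic to ${Aut(G)}^{S}/\mathbb{I}_{G}^{S}$ as in (3.17), and on the decomposition (3.10) to get that the two subgroups generate everything. The only (harmless) variation is that you obtain the trivial intersection by counting, comparing $|K|$ with $|H/N|$ via (3.11) and $Ker\mathcal{C}=\mathbb{I}_{G}^{S}$, whereas the paper argues directly that any coset in the intersection forces $\phi_{i}\in\mathbb{I}_{G}^{S}$ and hence equals $<f_{nS}>$.
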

 \begin{proof}
 	Thanks to Proposition (3.6) and what we have done so far, in order to arrive at the thesis we need only prove that
 	
 	$$<<f_{nS}>f_{g_{\sigma^{\mathbb{I}_{G}}}S}>\bigcap <<f_{nS}>f_{g_{\gamma_{i}^{\phi_{i}}}S}>=<f_{nS}>  $$
 	 
 	 and
 	
 	$$<<f_{nS}>f_{g_{\sigma^{\mathbb{I}_{G}}}S}><<f_{nS}>f_{g_{\gamma_{i}^{\phi_{i}}}S}>=\widehat{{Aut(G)}^{S}}/<f_{nS}>,$$
 	
 	if $<f_{nS}>f_{g_{\gamma_{i}^{\phi_{i}}}S} \in <<f_{nS}>f_{g_{\sigma^{\mathbb{I}_{G}}}S}>\bigcap <<f_{nS}>f_{g_{\gamma_{i}^{\phi_{i}}}S}>$ then $<f_{nS}>f_{g_{\gamma_{i}^{\phi_{i}}}S}=<f_{nS}>f_{g_{\sigma^{\mathbb{I}_{G}}}S}$, so $\phi_{i} \in {\mathbb{I}_G}^S $; it implies that $<f_{nS}>f_{g_{\gamma_{i}^{\phi_{i}}}S}=<f_{nS}>$.
 	And so the first equality is proven.
 	
 	\qquad
 	
 	As for the second equality, it is sufficient to prove
 	
 $\widehat{{Aut(G)}^{S}}/<f_{nS}> \subseteq <<f_{nS}>f_{g_{\sigma^{\mathbb{I}_{G}}}S}><<f_{nS}>f_{g_{\gamma_{i}^{\phi_{i}}}S}>.$
 	
 	Since the inverse inclusion is immediate, but in truth we have already proved this in the beginning of the proof of Proposition (3.5). So we have concluded the demonstration.

 \end{proof}

\quad

We conclude the first part of this section by, further, the factoring  the $\widehat{{Aut(G)}^{S}}$  group by adding additional appropriate assumptions to the previous ones.

\quad

\begin{description}
	\item[Assumption (II)]

	There is a relation 
	
	$$	\hspace{0.8cm}Sim(I)^{Aut(G)}\stackrel{\mathcal{G}}{\longrightarrow}(G)^{t}$$
	\quad
	$$\hspace{2.5cm} \gamma^{\phi}\sigma^{\mathbb{I}_{G}} \mapsto g_{\gamma^{\phi}\sigma^{\mathbb{I}_{G}}} $$
	
	such that

	\begin{itemize}
		\item 
		$g_{(\gamma^{\phi}\sigma^{\mathbb{I}_{G}})^{-1}}=\phi^{-1}((g_{\gamma^{\phi}\sigma^{\mathbb{I}_{G}}})^{-1})s_{(\gamma^{\phi}\sigma^{\mathbb{I}_{G}})^{-1}}$
	\end{itemize}
	for some $s_{(\gamma^{\phi}\sigma^{\mathbb{I}_{G}})^{-1}} \in S_{(\gamma^{\phi}\sigma^{\mathbb{I}_{G}})^{-1}} $.
	\begin{itemize}
		\item
		$\phi(g_{\gamma^{\psi}\tau^{\mathbb{I}_{G}}})g_{\gamma^{\phi}\sigma^{\mathbb{I}_{G}}}=g_{\gamma^{\phi}\sigma^{\mathbb{I}_{G}}\gamma^{\psi}\tau^{\mathbb{I}_{G}}}s_{\gamma^{\phi}\sigma^{\mathbb{I}_{G}}\gamma^{\psi}\tau^{\mathbb{I}_{G}}}$
	\end{itemize}
	for some $s_{\gamma^{\phi}\sigma^{\mathbb{I}_{G}}\gamma^{\psi}\tau^{\mathbb{I}_{G}}} \in S_{\gamma^{\phi}\sigma^{\mathbb{I}_{G}}\gamma^{\psi}\tau^{\mathbb{I}_{G}}} $.
\end{description}

\quad

\begin{os}
	As we saw at the beginning of section 3 we have that $g_{\mathbb{I}_{I}}=\bar{1}$. Moreover, the relation $\mathcal{G}$ relation may not be a well-defined map; in fact, for example, a $\phi \in KerF$ with $\phi\neq \mathbb{I}_{G}$ could have a $\gamma^{\phi}\sigma^{\mathbb{I}_{G}}=\mathbb{I}_{I}$ but a $g_{\gamma^{\phi}\sigma^{\mathbb{I}_{G}}}\neq \bar{1}$ (think of the functions that fix the orbits in a non-trivial way). If there is a need in this case we will specify  $g_{\gamma^{\phi}\sigma^{\mathbb{I}_{G}}}=g_{\phi,\mathbb{I}_{I}}$, to distinguish it from $g_{\mathbb{I}_{I}}$.
	This reasoning can be done in general for every $\phi \in kerF$ that is, if $\gamma^{\phi}\sigma^{\mathbb{I}_{G}}=\tau^{\mathbb{I}_{G}}$ but $g_{\gamma^{\phi}\sigma^{\mathbb{I}_{G}}}\neq g_{\tau^{\mathbb{I}_{G}}}$  in that case  then we will have that $g_{\gamma^{\phi}\sigma^{\mathbb{I}_{G}}}=g_{\phi,\tau^{\mathbb{I}_{G}}}$. 
	Conversely, it also can happen that  $g_{\gamma^{\phi}\sigma^{\mathbb{I}_{G}}}=g_{\gamma^{\psi}\tau^{\mathbb{I}_{G}}}$ but $\gamma^{\phi}\sigma^{\mathbb{I}_{G}}\neq \gamma^{\psi}\tau^{\mathbb{I}_{G}}$.
\end{os}

\quad
By means of Assumption (II) it is verified that the set $\{  f_{g_{\gamma^{\phi}\sigma^{\mathbb{I}_{G}}}S} \}$ is a subgroup of $ \widehat{{Aut(G)}^{S}}$. In fact

$$f_{g_{\gamma^{\phi}\sigma^{\mathbb{I}_{G}}}S}f_{g_{\lambda^{\psi}\tau^{\mathbb{I}_{G}}}S}=f_{\phi(g_{\lambda^{\psi}\tau^{\mathbb{I}_{G}}})g_{\gamma^{\phi}\sigma^{\mathbb{I}_{G}}}S}= $$

$$=f_{g_{\gamma^{\phi}\sigma^{\mathbb{I}_{G}}\lambda^{\psi}\tau^{\mathbb{I}_{G}}}S}=f_{g_{\gamma\lambda^{\phi\psi}\tilde{\sigma}^{\mathbb{I}_{G}} }S},$$

for an opportune $\tilde{\sigma}^{\mathbb{I}_{G}} \in Sim(I)^{\{\mathbb{I}_{G}\}}$ . Furthermore

$$(f_{g_{\gamma^{\phi}\sigma^{\mathbb{I}_{G}}}S})^{-1}=f_{\phi((g_{\gamma^{\phi}\sigma^{\mathbb{I}_{G}}})^{-1})^{-1}S}=f_{g_{(\gamma^{\phi}\sigma^{\mathbb{I}_{G}})^{-1}}S}.$$
 \quad
 
Now we are going to prove the following lemma.

\begin{lem}
If Assumption (II) holds, then	$\widehat{{Aut(G)}^{S}}\cong<f_{nS}>\rtimes <f_{g_{\gamma^{\phi}\sigma^{\mathbb{I}_{G}}}S}>$ that is, unless isomorphisms $\widehat{{Aut(G)}^{S}}\cong <f_{nS}>\rtimes (\widehat{{Aut(G)}^{S}}/<f_{nS}>)$
\end{lem}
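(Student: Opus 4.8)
The plan is to recognise $\widehat{{Aut(G)}^{S}}$ as an internal semidirect product. Recall the standard criterion: if a group $H$ has a normal subgroup $N$ and a subgroup $K$ with $N\cap K=\{1\}$ and $NK=H$, then $H\cong N\rtimes K$, where $K$ acts on $N$ by conjugation, and moreover the quotient map restricts to an isomorphism $K\xrightarrow{\sim}H/N$. I would apply this with $H=\widehat{{Aut(G)}^{S}}$, $N=<f_{nS}>$, and $K=<f_{g_{\gamma^{\phi}\sigma^{\mathbb{I}_{G}}}S}>$. Normality of $N$ in $H$ is Proposition 3.1, and under Assumption (II) the set $\{f_{g_{\gamma^{\phi}\sigma^{\mathbb{I}_{G}}}S}\}$ is closed under product and inverse by the computation immediately preceding the lemma, so $K$ equals that set.

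The easy half is $NK=H$: every element of $\widehat{{Aut(G)}^{S}}$ is an $S$-invariant $f_{g_{\gamma^{\phi}\sigma^{\mathbb{I}_{G}}}nS}$, and by the factorisation $f_{g_{\gamma^{\phi}\sigma^{\mathbb{I}_{G}}}nS}=f_{nS}\,f_{g_{\gamma^{\phi}\sigma^{\mathbb{I}_{G}}}S}$ recorded at the start of the proof of Proposition 3.5, this lies in $NK$; the reverse inclusion is trivial, so $NK=\widehat{{Aut(G)}^{S}}$.

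The delicate half, which I expect to be the main obstacle, is $N\cap K=\{\mathbb{I}_{X}\}=\{f_{S}\}$. Suppose $f\in N\cap K$, so $f=f_{\tilde nS}$ for some $\tilde n\in N$ and simultaneously $f=f_{g_{\gamma^{\phi}\sigma^{\mathbb{I}_{G}}}S}$ for some induced permutation. Evaluating this equality of functions at each representative $x_{0}^{i}$ gives $\tilde n_{i}\cdot x_{0}^{i}=g_{i\,\gamma^{\phi}\sigma^{\mathbb{I}_{G}}(i)}\cdot x_{0}^{\gamma^{\phi}\sigma^{\mathbb{I}_{G}}(i)}$; the left side lies in $Orb_{G}(x_{0}^{i})$ and the right side in $Orb_{G}(x_{0}^{\gamma^{\phi}\sigma^{\mathbb{I}_{G}}(i)})$, and since distinct representatives sit in disjoint orbits we get $\gamma^{\phi}\sigma^{\mathbb{I}_{G}}=\mathbb{I}_{I}$. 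Then $g_{i\,\mathbb{I}_{I}(i)}^{-1}\tilde n_{i}\in S_{i}$, so $\tilde n_{i}\in g_{i\,\mathbb{I}_{I}(i)}S_{i}\subseteq g_{i\,\mathbb{I}_{I}(i)}N_{i}$; combined with $\tilde n_{i}\in N_{i}$ this forces the coset $g_{i\,\mathbb{I}_{I}(i)}N_{i}$ to be $N_{i}$ itself, and by the conventions fixed at the start of Section 3 the representative of the coset $N_{i}$ is $1$, so $g_{\gamma^{\phi}\sigma^{\mathbb{I}_{G}}}=\bar 1$ and $\tilde n_{i}\in S_{i}$ for every $i$. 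Hence $\tilde nS=S$ and $f=f_{\tilde nS}=f_{S}=\mathbb{I}_{X}$. The care needed here is precisely in invoking those representative conventions — in particular that $1$ represents $N_{i}$, and that in the orbit-fixing situation $g_{\phi,i,\mathbb{I}_{I}(i)}$ is a (possibly nontrivial) representative of a coset of $G/N_{i}$ — to rule out a spurious nontrivial element of $N\cap K$.

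With normality of $<f_{nS}>$ and the two intersection/product facts established, the recognition criterion yields $\widehat{{Aut(G)}^{S}}=<f_{nS}>\rtimes<f_{g_{\gamma^{\phi}\sigma^{\mathbb{I}_{G}}}S}>$ with the action given by conjugation, and the quotient map gives $<f_{g_{\gamma^{\phi}\sigma^{\mathbb{I}_{G}}}S}>\cong\widehat{{Aut(G)}^{S}}/<f_{nS}>$, which is the second, "unless isomorphisms'' formulation; substituting $<f_{nS}>\cong\prod_{i=1}^{t}N_{G}(Stab_{G}(x_{0}^{i}))/Stab_{G}(x_{0}^{i})$ from Corollary 3.3 expresses it in the concrete form. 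Everything beyond the intersection step is routine internal-semidirect-product bookkeeping, and the "opportune'' semidirect product alluded to in the statement is exactly the one determined by this conjugation action, which in turn depends on the choices of the $g_{\gamma^{\phi}\sigma^{\mathbb{I}_{G}}}$ regulated by Assumption (II).
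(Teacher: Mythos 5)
Your proposal is correct and follows essentially the same route as the paper: recognise $\widehat{{Aut(G)}^{S}}$ as an internal semidirect product using the normality of $<f_{nS}>$ from Proposition 3.1, the factorisation $f_{g_{\gamma^{\phi}\sigma^{\mathbb{I}_{G}}}nS}=f_{nS}f_{g_{\gamma^{\phi}\sigma^{\mathbb{I}_{G}}}S}$ for the product condition, and the Section~3 conventions on the coset representatives $g_{\gamma^{\phi}\sigma^{\mathbb{I}_{G}}}$ (either outside $N$ or equal to $\bar 1$) for the trivial intersection. Your treatment of the intersection step is in fact more detailed than the paper's, which simply asserts it from those conventions.
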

\begin{proof}
Thanks to Proposition 3.1 we already know that $<f_{nS}> \unlhd \widehat{{Aut(G)}^{S}}$. We  will now consider the subgroup $<f_{nS}>\bigcap<f_{g_{\gamma^{\phi}\sigma^{\mathbb{I}_{G}}}S}>$ in our choice the elements $g_{\gamma^{\phi}\sigma^{\mathbb{I}_{G}}}$ either belong to $G-N_{\gamma^{\phi}\sigma^{\mathbb{I}_{G}}}$ or are equal to $\bar{1}$,  so that if $f_{g_{\gamma^{\phi}\sigma^{\mathbb{I}_{G}}}S} \in <f_{nS}>$ then necessarily $f_{g_{\gamma^{\phi}\sigma^{\mathbb{I}_{G}}}S}=f_{S}$. Also showing that $<f_{nS}><f_{g_{\gamma^{\phi}\sigma^{\mathbb{I}_{G}}}S}>=\widehat{{Aut(G)}^{S}}$ is quite straightforward, so is showing that $  <f_{g_{\gamma^{\phi}\sigma^{\mathbb{I}_{G}}}S}> \cong \widehat{{Aut(G)}^{S}}/<f_{nS}>$. 	
\end{proof}

\quad

The lemma just shown is propeudetic to the following result.

\begin{te}
If assumption (I) and assumption (II) are both satisfied then 	$\widehat{{Aut(G)}^{S}}=<f_{nS}>\rtimes (<f_{g_{\sigma^{\mathbb{I}_{G}}}S}>\rtimes <f_{g_{\gamma_{i}^{\phi_{i}}}S}> )$ that is, unless isomorphisms  $\widehat{{Aut(G)}^{S}}\cong\prod_{i=1}^{t} N_{G}(Stab_{G}(x_{0}^{i}))/Stab_{G}(x_{0}^{i})\rtimes (Sim(I)^{\{\mathbb{I}_{G}\}}\rtimes Aut(G)^S/\mathbb{I}_{G}^S)$.

\end{te}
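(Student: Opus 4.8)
The plan is to assemble the desired iterated semidirect product decomposition out of the two already-established one-step decompositions, using the standard fact that if $A \unlhd H$, $B/A \unlhd H/A$ with $H/A \cong (B/A) \rtimes (C/A)$, and $H \cong A \rtimes (H/A)$, then $H \cong A \rtimes (B/A \rtimes C/A)$ with the appropriate induced actions; here all groups are realized concretely as subgroups of $\widehat{{Aut(G)}^{S}}$, so the iterated product is literally $\langle f_{nS}\rangle \rtimes (\langle f_{g_{\sigma^{\mathbb{I}_{G}}}S}\rangle \rtimes \langle f_{g_{\gamma_{i}^{\phi_{i}}}S}\rangle)$. First I would invoke Lemma (the one preceding the theorem): under Assumption (II) we have $\widehat{{Aut(G)}^{S}} \cong \langle f_{nS}\rangle \rtimes \langle f_{g_{\gamma^{\phi}\sigma^{\mathbb{I}_{G}}}S}\rangle$, where the second factor is, via that lemma, an isomorphic copy of $\widehat{{Aut(G)}^{S}}/\langle f_{nS}\rangle$. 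So it remains only to split this second factor further.

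Second, I would invoke Proposition 3.6: under Assumption (I), $\widehat{{Aut(G)}^{S}}/\langle f_{nS}\rangle \cong \langle\langle f_{nS}\rangle f_{g_{\sigma^{\mathbb{I}_{G}}}S}\rangle \rtimes \langle\langle f_{nS}\rangle f_{g_{\gamma_{i}^{\phi_{i}}}S}\rangle$. The point now is to transport this splitting back inside $\widehat{{Aut(G)}^{S}}$ itself, using the section provided by Assumption (II): the subgroup $\langle f_{g_{\gamma^{\phi}\sigma^{\mathbb{I}_{G}}}S}\rangle \leq \widehat{{Aut(G)}^{S}}$ maps isomorphically onto the quotient $\widehat{{Aut(G)}^{S}}/\langle f_{nS}\rangle$ (Lemma above), so I would pull back the two factors $\langle\langle f_{nS}\rangle f_{g_{\sigma^{\mathbb{I}_{G}}}S}\rangle$ and $\langle\langle f_{nS}\rangle f_{g_{\gamma_{i}^{\phi_{i}}}S}\rangle$ along this isomorphism. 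Their preimages are exactly $\langle f_{g_{\sigma^{\mathbb{I}_{G}}}S}\rangle$ and $\langle f_{g_{\gamma_{i}^{\phi_{i}}}S}\rangle$ — here one uses that with the choices normalized in Assumption (I), $g_{\gamma_i^{\phi_i}}=\bar1$ for $\phi_i \in \mathbb{I}_G^S$ so these are genuine subgroups (closure under product and inverse was already checked after Assumption (I), and closure for the $\sigma^{\mathbb{I}_G}$ family is Proposition 3.4). Since preimages of a normal subgroup and of a complement under a group isomorphism are again a normal subgroup and a complement, I get $\langle f_{g_{\gamma^{\phi}\sigma^{\mathbb{I}_{G}}}S}\rangle = \langle f_{g_{\sigma^{\mathbb{I}_{G}}}S}\rangle \rtimes \langle f_{g_{\gamma_{i}^{\phi_{i}}}S}\rangle$, with $\langle f_{g_{\sigma^{\mathbb{I}_{G}}}S}\rangle$ normal.

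Third, I would substitute this into the Lemma's decomposition to obtain $\widehat{{Aut(G)}^{S}} = \langle f_{nS}\rangle \rtimes (\langle f_{g_{\sigma^{\mathbb{I}_{G}}}S}\rangle \rtimes \langle f_{g_{\gamma_{i}^{\phi_{i}}}S}\rangle)$, checking that the parenthesization is legitimate: $\langle f_{nS}\rangle$ is normal in the whole group (Proposition 3.1), so the inner semidirect product sits as a complement to it, and the induced conjugation action of that complement on $\langle f_{nS}\rangle$ makes the outer $\rtimes$ meaningful. Finally I would translate each factor via the identifications already proved: $\langle f_{nS}\rangle \cong \prod_{i=1}^{t} N_{G}(Stab_{G}(x_{0}^{i}))/Stab_{G}(x_{0}^{i})$ (Proposition 3.2), $\langle\langle f_{nS}\rangle f_{g_{\sigma^{\mathbb{I}_{G}}}S}\rangle \cong Sim(I)^{\{\mathbb{I}_{G}\}}$ (Proposition 3.5), and $\langle\langle f_{nS}\rangle f_{g_{\gamma_{i}^{\phi_{i}}}S}\rangle \cong Aut(G)^S/\mathbb{I}_{G}^S$ (the isomorphism (3.17)), which are compatible with the section from Assumption (II), yielding the second displayed isomorphism in the statement.

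The main obstacle I anticipate is not any single hard computation but the bookkeeping needed to verify that the two complements are compatible — i.e. that the complement to $\langle f_{nS}\rangle$ furnished by Assumption (II), namely $\langle f_{g_{\gamma^{\phi}\sigma^{\mathbb{I}_{G}}}S}\rangle$, actually contains $\langle f_{g_{\gamma_i^{\phi_i}}S}\rangle$ as a subgroup and restricts on it to the splitting coming from Assumption (I). This requires that the arbitrary choices made in Assumption (I) (of the $\gamma_i^{\phi_i}$) and in Assumption (II) (of the map $\mathcal{G}$) be coherent, which one should make explicit: the $\gamma_i^{\phi_i}$ are among the $\gamma^\phi\sigma^{\mathbb{I}_G}$, and $\mathcal{G}(\gamma_i^{\phi_i}) = g_{\gamma_i^{\phi_i}}$ is the element fixed in Assumption (I); one must note that nothing in Assumption (I) conflicts with the cocycle-type relations in Assumption (II). Once this compatibility is recorded, everything else is the routine "$A\unlhd H$, complement splits, pull back" manipulation.
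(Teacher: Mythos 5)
Your proposal is correct and follows essentially the same route as the paper: it combines the Lemma (Assumption (II) gives $\widehat{{Aut(G)}^{S}}\cong<f_{nS}>\rtimes<f_{g_{\gamma^{\phi}\sigma^{\mathbb{I}_{G}}}S}>$) with Proposition 3.9 (Assumption (I) splits the quotient), and transports the quotient splitting into the complement via the isomorphism that the paper calls $\pi_N$ — exactly the paper's argument, including the final identification of the three factors via Propositions 3.2, 3.6 and (3.17). The only discrepancy is a harmless mislabel (the quotient-splitting result is Proposition 3.9, not 3.6).
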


\begin{proof}
Since the elements of $Sim(I)^{\{\mathbb{I}_{G}\}} $ do not depend on the elements of $Aut(G)$; adopting the index notation of $Aut(G)$ again, we can rewrite the elements of $Sim(I)^{Aut(G)^S}$ like this $\gamma_{i}^{\phi_{i}} \sigma^{\mathbb{I}_{G}}$, and consequently also changes the relation in assumption (II) is rewritten as follows	

	$$	\hspace{0.8cm}Sim(I)^{Aut(G)}\stackrel{\mathcal{G}}{\longrightarrow}(G)^{t}$$

$$\hspace{2.5cm} \gamma_{i}^{\phi_{i}}\sigma^{\mathbb{I}_{G}} \mapsto g_{\gamma_{i}^{\phi_{i}}\sigma^{\mathbb{I}_{G}}} .$$
\quad

In the proof of the previous lemma we had noted that $ \widehat{{Aut(G)}^{S}}/<f_{nS}> \cong <f_{g_{\gamma^{\phi}\sigma^{\mathbb{I}_{G}}}S}>  $ due to assumption (II); we specify the isomorphism defining this relationship between the two groups 	

	$$	\hspace{0.8cm}\widehat{{Aut(G)}^{S}}/<f_{nS}> \stackrel{\pi_N}{\longrightarrow}<f_{g_{\gamma^{\phi}\sigma^{\mathbb{I}_{G}}}S}> $$

$$\hspace{1cm}<f_{nS}>f_{g_{\gamma_{i}^{\phi_{i}}\sigma^{\mathbb{I}_{G}}}S}  \mapsto f_{g_{\gamma_{i}^{\phi_{i}}\sigma^{\mathbb{I}_{G}}}S}  .$$	

Since assumption (I) also holds for Proposition 3.9 we also know

$$\widehat{{Aut(G)}^{S}}/<f_{nS}>\cong<<f_{nS}>f_{g_{\sigma^{\mathbb{I}_{G}}}S}>\rtimes <<f_{nS}>f_{g_{\gamma_{i}^{\phi_{i}}}S}>.$$

So $\pi_N(<<f_{nS}>f_{g_{\sigma^{\mathbb{I}_{G}}}S}>)=\{ f_{g_{\sigma^{\mathbb{I}_{G}}}S}\}$ and $\pi_N(<<f_{nS}>f_{g_{\gamma_{i}^{\phi_{i}}}S}>)=\{ f_{g_{\gamma_{i}^{\phi_{i}}}S} \}$; this implies that $\{ f_{g_{\sigma^{\mathbb{I}_{G}}}S}\}$ and $\{ f_{g_{\gamma_{i}^{\phi_{i}}}S} \}$ are soubgroups of $\widehat{{Aut(G)}^{S}}$ and we can therefore rewrite them in this way respectively $<f_{g_{\sigma^{\mathbb{I}_{G}}}S} >$, $<f_{g_{\gamma_{i}^{\phi_{i}}}S} >$; in addition, since $ \pi_N(\widehat{{Aut(G)}^{S}}/<f_{nS}>)\cong \pi_N(<<f_{nS}>f_{g_{\sigma^{\mathbb{I}_{G}}}S}>)\rtimes \pi_N(<<f_{nS}>f_{g_{\gamma_{i}^{\phi_{i}}}S}>)$ we have verified the thesis of the theorem regarding the first isomorphism .
The second isomorphism of the theorem statement is shown by exploiting the properties of the $\pi_N$ map plus the 3.9 proposition.

\end{proof}

Assumptions I and II are not only a sufficient condition for the thesis of theorem (3.12) but are also necessary, so that we have the following theorem.

\begin{te}
	$\widehat{{Aut(G)}^{S}}\cong<f_{nS}>\rtimes (<f_{g_{\sigma^{\mathbb{I}_{G}}}S}>\rtimes <f_{g_{\gamma_{i}^{\phi_{i}}}S}> )  \iff $  assumption (I) and assumption (II) are both satisfied.
\end{te}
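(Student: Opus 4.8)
The plan is to treat the two directions separately. The implication ``Assumption (I) and Assumption (II) $\Rightarrow$ the iterated semidirect decomposition'' is exactly Theorem (3.12), so that half needs nothing new; all the work is in the converse. Observe that the hypothesis of the converse already carries, through the notation $<f_{g_{\sigma^{\mathbb{I}_{G}}}S}>$ and $<f_{g_{\gamma_{i}^{\phi_{i}}}S}>$, the assertion that these parametrized families are \emph{subgroups} of $\widehat{{Aut(G)}^{S}}$ realizing an internal iterated semidirect product $\widehat{{Aut(G)}^{S}}=<f_{nS}>\rtimes M$, with $M=<f_{g_{\sigma^{\mathbb{I}_{G}}}S}>\rtimes K$ and $K=<f_{g_{\gamma_{i}^{\phi_{i}}}S}>$. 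I will show such a decomposition forces both assumptions, after possibly re-choosing the arbitrary representatives --- which is precisely the freedom those assumptions grant.

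First I recover Assumption (II) from the outer splitting. Since $M$ is a complement of the normal subgroup $<f_{nS}>$ (normality is Proposition (3.1)) and $M\cong\widehat{{Aut(G)}^{S}}/<f_{nS}>$, each coset $<f_{nS}>f_{g_{\gamma^{\phi}\sigma^{\mathbb{I}_{G}}}S}$ meets $M$ in exactly one element, which I take as the redefined value of $f_{g_{\gamma^{\phi}\sigma^{\mathbb{I}_{G}}}S}$; this pins down the representative $g_{\gamma^{\phi}\sigma^{\mathbb{I}_{G}}}\in(G)^{t}$ modulo $S_{\gamma^{\phi}\sigma^{\mathbb{I}_{G}}}$, which is all Assumption (II) asks. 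Because $M$ is closed under inversion, $(f_{g_{\gamma^{\phi}\sigma^{\mathbb{I}_{G}}}S})^{-1}\in M$; comparing the expression $(f_{g_{\gamma^{\phi}\sigma^{\mathbb{I}_{G}}}S})^{-1}=f_{\phi^{-1}((g_{\gamma^{\phi}\sigma^{\mathbb{I}_{G}}})^{-1})S}$ furnished by Proposition (1.4) (in the compact notation) against the chosen representative $f_{g_{(\gamma^{\phi}\sigma^{\mathbb{I}_{G}})^{-1}}S}$ forces $g_{(\gamma^{\phi}\sigma^{\mathbb{I}_{G}})^{-1}}=\phi^{-1}((g_{\gamma^{\phi}\sigma^{\mathbb{I}_{G}}})^{-1})\,s$ for some $s\in S_{(\gamma^{\phi}\sigma^{\mathbb{I}_{G}})^{-1}}$ --- the first bullet of Assumption (II). Likewise, closure of $M$ under products together with the composition formula (Proposition (1.5), as used in deriving (3.7)) yields $\phi(g_{\gamma^{\psi}\tau^{\mathbb{I}_{G}}})g_{\gamma^{\phi}\sigma^{\mathbb{I}_{G}}}=g_{\gamma^{\phi}\sigma^{\mathbb{I}_{G}}\gamma^{\psi}\tau^{\mathbb{I}_{G}}}\,s'$ with $s'\in S_{\gamma^{\phi}\sigma^{\mathbb{I}_{G}}\gamma^{\psi}\tau^{\mathbb{I}_{G}}}$ --- the second bullet. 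Hence Assumption (II) holds.

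Next I recover Assumption (I) from the inner splitting $M=<f_{g_{\sigma^{\mathbb{I}_{G}}}S}>\rtimes K$. Since $K=<f_{g_{\gamma_{i}^{\phi_{i}}}S}>$ is a subgroup, using $K\cong Aut(G)^{S}/\mathbb{I}_{G}^{S}$ (cf. (3.18)), whose nonidentity elements are indexed by the $\phi_{i}\notin\mathbb{I}_{G}^{S}$, the product $f_{g_{\gamma_{i}^{\phi_{i}}}S}f_{g_{\gamma_{j}^{\phi_{j}}}S}$ equals $f_{g_{\gamma_{i_{j}}^{\phi_{i_{j}}}}S}$ for some index $i_{j}\in\tilde{I}$. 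Expanding the left-hand side by the composition formula gives $f_{g_{\gamma_{i}^{\phi_{i}}}S}f_{g_{\gamma_{j}^{\phi_{j}}}S}(g\cdot x_{0})=\phi_{i}\phi_{j}(g)\,g'\cdot x_{0}^{\gamma_{i}^{\phi_{i}}\gamma_{j}^{\phi_{j}}}$ for a suitable $g'\in(G)^{t}$; matching the orbit indices and the acting automorphism against those of $f_{g_{\gamma_{i_{j}}^{\phi_{i_{j}}}}S}$, and using that the fixed representatives $x_{0}^{\ell}$ are unique modulo the $G$-action, forces $\gamma_{i}^{\phi_{i}}\gamma_{j}^{\phi_{j}}=\gamma_{i_{j}}^{\phi_{i_{j}}}$ with $\phi_{i_{j}}=\phi_{i}\phi_{j}$, i.e. the first bullet of Assumption (I). Closure of $K$ under inversion, combined with Proposition (1.4) (which makes $(f_{g_{\gamma_{i}^{\phi_{i}}}S})^{-1}$ a $\phi_{i}^{-1}$-invariant function with induced permutation $(\gamma_{i}^{\phi_{i}})^{-1}$), gives the second bullet. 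Finally, the normalizations $\gamma_{i}^{\phi_{i}}=\mathbb{I}_{I}$ and $g_{\gamma_{i}^{\phi_{i}}}=\bar{1}$ for $\phi_{i}\in\mathbb{I}_{G}^{S}$ are forced because the identity element of $K$ is $f_{S}$. This closes the converse.

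The main obstacle is bookkeeping rather than conceptual depth: one must pin down precisely how the abstract complements $M$ and $K$ correspond to the parametrized families $\{f_{g_{\gamma^{\phi}\sigma^{\mathbb{I}_{G}}}S}\}$ and $\{f_{g_{\gamma_{i}^{\phi_{i}}}S}\}$, check that selecting the unique representative inside a complement can be carried out consistently over the whole index set $\tilde{I}$ and compatibly with the already-fixed $\sigma^{\mathbb{I}_{G}}$-representatives, and verify that the ``modulo $S$'' ambiguity in each $g_{\gamma^{\phi}\sigma^{\mathbb{I}_{G}}}$ never obstructs the identities asserted in the assumptions. The homomorphism $F$ of Section 2 and the uniqueness of orbit representatives are the tools that keep these choices coherent.
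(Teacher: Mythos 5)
The paper states this theorem with no proof at all --- it is asserted immediately after Theorem 3.12 as the claim that the two assumptions are ``also necessary'' --- so there is no argument of the author's to compare yours against. Your treatment of the forward direction (defer to Theorem 3.12) is exactly what the paper intends, and your overall strategy for the converse --- read the right-hand side as an internal iterated semidirect product by the named families, then extract the bullets of Assumptions (I) and (II) from closure of the complements under product and inverse together with the composition formula $f_{gS}f_{g'S}=f_{\phi(g')gS}$ and Proposition (1.4) --- is a sensible reconstruction and, I believe, the only reasonable one given the notation.

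Two points need tightening. First, the interpretive step you make explicit (that $\cong$ here means the specific families $\{f_{g_{\sigma^{\mathbb{I}_{G}}}S}\}$ and $\{f_{g_{\gamma_{i}^{\phi_{i}}}S}\}$ are subgroups realizing an internal decomposition, not merely that an abstract isomorphism of that shape exists) is genuinely load-bearing: with the weaker abstract reading the converse would not follow, so you should state that you are proving the theorem under the internal reading. Second, your recovery of the first bullet of Assumption (I) asserts that $f_{g_{\gamma_{i}^{\phi_{i}}}S}f_{g_{\gamma_{j}^{\phi_{j}}}S}$ equals $f_{g_{\gamma_{i_{j}}^{\phi_{i_{j}}}}S}$ \emph{for some index} $i_{j}\in\tilde{I}$, citing only the isomorphism $K\cong Aut(G)^{S}/\mathbb{I}_{G}^{S}$; a priori the product is just some word in $K$, not a generator. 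The missing argument is that the product and the generator $f_{g_{\gamma_{i_{j}}^{\phi_{i_{j}}}}S}$ with $\phi_{i_{j}}=\phi_{i}\phi_{j}$ both lie in $K$ and both lie in the same coset of the normal subgroup $<f_{nS}>\rtimes<f_{g_{\sigma^{\mathbb{I}_{G}}}S}>$ (both map to the class of $\phi_{i}\phi_{j}$ under the map $\mathcal{C}$ of Section 3), and a complement meets each such coset in exactly one element, whence they coincide as functions; only then does evaluating at $x_{0}$ and invoking disjointness of orbits force $\gamma_{i}^{\phi_{i}}\gamma_{j}^{\phi_{j}}=\gamma_{i_{j}}^{\phi_{i_{j}}}$ and the congruence of the $g$'s modulo stabilizers. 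With that one-element-per-coset argument inserted (and the analogous remark for inverses), your converse is complete.
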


\qquad

\subsection{Examples of $\widehat{{Aut(G)}^{S}}$}

\quad

 In this last part of the section we will consider the actions of the dihedral group $D_n$ on the ring $\mathbb{Z}_{2n}$ of integers modulo $2n$ 

$$ D_{n}\times\mathbb{Z}_{2n} \longrightarrow \mathbb{Z}_{2n}\hspace{2cm}  $$

$$(\rho^{k},x)\longrightarrow \rho^{k} \cdot x= x+2k $$

$$\hspace{1.5cm}(\sigma,x)\longrightarrow  \sigma \cdot x= -x$$ 

\quad

and divide them into two cases; n even and odd.

\quad

\title{1° Case : n odd } 

\quad

Recalling that $ D_{n}=<\rho, \sigma| \rho^{n}=1, \sigma^{2}=1, \rho^{k}\sigma=\sigma\rho^{-k}>$ , let us first fix the representatives; since the action of $D_{n}$  on $\mathbb{Z}_{2n}$ generates two orbits, we can have that $x_{0}^{0}=0$ and that $x_{0}^{1}=1$.

Let us now consider the stabilizers of $ D_{n}$, which with quick check we can see that

$$Stab_{D_{n}}(0)=<\sigma>, \hspace{0.25cm} Stab_{D_{n}}(1)=<\rho\sigma>$$.

\quad
\quad

Thus $Aut(D_{n})^{S}$ has $S=(<\sigma>, <\rho\sigma>)$.

\quad
\quad

Now we need to know what the normalizers of such stabilizers look like, and it turns out, by elementary algebraic calculations, since n is odd, that

$$N_{D_{n}}(Stab_{D_{n}}(0))=<\sigma>, \hspace{0.25cm} N_{D_{n}}(Stab_{D_{n}}(1))=<\rho\sigma>.$$

\quad

Thus there are no normalizers of the stabilizers taken into account other than the stabilizers themselves, so assumption (II) for n odd holds trivially.

\quad

Now let us verify assumption I.  First we calculate ${Aut(D_{n})}^{(<\sigma>, <\rho\sigma>)}$, starting with $Aut(D_{n})$.

We know that $Aut(D_{n})=\{ \mathcal{T}, \phi_{\nu}     \}$ such that  $\mathcal{T}^{k}(\rho)=\rho$ and $\mathcal{T}^{k}(\sigma)=\rho^{k}\sigma$ , $\phi_{\nu}(\sigma)=\sigma$, $\phi_{\nu}(\rho)=\rho^{\nu}$ with $k \in \mathbb{Z}_{n}$ and $\nu \in U(\mathbb{Z}_{n})$ that is, $\nu$ is an invertible of $\mathbb{Z}_{n} $ (for this see [1] p. 80).

We also recall the rules of composition of these automorphisms and their orders.

\quad

\begin{equation}
 \mathcal{T}^{k}\phi_{\nu}\mathcal{T}^{l}\phi_{\upsilon}=\mathcal{T}^{k+{\nu}l}\phi_{\nu\upsilon}
\end{equation}

\quad 
 
\begin{equation}
\mathcal{T}^{k}\phi_{\nu}=\phi_{\nu}\mathcal{T}^{k\nu^{-1}}
\end{equation}

\begin{equation}
\mathcal{T}^{n}=\mathbb{I}_{D_{n}},\hspace{0.5cm} {\phi_{\nu}}^{o(\nu)}=\mathbb{I}_{D_{n}},\hspace{0.5cm} with \hspace{0.5cm} o(\nu)|\phi(n).
\end{equation}

In this context, the term '$\phi(n)$' refers to the Euler's totient function applied to the congruence rings modulo n, as detailed in the reference texts listed in the bibliography (from [1] to [6]).
\quad
Now let's see how these automorphisms behave with stabilizers $<\sigma> $, $<\rho\sigma> $, recalling that by $2^{-1}$ we mean, $n$ being odd, $2^{-1}\equiv \frac{n+1}{2}$ mod $n$.

\quad

\begin{equation}
\mathcal{T}^{k}(\sigma)=\rho^{k}\sigma=\rho^{k2^{-1}}\sigma\rho^{-k2^{-1}} \Rightarrow \mathcal{T}^{k}(<\sigma>)=<\sigma>^{\rho^{k2^{-1}}}
\end{equation}

\begin{equation}
\mathcal{T}^{k}(\rho\sigma)=\rho^{k+1}\sigma=\rho^{k}\rho\sigma=\rho^{k2^{-1}}(\rho\sigma)\rho^{-k2^{-1}} \Rightarrow \mathcal{T}^{k}(<\rho\sigma>)=<\rho\sigma>^{\rho^{k2^{-1}}}
\end{equation}

\begin{equation}
 \phi_{\nu}(\sigma)=\sigma \Rightarrow  \phi_{\nu}(<\sigma>)=<\sigma>
\end{equation}

\begin{equation}
  \phi_{\nu}(\rho\sigma)=\rho^{\nu}\sigma=\rho^{\nu-1}\rho\sigma=\rho^{(\nu-1)2^{-1}}\rho\sigma\rho^{-(\nu-1)2^{-1}}  \Rightarrow  \phi_{\nu}(<\rho\sigma>)=<\rho\sigma>^{\rho^{(\nu-1)2^{-1}}}
\end{equation}
\quad

To conclude these calculations let us see how the compositions of $\mathcal{T}^{k} $ and $\phi_{\nu}$ behave on the stabilizers. 

\begin{equation}
 \mathcal{T}^{k}\phi_{\nu}(<\sigma>)=\mathcal{T}^{k}(<\sigma>)=<\sigma>^{\rho^{k2^{-1}}}
\end{equation}

\begin{equation}
\mathcal{T}^{k}\phi_{\nu}(<\rho\sigma>)=\mathcal{T}^{k}\Big(<\rho\sigma>^{\rho^{(\nu-1)2^{-1}}}\Big)=<\rho\sigma>^{\rho^{(\nu-1)2^{-1}+k2^{-1}}} =<\rho\sigma>^{\rho^{(\nu-1+k)2^{-1}} }
\end{equation}

All this leads us to the following conclusion.

\begin{prop}
	
For odd n we have that	
$$Aut(D_{n})^{S}=Aut(D_{n})=Aut(D_{n})^{O}$$ 
with $O=(Orb_{D_{n}}(0),Orb_{D_{n}}(1))$ i.e., all automorphisms of $Aut(D_{n})$ fix orbits. 
\end{prop}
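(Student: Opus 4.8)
The plan is to sandwich. One always has the inclusions $Aut(D_n)^{O}\subseteq Aut(D_n)^{S}\subseteq Aut(D_n)$: the first holds because, by the description of $Aut(G)^{O}$ in Remark 2.8, an orbit-fixing automorphism sends each stabilizer to a conjugate of itself, hence in particular to a conjugate of \emph{some} stabilizer, so it lies in $Aut(G)^{S}$; the second is immediate from the very definition of $Aut(G)^{S}$ as a subgroup of $Aut(G)$. Thus it is enough to prove the reverse inclusion $Aut(D_n)\subseteq Aut(D_n)^{O}$, i.e. that \emph{every} automorphism of $D_n$ carries each of the two stabilizers $\langle\sigma\rangle=Stab_{D_n}(0)$ and $\langle\rho\sigma\rangle=Stab_{D_n}(1)$ into a $D_n$-conjugate of itself; once this is done the whole chain collapses to the asserted equalities.

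First I would invoke that, $n$ being odd, every $\psi\in Aut(D_n)$ has the form $\psi=\mathcal{T}^{k}\phi_{\nu}$ with $k\in\mathbb{Z}_n$ and $\nu\in U(\mathbb{Z}_n)$, which is the classification of $Aut(D_n)$ recalled above from [1, p.~80]. Then the conclusion is read off directly from the last two displayed equalities of the section, namely
$$\mathcal{T}^{k}\phi_{\nu}(\langle\sigma\rangle)=\langle\sigma\rangle^{\rho^{k2^{-1}}},\qquad \mathcal{T}^{k}\phi_{\nu}(\langle\rho\sigma\rangle)=\langle\rho\sigma\rangle^{\rho^{(\nu-1+k)2^{-1}}},$$
where $2^{-1}\equiv\frac{n+1}{2}\pmod n$. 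In each case the image is visibly a conjugate of the subgroup one started from, so $\mathcal{T}^{k}\phi_{\nu}\in Aut(D_n)^{O}$. Since this exhausts $Aut(D_n)$, we obtain $Aut(D_n)\subseteq Aut(D_n)^{O}$ and hence $Aut(D_n)^{S}=Aut(D_n)=Aut(D_n)^{O}$.

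There is essentially no obstacle here: all the computational weight has already been carried by the four one-line verifications for $\mathcal{T}^{k}$, for $\phi_{\nu}$, and for their composites acting on $\langle\sigma\rangle$ and $\langle\rho\sigma\rangle$, and this proposition is really a bookkeeping corollary of them. The only points that deserve a sentence of care are (i) that "fixing orbits" is, per Remark 2.8, a condition on stabilizers only up to conjugacy, so conjugacy of the images rather than literal equality is all that is required, and (ii) that for odd $n$ the two subgroups $\langle\sigma\rangle$ and $\langle\rho\sigma\rangle$ are themselves conjugate in $D_n$ (all reflections of $D_n$ being conjugate when $n$ is odd, since $2$ is invertible modulo $n$), so there is in any case no room for an automorphism to "swap" the two orbits in a way escaping $Aut(D_n)^{O}$. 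It is also worth recording at this point the explicit conjugating exponents $k2^{-1}$ and $(\nu-1+k)2^{-1}$, since these are precisely the elements $g_{ij}$ attached to the induced permutations and will be reused in the sequel.
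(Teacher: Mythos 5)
Your proof is correct and follows essentially the same route as the paper: the paper's "proof" consists precisely of the displayed computations $\mathcal{T}^{k}\phi_{\nu}(\langle\sigma\rangle)=\langle\sigma\rangle^{\rho^{k2^{-1}}}$ and $\mathcal{T}^{k}\phi_{\nu}(\langle\rho\sigma\rangle)=\langle\rho\sigma\rangle^{\rho^{(\nu-1+k)2^{-1}}}$ together with the classification $Aut(D_n)=\{\mathcal{T}^{k}\phi_{\nu}\}$, from which the proposition is read off exactly as you do. Your explicit sandwich $Aut(D_n)^{O}\subseteq Aut(D_n)^{S}\subseteq Aut(D_n)$ merely spells out the inclusion chain the paper leaves implicit (it is recorded earlier as $Aut(G)^{O}\unlhd Aut(G)^{S}$ in Remark 2.8), so no genuinely different argument is involved.
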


\quad

From Proposition 3.14 we have that all the permutations $\phi_{i}$-induced by the $Aut(D_{n})$ automorphisms $\{ \gamma_{i}^{\phi_{i}}  \}=\{   \mathbb{I}_{I}\}$, i.e  are  coincident with the identical permutation $\mathbb{I}_{I} $, with $I=\{0, 1\}$, so the assumption (I) trivially holds.

\quad

Since $n$ is odd we have obtained that assumptions (I) and (II) are valid, thanks to Theorem 3.14 we deduce that

\begin{equation}
\widehat{{Aut(D_{n})}^{S}}\cong <f_{g_{\sigma^{\mathbb{I}_{G}}}S}>\rtimes <f_{g_{\gamma_{i}^{\phi_{i}}}S}>,\hspace{0.12cm} for\hspace{0.12cm} n \hspace{0.12cm} odd.  	
\end{equation}

We now proceed to determine $g_{\gamma_{i}^{\phi_{i}}}$ and $g_{\sigma^{\mathbb{I}_{D_{n}}}}$ .  The $g_{\gamma_{i}^{\phi_{i}}}$ were implicitly obtained in (3.25) and (3.26). We shall now calculate $g_{\sigma^{\mathbb{I}_{D_{n}}}}$ .

\begin{equation}
 \sigma=\rho^{-1}\rho\sigma=\rho^{-2^{-1}}\rho\sigma\rho^{2^{-1}}=(\rho\sigma)^{\rho^{-2^{-1}}}\hspace{0.5cm} \rho\sigma=\rho^{2\cdot2^{-1}}\sigma=\rho^{2^{-1}}\sigma\rho^{-2^{-1}}=(\sigma)^{\rho^{2^{-1}}}.
\end{equation}

Hence $Sim(I)^{\{\mathbb{I}_{D_{n}}\}}=<\tau=(0,1)>$ i.e., the group generated by the $\tau$ transposition that exchanges the representatives $0,1 \in \mathbb{Z}_{2n}$ associated with the $\mathbb{I}_{\mathbb{Z}_{2n}}$-invariant application that exchanges the orbits.

\quad

For the calculations made from (3.21) to (3.27) in addition to (3.28), the choice of $g_{\gamma_{i}^{\phi_{i}}} $ and $g_{\sigma^{\mathbb{I}_{D_{n}}}} $ will be as follows

\begin{equation}
	g_{\mathcal{T}^{k},\mathbb{I}_{I}}=(\rho^{k2^{-1}},\rho^{k2^{-1}}) \hspace{0.3cm} g_{\mathcal{\phi_{\nu}},\mathbb{I}_{I}}=(1,\rho^{(\nu-1)2^{-1}})\hspace{0.3cm} g_{\tau}=(\rho^{-2^{-1}},\rho^{2^{-1}})
\end{equation}

\begin{equation}
 g_{\mathcal{T}^{k}\phi_{\nu},\mathbb{I}_{I}}=g_{\mathcal{T}^{k},\mathbb{I}_{I}}g_{\mathcal{\phi_{\nu}},\mathbb{I}_{I}}\hspace{0.3cm} g_{\mathcal{T}^{k}\phi_{\nu},\tau}=g_{\mathcal{T}^{k}\phi_{\nu},\mathbb{I}_{I}}g_{\tau}.
\end{equation}

Therefore the decomposition, not total for now, of $\widehat{{Aut(D_{n})}^{S}} $  is

$$ \widehat{{Aut(D_{n})}^{S}}\cong <f_{ g_{\tau}S}>\rtimes <f_{g_{\mathcal{T}^{k}\phi_{\nu},\mathbb{I}_{I}}S}>  .$$ 

\quad

Let us try to go more specific by calculating the analytical expression of the functions $f_{ g_{\tau}S}$, $f_{g_{\mathcal{T}^{k}\phi_{\nu},\mathbb{I}_{I}}S}$. We start with the one nontrivial $\mathbb{I}_{\mathbb{Z}_{2n}}$-invariant $f_{ g_{\tau}S}$.

$$ f_{ g_{\tau}S}(\rho^{x}\cdot 0)=f_{ g_{\tau}S}(2x)=\rho^{x}\rho^{-2^{-1}}\cdot 1=\rho^{x-2^{-1}}\cdot 1=1+2x-1-n=2x+n$$

$$f_{ g_{\tau}S}(\rho^{x}\cdot 1)=f_{ g_{\tau}S}(1+2x)=\rho^{x}\rho^{2^{-1}}\cdot 0 =\rho^{x+2^{-1}}\cdot 0=2x+1+n=(1+2x)+n.$$

\quad

From the calculations just done we derive that $f_{ g_{\tau}S}$ is a translation of a "quantity" $n$ on the $\mathbb{Z}_{2n}$ ring and we can denote it by $T_{n}$ such that

$$ T_{n}:\mathbb{Z}_{2n} \longrightarrow \mathbb{Z}_{2n}$$
$$ \hspace{1cm}         x\rightarrow x+n.$$

\quad

We now derive the  $f_{g_{\mathcal{T}^{k}\phi_{\nu},\mathbb{I}_{I}}S}$ . To do this, simply calculate the $f_{g_{\mathcal{T}^{k},\mathbb{I}_{I}}S} $ and then the $f_{g_{\phi_{\nu},\mathbb{I}_{I}}S} $  and compose.

\quad

We begin by calculating the $f_{g_{\mathcal{T}^{k},\mathbb{I}_{I}}S} $ for $k$ even 

$$ f_{ g_{\mathcal{T}^{k},\mathbb{I}_{I}}S}(\rho^{x}\cdot 0)=f_{g_{\mathcal{T}^{k},\mathbb{I}_{I}} S}(2x)=\rho^{x}\rho^{k2^{-1}}\cdot 0=\rho^{x+k2^{-1}}\cdot 0=2x+k(1+n)=2x+k$$

$$ f_{ g_{\mathcal{T}^{k},\mathbb{I}_{I}}S}(\rho^{x}\cdot 1)=f_{g_{\mathcal{T}^{k},\mathbb{I}_{I}} S}(1+2x)=\rho^{x}\rho^{k2^{-1}}\cdot 1=\rho^{x+k2^{-1}}\cdot 1= 1+2x+k(1+n)=(1+2x)+k.$$

Here again we have a translation of $k$ in $\mathbb{Z}_{2n}$.

$$f_{ g_{\mathcal{T}^{k},\mathbb{I}_{I}}S}:=T_{k} \hspace{2cm} T_{k}:\mathbb{Z}_{2n} \longrightarrow \mathbb{Z}_{2n}$$
$$ \hspace{5cm}         x\rightarrow x+k.$$

Now let's calculate $f_{g_{\mathcal{T}^{k},\mathbb{I}_{I}}S} $  for odd $k$.

$$f_{g_{\mathcal{T}^{k},\mathbb{I}_{I}}S}(\rho^{x}\cdot 0)=2x+k(1+n)=2x+k+n$$

$$f_{g_{\mathcal{T}^{k},\mathbb{I}_{I}}S}(\rho^{x}\cdot 1)=1+2x+k(1+n)=(1+2x)+k+n .$$

We obtained a translation in $\mathbb{Z}_{2n}$ of $k+n$ that is always even.

$$\hspace{2cm} T_{k+n}:\mathbb{Z}_{2n} \longrightarrow \mathbb{Z}_{2n} \hspace{1cm}with\hspace{0.2cm}k \hspace{0.2cm} odd $$
$$ \hspace{1cm}         x\rightarrow x+k+n.$$

\quad

Summing up.

\begin{prop}
	For $n$ odd numbers we have $$ \{ f_{ g_{\mathcal{T}^{k},\mathbb{I}_{I}}S}     \}= \{ T_{2k} | 2k \in  \mathbb{Z}_{2n}  \} .$$
\end{prop}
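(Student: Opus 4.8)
The plan is to read Proposition 3.16 straight off the two case computations carried out immediately above its statement, and then to verify that the resulting family of translations is exactly the set of translations of $\mathbb{Z}_{2n}$ by an even amount. As a preliminary remark, since $\mathcal{T}^{n}=\mathbb{I}_{D_{n}}$ the index $k$ in $\mathcal{T}^{k}$ ranges over a set of representatives $\{0,1,\dots,n-1\}$ of $\mathbb{Z}_{n}$, and by Proposition 3.14 the permutation induced by $\mathcal{T}^{k}$ is $\mathbb{I}_{I}$, so the functions in question are precisely the $f_{g_{\mathcal{T}^{k},\mathbb{I}_{I}}S}$ with $g_{\mathcal{T}^{k},\mathbb{I}_{I}}=(\rho^{k2^{-1}},\rho^{k2^{-1}})$ as fixed in (3.30).

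First I would establish the inclusion $\{\,f_{g_{\mathcal{T}^{k},\mathbb{I}_{I}}S}\,\}\subseteq\{\,T_{2j}\mid 2j\in\mathbb{Z}_{2n}\,\}$. This is exactly what the two displayed computations give: for $k$ even, $f_{g_{\mathcal{T}^{k},\mathbb{I}_{I}}S}=T_{k}$, and for $k$ odd, $f_{g_{\mathcal{T}^{k},\mathbb{I}_{I}}S}=T_{k+n}$, where $k+n$ is even because $n$ is odd. So in either case the translation amount is an even residue modulo $2n$, which is the asserted inclusion. (The same fact can be obtained in one line: using $2^{-1}\equiv\frac{n+1}{2}\pmod n$ one has $\rho^{k2^{-1}}\cdot x=x+k(1+n)\pmod{2n}$, and $k(1+n)\equiv k\pmod{2n}$ if $k$ is even while $k(1+n)\equiv k+n\pmod{2n}$ if $k$ is odd.)

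Next I would prove the reverse inclusion by a surjectivity argument that is where the hypothesis ``$n$ odd'' is genuinely used. Letting $k$ run through $\{0,1,\dots,n-1\}$, the even values of $k$ produce the translations $T_{0},T_{2},\dots,T_{n-1}$ (note $n-1$ is even), and the odd values $k=1,3,\dots,n-2$ produce $T_{n+1},T_{n+3},\dots,T_{2n-2}$. Since $n$ is odd, the union $\{0,2,\dots,n-1\}\cup\{n+1,n+3,\dots,2n-2\}$ equals the full set $\{0,2,4,\dots,2n-2\}$ of even residues modulo $2n$: the only even residue that could be missing from the two blocks would have to lie strictly between $n-1$ and $n+1$, i.e. be $n$ itself, which is odd. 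Hence every $T_{2j}$ with $2j\in\mathbb{Z}_{2n}$ arises as some $f_{g_{\mathcal{T}^{k},\mathbb{I}_{I}}S}$, and in fact the two case computations exhibit a bijection between $\mathbb{Z}_{n}$ and the set of even translations of $\mathbb{Z}_{2n}$ (both of cardinality $n$), which proves the equality of sets.

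I do not expect a real obstacle here: the argument is elementary bookkeeping. The single delicate point, which I would isolate as a one-line sublemma, is the claim that for $n$ odd the assignment $k\mapsto k$ on even $k\in\{0,\dots,n-1\}$ together with $k\mapsto k+n$ on odd $k\in\{1,\dots,n-2\}$ is a bijection onto the even residues of $\mathbb{Z}_{2n}$; once this is granted the proposition follows immediately from the computations already present in the text.
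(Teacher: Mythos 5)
Your proposal is correct and follows essentially the same route as the paper: the paper simply presents the two case computations ($k$ even giving $T_{k}$, $k$ odd giving $T_{k+n}$) and states the proposition as a summary, leaving the reverse inclusion implicit. Your explicit counting argument that the two blocks of even residues exhaust $\{0,2,\dots,2n-2\}$ is a small but welcome addition of rigor that the paper omits.
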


\quad

Now let us calculate the $f_{g_{\phi_{\nu},\mathbb{I}_{I}}S} $ , remembering that for $n$ odd the invertibles $\nu$ of $\mathbb{Z}_{n}$ can be even or odd.
Let us begin with the $f_{g_{\phi_{\nu},\mathbb{I}_{I}}S} $ for $\nu$ even.

$$ f_{g_{\phi_{\nu},\mathbb{I}_{I}}S}(\rho^{x}\cdot 0)=f_{g_{\phi_{\nu},\mathbb{I}_{I}}S}(2x)=\rho^{{\nu}x}\cdot 0=2{\nu}x=2x(\nu+n)        $$

$$ f_{g_{\phi_{\nu},\mathbb{I}_{I}}S}(\rho^{x}\cdot 1)=f_{g_{\phi_{\nu},\mathbb{I}_{I}}S}(1+2x)=\rho^{{\nu}x+(\nu-1)2^{-1}}\cdot 1=1+2{\nu}x+(\nu-1)(1+n)=$$
$$=1+ 2{\nu}x+\nu-1+n={\nu}(1+2x) +n=(\nu+n)(1+2x)                      . $$

\quad

From the calculations we deduce that $f_{g_{\phi_{\nu},\mathbb{I}_{I}}S}$ is an invertible linear application defined by the factor $\nu+n$, that is invertible in $\mathbb{Z}_{2n}$ and therefore we can define it by $f_{\nu+n}$.

$$\hspace{2cm} f_{\nu+n}:\mathbb{Z}_{2n} \longrightarrow \mathbb{Z}_{2n} \hspace{1cm}with\hspace{0.2cm}\nu \hspace{0.2cm} even $$
$$ \hspace{0.2cm}         x\rightarrow (\nu+n)x.$$

We continue our calculations by explicating $f_{g_{\phi_{\nu},\mathbb{I}_{I}}S}$  with odd $\nu$.

$$ f_{g_{\phi_{\nu},\mathbb{I}_{I}}S}(2x)=2{\nu}x        $$

$$ f_{g_{\phi_{\nu},\mathbb{I}_{I}}S}(1+2x)=1+2{\nu}x +(\nu-1)(1+n)=1+ 2{\nu}x+(\nu-1)={\nu}(1+2x)               . $$

Here again we have a linear application of factor $\nu$, which is also invertible in $\mathbb{Z}_{2n}$, and here again we can redefine $f_{g_{\phi_{\nu},\mathbb{I}_{I}}S} $ with $f_{\nu}$.

$$\hspace{2cm} f_{\nu}:\mathbb{Z}_{2n} \longrightarrow \mathbb{Z}_{2n} \hspace{1cm}with\hspace{0.2cm}\nu \hspace{0.2cm} odd $$
$$ \hspace{0.2cm}         x\rightarrow {\nu}x.$$

\quad

The invertibles $\nu$ and $\upsilon+n$ with $\upsilon$ is an even number invertible in $\mathbb{Z}_{n}$ and $\nu$ invertible in both $\mathbb{Z}_{n}$ and $\mathbb{Z}_{2n}$, represent all invertibles in  $\mathbb{Z}_{2n}$ . 
In fact with little logical and algebraic arguments $\nu$ are all invertibles in $\mathbb{Z}_{2n}$ such that $\nu<n$ in $\mathbb{Z}$;
whereas if we consider invertibles $\mathbb{Z}_{2n}$  in  such that $w>n$ in  $\mathbb{Z}$ can be represented as $w=\upsilon+n$ where $\upsilon$ is an even invertible in $\mathbb{Z}_{n}$.

\quad

From this observation we infer that 

$$ \{  f_{\nu}, f_{\upsilon+n} | \nu \hspace{0.2cm} odd,\upsilon \hspace{0.2cm} even\hspace{0.2cm}with\hspace{0.2cm} \nu,\upsilon \in U(\mathbb{Z}_{n})  \}=  \{ f_{w}| w \in U(\mathbb{Z}_{2n})         \}                . $$

\quad

 From (3.30) and the calculations so far, we deduce that

\begin{prop}
	$$ <f_{g_{\mathcal{T}^{k}\phi_{\nu},\mathbb{I}_{I}}S}>=< T_{2k},f_w | k \in \mathbb{Z}_{2n},\hspace{0.1cm} w \in U(\mathbb{Z}_{2n})>$$
	
	In particular, $T_{2k}f_w$ is an affinity in $\mathbb{Z}_{2n} $ i.e.
	
	$$T_{2k}f_{w}:\mathbb{Z}_{2n} \longrightarrow \mathbb{Z}_{2n}  $$
	$$ \hspace{2cm}         x\rightarrow {w}x+2k .$$
\end{prop}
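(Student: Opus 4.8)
The plan is to realise each generator $f_{g_{\mathcal{T}^{k}\phi_{\nu},\mathbb{I}_{I}}S}$ as a composition of one of the translations $T_{2k}$ with one of the linear maps $f_{w}$, and then read off the generated subgroup. The starting observation is that, since $n$ is odd, the normalizers of $Stab_{D_{n}}(0)=<\sigma>$ and $Stab_{D_{n}}(1)=<\rho\sigma>$ coincide with the stabilizers themselves, so Proposition 1.10 applies and for each $\psi\in Aut(D_{n})^{S}=Aut(D_{n})$ there is a \emph{unique} $\psi$-invariant function. Hence, because by Proposition 1.5 the composition $f_{g_{\mathcal{T}^{k},\mathbb{I}_{I}}S}\circ f_{g_{\phi_{\nu},\mathbb{I}_{I}}S}$ is $\mathcal{T}^{k}\phi_{\nu}$-invariant, it must equal $f_{g_{\mathcal{T}^{k}\phi_{\nu},\mathbb{I}_{I}}S}$; this is coherent with the bookkeeping choice $g_{\mathcal{T}^{k}\phi_{\nu},\mathbb{I}_{I}}=g_{\mathcal{T}^{k},\mathbb{I}_{I}}g_{\phi_{\nu},\mathbb{I}_{I}}$ made in (3.30), and in particular $f_{g_{\mathcal{T}^{k}\phi_{\nu},\mathbb{I}_{I}}S}=f_{g_{\mathcal{T}^{k},\mathbb{I}_{I}}S}$ when $\nu=1$ (since then $g_{\phi_{\nu},\mathbb{I}_{I}}=\bar 1$) and $f_{g_{\mathcal{T}^{k}\phi_{\nu},\mathbb{I}_{I}}S}=f_{g_{\phi_{\nu},\mathbb{I}_{I}}S}$ when $k=0$.

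Next I would substitute the explicit descriptions already computed in this subsection. By Proposition 3.15 the factor $f_{g_{\mathcal{T}^{k},\mathbb{I}_{I}}S}$, as $k$ runs over $\mathbb{Z}_{n}$, runs over exactly $\{T_{2k}\mid 2k\in\mathbb{Z}_{2n}\}$, the set of all translations by an even amount; and by the identification $\{f_{\nu},f_{\upsilon+n}\}=\{f_{w}\mid w\in U(\mathbb{Z}_{2n})\}$ established above, the factor $f_{g_{\phi_{\nu},\mathbb{I}_{I}}S}$, as $\nu$ runs over $U(\mathbb{Z}_{n})$, runs over exactly $\{f_{w}\mid w\in U(\mathbb{Z}_{2n})\}$. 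Since $k$ and $\nu$ are independent parameters, the family $\{f_{g_{\mathcal{T}^{k}\phi_{\nu},\mathbb{I}_{I}}S}\}$ is precisely $\{\,T_{2k}\circ f_{w}\mid 2k\in\mathbb{Z}_{2n},\ w\in U(\mathbb{Z}_{2n})\,\}$, and evaluating gives $(T_{2k}\circ f_{w})(x)=T_{2k}(wx)=wx+2k$, which is the ``in particular'' assertion.

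It then remains to compare the two subgroups. On one hand each $T_{2k}\circ f_{w}$ is a product of $T_{2k}$ and $f_{w}$, so $<f_{g_{\mathcal{T}^{k}\phi_{\nu},\mathbb{I}_{I}}S}>\subseteq\,<T_{2k},f_{w}>$. On the other hand every $T_{2k}$ occurs in the family (take $\nu=1$, i.e. $w=1$) and every $f_{w}$ occurs in the family (take $k=0$), so $<T_{2k},f_{w}>\subseteq\,<f_{g_{\mathcal{T}^{k}\phi_{\nu},\mathbb{I}_{I}}S}>$. The two inclusions yield the stated equality $<f_{g_{\mathcal{T}^{k}\phi_{\nu},\mathbb{I}_{I}}S}>=\,<T_{2k},f_{w}\mid k\in\mathbb{Z}_{2n},\ w\in U(\mathbb{Z}_{2n})>$.

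The only delicate point is the very first step: one must be sure that $f_{g_{\mathcal{T}^{k},\mathbb{I}_{I}}S}\circ f_{g_{\phi_{\nu},\mathbb{I}_{I}}S}$ reproduces exactly the chosen generator $f_{g_{\mathcal{T}^{k}\phi_{\nu},\mathbb{I}_{I}}S}$, and not merely some $\mathcal{T}^{k}\phi_{\nu}$-invariant map. This is precisely where the hypothesis ``$n$ odd'' is used, through the uniqueness statement of Proposition 1.10 (equivalently, through the fact that normalizers equal stabilizers, which is what guarantees the choices in (3.30) are internally consistent); everything afterwards is routine substitution and an elementary argument about generators.
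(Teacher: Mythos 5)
Your argument is correct and follows essentially the same route as the paper: the paper presents this proposition as an immediate consequence of the convention (3.30) together with the preceding explicit computations identifying the $f_{g_{\mathcal{T}^{k},\mathbb{I}_{I}}S}$ with the even translations $T_{2k}$ and the $f_{g_{\phi_{\nu},\mathbb{I}_{I}}S}$ with the invertible linear maps $f_{w}$, $w\in U(\mathbb{Z}_{2n})$, which is exactly your decomposition and double inclusion. One caveat: your appeal to Proposition 1.10 to assert that each $\psi\in Aut(D_{n})$ admits a \emph{unique} $\psi$-invariant function is false in this example --- for $n$ odd the two stabilizers $<\sigma>$ and $<\rho\sigma>$ are conjugate, so every $\psi$ admits two induced permutations and hence two invariant functions (both $\mathbb{I}_{\mathbb{Z}_{2n}}$ and $T_{n}$ are $\mathbb{I}_{D_{n}}$-invariant); Proposition 1.10 only removes the ambiguity coming from the normalizer once the induced permutation is fixed. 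This does not damage the proof, since the identity $f_{g_{\mathcal{T}^{k}\phi_{\nu},\mathbb{I}_{I}}S}=f_{g_{\mathcal{T}^{k},\mathbb{I}_{I}}S}\circ f_{g_{\phi_{\nu},\mathbb{I}_{I}}S}$ already follows from (3.30) and Proposition 1.5, and uniqueness does hold within the class of orbit-fixing invariant functions, which is all your argument actually requires.
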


\quad

 It can be easily seen that the operation rules of $<f_{g_{\mathcal{T}^{k}\phi_{\nu},\mathbb{I}_{I}}S}>$ are similar to the rules of $Aut(D_{n})$ . In fact
 
 \begin{equation}
 	T_{2k}f_{\nu}T_{2k'}f_{\nu'}=T_{2k+2k'\nu}f_{\nu\nu'}.
 \end{equation}

From (3.31) it can be deduced

\begin{equation}
	(T_{2k}f_{\nu})^{-1}=T_{-2\nu^{-1}k}f_{\nu^{-1}} \hspace{1cm}f_{\nu}T_{2k}=T_{2k\nu}f_{\nu}.
\end{equation}

\quad

We now prove the following assertion.

\begin{prop}
For n odd
$$ <f_{g_{\mathcal{T}^{k}\phi_{\nu},\mathbb{I}_{I}}S}> \cong <T_{2}>\rtimes<f_\nu | \nu \in U(\mathbb{Z}_{2n})   >            $$	.

\end{prop}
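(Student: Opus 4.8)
The plan is to exhibit $\langle f_{g_{\mathcal{T}^{k}\phi_{\nu},\mathbb{I}_{I}}S}\rangle$ as an internal semidirect product of the cyclic subgroup $\langle T_2\rangle$ by the subgroup $\langle f_\nu\mid\nu\in U(\mathbb{Z}_{2n})\rangle$, using the standard criterion: both are subgroups, $\langle T_2\rangle$ is normal, the two subgroups intersect trivially, and together they generate the whole group. First I would record that by Proposition~3.17 every element of $\langle f_{g_{\mathcal{T}^{k}\phi_{\nu},\mathbb{I}_{I}}S}\rangle$ has the affine form $x\mapsto wx+2k$ with $w\in U(\mathbb{Z}_{2n})$ and $2k\in\mathbb{Z}_{2n}$, i.e. it equals $T_{2k}f_w$; in particular the translation part always lies in the even subgroup $2\mathbb{Z}_{2n}=\langle T_2\rangle$, which has order $n$ (since $n$ is odd, $2$ generates the even residues mod $2n$). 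So $\langle T_2\rangle=\{T_{2k}\mid k\in\mathbb{Z}_{2n}\}$ is a subgroup, and $\langle f_\nu\mid\nu\in U(\mathbb{Z}_{2n})\rangle$ is a subgroup isomorphic to $U(\mathbb{Z}_{2n})$ via $f_\nu\mapsto\nu$ (bijectivity is clear, and $f_\nu f_{\nu'}=f_{\nu\nu'}$ from the composition rule).

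The key computational input is already available: from (3.33) we have $f_\nu T_{2k}=T_{2k\nu}f_\nu$, equivalently $f_\nu T_{2k} f_\nu^{-1}=T_{2k\nu}$, which shows $\langle T_2\rangle$ is normalized by every $f_\nu$, hence (together with the fact that $\langle T_2\rangle$ is abelian and normalized by its own elements) $\langle T_2\rangle\unlhd\langle f_{g_{\mathcal{T}^{k}\phi_{\nu},\mathbb{I}_{I}}S}\rangle$. Next, for the trivial-intersection claim: if $T_{2k}f_\nu=T_{2k'}$ for some $k,k'$, then applying both sides to $0$ and to $1$ forces $\nu x\equiv x$ for all $x$, hence $\nu=1$, so the element lies in $\langle T_2\rangle$ alone; thus $\langle T_2\rangle\cap\langle f_\nu\rangle=\{\mathbb{I}_{\mathbb{Z}_{2n}}\}$. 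Finally, the generation statement: every $T_{2k}f_w$ is visibly a product of an element of $\langle T_2\rangle$ and an element of $\langle f_\nu\rangle$, so the two subgroups generate the whole group. Invoking the internal-semidirect-product criterion then yields $\langle f_{g_{\mathcal{T}^{k}\phi_{\nu},\mathbb{I}_{I}}S}\rangle\cong\langle T_2\rangle\rtimes\langle f_\nu\mid\nu\in U(\mathbb{Z}_{2n})\rangle$, and since $\langle T_2\rangle$ is cyclic of order $n$ this is the asserted decomposition.

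I do not expect a serious obstacle here; the one point requiring a little care is the precise identification of $\langle T_2\rangle$ — one must use that $n$ is odd so that $\gcd(2,2n)=2$ and the even residues modulo $2n$ form a cyclic group of order $n$ generated by $T_2$ (this is exactly where the hypothesis on $n$ enters, paralleling the earlier computation $T_{k+n}$ being an even translation for odd $k$). A second minor subtlety is making sure the conjugation formula $f_\nu T_{2k}f_\nu^{-1}=T_{2k\nu}$ indeed stays inside $\langle T_2\rangle$, which is immediate since $2k\nu$ is again even. Everything else is a direct application of the composition rules (3.32)--(3.33) and Proposition~3.17, so the write-up is essentially a verification of the four bullet points of the semidirect-product criterion.
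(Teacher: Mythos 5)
Your proposal is correct and follows essentially the same route as the paper: normality of $<T_{2}>$ via the conjugation identity $f_{\nu}T_{2k}f_{\nu^{-1}}=T_{2k\nu}$ from (3.32), trivial intersection by evaluating the affine maps at a point, and generation from the affine form $T_{2k}f_{w}$ of a general element. The only cosmetic difference is that the paper checks the intersection only at $x=0$ and conjugates by a general element $T_{2k'}f_{\nu}$ rather than reducing to generators, but these are immaterial.
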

\begin{proof}
	
First we show that 

\begin{equation}
	<T_{2}> \unlhd < T_{2k}f_\nu >
\end{equation}

\quad

From (3.32) it is verified that

$$ f_{\nu}T_{2k}f_{\nu^{-1}}=T_{2k{\nu}}\Rightarrow T_{2k'}f_{\nu}T_{2k}f_{\nu^{-1}}T_{-2k'}=T_{2k'}T_{2k{\nu}}T_{-2k'}=T_{2k{\nu}}$$

Furthermore, it is easy to verify that $<T_{2k}>=<T_{2}>$	and for which (3.33) has been proved. 

\quad

Thereafter we have that $<T_{2}>\cap < f_\nu >=\{ \mathbb{I}_{\mathbb{Z}_{2n}} \}$, in fact if $T_{2k} \in <T_{2}>\cap < f_\nu >$ this implies that there exists a $\nu \in U(\mathbb{Z}_{2n})$ such that $T_{2k}=f_\nu$ i.e., $x+2k=\nu x$ for every $x \in \mathbb{Z}_{2n}$ specifically for $x=0$ we have that $2k=0$ so $T_{2k}=\mathbb{I}_{\mathbb{Z}_{2n}}$ .

\quad

The fact that $<T_2><f_\nu>=<T_{2k}f_\nu>$ is a very simple verification allows us to conclude the proof.

\end{proof}

\quad

From this proposition we derive the following property

\begin{prop}
	For $n$ odd
	
	$$\widehat{{Aut(D_{n})}^{S}}\cong <T_{n}>\times(<T_{2}>\rtimes<f_\nu | \nu \in U(\mathbb{Z}_{2n})   >  )$$ in particular we have that $<T_{n}>=Z(\widehat{{Aut(D_{n})}^{S}})$.
\end{prop}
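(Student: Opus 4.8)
The plan is to bootstrap from the decomposition already established, namely
$\widehat{{Aut(D_{n})}^{S}}=\langle f_{g_{\tau}S}\rangle\rtimes\langle f_{g_{\mathcal{T}^{k}\phi_{\nu},\mathbb{I}_{I}}S}\rangle=\langle T_{n}\rangle\rtimes\langle T_{2k}f_{\nu}\rangle$, where the first equality is the (partial) decomposition obtained just before Proposition 3.15 and the second is the identification of the two factors carried out in the explicit computations and in Proposition 3.16 (all groups here being literal subgroups of $\mathrm{Sym}(\mathbb{Z}_{2n})$, this is an internal semidirect product). The only point left is thus to show that this semidirect product is in fact a direct product, which amounts to checking that $\langle T_{n}\rangle$ is central, and then to pin down the full center.

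First I would prove $\langle T_{n}\rangle\leq Z(\widehat{{Aut(D_{n})}^{S}})$. Since $T_{n}^{2}=T_{2n}=\mathbb{I}_{\mathbb{Z}_{2n}}$, it suffices to check that $T_{n}$ commutes with the generators $T_{2}$ and $f_{\nu}$ ($\nu\in U(\mathbb{Z}_{2n})$) of $\langle T_{2k}f_{\nu}\rangle$. Translations commute, so $T_{n}T_{2}=T_{2}T_{n}$. For $f_{\nu}$, the same computation as in (3.32) gives $f_{\nu}T_{n}f_{\nu}^{-1}(x)=\nu(\nu^{-1}x+n)=x+\nu n$; and since $n$ is odd every unit $\nu$ of $\mathbb{Z}_{2n}$ is odd, so $\nu n\equiv n\pmod{2n}$ and $f_{\nu}T_{n}f_{\nu}^{-1}=T_{n}$. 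Hence $\langle T_{n}\rangle$ is central, the conjugation action of $\langle T_{2k}f_{\nu}\rangle$ on $\langle T_{n}\rangle$ is trivial, and the semidirect product collapses to $\widehat{{Aut(D_{n})}^{S}}=\langle T_{n}\rangle\times\langle T_{2k}f_{\nu}\rangle$. Substituting Proposition 3.17 for the second factor yields the asserted isomorphism $\widehat{{Aut(D_{n})}^{S}}\cong\langle T_{n}\rangle\times(\langle T_{2}\rangle\rtimes\langle f_{\nu}\mid\nu\in U(\mathbb{Z}_{2n})\rangle)$.

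It remains to show $Z(\widehat{{Aut(D_{n})}^{S}})=\langle T_{n}\rangle$. Here I would first note that, since $\gcd(2,n)=1$, one has $\langle T_{n},T_{2}\rangle=\langle T_{1}\rangle$, the group of all translations, so by Proposition 3.16 the whole group is exactly the affine group $\{x\mapsto\nu x+m\mid m\in\mathbb{Z}_{2n},\ \nu\in U(\mathbb{Z}_{2n})\}\cong\mathbb{Z}_{2n}\rtimes U(\mathbb{Z}_{2n})$. An element $x\mapsto\nu x+m$ is central iff $\nu m'+m=\nu' m+m'$ for all $m'\in\mathbb{Z}_{2n}$ and all $\nu'\in U(\mathbb{Z}_{2n})$ (the linear parts commute automatically, $U(\mathbb{Z}_{2n})$ being abelian). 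Taking $\nu'=1$ forces $\nu m'=m'$ for every $m'$, hence $\nu=1$ (take $m'=1$); then with $\nu=1$ the condition reads $m=\nu' m$ for every $\nu'$, and $\nu'=-1$ gives $2m\equiv0\pmod{2n}$, i.e. $m\in\{0,n\}$. Therefore $Z(\widehat{{Aut(D_{n})}^{S}})=\{\mathbb{I}_{\mathbb{Z}_{2n}},T_{n}\}=\langle T_{n}\rangle$, which also re-confirms that the first factor above is exactly the centre of the product. All steps are elementary; the only thing requiring care is the congruence bookkeeping between $\mathbb{Z}_{n}$ and $\mathbb{Z}_{2n}$ — in particular the repeated use of ``$n$ odd $\Rightarrow$ every unit of $\mathbb{Z}_{2n}$ is odd, hence $\nu n\equiv n$'' and ``$\gcd(2,n)=1$'' — together with the choice of the test elements $\nu'=1$ and $\nu'=-1$ when isolating the centre.
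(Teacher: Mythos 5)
Your proposal is correct, and its core step is the same as the paper's: you conjugate $T_{n}$ by a general element $T_{2k}f_{\nu}$ and use that every unit of $\mathbb{Z}_{2n}$ is odd (so $\nu n\equiv n \bmod 2n$) to conclude $\langle T_{n}\rangle$ is central, which collapses the semidirect product $\langle T_{n}\rangle\rtimes\langle T_{2k}f_{\nu}\rangle$ into a direct product. Where you go beyond the paper is the claim $\langle T_{n}\rangle=Z(\widehat{{Aut(D_{n})}^{S}})$: the paper's proof only verifies the inclusion $\langle T_{n}\rangle\subseteq Z$ and leaves the reverse inclusion unaddressed, whereas you actually prove it by observing that $\gcd(2,n)=1$ forces $\langle T_{2},T_{n}\rangle=\langle T_{1}\rangle$, hence the whole group is the affine group of $\mathbb{Z}_{2n}$, and then computing the center of that affine group with the test elements $\nu'=1$ and $\nu'=-1$. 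This anticipates the identification with $Aff(\mathbb{Z}_{2n})$ that the paper only records afterwards (Proposition 3.20), but you derive it independently and without circularity, so your argument is strictly more complete than the one printed in the paper.
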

\begin{proof}
	To prove the proposition, it is sufficient to show that $<T_n>$ is the center of $\widehat{{Aut(D_{n})}^{S}}$.
	
	$$T_{2k}f_\nu T_n f_{{\nu}^{-1}}T_{-2k}(x)=T_{2k}f_\nu T_n(-2k{\nu}^{-1}+{\nu}^{-1}x)=T_{2k}f_\nu(-2k{\nu}^{-1}+{\nu}^{-1}x+n)=   $$
	
	$$=T_{2k}(-2k+n+x)=2k-2k+n+x=n+x=T_n(x).$$

\end{proof}

\quad

As a consequence of proposition (3.18) we have the following corollary,

\begin{coro}
	$\widehat{{Aut(D_{n})}^{S}}\cong \mathbb{Z}_{2}\times(\mathbb{Z}_{n}\rtimes U(\mathbb{Z}_{2n})).$
\end{coro}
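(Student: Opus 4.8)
The plan is simply to read off the isomorphism types of the three factors that appear in Proposition 3.18 and substitute them back into that decomposition. By Proposition 3.18 we already have the isomorphism $\widehat{{Aut(D_{n})}^{S}}\cong <T_{n}>\times(<T_{2}>\rtimes<f_{\nu}\mid \nu\in U(\mathbb{Z}_{2n})>)$ together with the fact that $<T_{n}>$ is the centre; so the direct–product structure is already in place and it suffices to identify each factor separately and to check that the semidirect action survives the identifications.

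First, since $2n\equiv 0$ in $\mathbb{Z}_{2n}$ one has $T_{n}\circ T_{n}=T_{2n}=\mathbb{I}_{\mathbb{Z}_{2n}}$ while $T_{n}\neq\mathbb{I}_{\mathbb{Z}_{2n}}$, so $<T_{n}>\cong\mathbb{Z}_{2}$. Next, the order of $T_{2}$ equals the additive order of $2$ in $\mathbb{Z}_{2n}$, namely $2n/\gcd(2,2n)=n$, and (as noted after Proposition 3.17) the group of all $T_{2k}$ coincides with $<T_{2}>$, with $T_{2k}$ depending only on $k\bmod n$; hence $<T_{2}>\cong\mathbb{Z}_{n}$. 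Finally, $\nu\mapsto f_{\nu}$ is a homomorphism from the multiplicative group $U(\mathbb{Z}_{2n})$ into $Sym(\mathbb{Z}_{2n})$, because $f_{\nu}f_{\nu'}=f_{\nu\nu'}$, and it is injective since $f_{\nu}=\mathbb{I}_{\mathbb{Z}_{2n}}$ forces $\nu\cdot 1=1$; thus $<f_{\nu}\mid\nu\in U(\mathbb{Z}_{2n})>\cong U(\mathbb{Z}_{2n})$.

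It then remains to see that $<T_{2}>\rtimes<f_{\nu}>$ becomes $\mathbb{Z}_{n}\rtimes U(\mathbb{Z}_{2n})$ under these identifications. Writing $T_{2k}\leftrightarrow k\in\mathbb{Z}_{n}$ and $f_{\nu}\leftrightarrow\nu\in U(\mathbb{Z}_{2n})$, relation (3.32) gives $f_{\nu}T_{2k}f_{\nu^{-1}}=T_{2k\nu}$, so the conjugation action of $\nu$ is $k\mapsto k\nu\pmod n$; this is well defined as an action of $U(\mathbb{Z}_{2n})$ on $\mathbb{Z}_{n}$ precisely because $k\nu\bmod n$ depends only on $\nu\bmod n$, and for $n$ odd the Chinese Remainder Theorem gives a ring isomorphism $\mathbb{Z}_{2n}\cong\mathbb{Z}_{2}\times\mathbb{Z}_{n}$, hence $U(\mathbb{Z}_{2n})\cong U(\mathbb{Z}_{n})$, so up to that isomorphism the action is the usual multiplicative one. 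Substituting the three identifications into the decomposition of Proposition 3.18 yields $\widehat{{Aut(D_{n})}^{S}}\cong\mathbb{Z}_{2}\times(\mathbb{Z}_{n}\rtimes U(\mathbb{Z}_{2n}))$.

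All of the computations above are elementary arithmetic in $\mathbb{Z}_{2n}$; the only point that requires genuine care — and therefore the main obstacle — is verifying that the conjugation action descends correctly to the finite cyclic quotient $\mathbb{Z}_{n}$ and agrees with the action meant in the statement, which is exactly the place where the hypothesis that $n$ is odd enters.
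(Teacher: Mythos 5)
Your proof is correct and follows the same route the paper intends: the corollary is stated as an immediate consequence of Proposition 3.18, obtained exactly by identifying $<T_{n}>\cong\mathbb{Z}_{2}$, $<T_{2}>\cong\mathbb{Z}_{n}$ and $<f_{\nu}\mid\nu\in U(\mathbb{Z}_{2n})>\cong U(\mathbb{Z}_{2n})$ and checking via (3.32) that the conjugation action descends to multiplication on $\mathbb{Z}_{n}$. Your write-up supplies the verifications the paper leaves implicit, and they are all sound.
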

	
\quad

A further property concludes example 1.

\begin{prop}
	For $n$ odd
	
	$$\widehat{{Aut(D_{n})}^{S}}\cong <T_1>\rtimes <f_\nu > \cong \mathbb{Z}_{2n}\rtimes GL(\mathbb{Z}_{2n})=Aff(\mathbb{Z}_{2n}).$$
	
	Moreover, we have $\Big|\widehat{{Aut(D_{n})}^{S}}\Big|=2n\phi(2n)=2n\phi(n).$
\end{prop}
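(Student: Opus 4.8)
The plan is to deduce the statement from the preceding proposition, which already gives $\widehat{{Aut(D_{n})}^{S}}\cong \langle T_{n}\rangle\times(\langle T_{2}\rangle\rtimes\langle f_\nu\mid \nu\in U(\mathbb{Z}_{2n})\rangle)$, together with the elementary remark that, $n$ being odd, the two translations $T_n$ and $T_2$ generate the full cyclic translation group $\langle T_1\rangle$ of $\mathbb{Z}_{2n}$. So the first move is to record that $\widehat{{Aut(D_{n})}^{S}}$ is generated by $T_n$, $T_2$ and the affinities $f_\nu$, $\nu\in U(\mathbb{Z}_{2n})$. Since $\gcd(2,n)=1$, choose $a,b\in\mathbb{Z}$ with $an+2b=1$; then $T_1=T_n^{\,a}T_2^{\,b}$ lies in the group, while $T_n=T_1^{\,n}$ and $T_2=T_1^{\,2}$, so $\langle T_n,T_2\rangle=\langle T_1\rangle$. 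Because $T_1^{\,m}=T_m=\mathbb{I}_{\mathbb{Z}_{2n}}$ exactly when $2n\mid m$, the subgroup $\langle T_1\rangle$ is cyclic of order $2n$, and $\widehat{{Aut(D_{n})}^{S}}=\langle T_1,\ f_\nu:\nu\in U(\mathbb{Z}_{2n})\rangle$.

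Next I would verify the internal semidirect product criterion for $\langle T_1\rangle$ and $\langle f_\nu\rangle$. For normality of $\langle T_1\rangle$ it suffices to conjugate by generators: conjugation by $T_1$ is trivial, and a one-line computation gives $f_\nu T_1 f_\nu^{-1}=T_\nu\in\langle T_1\rangle$ (evaluate at $x$: $f_\nu(\nu^{-1}x+1)=x+\nu$); applying this with $\nu^{-1}$ in place of $\nu$ yields $f_\nu\langle T_1\rangle f_\nu^{-1}=\langle T_1\rangle$, so $\langle T_1\rangle\unlhd\widehat{{Aut(D_{n})}^{S}}$. For triviality of the intersection, if $T_k=f_\nu$ then evaluating at $0$ forces $k=0$, hence $T_k=\mathbb{I}_{\mathbb{Z}_{2n}}$, so $\langle T_1\rangle\cap\langle f_\nu\rangle=\{\mathbb{I}_{\mathbb{Z}_{2n}}\}$. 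For exhaustion, the relation $f_\nu T_k=T_{k\nu}f_\nu$ already recorded in (3.32) lets one rewrite any product of generators as a single $T_k f_\nu$, whence $\langle T_1\rangle\langle f_\nu\rangle=\widehat{{Aut(D_{n})}^{S}}$. Finally $\langle f_\nu:\nu\in U(\mathbb{Z}_{2n})\rangle\cong U(\mathbb{Z}_{2n})$ via $f_\nu\mapsto\nu$ (injective since $f_\nu(1)=\nu$, multiplicative since $f_\nu f_\mu=f_{\nu\mu}$).

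Putting these together gives $\widehat{{Aut(D_{n})}^{S}}=\langle T_1\rangle\rtimes\langle f_\nu\rangle\cong\mathbb{Z}_{2n}\rtimes U(\mathbb{Z}_{2n})$, and since the conjugation action $f_\nu:T_k\mapsto T_{k\nu}$ is the standard action of units on $\mathbb{Z}_{2n}$ — equivalently $T_k f_\nu$ acts as $x\mapsto\nu x+k$ — this is exactly $Aff(\mathbb{Z}_{2n})=\mathbb{Z}_{2n}\rtimes GL(\mathbb{Z}_{2n})$ after identifying $GL(\mathbb{Z}_{2n})$ with $U(\mathbb{Z}_{2n})$. For the order, $|\langle T_1\rangle|=2n$ and $|\langle f_\nu\rangle|=|U(\mathbb{Z}_{2n})|=\phi(2n)$, whence $|\widehat{{Aut(D_{n})}^{S}}|=2n\,\phi(2n)$; and $n$ odd gives $\phi(2n)=\phi(2)\phi(n)=\phi(n)$, so $|\widehat{{Aut(D_{n})}^{S}}|=2n\phi(n)$. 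The only step that needs genuine care is making sure that the semidirect product produced by the internal criterion really is the affine group with its natural action and not some twisted variant; this is settled by the explicit formula $T_kf_\nu(x)=\nu x+k$, which shows no nontrivial cocycle is hidden. Everything else is bookkeeping on top of the preceding proposition and the relations (3.31)--(3.32).
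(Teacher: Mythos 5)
Your proposal is correct and follows essentially the same route as the paper: both arguments start from the decomposition $\widehat{{Aut(D_{n})}^{S}}\cong \langle T_{n}\rangle\times(\langle T_{2}\rangle\rtimes\langle f_\nu\rangle)$ of the preceding proposition and use the oddness of $n$ to combine $T_n$ with the even translations and obtain $T_1$ (the paper via $T_{2k}T_n=T_{2k+n}$ with $2k+n\equiv 1$, you via B\'ezout), concluding that the group is all of $Aff(\mathbb{Z}_{2n})$. You merely spell out in full the semidirect-product verification and the order count that the paper leaves implicit.
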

\begin{proof}
	
According to Proposition 3.18 we have that $T_{2k}f_{\nu}T_{n}=T_{2k+n}f_{\nu}$. From this it follows that $T_{1+2k} \in \widehat{{Aut(D_{n})}^{S}}$ since $T_1= T_{2n+1}$,  in particular these translations of an odd quantity exchange orbits.

\end{proof}

\quad

\title{2° Case : n even. } 

\quad

Choosing the same representatives as in the previous example, the action of the group $D_n$ on $\mathbb{Z}_{2n}$ and its stabilizers are the same. So there is nothing to say about it. Instead the normalizers are non-trivial, in fact after simple algebraic calculations we have that

\begin{equation}
	N_{D_n}(<\sigma>)=\{ \mathbb{I}_{D_{n}}, \rho\sigma, \rho^{\frac{n}{2}} ,\rho^{\frac{n}{2}}\sigma           \}\cong V
\end{equation}                            

\begin{equation}
	N_{D_n}(<\rho\sigma>)=\{ \mathbb{I}_{D_{n}}, \rho\sigma, \rho^{\frac{n}{2}} ,\rho^{\frac{n}{2}+1}\sigma           \}.	\cong V
\end{equation}

\quad

Unlike the odd-numbered case, as it is impossible for the stabilizer $<\sigma>$ to be conjugated to the stabilizer $<\rho\sigma>$ consequently $Sim(I)^{\{ \mathbb{I}_{D_n}\}}=\mathbb{I}_{D_n}$. This implies that $\{ \gamma_{i}^{\phi_i}\}= Sim(I)^{Aut(D_n)^S}$ so assumption (I) is trivially verified.

\quad

Now we calculate $Aut(D_n)^S$ and consequently $Sim(I)^{Aut(D_n)^S}  $. It is verified that with the same calculation method done in the previous example

$$\mathcal{T}^{2k}(<\sigma>)=<\sigma>^{\rho^{k}} \hspace{0.5cm}\mathcal{T}^{2k}(<\rho\sigma>)=<\rho\sigma>^{\rho^{k}}$$

$$\mathcal{T}^{1+2k}(<\sigma>)=<\rho\sigma>^{\rho^{k}} \hspace{0.5cm}\mathcal{T}^{1+2k}(<\rho\sigma>)=<\sigma>^{\rho^{1+k}}$$

$$\phi_{\nu}(<\sigma>)=<\sigma> \hspace{0.5cm} \phi_{\nu}(<\rho\sigma>)=<\rho\sigma>^{\rho^{\frac{\nu-1}{2}}  }  . $$

These results lead us to the following conclusions

\begin{prop}
	
	\quad
	
	\begin{enumerate}
		 \item $Aut(D_n)^S=Aut(D_n)  \hspace{0.5cm} KerF=<\mathcal{T}^{2k}, \phi_{\nu}| k \in \mathbb{Z}_n, \nu \in U(\mathbb{Z}_n)>$
	
	\quad	
		
		 \item  $|Aut(D_n)/<\mathcal{T}^{2k}, \phi_{\nu}| k \in \mathbb{Z}_n, \nu \in U(\mathbb{Z}_n)>|=2 \hspace{0.5cm} <\mathcal{T}^{2k}, \phi_{\nu}> \lhd Aut(D_n)$ 
		 
	\quad	 
		 
		 \item $ Aut(D_n)^{O}=KerF \Rightarrow Sim(I)^{Aut(D_n)^{S}}=<\tau=(0,1)>=\{ \gamma_{i}^{\phi_i}\}. $

	\end{enumerate}
\end{prop}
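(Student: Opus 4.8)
The plan is to establish the three items in turn, relying on the stabilizer computations displayed immediately above the statement, on the composition rules for $Aut(D_n)$ recalled earlier (the identity $\mathcal{T}^{k}\phi_{\nu}\,\mathcal{T}^{l}\phi_{\upsilon}=\mathcal{T}^{k+\nu l}\phi_{\nu\upsilon}$ and $\mathcal{T}^{n}=\mathbb{I}_{D_n}$, ${\phi_\nu}^{o(\nu)}=\mathbb{I}_{D_n}$), and on the general machinery of Sections~1 and~2.

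For part (1): $Aut(D_n)$ is generated by the $\mathcal{T}^{k}$ and the $\phi_\nu$, and the displayed identities $\mathcal{T}^{2k}(\langle\sigma\rangle)=\langle\sigma\rangle^{\rho^{k}}$, $\mathcal{T}^{2k}(\langle\rho\sigma\rangle)=\langle\rho\sigma\rangle^{\rho^{k}}$, $\mathcal{T}^{1+2k}(\langle\sigma\rangle)=\langle\rho\sigma\rangle^{\rho^{k}}$, $\mathcal{T}^{1+2k}(\langle\rho\sigma\rangle)=\langle\sigma\rangle^{\rho^{1+k}}$, $\phi_\nu(\langle\sigma\rangle)=\langle\sigma\rangle$, $\phi_\nu(\langle\rho\sigma\rangle)=\langle\rho\sigma\rangle^{\rho^{(\nu-1)/2}}$ show that each generator sends every stabilizer to a conjugate of a stabilizer; since $Aut(G)^{S}$ is a subgroup of $Aut(G)$ (Section~1), this forces $Aut(D_n)^{S}=Aut(D_n)$. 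For $KerF$ I would use that, $n$ being even, $\langle\sigma\rangle$ and $\langle\rho\sigma\rangle$ are not conjugate in $D_n$, so $Sim(I)^{\{\mathbb{I}_{D_n}\}}=\{\mathbb{I}_{I}\}$; hence $KerF$ is exactly the set of automorphisms whose induced permutation is $\mathbb{I}_{I}$, that is, the orbit-fixing automorphisms $Aut(D_n)^{O}$. Reading the displayed identities, $\mathcal{T}^{2k}$ and $\phi_\nu$ fix the two orbit indices while $\mathcal{T}^{1+2k}$ interchanges them, so $KerF=Aut(D_n)^{O}=\langle \mathcal{T}^{2k},\phi_\nu\mid k\in\mathbb{Z}_n,\ \nu\in U(\mathbb{Z}_n)\rangle$, the right-hand set being closed under the recalled composition rules.

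For part (2): the decisive point is that $n$ even forces every $\nu\in U(\mathbb{Z}_n)$ to be odd, so $k+\nu l\equiv k+l\pmod 2$ in the composition rule. I would then define $\varepsilon\colon Aut(D_n)\to\mathbb{Z}_2$ by $\mathcal{T}^{k}\phi_\nu\mapsto k\bmod 2$, which is well defined precisely because $n$ is even (so that reduction $\mathbb{Z}_n\to\mathbb{Z}_2$ makes sense) and because the pair $(k,\nu)$ is determined by the automorphism, and is a surjective homomorphism by the parity computation just made. Its kernel is exactly $\langle\mathcal{T}^{2k},\phi_\nu\rangle$, so this subgroup is normal and of index $2=|\mathbb{Z}_2|$; alternatively one computes $|Aut(D_n)|=n\phi(n)$ and $|\langle\mathcal{T}^{2k},\phi_\nu\rangle|=(n/2)\phi(n)$ directly.

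For part (3): the hypothesis $Aut(D_n)^{O}=KerF$ is exactly what part (1) gives. Applying the first-isomorphism identification $Aut(G)^{S}/KerF\cong Sim(I)^{Aut(G)^{S}}/Sim(I)^{\{\mathbb{I}_{G}\}}$ from Section~2, together with $Sim(I)^{\{\mathbb{I}_{D_n}\}}=\{\mathbb{I}_{I}\}$ and $Aut(D_n)^{S}/KerF\cong\mathbb{Z}_2$ from parts (1)--(2), shows $Sim(I)^{Aut(D_n)^{S}}$ is a two-element subgroup of $Sym(\{0,1\})$, i.e. $\langle\tau\rangle$ with $\tau=(0,1)$ (and $\tau$ is realized already by $\mathcal{T}^{1}$). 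Finally, since $\langle\sigma\rangle$ and $\langle\rho\sigma\rangle$ lie in distinct $D_n$-conjugacy classes, each automorphism of $D_n$ has a unique induced permutation, so the chosen family $\{\gamma_i^{\phi_i}\}$ coincides with the full set $Sim(I)^{Aut(D_n)^{S}}=\langle\tau\rangle$. The one genuinely delicate step is the well-definedness of $\varepsilon$ in part (2): it requires simultaneously that $n$ is even (for the target reduction) and that units mod $n$ are odd (for the homomorphism property); the rest is bookkeeping with the relations for $Aut(D_n)$ and the structural results of Section~2.
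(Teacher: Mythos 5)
Your proposal is correct and follows essentially the same route as the paper, which simply reads the three claims off the displayed stabilizer computations ($\mathcal{T}^{2k}$ and $\phi_{\nu}$ conjugate each stabilizer into itself, $\mathcal{T}^{1+2k}$ swaps the two non-conjugate stabilizers) and the general results of Sections 1--2. The only material you add beyond what the paper leaves implicit is the explicit parity homomorphism $\varepsilon\colon \mathcal{T}^{k}\phi_{\nu}\mapsto k\bmod 2$, which is a clean formalization of the index-2 claim in part (2) and is valid precisely for the reasons you state (units mod an even $n$ are odd, so $k+\nu l\equiv k+l \pmod 2$).
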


\quad

We now test whether assumption (II) holds true.

\quad

If assumption (II) were true, then by Theorem 3.13 we would have

$$ \widehat{{Aut(D_{n})}^{S}}\cong <f_{nS}>\rtimes <f_{g_{\gamma^{ \mathcal{T}^{k}\phi_{\nu}}}S}>,$$

since $\sigma^{\mathbb{I}_{D_{n}}}=\mathbb{I}_{I}$ and thus $<f_{g_{\sigma^{\mathbb{I}_{D_{n}}}}S}>=\{\mathbb{I}_{\mathbb{Z}_{2n}}\}.$

\quad

We know that all $f_{g_{\gamma^{ \mathcal{T}^{k}\phi_{\nu}}}S}   $ where $\gamma^{ \mathcal{T}^{k}\phi_{\nu}} \in <\tau>$ are generated by the $\mathcal{T}^{k}$-invariants and $\phi_{\nu} $-invariants where $g_{\gamma^{ \mathcal{T}^{k}\phi_{\nu}}} \notin N$.
While for $f_{nS}$ we choose representatives of $N/S$. We have that $N/S= \{ (\rho^{\frac{n}{2}}S_{0},\rho^{\frac{n}{2}}S_{1} ), (\rho^{\frac{n}{2}}S_{0},S_{1} ), (S_{0},\rho^{\frac{n}{2}}S_{1} ) ,(S_{0},S_{1})    \}$, where we recall that by $S_{0}$ and $S_{1}$ we mean $<\sigma>$ and $<\rho\sigma>$, respectively  ; we must therefore deduce that $ g_{\gamma^{ \mathcal{T}^{k}\phi_{\nu}}} \notin \{ (\rho^{\frac{n}{2}},\rho^{\frac{n}{2}} ), (\rho^{\frac{n}{2}}, 1), (1,\rho^{\frac{n}{2}} )     \}$.

\quad

Again with calculations similar to the odd case, it is verified that the $\phi_{\nu}$-invariant functions are invertible linear applications in $\mathbb{Z}_{2n}$ i.e., $f_{ g_{\gamma^{ \phi_{\nu}}}} =f_\nu$ with $\nu \in U(\mathbb{Z}_{n})$ with $f_\nu(x)={\nu}x$.

While $\mathcal{T}^{2k}$-invariants and $\mathcal{T}^{1+2k}$-invariants are respectively the translations in $\mathbb{Z}_{2n}$ of quantities $2k$ and $2k+1$, with $k \in \mathbb{Z}_{n}$ i.e. $f_{g_{\gamma^{ \mathcal{T}^{2k}}}}=T_{2k}$ with $T_{2k}(x)=x+2k$ and $T_{1+2k}(x)=x+1+2k$.
Hence we have that

$$ <f_{g_{\gamma^{ \mathcal{T}^{k}\phi_{\nu}}}S}>=<T_{k},f_{\nu}| k \in \mathbb{Z}_{n}, \frac{k}{2}\neq \frac{n}{2}\hspace{0.1cm}for\hspace{0.1cm} k\hspace{0.1cm} even,\hspace{0.1cm} \frac{k-1}{2}\neq \frac{n}{2} for\hspace{0.1cm} k\hspace{0.1cm} odd,\hspace{0.1cm}\nu \in U(\mathbb{Z}_{n})\hspace{0.1cm}with\hspace{0.1cm} \frac{\nu-1}{2}\neq \frac{n}{2}>  .$$

The last conditions on the second member of equality comes from the assumption that $g_{\gamma^{ \mathcal{T}^{k}\phi_{\nu}}} \notin N$.

 Now I consider the element $f_{{\bigl (\rho^{\frac{n}{2}},\rho^{\frac{n}{2}} \bigr)}S} \in <f_nS>$; it is verified that this is equal to the translation of a quantity $n$ i.e.,  $f_{{\bigl (\rho^{\frac{n}{2}},\rho^{\frac{n}{2}} \bigr)}S}=T_n$ and due to the fact that assumption (II) holds, $T_n \notin <T_k, f_\nu>$; from here we get the absurd since $T_1 \in <T_k, f_\nu>$ we get that $(T_{1})^{n}=T_{n}$.

\quad

We then showed that 

\begin{prop}
For even n we have that in $\widehat{{Aut(D_{n})}^{S}}  $ the assumption (II) is not satisfied.	
	
\end{prop}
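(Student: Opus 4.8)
The plan is to argue by contradiction, exploiting the fact that Assumption~(II) would force a semidirect product decomposition which clashes with the explicit description of the $S$-invariant functions obtained above. So I would begin by supposing that Assumption~(II) holds for $D_n$ with $n$ even. Since in this case $\sigma^{\mathbb{I}_{D_{n}}}=\mathbb{I}_{I}$ — so that $Sim(I)^{\{\mathbb{I}_{D_n}\}}=\{\mathbb{I}_{I}\}$ and hence $<f_{g_{\sigma^{\mathbb{I}_{D_{n}}}}S}>=\{\mathbb{I}_{\mathbb{Z}_{2n}}\}$ — Lemma~3.11 (equivalently, the forward implication of Theorem~3.13) would yield
$$\widehat{{Aut(D_{n})}^{S}}\cong <f_{nS}>\rtimes <f_{g_{\gamma^{\mathcal{T}^{k}\phi_{\nu}}}S}>,$$
and in particular the trivial intersection $<f_{nS}>\cap <f_{g_{\gamma^{\mathcal{T}^{k}\phi_{\nu}}}S}>=\{\mathbb{I}_{\mathbb{Z}_{2n}}\}$.

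Next I would pin down the two factors explicitly, reusing the computations of the odd case almost verbatim. On one side, evaluating the $\mathbb{I}_{D_n}$-invariant function attached to the representative $\bigl(\rho^{\frac n2},\rho^{\frac n2}\bigr)$ of $N/S$ — a legitimate choice because $\rho^{\frac n2}$ lies in both $N_{D_n}(<\sigma>)$ and $N_{D_n}(<\rho\sigma>)$ — one gets $f_{(\rho^{\frac n2},\rho^{\frac n2})S}=T_{n}$, so $T_n\in <f_{nS}>$, and $T_n\neq \mathbb{I}_{\mathbb{Z}_{2n}}$ since $n\not\equiv 0 \pmod{2n}$. On the other side, the same style of computation identifies the $\mathcal{T}^{k}$-invariants with the translations $T_k$ and the $\phi_\nu$-invariants with the linear maps $f_\nu$; here I would note that $k=1$ is admissible, because $\tfrac{k-1}{2}=0\neq\tfrac n2$ in $\mathbb{Z}_n$ and hence $g_{\gamma^{\mathcal{T}^{1}}}\notin N$, so that the $\mathcal{T}^{1}$-invariant $T_1$ lies in $<f_{g_{\gamma^{\mathcal{T}^{k}\phi_{\nu}}}S}>$.

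The contradiction would then be immediate: $T_1$ generates a cyclic subgroup of $\widehat{{Aut(D_{n})}^{S}}$ containing $T_1^{\,n}=T_n$, so $T_n$ also belongs to $<f_{g_{\gamma^{\mathcal{T}^{k}\phi_{\nu}}}S}>$, making $T_n$ a nontrivial element of $<f_{nS}>\cap <f_{g_{\gamma^{\mathcal{T}^{k}\phi_{\nu}}}S}>$ and contradicting the trivial intersection forced by the semidirect decomposition; hence Assumption~(II) cannot hold. I expect the only mildly delicate points to be the two identifications $f_{(\rho^{n/2},\rho^{n/2})S}=T_n$ and $f_{g_{\gamma^{\mathcal{T}^{k}}}S}=T_k$, but these are nothing more than repetitions of calculations already performed in the case $n$ odd; the genuine obstacle is conceptual rather than computational, namely recognising that the semidirect structure Assumption~(II) would impose forces the ``normalizer part'' $<f_{nS}>$ and the subgroup generated by the odd translation $T_1$ to meet trivially, which is absurd because $T_1^{\,n}=T_n$ already lies in the former.
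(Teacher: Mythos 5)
Your proposal is correct and follows essentially the same route as the paper: assume Assumption (II), invoke the resulting semidirect decomposition $\widehat{{Aut(D_{n})}^{S}}\cong <f_{nS}>\rtimes <f_{g_{\gamma^{\mathcal{T}^{k}\phi_{\nu}}}S}>$ (with the $Sim(I)^{\{\mathbb{I}_{D_n}\}}$ factor trivial since the two stabilizers are not conjugate for $n$ even), identify $f_{(\rho^{n/2},\rho^{n/2})S}=T_{n}$ in the normal factor and $T_{1}$ in the complement, and derive the contradiction from $T_{1}^{\,n}=T_{n}$ violating the trivial intersection. This matches the paper's argument step for step, including the key observations about the representatives of $N/S$ and the admissibility of $T_{1}$.
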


Thus the set $<T_{k},f_{\nu}| k \in \mathbb{Z}_{n}, \frac{k}{2}\neq \frac{n}{2}\hspace{0.1cm}for\hspace{0.1cm} k\hspace{0.1cm} even,\hspace{0.1cm} \frac{k-1}{2}\neq \frac{n}{2} for\hspace{0.1cm} k\hspace{0.1cm} odd,\hspace{0.1cm}\nu \in U(\mathbb{Z}_{n})\hspace{0.1cm}with\hspace{0.1cm} \frac{\nu-1}{2}\neq \frac{n}{2}>  $   is not a subgroup of $\widehat{{Aut(D_{n})}^{S}}  $ but representatives of the quotient group $\widehat{{Aut(D_{n})}^{S}}/<f_{nS}>$, so every element of $\widehat{{Aut(D_{n})}^{S}} $ can be written as $f_{g_{\gamma^{ \mathcal{T}^{k}\phi_{\nu}}}S}f_{nS}=f_{g_{\gamma^{ \mathcal{T}^{k}}}S}f_{n_{1}S}f_{g_{\gamma^{\phi_{\nu}}}S}f_{n_{2}S}$.

\quad

From this consideration, let us try to delve into the algebraic structure of  $\widehat{{Aut(D_{n})}^{S}} $  by beginning to describe in detail the elements of $<f_{nS}>$.

\quad

$$ f_{{\bigl ( 1,\rho^{\frac{n}{2}} \bigr)}S}:\mathbb{Z}_{2n} \longrightarrow \mathbb{Z}_{2n} $$
$$ \hspace{1.5cm}         2x\rightarrow 2x$$
$$ \hspace{3cm}        1+2x\rightarrow 1+2x+n$$

\quad

$$ f_{{\bigl (\rho^{\frac{n}{2}}, 1 \bigr)}S}:\mathbb{Z}_{2n} \longrightarrow \mathbb{Z}_{2n} $$
$$ \hspace{2cm}         2x\rightarrow 2x+n$$
$$ \hspace{3cm}        1+2x\rightarrow 1+2x.$$

\quad

Observing the fact that all invertibles in $\mathbb{Z}_{2n}$, similar to the $n$ odd case, can be rewritten as $\nu$ if $\nu$ is invertible in $\mathbb{Z}_{n}$ and $w=\nu+n$ if $w>n$ in $\mathbb{Z}$; then we have that $T_n f_{\nu}=f_{{\bigl (\rho^{\frac{n}{2}}, 1 \bigr)}S}f_{\nu+n}$. That is, all maps  $ f_{\nu+n}$ are invertible linear applications in $\mathbb{Z}_{2n}$ .

Further from this fact we can rewrite the elements of $f_{nS}$ in such a way: $ f_{{\bigl ( 1,\rho^{\frac{n}{2}} \bigr)}S}=f_{1+n}$, $ f_{{\bigl (\rho^{\frac{n}{2}}, 1 \bigr)}S}=T_{n}f_{1+n}$ and we'd already seen that $f_{{\bigl (\rho^{\frac{n}{2}},\rho^{\frac{n}{2}} \bigr)}S}=T_{n} $. Therefore, according to Proposition 3.2 

\begin{equation}
	<f_{nS}>=<T_{n}, f_{1+n}> \cong V.
\end{equation}

\quad

We also have that $< T_{n}, T_{k} | k \in \mathbb{Z}_{n}>=<T_{k} | k \in \mathbb{Z}_{2n}>$ and that $<T_{n}f_{\nu}, f_{\nu}, f_{1+n} | \nu \in U(\mathbb{Z}_{n})>=<f_{\nu}| \nu \in U(\mathbb{Z}_{2n})>$.

Moreover, we already know that the subgroup of translations in $\mathbb{Z}_{2n}$,  $<T_n>$ is normal in $\widehat{{Aut(D_{n})}^{S}} $. As we have seen in the $n$ odd case, it is verified that $<T_n>\cap<f_\nu>=\{\mathbb{I}_{\mathbb{Z}_{2n}}\}$ and $\widehat{{Aut(D_{n})}^{S}}=<T_n><f_\nu>$.

Then we conclude that

\begin{prop}
	For $n$ even
	
	$$\widehat{{Aut(D_{n})}^{S}}\cong <T_1>\rtimes <f_\nu > \cong \mathbb{Z}_{2n}\rtimes GL(\mathbb{Z}_{2n})=Aff(\mathbb{Z}_{2n}).$$
	
	Moreover, we have $\Big|\widehat{{Aut(D_{n})}^{S}}\Big|=2n\phi(2n).$
\end{prop}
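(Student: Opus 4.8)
The plan is to read the decomposition off from the generators already produced above, exactly as in the $n$ odd case but now feeding in the description $<f_{nS}>=<T_n,f_{1+n}>\cong V$ from (3.36). First I would recall, from the paragraph just preceding the statement, that every element of $\widehat{{Aut(D_n)}^S}$ can be written as $f_{g_{\gamma^{\mathcal{T}^k}}S}f_{n_1S}f_{g_{\gamma^{\phi_\nu}}S}f_{n_2S}$, hence as a product of maps chosen among the translations $T_k$ $(k\in\mathbb{Z}_n)$, the linear maps $f_\nu$ $(\nu\in U(\mathbb{Z}_n))$, the translation $T_n$, and the linear map $f_{1+n}$; thus $\widehat{{Aut(D_n)}^S}=<T_k,f_\nu,T_n,f_{1+n}>$. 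I would then invoke the two identities recorded immediately before the statement, namely $<T_n,T_k\mid k\in\mathbb{Z}_n>=<T_k\mid k\in\mathbb{Z}_{2n}>=<T_1>$ and $<T_nf_\nu,f_\nu,f_{1+n}\mid\nu\in U(\mathbb{Z}_n)>=<f_\nu\mid\nu\in U(\mathbb{Z}_{2n})>$ — the latter using that $1+n$ is odd and therefore a unit of $\mathbb{Z}_{2n}$, so that $f_{1+n}$ is one of the $f_\nu$ — to conclude that $\widehat{{Aut(D_n)}^S}=<T_1><f_\nu>$ as a product of the two subgroups.

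Next I would check the three defining conditions of an internal semidirect product. For normality of $<T_1>$: conjugating a translation $T_m$ by $f_\nu$ gives $f_\nu T_m f_\nu^{-1}=T_{m\nu}$, and conjugating $T_m$ by another translation leaves it fixed, so every conjugate of an element of $<T_1>$ by a generator of $\widehat{{Aut(D_n)}^S}$ is again a translation, i.e. $<T_1>\unlhd\widehat{{Aut(D_n)}^S}$ (as also observed in the text). For triviality of the intersection: if $T_m=f_\nu$ as maps on $\mathbb{Z}_{2n}$, evaluating at $x=0$ forces $m=0$, hence $<T_1>\cap<f_\nu>=\{\mathbb{I}_{\mathbb{Z}_{2n}}\}$. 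Finally $<T_1>\cong\mathbb{Z}_{2n}$ since $T_1$ has order $2n$, while $\nu\mapsto f_\nu$ is a homomorphism ($f_\nu f_{\nu'}=f_{\nu\nu'}$) which is injective ($f_\nu=\mathbb{I}$ forces $\nu=1$) and onto $<f_\nu>$, so $<f_\nu>\cong U(\mathbb{Z}_{2n})=GL(\mathbb{Z}_{2n})$. Assembling these, $\widehat{{Aut(D_n)}^S}=<T_1>\rtimes<f_\nu>\cong\mathbb{Z}_{2n}\rtimes GL(\mathbb{Z}_{2n})=Aff(\mathbb{Z}_{2n})$, the action being $f_\nu:T_m\mapsto T_{m\nu}$; and then $|\widehat{{Aut(D_n)}^S}|=|\mathbb{Z}_{2n}|\cdot|U(\mathbb{Z}_{2n})|=2n\,\phi(2n)$.

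The step I expect to cost the most care is the generation claim $\widehat{{Aut(D_n)}^S}=<T_k,f_\nu,T_n,f_{1+n}>$, i.e. that every element really is an affinity of $\mathbb{Z}_{2n}$. This requires gluing the explicit coset representatives $f_{g_{\gamma^{\mathcal{T}^k}\phi_\nu}S}$ of $\widehat{{Aut(D_n)}^S}/<f_{nS}>$ (the $T_{2k}$, $T_{1+2k}$ and $f_\nu$ computed above) to the Klein four-group $<f_{nS}>=<T_n,f_{1+n}>\cong V$ of (3.36), recognising $f_{1+n}$ as $x\mapsto(1+n)x$ so it sits inside $<f_\nu\mid\nu\in U(\mathbb{Z}_{2n})>$ and $T_n$ inside $<T_1>$; along the way one uses the composition rule $T_mf_\nu T_{m'}f_{\nu'}=T_{m+m'\nu}f_{\nu\nu'}$ (the analogue for arbitrary translations of the odd-case identity) to see that the product $<T_1><f_\nu>$ is closed, which is precisely what identifies it with the whole group. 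Everything after that is the routine verification of the semidirect-product axioms already set up in the $n$ odd case.
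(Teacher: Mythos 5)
Your proposal is correct and follows essentially the same route as the paper: the paper likewise assembles the group from the coset representatives $T_k$, $f_\nu$ together with $<f_{nS}>=<T_n,f_{1+n}>\cong V$, merges them into the full translation subgroup and the full unit subgroup $<f_\nu\mid \nu\in U(\mathbb{Z}_{2n})>$, and then checks normality, trivial intersection and generation exactly as you do. Your write-up merely makes explicit the verifications the paper leaves as "it is verified that", so there is nothing to add.
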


\quad

So in general the following are valid

\begin{coro}

	$$\widehat{{Aut(D_{n})}^{S}}\cong <T_1>\rtimes <f_\nu > \cong \mathbb{Z}_{2n}\rtimes GL(\mathbb{Z}_{2n})=Aff(\mathbb{Z}_{2n})$$
	
	Moreover, we have $\Big|\widehat{{Aut(D_{n})}^{S}}\Big|=2n\phi(2n).$
\end{coro}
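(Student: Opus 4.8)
The plan is to assemble this uniform statement from the two parity cases that have already been treated. Every admissible $n$ is either odd or even; for $n$ odd the isomorphism $\widehat{{Aut(D_{n})}^{S}}\cong <T_1>\rtimes <f_\nu > \cong \mathbb{Z}_{2n}\rtimes GL(\mathbb{Z}_{2n})=Aff(\mathbb{Z}_{2n})$ is exactly the content of the proposition established at the close of the $n$ odd case, and for $n$ even it is exactly the content of the proposition established at the close of the $n$ even case. Since these two cases exhaust all admissible $n$, the isomorphism holds with no parity hypothesis, which gives the first assertion.

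For the order, the $n$ even proposition already states $\big|\widehat{{Aut(D_{n})}^{S}}\big|=2n\phi(2n)$ verbatim, so nothing further is needed there. The $n$ odd proposition records the order as $2n\phi(n)$; to reconcile this I would simply invoke multiplicativity of Euler's totient: for $n$ odd one has $\gcd(2,n)=1$, hence $\phi(2n)=\phi(2)\phi(n)=\phi(n)$ and therefore $2n\phi(n)=2n\phi(2n)$. Thus in both parities $\big|\widehat{{Aut(D_{n})}^{S}}\big|=2n\phi(2n)$, which is the second assertion.

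I do not expect a genuine obstacle: the corollary is essentially a bookkeeping summary of the two preceding propositions. The one point worth spelling out, if one wants to be fully careful, is that the semidirect product occurring in the two cases is the same one — in each case $<f_\nu>$ acts on the translation subgroup $<T_1>\cong\mathbb{Z}_{2n}$ by $f_\nu T_1 f_\nu^{-1}=T_\nu$, i.e. by multiplication by the unit $\nu$, which is precisely the action of $GL(\mathbb{Z}_{2n})=U(\mathbb{Z}_{2n})$ that defines $Aff(\mathbb{Z}_{2n})$. Hence the common right-hand side $Aff(\mathbb{Z}_{2n})$ really denotes the same abstract group in both parities, so the statement is genuinely uniform and not merely a coincidence of notation.
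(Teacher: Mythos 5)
Your proposal is correct and matches the paper's intent exactly: the corollary is stated immediately after the two parity-case propositions as their common summary, with no separate proof, and the odd-case proposition already records the order in the form $2n\phi(2n)=2n\phi(n)$, so the totient reconciliation you supply is consistent with what the paper asserts. Your extra check that the $U(\mathbb{Z}_{2n})$-action on $\langle T_1\rangle$ is the same in both parities is a reasonable piece of added care but not a departure from the paper's route.
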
   

\section{Algebraic sieves}

Consider a finite ordered list of finite groups $G:=(G_{i})_{i=0}^{l}$ and consider the actions of $G_i$ on a finite nonempty set $X$.

$$ G_i \times X \longrightarrow X$$

$$ (g_{i},x) \rightarrow g_{i}\cdot x .$$

\quad

Now we fix the representatives of the orbits induced by $G_i$ on $X$ i.e $x_{i}^{j} \in X$ with $j \in L_{i} :=\{0,..., j,...,l_{i} -1\}$ with $l_{i}=\{$Number of orbits of $X$ induced by $G_{i}\}$ as $i$ varies in $L:=\{0,...,i,...,l\}$.

\quad

In this context we can give the following definitions

\begin{definition}
	A $multiple$ action on X, with $X \neq \emptyset$, is the data of a finite list of actions of groups $G=(G_{i})_{i=0}^{l}$, which we can define $(G\times X, \rho):=(G_{i} \times X, \rho_{i})_{i=0}^{l}$.
\end{definition}

\title{Notations:}

We can denote any ordered $l$-uple of representatives, previously fixed, $(x_{i}^{j_{i}})_{i=0}^l$ by $j_i \in L_i$ in such a way $x^J$ with $J=(j_{i})_{i=0}^l$ and $J \in \prod_{i=0}^l L_{i}$.

\quad

\begin{definition}
	Given a multiple action of a list of $G=(G_{i})_{i=0}^{l}$ groups on $X$ and given an $l$-uple of $x^J$ representatives of $G_i$ induced orbits for some $J \in \prod_{i=0}^l L_{i} $.
	A $G$-induced algebraic sieve of $x^J$ basis points consists of the partition of $X$ into the following sets $\cup_{i=0}^l  Orb_{G_{i}}(x_{i}^{j_{i}})$ , $\overline{{(\cup_{i=0}^l  Orb_{G_{i}}(x_{i}^{j_{i}}))}}$ where the latter is the complementary with respect to the set $X$. That is, in the formulas $X=\cup_{i=0}^l Orb_{G_{i}}(x_{i}^{j_{i}})\bigsqcup \overline{\cup_{i=0}^l  Orb_{G_{i}}(x_{i}^{j_{i}})}$. Furthermore, we can denote such a partition as $\mathcal{C}(G; x^J)=\bigl\{ \cup_{i=0}^l  Orb_{G_{i}}(x_{i}^{j_{i}}), \overline{\cup_{i=0}^l  Orb_{G_{i}}(x_{i}^{j_{i}})}\bigr\} $ . 
	
\end{definition}

\quad

\begin{definition}
	Given an algebraic sieve $\mathcal{C}(G; x^J) $, if $G=(G_{i})_{i=0}^{l}$ the set $\cup_{i=0}^l Orb_{G_{i}}(x_{i}^{j_{i}})$  is called a $G$-covering $\mathcal{A}(G; x^J)$. Such a set is said to be a complete $G$-covering if $\mathcal{A}(G; x^J)=X$, that is, a covering of $X$.

\end{definition}
   
   	So we can write an algebraic sieve in such a way        
   	
   	\begin{equation}
   		\mathcal{C}(G; x^J)=\bigl\{ \mathcal{A}(G; x^J) , \overline{\mathcal{A}(G; x^J)}         \bigr\}
   	\end{equation}  

\quad
   
   \subsection{Examples of algebraic sieves}

   \quad

 \quad
 
Before describing such an example we need the following definitions

\begin{definition}
	An algebraic sieve $\mathcal{C}(G; x^J) $ is said to be dihedral if $G=(D_{n_{i}})_{i=0}^{l}$ with dihedral group $D_{n_{i}}$.
\end{definition} 

\begin{definition}
	An algebraic sieve $\mathcal{C}(G; x^J) $ is said to be  a Goldbach's sieve if the complement of its $G$-covering $\overline{\mathcal{A}(G; x^J)}$  is in biunivocal correspondence with  all pairs of primes satisfying Goldbach's conjecture for some even number $N$ and if necessary with the pair 1 and N-1 if N-1 is a prime number. 
\end{definition}

\quad

Using the definitions just explained , we will now show an example of an algebraic dihedral sieve, which would be Goldbach's if the conjecture were satisfied for a certain number $2n$.

\quad

Let us consider, in $\mathbb{Z}_{2n}$, the set that has as class representatives all the prime numbers in $\mathbb{Z}$ that are smaller than  $\sqrt{2n}$ thus indexed .

\begin{equation}
	2=p_{0}<p_{1}<p_{2}<...<p_{j}<...<p_{t} \hspace{0.5cm} 2<q_{t+1}<...<q_{k}<...<q_{l}\le \bigl[\sqrt{2n}\bigr] \hspace{0.2cm} with \hspace{0.2cm} p_{j}|2n\hspace{0.2cm} q_{k} \nmid 2n 
\end{equation}
Then we consider the following list of groups $D_{p,q}^{2n}:=\{ D_{\frac{2n}{p_{j}}},D_{\bigl[\frac{2n}{q_{k}}\bigr]-1}| 0\le j \le t$, $  t+1\le k \le l \}$ accompanied by the following list of group actions on $\mathbb{Z}_{2n}$

\begin{equation}
	D_{\frac{2n}{p_{j}}}\times \mathbb{Z}_{2n} \longrightarrow \mathbb{Z}_{2n} 
\end{equation}    

$$\hspace{2.5cm}(\rho^{k},x)\rightarrow \rho^{k}\cdot x=x+kp_{j}  $$

$$\hspace{2cm}(\sigma, x)\rightarrow \sigma\cdot x=-x $$

$$\hspace{3cm}(\rho^{k}\sigma,x)\rightarrow \rho^{k}\sigma \cdot x=-x+kp_{j} $$

\quad

with $0 \le j \le t$.

\quad

While in the case $q_{k} \nmid 2n$ we need a more laborious description. We consider the following subsets of $\mathbb{Z}_{2n}$ \hspace{0.2cm}   $\mathcal{C}_{k}=\{ (2+m)q_k \in \mathbb{Z}_{2n} | 0 \le m \le \bigl[\frac{2n}{q_{k}}\bigr]-2 \hspace{0.2cm } in \hspace{0.2cm }  \mathbb{Z} \}$, $\mathcal{C}_{-k}=\{ -(2+m)q_k \in \mathbb{Z}_{2n} | 0 \le m \le \bigl[\frac{2n}{q_{k}}\bigr]-2 \hspace{0.2cm } in \hspace{0.2cm }  \mathbb{Z} \}$. The sets $\mathcal{C}_{k}$ and $\mathcal{C}_{-k}$, as they are constructed, are disjointed; by joining them we can consider them as the set 

$\mathcal{Q}_{k}=\mathcal{C}_{k} \bigsqcup \mathcal{C}_{-k}       =\{ \pm(2+m)q_k \in \mathbb{Z}_{2n} | 0 \le m \le \bigl[\frac{2n}{q_{k}}\bigr]-2 \hspace{0.2cm } in \hspace{0.2cm }  \mathbb{Z} \}$.

We immediately see that $|\mathcal{Q}_{k}|=\Big| D_{\bigl[\frac{2n}{q_{k}}\bigr]-1}\Big|$ so surely there is a biunivocal correspondence $f:D_{\bigl[\frac{2n}{q_{k}}\bigr]-1}\longrightarrow \mathcal{Q}_{k}$ in particular we choose this

\begin{equation}
	f(\rho^{m})=(2+m)q_{k} \hspace{0.2cm} in \hspace{0.1cm} particular \hspace{0.1cm} f(\rho^{0}=1)=2q_{k}
	\quad
	f(\sigma \rho^{m})=-(2+m)q_{k} \hspace{0.2cm} in \hspace{0.1cm} particular \hspace{0.1cm} f(\sigma \rho^{0}=\sigma)=-2q_{k}.
\end{equation}

The action of group $D_{\bigl[\frac{2n}{q_{k}}\bigr]-1}$ on $\mathbb{Z}_{2n}$ will be induced by the $f$ in (4.4) in following way

\begin{equation}
	D_{\bigl[\frac{2n}{q_{k}}\bigr]-1}\times \mathbb{Z}_{2n} \longrightarrow \mathbb{Z}_{2n}  
\end{equation}

$$\hspace{7cm}(\sigma^{h}\rho^{k},x)\rightarrow \sigma^{h}\rho^{k}\cdot x=\begin{cases}
	f(\sigma^{h}\rho^{k}f^{-1}(x)) \hspace{0.1cm } if \hspace{0.1cm } x \in \mathcal{Q}_{k}\hspace{0.5cm} h \in \{  0,1\} \\ (-1)^{h}x\hspace{0.1cm } otherwise.
\end{cases}  $$

\quad

The next step is to fix the representatives for each orbit that generates the list group $D_{p,q}^{2n}$.

We begin with groups of the type 	$D_{\frac{2n}{p_{j}}}$ with $ 0\le j \le t$ the choice of representatives is as follows

\begin{equation}
	x_{j}^{n_{j}}=n_{j} \hspace{0.2cm}with \hspace{0.2cm}0 \le n_{j} \le p_{j}-1 \hspace{0.2cm}n_{j}\in L_{j} .
\end{equation}

While the representatives associated with the groups $D_{\bigl[\frac{2n}{q_{k}}\bigr]-1}$   with $ t+1\le k \le l$ are  ,by indexing the following orbits in this way,

\begin{equation}
	Orb_{D_{\bigl[\frac{2n}{q_{k}}\bigr]-1}}(y_{k}^{n_{k}})=\{y_{k}^{n_{k}},-y_{k}^{n_{k}} \} \hspace{0.2cm}with \hspace{0.2cm} y_{k}^{n_{k}} \notin \mathcal{Q}_{k} \hspace{0.2cm} 1\le n_{k} \le n-\Big(\Bigl[\frac{2n}{q_{k}}\Bigr]-1\Big) \hspace{0.2cm} and\hspace{0.2cm}  n_{k}\in L_{k}
\end{equation} 

the following

\begin{equation}
	x_{k}^{0}=2q_{k} \hspace{0.2cm}	x_{k}^{n_{k}}=z_{k}^{n_{k}} \hspace{0.2cm} with \hspace{0.2cm} z_{k}^{n_{k}} \in  \{y_{k}^{n_{k}},-y_{k}^{n_{k}} \} \hspace{0.2cm} fixed.
\end{equation}

\quad

And now we can set the basic points

$$ x^{J}=(x_{0}^{0},x_{1}^{0},...,x_{j}^{0},...,x_{t}^{0},x_{t+1}^{0},...,x_{k}^{0},...,x_{l}^{0})$$

$$x^{J}=(\underbrace{0, 0,..., 0,..., 0}_{(t+1)-times}, 2q_{t+1},..., 2q_{k},..., 2q_{l})$$

\begin{equation}
	x^J=((0)_{i=0}^{t},(2q_{k})_{k=t+1}^{l})=:x_{0,q} \hspace{0.5cm} J=\{ 0\}_{i=0}^{t}=\overline{0} \in \prod_{i=0}^l L_{i}.
\end{equation}

\quad

We then have the necessary elements to construct the following algebraic dihedral sieve with the actions defined in (4.3) and (4.5); these actions generate the following $D_{p,q}^{2n}$-covering

\begin{equation}
	\mathcal{A}(D_{p,q}^{2n}; x_{0,q})=(\cup_{i=0}^t Orb_{D_{\frac{2n}{p_{j}}}}(0))\bigcup (\cup_{i=t+1}^l Orb_{D_{\bigl[\frac{2n}{q_{k}}\bigr]-1}}(2q_{k})).  
\end{equation}  

\quad

And we get the following dihedral sieve, which is Goldbach's if the conjecture holds for $2n$

\begin{equation}
	\mathcal{C}(D_{p,q}^{2n}; x_{0,q} )=\bigl\{ \mathcal{A}(D_{p,q}^{2n}; x_{0,q}), \overline{\mathcal{A}(D_{p,q}^{2n}; x_{0,q})}         \bigr\}.
\end{equation}

 \quad
 
 \section{Multi-invariant functions}
 
 \quad
 \begin{definition}
 	Consider a list of groups $G=(G_{i})_{i=0}^{l}$ and an associated list of actions on a set $X$,  $(G\times X, \rho)=(G_{i}\times X, \rho_{i})_{i=0}^{l}$.
 	We also consider a list of automorphisms $\phi:=(\phi^{(i)})_{i=0}^{l}$ with $\phi^{(i)}\in Aut(G_{i})$.
 	Then a function  $f  \in Sym(X)$ is said to be $\phi$-invariant or multi-invariant with respect to $\phi$ if
 	
 	\begin{equation}
 		f( \rho_{i}(g_{i},x))=\rho_{i}(\phi^{(i)}(g_{i}),f(x)) \hspace{0.2cm}\forall i=0,1,...,l.
 	\end{equation}
 	
 \end{definition}

\quad

If it is not necessary to specify the name of an action $\rho_i$ with each element $x \in X$. We will use the usual right-hand action symbol ' $\cdot$ '(as, moreover, we have used so far). So (5.1) becomes thus

\begin{equation}
	f( g_{i}\cdot x)=\phi^{(i)}(g_{i})\cdot f(x) \hspace{0.2cm}\forall i=0,1,...,l.
\end{equation}

\begin{os}
	If f is $\phi$-invariant for some $\phi \in \prod_{i=0}^l Aut(G_i)^{S_i}$,  recall that by $S_i$ we want to mean the list do stabilizers induced by the action of $G_i$ i.e. $S_i=(Stab_{G_i}(x_{i}^{j_i}))_{{j_i}=0}^{l_{i}-1}$ with $l_i$ is the number of orbits of the action of $G_i$ , then $f \in \bigcap_{i=0}^{l} \widehat{Aut(G_i)^{S_i}}$. Such a set makes sense indeed is a subgroup of $Sym(X)$, since every group of $Aut(G_i)^{S_i}$ , as it is defined is a subgroup of $Sym(X)$.
\end{os}
          
\quad

Now we see how an $\phi$-invariant $f$ acts on the algebraic sieves $ \mathcal{C}(G; x^J)=\bigl\{ \mathcal{A}(G; x^J) , \overline{\mathcal{A}(G; x^J)}         \bigr\}$;  we know that $ \mathcal{A}(G; x^J) = \cup_{i=0}^l  Orb_{G_{i}}(x_{i}^{j_{i}})$ with  $J \in \prod_{i=0}^l L_{i} $  by definition of $f$ (see (1.2)) we have that $f(Orb_{G_{i}}(x_{i}^{j_{i}}))=Orb_{G_{i}}(g_{\gamma^{\phi^{(i)}}}\cdot x_{i}^{\gamma^{\phi^{(i)}}(j_{i})})=Orb_{G_{i}}(x_{i}^{\gamma^{\phi^{(i)}}(j_{i})})$ as $i$ varies in $L$ and $j_i$ varies in $L_i$. Therefore 

\begin{equation}
	f(\mathcal{A}(G; x^J))=f(\cup_{i=0}^l  Orb_{G_{i}}(x_{i}^{j_{i}}))=\cup_{i=0}^l  Orb_{G_{i}}(x_{i}^{\gamma^{\phi^{(i)}}(j_{i})}).
\end{equation}
If we denote by $\gamma^\phi$ the list of permutations induced by the list of $\phi=(\phi^{(i)})_{i=0}^{l}$ automorphisms i.e. $\gamma^{\phi}=(\gamma^{\phi^{(i)}})_{i=0}^{l} \in \prod_{i=0}^l Sym(I_i)^{Aut(G_i)^{S_i}} $ where $I_i$ are the indices of the $x_{i}^{j_i}$ representatives and we can write

\begin{equation}
	 x^{\gamma^{\phi}(J)}=(x_{i}^{\gamma^{\phi^{(i)}}(j_i)} )_{i=0}^{l} \hspace{0.2cm} with \hspace{0.2cm} \gamma^{\phi}(J)=(\gamma^{\phi^{(i)}}(j_i) )_{i=0}^{l},  
\end{equation}
  then (5.3) is rewritten more compactly as follows
  
\begin{equation}
	f(\mathcal{A}(G; x^J))=f(\cup_{i=0}^l  Orb_{G_{i}}(x_{i}^{j_{i}}))=\cup_{i=0}^l  Orb_{G_{i}}(x_{i}^{\gamma^{\phi^{(i)}}(j_{i})})=\mathcal{A}(G; x^{\gamma^{\phi}(J)}).
\end{equation}  
And then we can define the following action of the multi-invariant function $f$ by $\phi$ on the algebraic sieves

\begin{equation}
	f(\mathcal{C}(G; x^J)):=\bigl\{ f(\mathcal{A}(G; x^J)) , f(\overline{\mathcal{A}(G; x^J)})   \bigr\}=\bigl\{  \mathcal{A}(G; x^{\gamma^{\phi}(J)}), \overline{\mathcal{A}(G; x^{\gamma^{\phi}(J)})}  \bigr\}=\mathcal{C}(G;  x^{\gamma^{\phi}(J)} ).
\end{equation}

\section{Some symmetry groups of an algebraic sieve}

\begin{definition}
	A symmetry group on an algebraic sieve $\mathcal{C}(G; x^J)$ of a given set $X$ is a group of transformations $T=\{ \mathcal{T}  \}$, acting as group actions on all bipartitions  $X$ (i.e. $\{A, B \}$ with $A \subseteq X$, $ B\subseteq X$ such that at least one of the two subsets is non-empty and that $A \cap B =\emptyset$ and $A\cup B=X$) that fixes the sieve i.e. $\mathcal{T}(C(G;x^J))=C(G;x^J)$.
\end{definition}

\quad

\title{ 1.Group symmetry through multi-invariant applications.
}

\quad

I consider a $f$ $\phi$-invariant with $f  \in \bigcap_{i=0}^{l} \widehat{Aut(G_i)^{S_i}}$ such that $f(\mathcal{C}(G;x^J))= \mathcal{C}(G;x^J)$ in which case the set $\{ f \in \bigcap_{i=0}^{l} \widehat{Aut(G_i)^{S_i}} | f(\mathcal{C}(G;x^J))= \mathcal{C}(G;x^J)  \}$ is a subgroup of $\bigcap_{i=0}^{l} \widehat{Aut(G_i)^{S_i}}$.

In fact from (5.6) we know that if $f(\mathcal{A}(G;x^J))=\mathcal{A}(G;x^J)$ then we have $f(\overline{\mathcal{A}(G; x^J)} )=\overline{f(\mathcal{A}(G; x^J))}=\overline{\mathcal{A}(G; x^J)} $. Also, since $f$ is invertible, if $f(\mathcal{A}(G;x^J))=\mathcal{A}(G;x^J)$ then $f^{-1}(\mathcal{A}(G;x^J))=\mathcal{A}(G;x^J)$ and for similar reasoning we can show that $f^{-1}(\overline{\mathcal{A}(G; x^J)})=\overline{\mathcal{A}(G; x^J)}$. Furthermore, if there exists another multi-invariant $g$ such that $g(\mathcal{A}(G;x^J))=\mathcal{A}(G;x^J)$ then $gf(\mathcal{A}(G;x^J))=g(\mathcal{A}(G;x^J))=\mathcal{A}(G;x^J)$.
\quad
Then

 \begin{equation}
 	{\left(\bigcap_{i=0}^{l} \widehat{Aut(G_i)^{S_i}}\right)}^{\mathcal{C}(G;x^J)}:=\{ f \in \bigcap_{i=0}^{l} \widehat{Aut(G_i)^{S_i}} | f(\mathcal{C}(G;x^J))= \mathcal{C}(G;x^J)  \}
 \end{equation}
 
  is a symmetry group of the algebraic sieve that takes into account the multiple action of the list of groups $G=(G_{i})_{i=0}^{l}$ on the set $X$ of the algebraic sieve $\mathcal{C}(G;x^J)$.
  
  \quad
  However, there may be problems in obtaining information about  $\mathcal{C}(G;x^J)$ from such a symmetry group. Just from the fact that ${\left(\bigcap_{i=0}^{l} \widehat{Aut(G_i)^{S_i}}\right)}^{\mathcal{C}(G;x^J)}\leq \bigcap_{i=0}^{l} \widehat{Aut(G_i)^{S_i}}$. For $l$ very large it might happen that $\bigcap_{i=0}^{l} \widehat{Aut(G_i)^{S_i}}=\mathbb{I}_{X}$ and in this case it is difficult to get information from the algebraic sieve  $\mathcal{C}(G;x^J)$ and then classify it; for example if  $\bigcap_{i=0}^{l} \widehat{Aut(G_i)^{S_i}}=\mathbb{I}_{X}$  it would be difficult to know whether $\mathcal{A}(G;x^J)$ is $G$-covering complete or not. In this case we would have to choose for choosing other symmetry groups appropriate for our research.
  
  \quad
  
  \title{2.Symmetry groups of an algebraic sieve with respect to $\phi$-invariant functions with $\phi \in Aut(G_i)^{S_i}$}.
  
  \quad
  
  I consider the following subgroup
  \begin{equation}
   \widehat{Aut(G_i)^{S_i}}^{\mathcal{C}(G;x^J)}:= \{ f \in \widehat{Aut(G_i)^{S_i}} | f(\mathcal{C}(G;x^J))= \mathcal{C}(G;x^J)  \}
  \end{equation} 
  for a $G_i$;  such a group, as it is defined, does not totally take into account all the actions in the list of groups of $G$, but also prefers the action of $G_i$. In fact, it is easily seen that  $\widehat{Aut(G_i)^{S_i}}^{\mathcal{C}(G;x^J)}\geq\left(\bigcap_{i=0}^{l} \widehat{Aut(G_i)^{S_i}}\right)^{\mathcal{C}(G;x^J)} $. The choice of such a symmetry group may have an advantage in knowing whether $\mathcal{A}(G;x^J)$ can be a complete $G$-covering or not. In fact if $\widehat{Aut(G_i)^{S_i}}^{\mathcal{C}(G;x^J)}< \widehat{Aut(G_i)^{S_i}}$ surely  $\mathcal{A}(G;x^J)$ is incomplete.

\section{The group $\widehat{Aut(D_n)}^{\mathcal{C}(D_{p,q}^{2n}; x_{0,q} )}$  }

\quad

In this section we will treat only the dihedral sieve $\mathcal{C}(D_{p,q}^{2n}; x_{0,q} ) $; so to lighten the notation,  from now on until the end of the article, we will denote by $\mathcal{G}_{2n}$ the group $\widehat{Aut(D_n)}^{\mathcal{C}(D_{p,q}^{2n}; x_{0,q} )}  $ i.e. $\mathcal{G}_{2n}=\widehat{Aut(D_n)}^{\mathcal{C}(D_{p,q}^{2n}; x_{0,q} )}$. And by $\mathcal{A}_{2n}$ and $\overline{\mathcal{A}_{2n}}$ we will denote $\mathcal{A}(D_{p,q}^{2n}; x_{0,q}) $ and $\overline{\mathcal{A}(D_{p,q}^{2n}; x_{0,q})}  $, respectively, and by $ \mathcal{C}_{2n}$ the sieve $\mathcal{C}_{2n}=	\mathcal{C}(D_{p,q}^{2n}; x_{0,q} )$. 
We now begin to study the first properties of $\mathcal{G}_{2n}$.

\begin{lem}
	If $T_{d} \notin \mathcal{G}_{2n}$ and $f_{\nu} \notin \mathcal{G}_{2n}$ but $T_{d} f_{\nu} \in \mathcal{G}_{2n}$ then $T_{2d} \in \mathcal{G}_{2n}$, $f_{\nu^2} \in \mathcal{G}_{2n}$
\end{lem}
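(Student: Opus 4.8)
The plan is to work inside $\widehat{Aut(D_n)^S}\cong\mathbb{Z}_{2n}\rtimes GL(\mathbb{Z}_{2n})$, where a general element has the form $T_a f_\mu\colon x\mapsto\mu x+a$, and to exploit two facts: that $\mathcal{G}_{2n}$ is a subgroup of this affine group, and that it contains the negation $f_{-1}\colon x\mapsto -x$. The latter holds because $-1\in U(\mathbb{Z}_{2n})$, so $f_{-1}\in\widehat{Aut(D_n)^S}$, and because every orbit appearing in $\mathcal{A}_{2n}$ — namely the $Orb_{D_{2n/p_j}}(0)$ and the $Orb_{D_{[2n/q_k]-1}}(2q_k)=\{\pm(2+m)q_k\}$ — is stable under $x\mapsto -x$; hence $f_{-1}(\mathcal{A}_{2n})=\mathcal{A}_{2n}$ and $f_{-1}(\mathcal{C}_{2n})=\mathcal{C}_{2n}$. (The hypotheses $T_d\notin\mathcal{G}_{2n}$ and $f_\nu\notin\mathcal{G}_{2n}$ serve only to single out the non-trivial case, since otherwise $T_{2d}=T_d^{2}$ and $f_{\nu^{2}}=f_\nu^{2}$ would lie in $\mathcal{G}_{2n}$ for free; they are not actually needed below.)

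Set $g:=T_d f_\nu\in\mathcal{G}_{2n}$, so that $g(x)=\nu x+d$ and $g^{-1}(x)=\nu^{-1}(x-d)$. First I would compute the conjugate $g f_{-1} g^{-1}$: applying the three maps in turn gives $g f_{-1} g^{-1}(x)=\nu\bigl(-\nu^{-1}(x-d)\bigr)+d=-x+2d$, i.e. $g f_{-1}g^{-1}=T_{2d}f_{-1}$. Since $g$, $g^{-1}$ and $f_{-1}$ all lie in the group $\mathcal{G}_{2n}$, so does $T_{2d}f_{-1}$; composing once more with $f_{-1}$ and using $f_{-1}^{2}=\mathbb{I}_{\mathbb{Z}_{2n}}$ yields $(T_{2d}f_{-1})f_{-1}=T_{2d}\in\mathcal{G}_{2n}$, which is the first half of the statement.

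For $f_{\nu^{2}}$ I would square $g$: a direct computation gives $g^{2}(x)=\nu^{2}x+(1+\nu)d$, i.e. $g^{2}=T_{(1+\nu)d}f_{\nu^{2}}\in\mathcal{G}_{2n}$. Now $\nu\in U(\mathbb{Z}_{2n})$ together with $2\mid 2n$ forces $\nu$ to be odd, so $1+\nu$ is even; writing $1+\nu=2m$ for an integer $m$ we obtain $(1+\nu)d=m\cdot(2d)$, whence $T_{(1+\nu)d}=(T_{2d})^{m}\in\mathcal{G}_{2n}$ by the previous step. Therefore $f_{\nu^{2}}=T_{(1+\nu)d}^{-1}\circ g^{2}\in\mathcal{G}_{2n}$, completing the argument. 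The only steps here that are not pure bookkeeping in the affine group are the observation that $f_{-1}\in\mathcal{G}_{2n}$ — read off from the explicit description of the covering $\mathcal{A}_{2n}$ — and the parity remark that every unit modulo $2n$ is odd, which is exactly what allows $(1+\nu)d$ to be absorbed into $\langle T_{2d}\rangle$; I expect the former to be the only point at which one must genuinely look at what the Goldbach sieve is, rather than just manipulate the ambient group.
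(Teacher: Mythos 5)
Your proof is correct, but it takes a genuinely different and cleaner route than the paper's. The paper establishes $T_{2d}\in\mathcal{G}_{2n}$ by an element--chasing argument on $\overline{\mathcal{A}_{2n}}=\{r_1,\dots,r_m\}$: it picks an $r_i$ that $T_d$ throws into $\mathcal{A}_{2n}$, shows (with a case split on whether $n/2$ is prime) that its image is a unit, and then uses the hypothesis $T_df_\nu\in\mathcal{G}_{2n}$ together with the symmetry $r_i\mapsto -r_i$ of $\overline{\mathcal{A}_{2n}}$ to force $T_{2d}(r_i)$ back into $\overline{\mathcal{A}_{2n}}$. You instead promote that symmetry to the single clean statement $f_{-1}\in\mathcal{G}_{2n}$ (justified correctly: both the orbits $Orb_{D_{2n/p_j}}(0)=p_j\mathbb{Z}_{2n}$ and the orbits $\mathcal{Q}_k=\{\pm(2+m)q_k\}$ are stable under negation, so $\mathcal{A}_{2n}$ is), and then read off $T_{2d}=\bigl(gf_{-1}g^{-1}\bigr)f_{-1}$ by a two-line conjugation in $Aff(\mathbb{Z}_{2n})$; this buys you a shorter argument with no case analysis on $n$, and it in fact proves slightly more, since the hypotheses $T_d\notin\mathcal{G}_{2n}$, $f_\nu\notin\mathcal{G}_{2n}$ are never used. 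The second half of your argument --- squaring $g$ to get $g^2=T_{(1+\nu)d}f_{\nu^2}$ and absorbing $T_{(1+\nu)d}$ into $\langle T_{2d}\rangle$ via the parity of $\nu$ --- is essentially the same computation the paper performs with $(T_df_\nu)^2=(T_{2d})^{(1+\nu)/2}f_{\nu^2}$. One small caveat: you should make explicit (as you do implicitly) that membership in $\mathcal{G}_{2n}$ is read as $f(\mathcal{A}_{2n})=\mathcal{A}_{2n}$ rather than merely permuting the two blocks of the partition; this is the convention the paper adopts in Section 6, so your argument is consistent with it.
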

\begin{proof}
	Let us denote by $\overline{\mathcal{A}_{2n}}=\{ r_{1},..., r_{i},...,r_{m}\}$ . Such a set is nonempty by hypothesis of the lemma. Moreover, it can be easily verified that if $r_{i} \in \overline{\mathcal{A}_{2n}}$ then $-r_{i} \in \overline{\mathcal{A}_{2n}}$ in $\mathbb{Z}_{2n}$.
	Since $T_{d} \notin \mathcal{G}_{2n}$ , there exists at least one $r_{i}$   such that $T_{d}(r_{i})=w_{i}$ with $w_{i} \in \mathcal{A}_{2n}$. It is shown that $w_{i} \in U(\mathbb{Z}_{2n})$. In fact, if this were not so i.e. $T_{d}(r_{i})=mu$ with $(m,2n)>1$ and $(u,2n)=1$ then one would have that $T_{d}(-mu)=-r_{i}$, in particular there would exist $r_{j} \in \overline{\mathcal{A}_{2n}}$ such that $f_{\nu}(r_{j})=-mu$ since $T_{d}f_{\nu} \in \mathcal{G}_{2n}$ but this is absurd if $\frac{n}{2}$ is not prime, since $(\nu r_{j}, 2n)=1$.  If, on the other hand, $\frac{n}{2}=p$ were prime one would have as the only plausible possibility that $u=1$ and $r_{j}=p$ but since $f_{\nu}(p)=p$ then $f_{\nu}(r_{j})=p \in\overline{\mathcal{A}_{2n}} $ therefore $w_{i}=p$ but this is absurd since $w_{i} \notin \overline{\mathcal{A}_{2n}}$.
	\quad
	Then $w_{i}=T_{d}(r_{i}) \in U(\mathbb{Z}_{2n})-\overline{\mathcal{A}_{2n}}$  and furthermore $T_{d}(T_{d}(r_{i}))=T_{2d}(r_{i})=T_{d}(w_{i})$. We know that $T_{d}(r_{i})=w_{i}$ then implies that $T_{d}(-w_{i})=-r_{i}$, so by hypothesis there exists $r_{j} \in \overline{\mathcal{A}_{2n}}$ such that $f_{\nu}(r_{j})=-w_{i} \Rightarrow \nu r_{j}=-w_{i} \Rightarrow w_{i}=-\nu r_{j}=\nu(- r_{j}) \Rightarrow f_{\nu}(-r_{j})=w_{i} $.
	Hence $T_{2d}(r_{i})=T_{d}f_{\nu}(-r_{j})=r_{k}$ for some  $r_{k}\in \overline{\mathcal{A}_{2n}}$.
	So since $r_{i}$ is generic we get that $T_{2d} \in  \mathcal{G}_{2n}$, and since $(T_{d}f_{\nu})^{2} \in \mathcal{G}_{2n}$, we have that $(T_{d}f_{\nu})^{2}=T_{d}f_{\nu} T_{d}f_{\nu}=T_{d+d\nu} f_{\nu^{2}} \Rightarrow T_{d(1+\nu)}f_{\nu^{2}}=T_{2d(\frac{1+\nu}{2})}f_{\nu^{2}} =(T_{2d})^{\frac{1+\nu}{2}} f_{\nu^{2}}$ then $(T_{2d})^{\frac{1+\nu}{2}} \in \mathcal{G}_{2n} $ this implies that $f_{\nu^{2}} \in \mathcal{G}_{2n} $ from which the thesis.
\end{proof}

\quad

\begin{os}
	Under the assumptions of Lemma 7.1 if we add that $\mathcal{G}_{2n}^{(1)}:=<T>\bigcap \mathcal{G}_{2n}=<T_m> \neq <\mathbb{I}_{\mathbb{Z}_{2n}}>$ with $m \neq 1$( this implies that $\mathcal{C}$ is Goldbach's so $m$ is even). So if there exists $T_{d}$ and $f_{\nu}$ that verify the assumptions of lemma 7.1. In particular this implies that $T_{2d} \in <T_m>$ and that $2d= \alpha m$ in $\mathbb{Z}$ with $\alpha$ an appropriate odd number so $d = \alpha \frac{m}{2}$. Hence we have that $T_{{\alpha} \frac{m}{2}} f_{\nu}\in \mathcal{G}_{2n}$. But this is true for every $\alpha$; because, given an odd $\beta$ different from $\alpha \in \mathbb{Z}$, then $T_{\alpha \frac{m}{2}} f_{\nu} =T_{(\alpha-\beta)\frac{m}{2}} T_{\beta\frac{m}{2}} f_{\nu} \in \mathcal{G}_{2n}$, and since this is $\alpha-\beta$ even we have that $T_{(\alpha-\beta)\frac{m}{2}} \in  \mathcal{G}_{2n}$, and hence we have that $T_{\beta\frac{m}{2}} f_{\nu} \in \mathcal{G}_{2n} $ .
     If, on the other hand,$\mathcal{G}_{2n}^{(1)}=<\mathbb{I}_{\mathbb{Z}_{2n}}>$ again under the assumptions of lemma 7.1 that there exists a $T_{d}$ satisfying this assumption we have that $T_{2d} \in <\mathbb{I}_{\mathbb{Z}_{2n}}>$ so  $T_{2d}=\mathbb{I}_{\mathbb{Z}_{2n}} \Rightarrow d=n$ so $T_{n} f_{\nu} \in \mathcal{G}_{2n}$.
	
\end{os}

\quad

From this observation, we now illustrate an additional lemma functional to demonstrate a fundamental property of the structure of the group $\mathcal{G}_{2n}$ in terms of factorization into products and/or semiproducts of groups.

\quad

\begin{lem}
		If the assumptions of Lemma 7.1 hold, then  $T_{\frac{m}{2}} f_{\nu} \in \mathcal{G}_{2n} $ (with $m$ referring to $T_{m}$, generator of $\mathcal{G}_{2n}^{(1)}$) and is unique unless multiplication ( left or right) by an appropriate element $T_{\beta m}f_{u}$ with $f_{u} \in \mathcal{G}_{2n}.$
\end{lem}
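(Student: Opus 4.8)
The plan is to work throughout inside the ambient group $\widehat{{Aut(D_{n})}^{S}}\cong Aff(\mathbb{Z}_{2n})$, realising every element as an affine map $T_{a}f_{\nu}\colon x\mapsto \nu x+a$ ($a\in\mathbb{Z}_{2n}$, $\nu\in U(\mathbb{Z}_{2n})$) and using only the composition rules $f_{\nu}T_{a}=T_{a\nu}f_{\nu}$ and $(T_{a}f_{\nu})^{-1}=T_{-a\nu^{-1}}f_{\nu^{-1}}$ (the extension to arbitrary translation amounts of (3.31)--(3.32)). First I would record the standing facts: $\mathcal{G}_{2n}^{(1)}=\langle T_{m}\rangle$ sits inside the translation group $\langle T\rangle\cong\mathbb{Z}_{2n}$, so we may take $m\mid 2n$; moreover $m\neq 1$ (all translations being symmetries would force the proper nonempty set $\overline{\mathcal{A}_{2n}}$ to be translation-invariant, which is impossible), hence by Remark 7.2 $m$ is even, so $(T_{m/2})^{2}=T_{m}$ and the multiples of $m$ form a subgroup $m\mathbb{Z}_{2n}$ of $\mathbb{Z}_{2n}$ which is stable under multiplication by units.

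For existence I would invoke Lemma 7.1 on the given $T_{d}f_{\nu}\in\mathcal{G}_{2n}$ to get $T_{2d}\in\mathcal{G}_{2n}^{(1)}=\langle T_{m}\rangle$, and then argue, exactly as in Remark 7.2, that this forces $d\equiv\alpha\tfrac{m}{2}\pmod{2n}$ with $\alpha$ odd, since an even $\alpha$ would make $T_{d}$ a power of $T_{m}$, contradicting $T_{d}\notin\mathcal{G}_{2n}$. Consequently $T_{m/2}f_{\nu}=(T_{m})^{(1-\alpha)/2}\,(T_{d}f_{\nu})$ is a product of two elements of $\mathcal{G}_{2n}$, hence lies in $\mathcal{G}_{2n}$; this yields the claimed $T_{m/2}f_{\nu}$, with the same $\nu$ as in the hypotheses. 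The degenerate case $\mathcal{G}_{2n}^{(1)}=\langle\mathbb{I}_{\mathbb{Z}_{2n}}\rangle$ is covered identically by reading $m=2n$, $\tfrac{m}{2}=n$, and using $T_{2d}=\mathbb{I}_{\mathbb{Z}_{2n}}\Rightarrow d=n$.

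For the uniqueness clause I would take an arbitrary $g=T_{d'}f_{\nu'}\in\mathcal{G}_{2n}$ that also satisfies the hypotheses of Lemma 7.1, so that the same reasoning gives $d'\equiv\tfrac{m}{2}+\beta m\pmod{2n}$. Forming $h:=(T_{m/2}f_{\nu})^{-1}g\in\mathcal{G}_{2n}$ and simplifying with the composition rules gives $h=T_{(d'-m/2)\nu^{-1}}f_{\nu^{-1}\nu'}$; since $d'-\tfrac{m}{2}$ is a multiple of $m$ and $m\mathbb{Z}_{2n}$ is unit-stable, the translation part of $h$ is some $\beta' m$, so $h=T_{\beta' m}f_{\nu^{-1}\nu'}$ with $T_{\beta' m}=(T_{m})^{\beta'}\in\mathcal{G}_{2n}$; hence $f_{u}:=f_{\nu^{-1}\nu'}=T_{-\beta' m}h\in\mathcal{G}_{2n}$ and $g=(T_{m/2}f_{\nu})(T_{\beta' m}f_{u})$ --- the "right multiplication by $T_{\beta m}f_{u}$, $f_{u}\in\mathcal{G}_{2n}$" form. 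Running the same computation with $g(T_{m/2}f_{\nu})^{-1}$ in place of $(T_{m/2}f_{\nu})^{-1}g$ gives the left-multiplication version, the one extra point being that the translation part $\tfrac{m}{2}(1-\nu^{-1}\nu')+\beta m$ is again a multiple of $m$ because $\nu,\nu'$ are units modulo the even number $2n$, hence odd, so $1-\nu^{-1}\nu'$ is even.

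I expect the only non-routine point --- the "hard part" --- to be the verification that the correction element $h$ (respectively $g(T_{m/2}f_{\nu})^{-1}$) lands in the coset $T_{\beta m}$ rather than merely in $T_{m/2}\cdot m\mathbb{Z}_{2n}$. This uses simultaneously the Remark 7.2 constraint that a genuinely mixed element of $\mathcal{G}_{2n}$ has translation part $\equiv\tfrac{m}{2}\pmod{m}$ and the unit-stability of $m\mathbb{Z}_{2n}$; once this is secured, peeling off the pure factor $f_{u}$ inside $\mathcal{G}_{2n}$ is immediate from $(T_{m})^{\beta}\in\mathcal{G}_{2n}$, and everything else is bookkeeping in $Aff(\mathbb{Z}_{2n})$.
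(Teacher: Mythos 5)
Your proof is correct and follows essentially the same route as the paper: existence is extracted from Remark 7.2 exactly as the paper does, and uniqueness is obtained by forming the quotient of two candidate elements, checking via the composition rules that its translation part is a multiple of $m$ and hence lies in $\mathcal{G}_{2n}^{(1)}=\langle T_{m}\rangle$, so that the unit part $f_{u}$ must also belong to $\mathcal{G}_{2n}$. Your version is marginally more general in allowing the second element to have translation part $\tfrac{m}{2}+\beta m$ rather than exactly $\tfrac{m}{2}$, but the mechanism is identical to the paper's.
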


\begin{proof}
	The fact that $T_{\frac{m}{2} }f_{\nu} \in \mathcal{G}_{2n}$ is deduced from observation 7.2. We move from uniqueness unless products with other elements of $\mathcal{G}_{2n}$; if there existed another $T_{\frac{m}{2} }f_{w} \in \mathcal{G}_{2n}$  with $f_{w} \neq f_{\nu}$ and $f_{w}$ satisfies the hypothesis of Lemma 7.1, then we would have that $T_{\frac{m}{2} }f_{w}, T_{\frac{m}{2} }f_{\nu} \in \mathcal{G}_{2n}$ in particular we would have that $T_{\frac{m}{2} }f_{w}(T_{\frac{m}{2} }f_{\nu})^{-1} \in \mathcal{G}_{2n}\Rightarrow T_{\frac{m}{2} } f_{w}(T_{-\nu^{-1}\frac{m}{2}} f_{{\nu}^{-1}}) \in \mathcal{G}_{2n}\Rightarrow   T_{\frac{m}{2}-w{\nu}^{-1}\frac{m}{2}}  f_{{w}{\nu}^{-1}}=T_{\frac{m}{2}(1-w{\nu}^{-1})} f_{{w}{\nu}^{-1}}$. Given that $T_{\frac{m}{2}(1-w{\nu}^{-1})} \in \mathcal{G}_{2n}^{(1)}$  then $f_{{w}{\nu}^{-1}} \in \mathcal{G}_{2n}$. Then there exists $u \in U(\mathbb{Z}_{2n})$ such that $f_{u} \in \mathcal{G}_{2n}$ and $f_{u}=f_{{w}{\nu}^{-1}}$ . This implies that $f_{w}=f_{u}f_{\nu}\Rightarrow T_{\frac{m}{2} }f_{w}(T_{\frac{m}{2} }f_{\nu})^{-1}=T_{\frac{m}{2}(1-w{\nu}^{-1})}f_{u}$. Thus $T_{\frac{m}{2} }f_{w}=T_{\frac{m}{2}(1-w{\nu}^{-1})}f_{u}T_{\frac{m}{2} }f_{\nu}$. Thus Lemma 7.3 is verified for $\beta=\frac{1-w{\nu}^{-1}}{2}$ and furthermore $ T_{\frac{m}{2} }f_{w}=T_{\frac{m}{2} }f_{u}f_{\nu}=T_{\frac{m}{2} }f_{\nu}(f_{u}).$
\end{proof}
\quad

Now let us begin to see how these lemmas just proved can be exploited on the following proposition

\begin{prop}
	Let $N=2^{k}\overline{ N }$ with $k>1$and $\overline{ N }$ odd and $T_{2^{k-1}\overline{ N }}$ and $ f_{1+2^{k-1}\overline{ N } } \notin  \mathcal{ G }_ {N}$ then $\mathcal{ G }_ {N}=<T_{2^{k-1}\overline{ N }}f_{1+2^{k-1}\overline{ N }}>\times( <T_{ 2m }>\rtimes H ) $ with $H < U(\mathbb {Z }_{ N })$ and $<T_{ 2m } > < <T_{ 2 }>.$
\end{prop}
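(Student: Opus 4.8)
Write $\iota:=T_{2^{k-1}\overline N}f_{1+2^{k-1}\overline N}$ for the element in the statement; since $k>1$ we have $2^{k-1}\overline N=N/2$ and $4\mid N$, so $\iota=T_{N/2}f_{1+N/2}$. Recall from Section~3.1 that $\widehat{{Aut(D_n)}^S}=Aff(\mathbb{Z}_{N})$, with elements $T_af_\nu\colon x\mapsto\nu x+a$ ($a\in\mathbb{Z}_N$, $\nu\in U(\mathbb{Z}_N)$). Set $H:=\{f_\nu\in\mathcal G_N\mid\nu\in U(\mathbb{Z}_N)\}$ (the linear part of $\mathcal G_N$, a subgroup of $GL(\mathbb{Z}_N)$) and $\langle T_{2m}\rangle:=\mathcal G_N\cap\langle T\rangle=\mathcal G_N^{(1)}$ (the translation part), where $2m$ is the least positive integer with $T_{2m}\in\mathcal G_N$; being a subgroup of $\langle T\rangle\cong\mathbb{Z}_N$, this forces $2m\mid N$. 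The plan is to first isolate why $\iota$ is forced into $\mathcal G_N$ and forced to be central, and then to assemble the decomposition from the internal (semi)direct product criteria.

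First I would record two facts about the Goldbach sieve. Since $2\mid N$ and $2\le\sqrt N$, the orbit $Orb_{D_n}(0)=\{0,2,\dots,N-2\}$ of all even residues lies in $\mathcal A_N$, so $\overline{\mathcal A_N}$ consists of odd residues only; moreover $\overline{\mathcal A_N}\neq\emptyset$, for otherwise $\mathcal G_N$ would be all of $\widehat{{Aut(D_n)}^S}$ and would contain $f_{1+N/2}$, against hypothesis. From this: (i) every $T_af_\nu\in\mathcal G_N$ has $a$ even, since an odd $r\in\overline{\mathcal A_N}$ maps to an odd element $\nu r+a\in\overline{\mathcal A_N}$ with $\nu$ odd, forcing $a\equiv 0\pmod 2$; and (ii) $\iota$ fixes every odd residue, because $(1+N/2)x+N/2\equiv x$ for $x$ odd, so $\iota$ fixes $\overline{\mathcal A_N}$ pointwise and hence $\iota\in\mathcal G_N$. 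A short computation using $4\mid N$ gives $\iota^2=\mathbb I$ while $\iota\neq\mathbb I$ (nontrivial linear part), so $\langle\iota\rangle\cong\mathbb{Z}_2$; and comparing $\iota(T_af_\nu)$ with $(T_af_\nu)\iota$ reduces, by (i) and $\nu$ odd, to a true congruence, so $\iota\in Z(\mathcal G_N)$ and in particular $\langle\iota\rangle\unlhd\mathcal G_N$.

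Next I would put $G_2:=\langle\langle T_{2m}\rangle\cup H\rangle$. That $G_2=\langle T_{2m}\rangle\rtimes H$ internally is routine: $f_\nu T_{2m}f_\nu^{-1}=T_{2m\nu}\in\langle T_{2m}\rangle$ (as $2m\mid N$), $\langle T_{2m}\rangle\cap H=\{\mathbb I\}$ (a nontrivial translation is not a linear map), and $G_2=\langle T_{2m}\rangle H$. Normality $G_2\unlhd\mathcal G_N$ holds because conjugating $T_{2jm}f_\nu\in G_2$ by any $T_af_b\in\mathcal G_N$ yields an element of $\mathcal G_N$ whose linear part is again $\nu\in H$ (the linear part map being a homomorphism onto the abelian $GL(\mathbb{Z}_N)$), hence whose translation part lies in $\mathcal G_N\cap\langle T\rangle=\langle T_{2m}\rangle$. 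Also $\langle\iota\rangle\cap G_2=\{\mathbb I\}$: if $\iota=T_{2jm}f_\nu\in G_2$, matching linear and then translation parts forces $T_{N/2}=T_{2jm}\in\langle T_{2m}\rangle\subseteq\mathcal G_N$, contradicting $T_{2^{k-1}\overline N}\notin\mathcal G_N$. Thus $\langle\iota\rangle$ and $G_2$ are normal with trivial intersection, and it remains only to show they generate $\mathcal G_N$.

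This last step is where Observation~7.2 enters. Take $g=T_af_b\in\mathcal G_N$. If $f_b\in\mathcal G_N$ then $f_b\in H$ and $T_a=gf_b^{-1}\in\mathcal G_N\cap\langle T\rangle=\langle T_{2m}\rangle$, so $g\in G_2$. If $f_b\notin\mathcal G_N$ then $T_a\notin\mathcal G_N$ as well, so $T_a,f_b,T_af_b$ satisfy the hypotheses of Lemma~7.1; Observation~7.2 then gives $a=\alpha m$ with $\alpha$ odd, and, applied to $\iota$, $N/2=\alpha_0 m$ with $\alpha_0$ odd, whence $a-N/2=(\alpha-\alpha_0)m\in 2m\mathbb{Z}$, i.e. $T_{a-N/2}\in\langle T_{2m}\rangle$. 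Writing $g=T_{a-N/2}(T_{N/2}f_b)=T_{a-N/2}\,\iota\,f_{(1+N/2)b}$ (using $(1+N/2)^{-1}=1+N/2$), the factor $f_{(1+N/2)b}=\iota\,T_{-(a-N/2)}\,g$ lies in $\mathcal G_N$, hence in $H$; since $\iota$ is central, $g=\iota\,(T_{a-N/2}f_{(1+N/2)b})\in\langle\iota\rangle G_2$. This yields $\mathcal G_N=\langle\iota\rangle\times(\langle T_{2m}\rangle\rtimes H)$. Finally $H\subsetneq U(\mathbb{Z}_N)$ since $1+N/2\in U(\mathbb{Z}_N)$ but $f_{1+N/2}\notin\mathcal G_N$, and $\langle T_{2m}\rangle\subsetneq\langle T_2\rangle$ since $T_2\notin\mathcal G_N$ (otherwise $T_{2^{k-1}\overline N}=T_2^{\,2^{k-2}\overline N}\in\mathcal G_N$, using $k\ge2$), so $2m>2$. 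I expect the genuinely delicate point to be the centrality of $\iota$ in the second paragraph, which is exactly where the combinatorics of the Goldbach covering ($\mathcal A_N$ contains every even residue, so $\overline{\mathcal A_N}$ is a nonempty set of odd residues) is indispensable; everything afterwards is formal group theory together with Lemma~7.1 and Observation~7.2.
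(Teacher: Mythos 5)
Your proof is correct and follows essentially the same route as the paper's: show that $\iota=T_{2^{k-1}\overline N}f_{1+2^{k-1}\overline N}$ fixes $\overline{\mathcal A_N}$ pointwise and is an involution, invoke Lemma 7.1 together with Observation 7.2 and Lemma 7.3 to account for the mixed elements, verify commutativity to obtain the direct factor, and deduce the strict inclusions $H<U(\mathbb Z_N)$, $\langle T_{2m}\rangle<\langle T_2\rangle$ from the hypothesis that $T_{2^{k-1}\overline N}$ and $f_{1+2^{k-1}\overline N}$ do not lie in $\mathcal G_N$. You merely make explicit several steps the paper leaves implicit (normality of $\langle T_{2m}\rangle\rtimes H$, the trivial intersection, and the generation argument via $g=\iota\,(T_{a-N/2}f_{(1+N/2)b})$).
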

\begin{proof}
First, it is easy to see that $T_{2^{k-1}\overline{ N }}f_{1+2^{k-1}\overline{ N }} \in \mathcal{ G }_ {N}$; in fact given an $r \in \overline{\mathcal{A}_{N}}$ we have that $T_{2^{k-1}\overline{ N }}f_{1+2^{k-1}\overline{ N }}(r)=T_{2^{k- 1}\overline{ N }}(r(1+2^{k-1}\overline{ N }) ) =T_{2^{k-1}\overline{ N }}(r+2^{k-1}\overline{ N })=r+2^{k-1}\overline{ N }+2^{k-1}\overline{ N }=r$. We are in the condition of Lemma 7.1 since ${(T_{2^{k-1}\overline{ N }})}^{2}=T_{2^{k}\overline{ N }}= \mathbb{I}_{\mathbb{Z}_{N}}$ and ${(f_{1+2^{k-1}\overline{ N }})}^{2}=f_{1+{(2^{k-1}\overline{ N })}^{2}}=f_{1+2^{k-1}\overline{ N }2^{k-1}\overline{ N }}=f_{ 1 }=\mathbb{I}_{\mathbb{Z}_{N}}$, due to the fact that $k>1$ .

Then by Lemma 7.2 and observation 7.3, it is verified that $T_{2^{k-1}\overline{ N }}f_{1+2^{k-1}\overline{ N }}$ is unique unless left and right products for $T_{2\alpha m }f_{ u }$ with $f_{ u } \in H$ .
We note that $T_{2^{k-1}\overline{ N }}f_{1+2^{k-1}\overline{ N }} T_{2\alpha m }f_{ u }=T_{2^{k-1}\overline{ N }+2\alpha m}f_{u+2^{k-1}\overline{ N }}$ and that $T_{2\alpha m }f_{ u }T_{2^{k- 1}\overline{ N }}f_{1+2^{k-1}\overline{ N }}=T_{2\alpha m+u2^{k-1}\overline{ N }}f_{u+u2^{k- 1}\overline{ N }}=T_{2^{k-1}\overline{ N }+2\alpha m}f_{u+2^{k-1}\overline{ N }}$ so that between $< T_{2^{k-1}\overline{ N }}f_{1+2^{k-1}\overline{ N }}>$ and $<T_{ 2m }>\rtimes H $ there is a direct product.

\quad

Moreover, $H< U(\mathbb{Z}_{N})$ and $<T_{2m}> < <T_{2}>$ if it were not so we would have that at least one of the elements $T_{2^{k-1}\overline{ N }}$ and $f_{1+2^{k-1}\overline{ N }}$ belong to $\mathcal{ G }_ {N}$ against the assumptions of proposition 7.4.

\end{proof}

\begin{os}
	We can be seen from the proof of Proposition 7.4 that $<T_{2^{k- 1}\overline{ N }}f_{1+2^{k-1}\overline{ N }}>=Z( \widehat{Aut(D_n)} ) $ that is, the center of $ \widehat{Aut(D_n)} $ so we have that $\mathcal{G}_{N} =Z( \widehat{Aut(D_n)} ) \times (<T_{ 2m }>\rtimes H).$
\end{os}		
\quad

We now turn to the remaining cases to analyze how the group $\mathcal{G}_{N}$ factorizes. We anticipate that in these cases the proof is more elaborate than in proposition 7.4.

\quad

\begin{prop}
	If either $T_{2^{k- 1}\overline{ N }}$, $f_{1+2^{k-1}\overline{ N }} \in \mathcal{G}_{N}$ and $N=2^{k}\overline{ N }$ with $k>1$and $\overline{ N }$ odd or $k=1$ ,that is, $N=2\overline{ N }$, then there is no element $T_{m}f_{\nu} \in \widehat{Aut(D_n)}$ such that $T_{m} \notin \mathcal{G}_{N}$ and $f_{\nu} \notin \mathcal{G}_{N}$ and $T_{m}f_{\nu} \in \mathcal{G}_{N}.$
\end{prop}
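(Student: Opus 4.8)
The plan is to argue by contradiction: suppose there is an element $T_{d}f_{\nu}\in\widehat{Aut(D_n)}$ with $T_{d}\notin\mathcal{G}_{N}$, $f_{\nu}\notin\mathcal{G}_{N}$ but $T_{d}f_{\nu}\in\mathcal{G}_{N}$ (I write the element of the statement $T_{d}f_{\nu}$, reserving $m$ for the generator of $\mathcal{G}_{N}^{(1)}=\langle T\rangle\cap\mathcal{G}_{N}$). First I would record a few elementary facts. Since $p_{0}=2\mid N=2n$, the orbit $Orb_{D_{n}}(0)$ is the whole set of even residues and is contained in $\mathcal{A}_{N}$, so $\overline{\mathcal{A}_{N}}$ consists only of odd residues, and (as noted in the proof of Lemma 7.1) it is closed under negation, so $f_{-1}\in\mathcal{G}_{N}$. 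If $\overline{\mathcal{A}_{N}}=\emptyset$, or $\overline{\mathcal{A}_{N}}$ is exactly the set of all odd residues, then every affine map $x\mapsto wx+c$ with $w\in U(\mathbb{Z}_{N})$ fixes the partition $\{\mathcal{A}_{N},\overline{\mathcal{A}_{N}}\}$, so $\mathcal{G}_{N}=\widehat{Aut(D_n)}$ and there is nothing to prove; hence we may assume $0<|\overline{\mathcal{A}_{N}}|<n$. This gives the \emph{parity lemma}: $f_{w}$ preserves parity, $(T_{c}f_{w})(\overline{\mathcal{A}_{N}})$ is too small to equal $\mathcal{A}_{N}$, so every $T_{c}f_{w}\in\mathcal{G}_{N}$ fixes $\overline{\mathcal{A}_{N}}$ and therefore has $c$ even; in particular $d$ is even. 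Finally, when $k>1$ the map $T_{n}f_{1+n}$ acts as the identity on $\overline{\mathcal{A}_{N}}$ (for $r$ odd, $T_{n}f_{1+n}(r)=(1+n)r+n=r+n(r+1)=r$), so $T_{n}f_{1+n}\in\mathcal{G}_{N}$, and together with the hypothesis that $T_{n}$ or $f_{1+n}$ lies in $\mathcal{G}_{N}$ this forces $T_{n},f_{1+n}\in\mathcal{G}_{N}$.

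Next I would apply Lemmas 7.1 and 7.3. By Lemma 7.1, $T_{2d}\in\mathcal{G}_{N}$ and $f_{\nu^{2}}\in\mathcal{G}_{N}$. Write $\mathcal{G}_{N}^{(1)}=\langle T_{m}\rangle$ with $m\mid N$. If $\mathcal{G}_{N}^{(1)}=\{\mathbb{I}_{\mathbb{Z}_{N}}\}$ then $T_{2d}=\mathbb{I}_{\mathbb{Z}_{N}}$, so $d\in\{0,n\}$ and hence $d=n$; this is impossible for $k=1$ (then $n=\overline{N}$ is odd while $d$ is even), and for $k>1$ it contradicts $T_{n}\in\mathcal{G}_{N}^{(1)}$, $T_{n}\neq\mathbb{I}_{\mathbb{Z}_{N}}$ — so from now on $\mathcal{G}_{N}^{(1)}=\langle T_{m}\rangle$ is nontrivial. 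By the parity lemma $m$ is even; since $T_{d}\notin\mathcal{G}_{N}$ we have $m\nmid d$, while $T_{2d}\in\mathcal{G}_{N}^{(1)}$ gives $m\mid 2d$, so (writing $v_{2}$ for the $2$-adic valuation) $v_{2}(m)=v_{2}(d)+1$, and $d$ even forces $4\mid m$. By Lemma 7.3 (resp.\ observation 7.2) we also obtain $T_{m/2}f_{\nu}\in\mathcal{G}_{N}$, with $T_{m/2}\notin\mathcal{G}_{N}$ since $v_{2}(m/2)<v_{2}(m)$; thus the ``mixed'' element may be taken to be $T_{m/2}f_{\nu}$.

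Now I would close the cases. For $k=1$: from $m\mid N=2\overline{N}$ with $\overline{N}$ odd we get $v_{2}(m)\le 1$, contradicting $4\mid m$. For $k=2$: since $T_{n}\in\mathcal{G}_{N}^{(1)}=\langle T_{m}\rangle$ we get $m\mid n=2\overline{N}$, again $v_{2}(m)\le 1$, a contradiction. The only surviving — and genuinely hard — case is $k\ge 3$ with $\mathcal{G}_{N}^{(1)}$ nontrivial, where $4\mid m\mid n$ is consistent and the group-theoretic identities no longer close by themselves. Here the plan is to combine the uniqueness clause of Lemma 7.3 — which, using $f_{-1},T_{n},f_{1+n}\in\mathcal{G}_{N}$, pins down the ``mixed'' coset $T_{m/2}f_{\nu}\,(\mathcal{G}_{N}\cap\langle f\rangle)$ modulo $\mathcal{G}_{N}^{(1)}$ — with the arithmetic of $\overline{\mathcal{A}_{N}}$ used in the proof of Lemma 7.1 (which elements of $\overline{\mathcal{A}_{N}}$ are units of $\mathbb{Z}_{N}$, and the exceptional role of $n/2$ when it is prime): tracking where $T_{m/2}f_{\nu}$ sends a typical $r\in\overline{\mathcal{A}_{N}}$ should show it cannot preserve $\overline{\mathcal{A}_{N}}$ unless $f_{\nu}\in\mathcal{G}_{N}$, the desired contradiction. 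I expect this last step, together with the separate treatment of the exceptional $N$ (the $N=4p$ case of Lemma 7.1), to carry the bulk of the argument.
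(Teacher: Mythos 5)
Your preliminary reductions are sound, and for the easy cases in places cleaner than the paper's: the parity observation (since $\overline{\mathcal{A}_N}$ is a nonempty proper subset of the odd residues, every $T_cf_w\in\mathcal{G}_N$ must fix $\overline{\mathcal{A}_N}$ and hence have $c$ even), the $2$-adic step $v_2(m)=v_2(d)+1\ge 2$ from $m\mid 2d$, $m\nmid d$, $d$ even, and the resulting disposal of $k=1$ and $k=2$ are all correct. But the proposal does not prove the proposition: the case $k\ge 3$, which you yourself flag as ``genuinely hard'' and expect ``to carry the bulk of the argument,'' is left as a plan rather than an argument, and the plan as stated (tracking where $T_{m/2}f_\nu$ sends a typical $r\in\overline{\mathcal{A}_N}$) does not identify any mechanism that would force $f_\nu\in\mathcal{G}_N$. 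Lemmas 7.1 and 7.3 give you the elements $T_{2d}$, $f_{\nu^2}$ and the uniqueness of the mixed coset modulo $\mathcal{G}_N^{(1)}$ and $H=\mathcal{G}_N\cap U(\mathbb{Z}_N)$; they do not control the image of an individual residue, so a pointwise argument has nothing to work with.

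The paper closes the $k>1$ case by a global counting argument that your sketch does not anticipate. The existence of one mixed element forces $\mathcal{G}_N=(\langle T_{2m}\rangle\rtimes H)\sqcup\{T_{m(2j+1)}f_{\nu u}\}$, so $\langle T_{2m}\rangle\rtimes H$ has index $2$; the subgroup $\langle T_{2m},T_mf_\nu,f_{\nu^2}\rangle$ is then identified with $\{T_{m\alpha}f_{\nu^j}:\alpha\equiv j\ (\mathrm{mod}\ 2)\}$ and its order computed to be $o(\nu)\tfrac{N}{2m}$; comparing with $|\mathcal{G}_N|=2\tfrac{N}{2m}|H|$ yields $|H|=o(\nu^2)$, i.e.\ $H=\langle f_{\nu^2}\rangle$ is cyclic. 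Only here does the hypothesis enter: it supplies two distinct commuting involutions in $H$ (namely $f_{1+2^{k-1}\overline N}$ together with $f_{-1}$, the latter always present because $\overline{\mathcal{A}_N}$ is negation-closed), so $H$ cannot be cyclic --- contradiction. To complete your write-up you would need to supply an argument of this strength for $k\ge 3$; as it stands the central case is missing.
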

			
\begin{proof}
	Let us start with the case $N=2\overline{ N }$, if there existed the element $T_{m}f_{\nu} \in \mathcal{G}_{N}$ such that $T_{m} \notin \mathcal{G}_{N}$ and $f_{\nu} \notin \mathcal{G}_{N}$ based on observation 7.2 then we can assume that $\mathcal{G}_{N}^{(1)}=<T_ {2m}> \geq < \mathbb{I}_{\mathbb{Z}_{N}}>$, in which case we would have that $m$ is odd, since $2m | 2\overline{ N }$ and $\overline{ N }$ is odd. Since $Orb_{ D_{ \overline{ N }}}(0) \in \mathcal{A}$, then we have that $T_{m}f_{\nu}( Orb_{ D_{ \overline{ N }}}(0) ) =T _{m}(Orb_{ D_{ \overline{ N }}}(0) )= Orb_{ D_{ \overline{ N }}}(1) $ due to the fact that $m$ is odd. From that fact we obtain the absurdity in that $Orb_{ D_{ \overline{ N }}}(1) \notin \mathcal{A} $ and instead $T_{m}f_{\nu} \in \mathcal{G}_{N}$. 
	
	\quad
	
	We now turn to the case $N=2^{k}\overline{ N }$ with $k>1$ which we will prove as the case before by absurdity. So we put ourselves under the assumptions that $T_{m} \notin \mathcal{G}_{N}$ and $f_{\nu} \notin \mathcal{G}_{N}$ and $T_{m}f_{\nu} \in \mathcal{G}_{N}$. 
	
	\quad
	By virtue of observation 7.2 and Lemma 7.3 we can state that all elements of the type $T_{n}f_{w} \in \mathcal{G}_{N}$ such that $T_{n} \notin {N}$ and $f_{w} \notin \mathcal{G}_{N}$ can be seen as $T_{m(2j+1)}f_{\nu u}$, assuming that $T_{2m}$ is a generator of ${\mathcal{G}_{N}}^{(1)}$ with $j \in \{ 0,...,i,...\frac{N}{2m}-1 \}$ and $f_{u} \in H=\mathcal{G}_{N}\cap U(\mathbb{Z}_{N})$.
	From this it can be deduced that $|\{T_{m(2j+1)}f_{\nu u}\} _{j \in \{ 0,...,i,...\frac{N}{2m}-1 \} , f_{u} \in H}|=|<T_{2m}>\rtimes H|$ and also it is not difficult to see that $\mathcal{G}_{N}=\{T_{m(2j+1)}f_{\nu u} \}\sqcup <T_{2m}>\rtimes H$. Hence $|\mathcal{G}_{N} |=| \{T_{m(2j+1)}f_{\nu u}\} |+|<T_{2m}>\rtimes H |= 2| <T_{2m}>\rtimes H |$ in particular $<T_{2m}>\rtimes H \unlhd \mathcal{G}_{N} $ since $\bigl[ \mathcal{G}_{N} :  <T_{2m}>\rtimes H \bigr]=2$.
	\quad
	
	I consider the subgroup $<T_{2m}, T_{m}f_{\nu}, f_{{\nu}^{2}} >$ , such a group can be seen as $$\{ T_{m \alpha}f_{{\nu}^{j}} \hspace{0.1cm}with\hspace{0.1cm} \alpha\hspace{0.1cm} and\hspace{0.1cm}  j\hspace{0.1cm} both\hspace{0.1cm} even\hspace{0.1cm} or\hspace{0.1cm} both\hspace{0.1cm} odd \}.$$
			In fact $<T_{2m}, T_{m}f_{\nu}, f_{{\nu}^{2}} >\subseteq \{ T_{m \alpha}f_{{\nu}^{j}} \}$
		if this were not so, one would have that $T_{m(2j+1)}f_{{\nu}^{2n}}$ or $T_{m(2j)}f_{{\nu}^{2n+1}} \in <T_{2m}, T_{m}f_{\nu}, f_{{\nu}^{2}} > $; in either case we would have absurdity since we would have that $T_{m(2j+1)}$ or $f_{{\nu}^{2n+1}} \in \mathcal{G}_{N}$ against the assumed hypothesis.
			We now show that $\{ T_{m \alpha}f_{{\nu}^{j}}  \} \subseteq <T_{2m}, T_{m}f_{\nu}, f_{{\nu}^{2}} >$.
				If $ T_{m\alpha}f_{{\nu}^{j}}$ with $\alpha$ and $j$ both odd then $$ T_{m\alpha}f_{{\nu}^{j}}=T_{m(\alpha -{ \sum^{j-1}_{i=0}}{\nu}^{i}+{ \sum^{j-1}_{i=0}}{\nu}^{i})}f_{{\nu}^{j}}=$$
				
				 $$=T_{m(\alpha-\sum^{j-1}_{i=0}{{\nu}^{i}})} T_{m(\sum^{j-1}_{i=0}{{\nu}^{i}})}f_{{\nu}^{j}}=T_{m(\alpha-\sum^{j-1}_{i=0}{{\nu}^{i}})}{(T_{m}f_{\nu})}^{j}=T_{2m\frac{(\alpha-\sum^{j-1}_{i=0}{{\nu}^{i}})}{2}}{(T_{m}f_{\nu})}^{j} .$$ 
				 
				 The latter equality leads us to say, due to the fact that $\alpha$ is odd $j$ is odd and $\nu$ is odd , that $T_{2m\frac{(\alpha-\sum^{j-1}_{i=0}{{\nu}^{i}})}{2}}{(T_{m}f_{\nu})}^{j} \in <T_{2m}, T_{m}f_{\nu}, f_{{\nu}^{2}} >$ and thus $ T_{m\alpha}f_{{\nu}^{j}} \in <T_{2m}, T_{m}f_{\nu}, f_{{\nu}^{2}} >$ . The case $ T_{m\alpha}f_{{\nu}^{j}} $ with $\alpha$ and $j$ even is proved analogously to the odd case since as before $ T_{m\alpha}f_{{\nu}^{j}} =T_{2m\frac{(\alpha- \sum^{j-1}_{i=0}{{\nu}^{i}})}{2}}{(T_{m}f_{\nu})}^{j} $ so that the inclusion $\{ T_{m \alpha}f_{{\nu}^{j}}  \} \subseteq <T_{2m}, T_{m}f_{\nu}, f_{{\nu}^{2}} >$ is proved.
				 Thus $ |<T_{2m}, T_{m}f_{\nu}, f_{{\nu}^{2}} >|=|\{ T_{m \alpha}f_{{\nu}^{j}} \} |$ and furthermore we can see that $\{ T_{m \alpha}f_{{\nu}^{j}} \} $ can be distributed in such a way $$ \{ T_{m \alpha}f_{{\nu}^{j}} \}= \bigsqcup^{\frac{o(\nu)}{2}-1} _{k=0}{\{ T_{m \beta} f_{\nu^{P_{2k}(\beta)}}   \}_{\beta \in \{ 0,...,i,...,\frac{N}{m}-1 \} }} \hspace{0.2cm} with \hspace{0.2cm} P_{2k}(\beta)=\begin{cases} 2k  \hspace{0.2cm} for\hspace{0.2cm} \beta \hspace{0.2cm} even \\
				 2k+1 \hspace{0.2cm} otherwise
				 	
				 \end{cases}  .$$
				 
				 \quad
				 
				 Hence
				 
				 $$ |\{ T_{m \alpha}f_{{\nu}^{j}} \}|= \sum_{k=0}^{\frac{o(\nu)}{2}-1}\Big| {\{ T_{m \beta}    \}_{\beta \in \{ 0,...,i,...,\frac{N}{m}-1 \} }}   \Big|=\sum_{k=0}^{\frac{o(\nu)}{2}-1}\frac{N}{m}=\frac{o(\nu)}{2} \frac{N}{m}=\frac{o(\nu)}{2} \frac{2N}{2m}=o(\nu)\frac{N}{2m}.$$
				 
				 And so
				 
				 $$ |<T_{2m}, T_{m}f_{\nu}, f_{{\nu}^{2}} >|=o(\nu)\frac{N}{2m}.$$
				 
				 Given that $\mathcal{G}_{N}=<T_{2m}, T_{m}f_{\nu}, f_{{\nu}^{2}} >\sqcup (H - <f_{{\nu}^{2}}>)$, and furthermore, we observe that $o({\nu}^{2})=\frac{o(\nu)}{2}$ (so $o(\nu)$ must be even otherwise we would have that $(f_{\nu})^{-1} \in \mathcal{G}_{N}$ against the assumptions); therefore, we have that 
			 
		 $$ | \mathcal{G}_{N}|=|<T_{2m}, T_{m}f_{\nu}, f_{{\nu}^{2}} >|+|H|-| <f_{{\nu}^{2}} >|=o(\nu)\frac{N}{2m}+|H|-\frac{o(\nu)}{2}.$$

But we know that $  |\mathcal{G}_{N}|=2| <T_{2m}>\rtimes H |=2| <T_{2m}> |\cdot |H|=2\frac{N}{2m}\cdot |H|$ .

$$o(\nu)\frac{N}{2m}+|H|-\frac{o(\nu)}{2}= 2\frac{N}{2m}\cdot |H| \Rightarrow \frac{o(\nu)}{2}( 2\frac{N}{2m}- 1)=|H|( 2\frac{N}{2m}-1 ) \Rightarrow o({\nu}^{2}) ( 2\frac{N}{2m}-1)=|H|( 2\frac{N}{2m}-1 ). $$

Given that $ f_{{\nu}^{2}} \in \mathcal{G}_{N} \Rightarrow o({{\nu}^{2}})  | |H|$ then $|H|=a o({{\nu}^{2}})$ for some $a \geq 1$.
	
	Then $o({{\nu}^{2}})( 2\frac{N}{2m}-1)=a o({{\nu}^{2}})( 2\frac{N}{2m}-1 ) \Rightarrow ( 2\frac{N}{2m}-1)=a( 2\frac{N}{2m}-1 ) $. This implies that $a$ must necessarily be $a=1$, that is, $H=<f_{{\nu}^{2}}>$, in particular $H$ is cyclic, but this is absurd since by hypothesis $T_{2^{k- 1}\overline{ N }}$, $f_{1+2^{k-1}\overline{ N }} \in \mathcal{G}_{N}$ and consequently, the group H contains two elements of order 2. Hence the thesis.

\end{proof}	
	
	\section{Methods of calculation for the group $\mathcal{G}_{N}$ .  }					
	
\quad

In this section we will discuss methods of calculating the group $\mathcal{G}_{N}$ that do not have general validity for all even numbers $N$ . We will begin with relatively simple special cases such as $N=2p$ with $p$ prime number.

\quad
Given an $N=2p$ for Proposition 7.6 we have that $\mathcal{G}_{N}=\mathcal{G}^{(1)}_{N}\rtimes H$ it is verified that $\mathcal{G}^{(1)}_{2p}=< \mathbb{I}_{\mathbb{Z}_{N}}>$.

In fact since $\overline{\mathcal{A}_{2p}}\neq \emptyset$ by hypothesis, then $\mathcal{G}^{(1)}_{2p} \leq <T_{2}>$ and furthermore $|<T_{2}>|=p$, so $\mathcal{G}^{(1)}_{2p}=\begin{cases}< \mathbb{I}_{\mathbb{Z}_{N}}>  \\ <T_{2}> otherwise \end{cases}$. If it were $\mathcal{G}^{(1)}_{2p}=<T_{2}>$ then $T_{2}(p) \in \overline{\mathcal{A}_{2p}}$ but $2p=0$ and that is absurd.

\quad

Hence $  \mathcal{G}_{2p} \leq U(\mathbb{Z}_{2p})$ unless isomorphisms, moreover the action on $\mathbb{Z}_{2p}$ of $\mathcal{G}_{2p}$ is of the type $f_{\nu}(x)=\nu x$. It is easy to check that the action of $ \mathcal{G}_{2p}$ on the partition of $U(\mathbb{Z}_{2p})$ given by the sets $\mathcal{A}_{2p} \cap U(\mathbb{Z}_{2p})$ and $\overline{\mathcal{A}_{2p}}-\{ p \}$ is free. It can be deduced from what has been said so far that $|\mathcal{G}_{2p}|  |  |\mathcal{A}_{2p} \cap U(\mathbb{Z}_{2p})|$ and $|\mathcal{G}_{2p}| |  |\overline{\mathcal{A}_{2p}}-\{ p \}|$ further we have that $|\mathcal{G}_{2p}| | ( |\mathcal{A}_{2p} \cap U(\mathbb{Z}_{2p})|, |\overline{\mathcal{A}_{2p}}-\{ p \}| )$.

\quad

Since $ |U({Z}_{2p})|=|\mathcal{A}_{2p} \cap U(\mathbb{Z}_{2p})|+|\overline{\mathcal{A}_{2p}}-\{ p \}|$, then we have that $|\mathcal{A}_{2p} \cap U(\mathbb{Z}_{2p})|$ and $|\overline{\mathcal{A}_{2p}}-\{ p \}|$ are even.

We can reach the conclusion of these arguments by introducing a criterion that can calculate the possible cardinality of the group $\mathcal{G}_{2p}$.

\quad

\begin{prop}
	Given $N=2p$ with $p$ prime then there exist two even integers $\alpha$ and $\beta$ such that $p-1=\alpha + \beta$ and $|\mathcal{G}_{2p}| | (\alpha, \beta).$
\end{prop}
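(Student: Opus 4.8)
The plan is to collect the divisibility facts already established in the running discussion before the statement and then package them into the stated criterion. From the text we know that $\mathcal{G}_{2p}\le U(\mathbb{Z}_{2p})$ (up to isomorphism) and that $\mathcal{G}_{2p}$ acts on $\mathbb{Z}_{2p}$ by $f_\nu(x)=\nu x$. We also know that this action is free on the two-block partition of $U(\mathbb{Z}_{2p})$ given by $\mathcal{A}_{2p}\cap U(\mathbb{Z}_{2p})$ and $\overline{\mathcal{A}_{2p}}-\{p\}$, so $|\mathcal{G}_{2p}|$ divides each of $\alpha:=|\mathcal{A}_{2p}\cap U(\mathbb{Z}_{2p})|$ and $\beta:=|\overline{\mathcal{A}_{2p}}-\{p\}|$, hence $|\mathcal{G}_{2p}|\mid(\alpha,\beta)$. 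Finally, it was observed that $\alpha$ and $\beta$ are both even. So the only thing that remains to be checked for the statement is the identity $p-1=\alpha+\beta$, together with a clean restatement.

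First I would verify $p-1=\alpha+\beta$. Since $p$ is an odd prime, $U(\mathbb{Z}_{2p})$ consists exactly of the residues coprime to $2p$, and $|U(\mathbb{Z}_{2p})|=\phi(2p)=\phi(2)\phi(p)=p-1$. The sets $\mathcal{A}_{2p}\cap U(\mathbb{Z}_{2p})$ and $\overline{\mathcal{A}_{2p}}-\{p\}$ partition $U(\mathbb{Z}_{2p})$: indeed $\mathcal{A}_{2p}$ and $\overline{\mathcal{A}_{2p}}$ partition all of $\mathbb{Z}_{2p}$, and the element $p$ — the only non-unit lying in $\overline{\mathcal{A}_{2p}}$ that could spoil the count — is removed, while $p\notin\mathcal{A}_{2p}$ (it is the unique order-$2$ element fixed by every $f_\nu$, and $\mathcal{A}_{2p}$ being a $D_{p,q}^{2p}$-covering built from the orbit of $0$ under $D_p$ together with the prime-related orbits, does not contain it — this is exactly the point already used when the text writes $\overline{\mathcal{A}_{2p}}-\{p\}$). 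Hence intersecting the partition $\{\mathcal{A}_{2p},\overline{\mathcal{A}_{2p}}\}$ with $U(\mathbb{Z}_{2p})$ and noting $p$ is the only unit-or-not discrepancy gives $|U(\mathbb{Z}_{2p})|=\alpha+\beta$, i.e. $p-1=\alpha+\beta$.

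Then I would assemble the pieces: $\alpha,\beta$ are even (as recorded just before the proposition, since $|U(\mathbb{Z}_{2p})|=p-1$ is even and the two blocks each have even size because $r\in\overline{\mathcal{A}_{2p}}\iff -r\in\overline{\mathcal{A}_{2p}}$ and $-r\neq r$ on units of $\mathbb{Z}_{2p}$ other than issues at $p$, which has been excluded, so the blocks are unions of two-element $\{r,-r\}$ pairs); $p-1=\alpha+\beta$; and $|\mathcal{G}_{2p}|\mid(\alpha,\beta)$ from freeness of the action on each block. This is precisely the assertion of Proposition 8.1.

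The main obstacle is the cleanest-possible justification that $p\notin\mathcal{A}_{2p}$ and that the two blocks genuinely have even cardinality; everything else is bookkeeping with $\phi(2p)=p-1$. If one wants to avoid re-deriving evenness, one can simply cite the paragraph immediately preceding the proposition, where it is concluded that $|\mathcal{A}_{2p}\cap U(\mathbb{Z}_{2p})|$ and $|\overline{\mathcal{A}_{2p}}-\{p\}|$ are even, and then the proof reduces to: set $\alpha,\beta$ equal to these two even numbers, note $\alpha+\beta=|U(\mathbb{Z}_{2p})|=p-1$, and invoke the already-established $|\mathcal{G}_{2p}|\mid(\alpha,\beta)$.
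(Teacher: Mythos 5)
Your proposal is correct and follows essentially the same route as the paper: the paper's proof is literally the one line ``set $\alpha=|\mathcal{A}_{2p}\cap U(\mathbb{Z}_{2p})|$ and $\beta=|\overline{\mathcal{A}_{2p}}-\{p\}|$'', relying on the immediately preceding paragraph for the freeness of the action, the divisibility of $|\mathcal{G}_{2p}|$ into both cardinalities, their evenness, and the count $|U(\mathbb{Z}_{2p})|=p-1=\alpha+\beta$. You simply spell out those same ingredients in more detail (in particular the verification that $p\notin\mathcal{A}_{2p}$ and the pairing $\{r,-r\}$ argument for evenness), which is a faithful expansion rather than a different argument.
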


\begin{proof}
For what we have seen so far, it suffices to pose $\alpha=|\mathcal{A}_{2p} \cap U(\mathbb{Z}_{2p})|$ and $\beta=|\overline{\mathcal{A}_{2p}}-\{ p \}| $

\end{proof}
		
\begin{os}
	This criterion just described in Proposition 8.1 in some cases may not be very helpful. For example in the case $N=26$ we have that $|\mathcal{G}_{26}|=\{ 2, 4,6 \}$; in such a case then it is convenient to explicitly compute the two sets $\mathcal{A}_{26} \cap U(\mathbb{Z}_{26}) $, and $ \overline{\mathcal{A}_{2p}}-\{ p \}$ , which in the case $N=26$ can of course can be done but when the even number $N$ becomes very large it gets complicated. This is due to the fact that for $p=13$ and that $p-1=12$ in addition to the factor $2$ contains a composite number $6$. On the other hand, when a prime number $p$ is of the type $p=2q+1$ with $q$ prime then the only common factor between $|\mathcal{A}_{26} \cap U(\mathbb{Z}_{2p})|$ and $|\overline{\mathcal{A}_{2p}}-\{ p \}|$ is $2$. So make criterion 8.1 more binding by adding an extra assumption about $N$.
\end{os}	

\quad

	\begin{prop}
		If there exists an $N=2p$, with $p$ prime, such $p=2q+1$ with $ q$ prime, then $\mathcal{G}_{2p} \cong \mathbb{Z}_{2}$.
	\end{prop}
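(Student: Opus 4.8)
The plan is to pin down $|\mathcal{G}_{2p}|$ from above by combining Proposition 8.1 with the hypothesis that $q$ is prime, and then from below by exhibiting one explicit nontrivial symmetry of the sieve.

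First I would invoke Proposition 8.1 (and the discussion preceding it, which uses $\overline{\mathcal{A}_{2p}}\neq\emptyset$ to get $\mathcal{G}^{(1)}_{2p}=\{\mathbb{I}_{\mathbb{Z}_{2p}}\}$, so that $\mathcal{G}_{2p}\le U(\mathbb{Z}_{2p})$ acting freely on each of $\mathcal{A}_{2p}\cap U(\mathbb{Z}_{2p})$ and $\overline{\mathcal{A}_{2p}}-\{p\}$). Setting $\alpha=|\mathcal{A}_{2p}\cap U(\mathbb{Z}_{2p})|$ and $\beta=|\overline{\mathcal{A}_{2p}}-\{p\}|$, we know $\alpha,\beta$ are even, $\alpha+\beta=|U(\mathbb{Z}_{2p})|=p-1$, and $|\mathcal{G}_{2p}|$ divides $(\alpha,\beta)$. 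Now use $p=2q+1$: then $\alpha+\beta=2q$, so writing $\alpha=2a$, $\beta=2b$ gives $a+b=q$. Both of the two sets are nonempty: for the first, since $p\ge 5$ the prime $3$ is among the $q_k$ and $9=(2+1)\cdot 3\in\mathcal{C}_k\subseteq\mathcal{A}_{2p}$ with $\gcd(9,2p)=1$; for the second, $2p$ admits a representation as a sum of two primes other than $p+p$, so $\overline{\mathcal{A}_{2p}}\supsetneq\{p\}$. Hence $1\le a,b\le q-1$, and since $q$ is prime $(a,b)=(a,q)=1$; therefore $(\alpha,\beta)=2(a,b)=2$ and $|\mathcal{G}_{2p}|$ divides $2$.

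For the lower bound I would use $f_{-1}\colon x\mapsto -x$. It lies in $\widehat{{Aut(D_{p})}^{S}}$ because $-1\in U(\mathbb{Z}_{2p})$; it fixes the sieve $\mathcal{C}_{2p}$ because $\mathcal{A}_{2p}$ is a union of dihedral orbits, each stable under $\sigma\cdot x=-x$, hence both $\mathcal{A}_{2p}$ and $\overline{\mathcal{A}_{2p}}$ are closed under negation; and $f_{-1}\neq\mathbb{I}_{\mathbb{Z}_{2p}}$ since $-1\not\equiv 1\pmod{2p}$ for $p>1$. Thus $f_{-1}\in\mathcal{G}_{2p}$ and $|\mathcal{G}_{2p}|\ge 2$. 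Combining with the previous paragraph, $|\mathcal{G}_{2p}|=2$, and the only group of order $2$ is $\mathbb{Z}_2$.

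The delicate step — the one I expect to be the main obstacle — is the assertion that $\beta>0$, i.e. that $\overline{\mathcal{A}_{2p}}$ has a point besides $p$: without it the estimate only yields $|\mathcal{G}_{2p}|\mid 2q$, and in fact if $\overline{\mathcal{A}_{2p}}=\{p\}$ one instead gets $\mathcal{G}_{2p}\cong\mathbb{Z}_{2q}$. For every $N=2p$ in the stated family this is checked directly (there is always a small prime $r$ with $2p-r$ prime), but a uniform elementary proof of it is of Goldbach type, so in the writeup it is cleanest to record it as part of the hypothesis that the sieve is genuinely Goldbach's for $2p$.
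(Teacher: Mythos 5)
Your proposal follows essentially the same route as the paper, which in fact never writes out a proof of this proposition: it relies entirely on Proposition 8.1 together with the sentence in Remark 8.2 asserting that for $p=2q+1$ the gcd $(\alpha,\beta)$ can only be $2$. Your version is more complete than the paper's on two points it leaves implicit: you verify the lower bound $|\mathcal{G}_{2p}|\geq 2$ by exhibiting $f_{-1}$, and you notice that the gcd argument needs $\alpha>0$ and $\beta>0$ (otherwise $(\alpha,\beta)$ could be $2q$, and indeed $\overline{\mathcal{A}_{2p}}=\{p\}$ would force $\mathcal{G}_{2p}\supseteq U(\mathbb{Z}_{2p})$ since every $f_{\nu}$ fixes $p$). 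The one caveat I would press you on is your proposed repair of the $\beta>0$ step: folding it into "the sieve is Goldbach's for $2p$" does not suffice, because Definition 4.5 only demands $\overline{\mathcal{A}_{2p}}\neq\emptyset$, and that holds automatically since $p$ itself always lies in $\overline{\mathcal{A}_{2p}}$ (giving the representation $2p=p+p$). What you actually need is the strictly stronger statement that $2p$ has a Goldbach representation other than $p+p$, or that $2p-1$ is prime; this is genuinely of Goldbach type, is silently assumed by the paper as well, and should be stated as an explicit hypothesis rather than absorbed into the sieve being Goldbach's. With that hypothesis made explicit, your argument is correct and closes the gap the paper leaves open.
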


\quad 

\begin{example}
	Thanks to criterion 8.3 we can easily calculate the symmetry group $ \mathcal{G}_{2p}$ of the algebraic dihedral goldbach sieve $\mathcal{C}_{2p}$ for example for $p=5, 7, 11, 23, 47$ which will be isomorphic to $ \mathbb{Z}_{2}.$
\end{example}

\quad

Now we will calculate symmetry groups of the type $ \mathcal{G}_{N}$ of particular even numbers $N$. We begin by introducing them through the following definition.

\quad

\begin{definition}
	An even number $N$ can be called $ \mathcal{G}_{N}$-cyclotomic if it is of the type $N=2^{k}\prod_{i=1}^{s} p^{h_{i}}_{i}$ where $p_{i}$ are all consecutive odd primes less than $[ \sqrt{N}]$ with $h_{i}>0, k \geq 0$ and there is no prime $q$ such that $q \nmid N$ and $q \le [\sqrt{N}].$
\end{definition}

We will now prove the following proposition describing the properties of the number just defined.

\begin{prop}

	 	Let $N$ be an even number $ \mathcal{G}_{N}$-cyclotomic then
	 	
 \begin{enumerate}  
	\item	The dihedral sieve $\mathcal{C}_{N}$ is Goldbach's ( i.e. $\overline{\mathcal{A}_{N}} \neq \emptyset$) 
	
	\item   $ \mathcal{G}_{N} \cong <T_{2m}> \rtimes U(\mathbb{Z}_{N}) $  for a suitable integer $m$ such that if $m>0$ then $m |N$
	
\end{enumerate}

\end{prop}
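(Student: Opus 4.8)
The plan is to prove the two assertions in turn, using the detailed analysis of $\widehat{Aut(D_n)}^S \cong \mathbb{Z}_{2n}\rtimes GL(\mathbb{Z}_N)$ from Section 3 together with the structural dichotomy established in Propositions 7.4 and 7.6. First, for assertion (1), I would show $\overline{\mathcal{A}_N}\neq\emptyset$ by a counting argument. By construction of the sieve $\mathcal{C}_N$, the covering $\mathcal{A}_N$ is the union of the orbits $Orb_{D_{N/p_j}}(0)$ for $p_j\mid N$ and $Orb_{D_{[\sqrt N/q_k]-1}}(2q_k)$ for $q_k\nmid N$, $q_k\le[\sqrt N]$. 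Since $N$ is $\mathcal{G}_N$-cyclotomic there are \emph{no} primes $q\le[\sqrt N]$ with $q\nmid N$, so the second family of orbits is empty and $\mathcal{A}_N=\bigcup_{j} Orb_{D_{N/p_j}}(0)$, where the $p_j$ range over $\{2\}\cup\{p_1,\dots\}$, the primes dividing $N$ that are $\le[\sqrt N]$. Each $Orb_{D_{N/p}}(0)=\{\,kp : k\in\mathbb{Z}\,\}$ inside $\mathbb{Z}_N$, i.e. the subgroup $p\mathbb{Z}_N$, which has size $N/p$. Hence $\mathcal{A}_N$ is contained in the set of elements of $\mathbb{Z}_N$ divisible by \emph{some} prime $\le[\sqrt N]$ dividing $N$. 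Any element of $U(\mathbb{Z}_N)$ — e.g.\ $1$ itself — is divisible by none of these primes, so $1\in\overline{\mathcal{A}_N}$ and the sieve is Goldbach's.

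For assertion (2), I would first invoke the dichotomy of Propositions 7.4 and 7.6 to reduce to the case where \emph{no} "mixed" element $T_m f_\nu\in\mathcal{G}_N$ with $T_m\notin\mathcal{G}_N$, $f_\nu\notin\mathcal{G}_N$ exists; then Proposition 7.6's conclusion (combined with the general $\widehat{Aut(D_n)}^S\cong\langle T_1\rangle\rtimes\langle f_\nu\rangle$ decomposition) gives $\mathcal{G}_N=\mathcal{G}_N^{(1)}\rtimes H$ with $\mathcal{G}_N^{(1)}=\langle T\rangle\cap\mathcal{G}_N$ and $H=U(\mathbb{Z}_N)\cap\mathcal{G}_N$. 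So the content of (2) is twofold: (a) $H=U(\mathbb{Z}_N)$, i.e.\ \emph{every} $f_\nu$ fixes the sieve; and (b) $\mathcal{G}_N^{(1)}=\langle T_{2m}\rangle$ for a suitable $m$ with $m\mid N$ when $m>0$. For (a): since $\mathcal{A}_N=\bigcup_{p\mid N,\,p\le[\sqrt N]} p\mathbb{Z}_N$, and each $p\mathbb{Z}_N$ is a \emph{characteristic} subgroup of the cyclic group $\mathbb{Z}_N$, any $\nu\in U(\mathbb{Z}_N)$ satisfies $f_\nu(p\mathbb{Z}_N)=p\mathbb{Z}_N$; hence $f_\nu(\mathcal{A}_N)=\mathcal{A}_N$, so $f_\nu(\overline{\mathcal{A}_N})=\overline{\mathcal{A}_N}$ and $f_\nu\in\mathcal{G}_N$. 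Thus $H=U(\mathbb{Z}_N)$. Also I must rule out the "first branch" of Proposition 7.4/7.6 — that is, I must check $T_{2^{k-1}\overline N}$ and $f_{1+2^{k-1}\overline N}$ do \emph{not} both fail to lie in $\mathcal{G}_N$ in a way that forces the $Z(\widehat{Aut(D_n)})\times(\cdots)$ shape; but by the argument just given every $f_\nu\in\mathcal{G}_N$, in particular $f_{1+2^{k-1}\overline N}\in\mathcal{G}_N$, so we are squarely in the situation of Proposition 7.6 and $\mathcal{G}_N=\mathcal{G}_N^{(1)}\rtimes U(\mathbb{Z}_N)$.

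For (b), $\mathcal{G}_N^{(1)}=\langle T\rangle\cap\mathcal{G}_N$ is a subgroup of the cyclic group $\langle T_1\rangle\cong\mathbb{Z}_N$, hence cyclic, generated by $T_d$ for $d=$ the smallest positive period; I must show this $d$ is even (so $d=2m$) and that $m\mid N$. Evenness of $d$: an odd translation $T_d$ maps $Orb_{D_{N/p_j}}(0)=p_j\mathbb{Z}_N$ to the coset $d+p_j\mathbb{Z}_N$; taking $p_j=2$ (which always occurs, since $2\mid N$), $T_d(2\mathbb{Z}_N)=1+2\mathbb{Z}_N$, the odd residues, which meets $\overline{\mathcal{A}_N}$ (indeed $1\in\overline{\mathcal{A}_N}$ as shown above, and $1$ is odd), so $T_d\notin\mathcal{G}_N$ unless $d$ is even. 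Thus $\mathcal{G}_N^{(1)}=\langle T_{2m}\rangle$. Divisibility $m\mid N$ when $m>0$: $T_{2m}$ has order $N/\gcd(2m,N)$ in $\langle T_1\rangle$, and since $\langle T_{2m}\rangle=\mathcal{G}_N^{(1)}\le\langle T_2\rangle$ (as $\overline{\mathcal{A}_N}\neq\emptyset$ forces $\mathcal{G}_N^{(1)}\neq\langle T_1\rangle$, and then $2m\in 2\mathbb{Z}_N$), the generator can be normalized so that $2m\mid N$, i.e.\ $m\mid N/2$, hence $m\mid N$; here I would argue that the minimal positive $2m$ with $T_{2m}\in\mathcal{G}_N$ divides $N$ because $\mathcal{G}_N^{(1)}$, being a subgroup of cyclic $\mathbb{Z}_N$, has a unique generator among the divisors of $N$. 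Assembling (a) and (b): $\mathcal{G}_N\cong\langle T_{2m}\rangle\rtimes U(\mathbb{Z}_N)$, which is the claim. The main obstacle I anticipate is the bookkeeping in (b) — precisely pinning down that the generating translation period is even \emph{and} is a divisor of $N$ — together with carefully justifying that the "mixed-element" branch (Proposition 7.4) genuinely cannot occur here; both hinge on the single clean fact that, for $\mathcal{G}_N$-cyclotomic $N$, $\mathcal{A}_N$ is exactly a union of the characteristic subgroups $p\mathbb{Z}_N$ over primes $p\mid N$ with $p\le[\sqrt N]$, so I would state and prove that identity as a preliminary lemma before doing anything else.
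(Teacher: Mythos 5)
Your proof is correct, and the engine of part (2) is the same as the paper's: identify $\mathcal{A}_N$ as a union of the subgroups $p\mathbb{Z}_N$ over the primes $p\mid N$, deduce that $\overline{\mathcal{A}_N}$ is exactly $U(\mathbb{Z}_N)$ and is therefore preserved by every $f_\nu$, so $H=U(\mathbb{Z}_N)$, and then use the absence of mixed elements (Proposition 7.6 --- or, as you could note, the simpler observation that a mixed element requires some $f_\nu\notin\mathcal{G}_N$, which is now impossible) to split $\mathcal{G}_N=\mathcal{G}_N^{(1)}\rtimes U(\mathbb{Z}_N)$. Where you diverge is part (1): the paper argues by contradiction, showing that $\overline{\mathcal{A}_N}=\emptyset$ would force $U(\mathbb{Z}_N)=\{1\}$ and hence $\phi(N)=1$, which is impossible for $N=2^k\prod p_i^{h_i}$; you instead exhibit $1\in\overline{\mathcal{A}_N}$ directly from the fact that the $q$-orbits are absent and no $p_j\mid N$ divides $1$. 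Your version is shorter and more elementary, and it has the added benefit that the element $1\in\overline{\mathcal{A}_N}$ is exactly what you later need to show that odd translations cannot lie in $\mathcal{G}_N$. That last point is the other place you add value: the paper asserts $\langle T_{2m}\rangle<\langle T_2\rangle$ as a consequence of (1) without spelling out why the generator of $\mathcal{G}_N^{(1)}$ must be an even translation, whereas you justify it (an odd $T_d$ sends the even class, contained in $\mathcal{A}_N$, onto the odd class, which meets $\overline{\mathcal{A}_N}$ at $1$). So the two proofs buy the same theorem, but yours makes explicit two small steps the paper leaves implicit.
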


\begin{proof}

If $\overline{\mathcal{A}_{N}}=\emptyset$ we will have that $-r$ is a multiple number in $ \mathbb{Z}$ for every prime number $r$ in particular by hypothesis $-r \in \mathcal{A}_{N}$ so $(N-r, N) >1$ and $(r, N)>1$ for which there are no primes $N >q> [\sqrt{N}]$ such that $ q \nmid N$. From this we deduce that $U(\mathbb{Z}_{N})= \{ 1 \}$; in fact, if there were a $ \nu \in U(\mathbb{Z}_{N})$ with $ \nu \neq 1$. Then $\nu$ would necessarily be a composite number i.e. $\nu$ is a multiple of some prime number $q$ such that $q \nmid N$ but we have just seen that if $\overline{\mathcal{A}_{N}}=\emptyset$ then such primes do not exist. From there the absurdity, for given that $ U(\mathbb{Z}_{N})=\{ 1 \}$, we have that $\phi(N)=1$ but $\phi(N)=\phi(2^{k}\prod_{i=1}^{s} p^{h_{i}}_{i})=2^{k-1}\phi(p^{h_{i}}_{i})$. And so for some $i$ we get that $\phi(N) \ge \phi(p^{h_{i}}_{i})=p^{h_{i}-1}_{i} (p_{i}-1) \ge 2$ that is $\phi(N) >1$ and (1) is proved.

\quad
So we are  in the situation that $\overline{\mathcal{A}_{N}} \neq \emptyset$ but this implies that $\overline{\mathcal{A}_{N}}=U(\mathbb{Z}_{N})$. For if $q \neq 1$ and is a prime number such that $ q \nmid N$, then obviously it cannot belong to $\mathcal{A}_{N}$ . Conversely, if $\nu \in U(\mathbb{Z}_{N})$ and $ \nu \neq 1$ cannot be composite. In fact, if $\nu$ were so then $\nu \in \mathcal{A}_{N}$, but by hypothesis all $ x \in \mathcal{A}_{N}$ have factors in common with $(x,N)>1$, which is impossible.
\quad
So that $U(\mathbb{Z}_{N})\le \mathcal{G}_{N}$ then by Proposition (7.6) $\mathcal{G}_{N}\cong <T_{2m}>\rtimes U(\mathbb{Z}_{N})$, which by (1) we have that $<T_{2m}> < <T_{2}>$ and thus we have also shown (2).

\end{proof}

\begin{os}
Given a number $N$ that is $\mathcal{G}_{N}$-cyclotomic then it can easily be seen that $N=q+1$ where $q$ is a prime. This condition is only necessary in general. In fact it is obvious that in general the inverse is not true.	
\end{os}
\quad
Now let's look at some examples of these numbers $\mathcal{G}_{N}-cyclotomic$.

\begin{example}
	The number $N=12$ is $\mathcal{G}_{12}-cyclotomic$, the verification in this case is quite elementary. In particular, we see how the group $\mathcal{G}_{12}$ is; it is verified that $U(\mathbb{Z}_{12})=\{ 1, 5, 7, 11 \}$ while it is verified that $\mathcal{G}^{(1)}_{12}=<T_{6}>$. Then by (2) of Proposition 8.5 we have that $\mathcal{G}_{12}=<T_{6}>\rtimes U(\mathbb{Z}_{12}) \cong \mathbb{Z}_{2}\rtimes V \cong ( \mathbb{Z}_{2})^{3}$.
	
	\quad
	For $N=18$ we have a $\mathcal{G}_{18}-cyclotomic$ number with $\overline{\mathcal{A}_{18}}=U (\mathbb{Z}_{18})=\{ 1, 5, 7, 11, 13, 17 \}$. We always have by applying Proposition 8.5 that $\mathcal{G}_{18}=<T_{6}>\rtimes U (\mathbb{Z}_{18})\cong \mathbb{Z}_{3}\rtimes \mathbb{Z}_{6}$.
	
	\quad
	
	For $N=24$ it is obtained that $U(\mathbb{Z}_{24})=\{ 1,5, 7, 11, 13, 17, 19,23 \} \cong (\mathbb{Z}_{2})^{3}$ and thus $\mathcal{G}_{24}=<T_{6}>\rtimes U(\mathbb{Z}_{24})\cong \mathbb{Z}_{4}\rtimes (\mathbb{Z}_{2})^{3}$.
	
	\quad
	
	For $N=30$ we get that $U(\mathbb{Z}_{30})=\{ 1, 7, 11, 13, 17, 19,23, 29 \}$ and thus $\mathcal{G}_{30}=<\mathbb{I}_{\mathbb{Z}_{30}}>\rtimes U(\mathbb{Z}_{30})\cong \mathbb{Z}_{2}\times \mathbb{Z}_{4}$.
	
\end{example}

\quad

The (1) and (2) of Proposition 8.5 is a necessary condition for an even number $N$ to be $\mathcal{G}_{N}-cyclotomic$, in truth the condition is sufficient and we will see this by proving the following proposition.

\begin{prop}
Given an even number $N > 6$ such that the dihedral sieve $\mathcal{C}_{N}$ is Goldbach's and that $\mathcal{G}_{N}\cap U(\mathbb{Z}_{N})=U(\mathbb{Z}_{N})$ unless isomorphisms, then $N$ is $\mathcal{G}_{N}-cyclotomic$.
\end{prop}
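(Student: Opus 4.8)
The plan is to prove the converse of Proposition 8.5, i.e. that the two conditions "$\mathcal{C}_N$ is Goldbach's" and "$\mathcal{G}_N\cap U(\mathbb{Z}_N)=U(\mathbb{Z}_N)$" force $N$ to be $\mathcal{G}_N$-cyclotomic. Recall that being $\mathcal{G}_N$-cyclotomic means $N=2^k\prod_{i=1}^s p_i^{h_i}$ where the $p_i$ are \emph{all} the consecutive odd primes $\le[\sqrt N]$ and there is no prime $q\le[\sqrt N]$ with $q\nmid N$. So what must be shown, contrapositively, is: if some prime $q\le[\sqrt N]$ fails to divide $N$, then either $\mathcal{C}_N$ is not Goldbach's or $\mathcal{G}_N\cap U(\mathbb{Z}_N)\subsetneq U(\mathbb{Z}_N)$ (up to isomorphism). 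First I would record the structural facts already available: by Proposition 7.6 (combined with the hypothesis $\mathcal{G}_N\cap U(\mathbb{Z}_N)=U(\mathbb{Z}_N)$) we have $\mathcal{G}_N\cong\mathcal{G}_N^{(1)}\rtimes U(\mathbb{Z}_N)$ with $\mathcal{G}_N^{(1)}=\langle T_{2m}\rangle$, and the action $f_\nu(x)=\nu x$ of $U(\mathbb{Z}_N)$ must preserve the partition $\{\mathcal{A}_N,\overline{\mathcal{A}_N}\}$; in particular $\overline{\mathcal{A}_N}$ is a union of $U(\mathbb{Z}_N)$-orbits under multiplication.

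The key step is to analyze $\overline{\mathcal{A}_N}$ as a $U(\mathbb{Z}_N)$-invariant set. The covering $\mathcal{A}_N$ consists exactly of the residues in $\mathbb{Z}_N$ divisible (in $\mathbb{Z}$, within range) by one of the small primes $p_j\mid N$ together with the carefully chosen orbits for the $q_k\nmid N$; by construction of the sieve in Section 4, a unit $\nu\in U(\mathbb{Z}_N)$ lands in $\overline{\mathcal{A}_N}$ unless it is "caught" by the $q_k$-part of the covering. The plan is to show: (a) every unit $\nu$ with $\nu<[\sqrt N]$ a prime not dividing $N$ must lie in $\overline{\mathcal{A}_N}$ (such a $q$ is not a multiple of any $p_j$, and one checks it avoids the $\mathcal{Q}_k$ construction); hence if such a $q$ exists, $1=\nu\cdot\nu^{-1}$ would have to be reachable from $q$ by multiplication by a unit, forcing $1\in\overline{\mathcal{A}_N}$, which is absurd since $1$ is not divisible by anything and the sieve puts $0$ and the prime-multiples in $\mathcal{A}_N$ — more precisely, $1\notin\overline{\mathcal{A}_N}$ because $1$ sits in $Orb_{D_{2n/p_0}}(x_0^0)$ only if... — so I would instead argue that $U(\mathbb{Z}_N)\cdot q=U(\mathbb{Z}_N)$ would force the whole unit group into $\overline{\mathcal{A}_N}$, contradicting that $\mathcal{A}_N$ contains units (e.g. $\pm 2q_k$ type elements or, when $N-1$ is prime, the element corresponding to that prime). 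Then (b) conversely, if no such $q$ exists, one retraces the argument of Proposition 8.5 to see $\overline{\mathcal{A}_N}=U(\mathbb{Z}_N)$ and $N$ has the required factored form, because every prime $\le[\sqrt N]$ divides $N$ and the exponents $h_i$, $k$ are whatever they are in the prime factorization of $N$.

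The main obstacle will be part (a): pinning down precisely why a prime $q\le[\sqrt N]$ with $q\nmid N$ cannot be absorbed into $\mathcal{A}_N$ — one must verify that $q$ is not of the form $\pm(2+m)q_k$ in $\mathbb{Z}_N$ nor a $p_j$-multiple, and that its entire $U(\mathbb{Z}_N)$-orbit stays in $\overline{\mathcal{A}_N}$, so that $U(\mathbb{Z}_N)\subseteq\overline{\mathcal{A}_N}$, which then clashes with the presence in $\mathcal{A}_N$ of at least one unit coming from the $\mathcal{Q}_k$ basepoints $2q_k$ when some $q_k\nmid N$ exists, or with the exceptional pair $\{1,N-1\}$ when $N-1$ is prime. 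The bookkeeping here — separating the cases "there is a $q_k\nmid N$ below $[\sqrt N]$" (so $\mathcal{A}_N$ genuinely contains units) versus "there isn't" (where one falls back to the factorization and checks the consecutive-primes condition directly) — is the delicate combinatorial heart of the argument, and I would organize it by first disposing of the degenerate small cases (hence the hypothesis $N>6$) and then running the orbit-counting/divisibility contradiction exactly as in the proof of Proposition 7.6 and Proposition 8.5, invoking $|\mathcal{G}_N|\mid|\overline{\mathcal{A}_N}|$ and $|\mathcal{G}_N|\mid|\mathcal{A}_N\cap U(\mathbb{Z}_N)|$ to squeeze out the conclusion.
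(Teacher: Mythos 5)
There is a genuine gap, and it sits exactly at the step you yourself flag as ``the delicate combinatorial heart.'' Your step (a) asserts that a prime $q\le[\sqrt N]$ with $q\nmid N$ must lie in $\overline{\mathcal{A}_N}$, i.e.\ that it ``avoids the $\mathcal{Q}_k$ construction.'' This is false as a statement about the sieve: such a $q$ lands in $\mathcal{A}_N$ whenever $N-q$ is a proper multiple of some small prime $q_{k'}\nmid N$, because then $q=-(2+m)q_{k'}\in\mathcal{C}_{-k'}\subseteq\mathcal{Q}_{k'}$. For instance with $N=100$ and $q=7$ one has $100-7=93=3\cdot 31$, so $7\in\mathcal{Q}_{k'}$ for $q_{k'}=3$ and hence $7\in\mathcal{A}_{100}$. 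So you cannot seed the $U(\mathbb{Z}_N)$-orbit argument with $q$ itself. You also assert that $1\in\overline{\mathcal{A}_N}$ would be ``absurd''; this is backwards --- the sieve explicitly admits the pair $(1,N-1)$, and the paper's proof in fact \emph{establishes} $1\in\overline{\mathcal{A}_N}$ as its first and decisive step. Finally, the units you offer as witnesses that $\mathcal{A}_N\cap U(\mathbb{Z}_N)\neq\emptyset$ do not work: the elements of type $\pm 2q_k$ are even, hence not units, and $N-1$ (when prime) lies in $\overline{\mathcal{A}_N}$, not in $\mathcal{A}_N$. The witness that actually works is a composite unit such as $q^2=(2+(q-2))q\le N$, which lies in $\mathcal{Q}_k$ by construction.

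The paper closes these holes as follows. It first shows $1\in\overline{\mathcal{A}_N}$: if not, then $f_\nu(1)=\nu\in\mathcal{A}_N$ for every unit $\nu$ by the hypothesis $\mathcal{G}_N\cap U(\mathbb{Z}_N)=U(\mathbb{Z}_N)$, so $\overline{\mathcal{A}_N}$ contains no units; since its elements $r$ have both $r$ and $N-r$ prime, a non-unit element forces $r\mid N$ and hence $N=2r$, leaving only the degenerate case $\overline{\mathcal{A}_N}=\{p\}$ with $N=2p$, which is then killed by Bertrand's postulate. Your proposal never isolates this degenerate case and never invokes Bertrand, yet without it the hypothesis that the sieve is Goldbach's ($\overline{\mathcal{A}_N}\neq\emptyset$) is never used in a load-bearing way. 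Once $1\in\overline{\mathcal{A}_N}$, the orbit $U(\mathbb{Z}_N)\cdot 1$ gives $U(\mathbb{Z}_N)\subseteq\overline{\mathcal{A}_N}$, hence $\overline{\mathcal{A}_N}=U(\mathbb{Z}_N)$ for $N>6$, and then any prime $q\le[\sqrt N]$ with $q\nmid N$ is refuted by the composite unit $q^2\in\mathcal{Q}_k\subseteq\mathcal{A}_N$. Your overall skeleton --- propagate membership in $\overline{\mathcal{A}_N}$ along $U(\mathbb{Z}_N)$-orbits and contradict the presence of a unit in $\mathcal{A}_N$ --- is the right one, but the entry point and both contradictions are misidentified, so the argument as proposed does not go through.
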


\begin{proof}
	First, we have that $1 \in \overline{\mathcal{A}_{N}}$. If this were not so, we would have that $f_{\nu}(1)\notin \overline{\mathcal{A}_{N}} \hspace{0.2cm} \forall \nu \in U(\mathbb{Z}_{N})$ by hypothesis. Hence  we are in the case that $\overline{\mathcal{A}_{N}}=\{ p\}$ and thus $N=2p$ with $p\ge 3$. Then all prime $q$ ,as $\mathcal{A}_{N}$ is defined, such that $q \nmid N$ and $q \le [\sqrt{N}]$ belong to $\mathcal{A}_{N}$, and there are none larger than $[\sqrt{N}]$. And from here we get the absurdity since by Bertrand's postulate(see cap 22 in [8]) applied to $p$ there exists at least one prime $q$ such that $2p-2>q>p>\sqrt{2p}$ the last inequality is justified by the fact that $p>3$. This contradicts the fact that we had deduced that there were no primes $q$ that did not divide $N$ greater than $[\sqrt{N}]$. Then we have that $f_{\nu}(1) \in \overline{\mathcal{A}_{N}}$ i.e., $\nu \in \overline{\mathcal{A}_{N}}$ and since $\nu$ is generic we get that $\overline{\mathcal{A}_{N}}=U(\mathbb{Z}_{N})$ if $N>6$. From here we deduce that $\mathcal{A}_{N}=\bigcup_{j}Orb_{D_{\frac{N}{p_{j}}}}(0)$ i.e. $N$ is $\mathcal{G}_{N}$-cyclotomic.
\end{proof}

\quad

We now see a criterion for determining the possible generators of $\mathcal{G}^{(1)}_{N}$. This criterion is based on the fact that the action of any element $T_{d}$ is free. In fact it can be easily seen that $$T_{d}(x)=x \Rightarrow x+d=x \Rightarrow d=0,$$ from which it follows that $|<T_{d}> |  | |\mathcal{A}_{N}| $ in particular if $d$ is even it preserves the parity of $x$ and thus we have that $|<T_{d}>| || \mathcal{A}_{N}\cap Orb_{D_{\frac{N}{2}}}(1)|$ and we also do not forget that $|<T_{d}>|  | |\overline{ \mathcal{A}_{N} }|$. Hence $|<T_{d}> | | (|\mathcal{A}_{N}\cap Orb_{D_{\frac{N}{2}}}(1) |, |\overline{ \mathcal{A}_{N} } |)$.
				
				From here one can give a kind of estimate on $\mathcal{G}^{(1)}_{N}$ on a group of a given generator $<T_{2m}>$ in the following  way
				
\begin{equation}
\mathcal{G}^{(1)}_{N} \le < T_{\frac{N}{(|\mathcal{A}_{N}\cap Orb_{D_{\frac{N}{2}}}(1) |, |\overline{ \mathcal{A}_{N} } |)}} 	>.
\end{equation}

\quad

Thus, we have the elements to prove the following criterion

\begin{prop}
	There exist two integers $\alpha$ and $\beta$ such that $\alpha + \beta=N$ with $N$ even so that $|\mathcal{G}^{(1)}_{N}| \mid (\alpha, \beta)$. Moreover, we will have in that case that $\mathcal{G}^{(1)}_{N} \le <T_\frac{N}{(\alpha, \beta)}>$
\end{prop}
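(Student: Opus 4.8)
The plan is to reduce everything to the freeness of the translation action on $\mathbb{Z}_{N}$ recorded just before the statement, together with the fact that $\mathcal{G}^{(1)}_{N}=<T>\cap\mathcal{G}_{N}$ is a cyclic group of translations fixing the partition $\mathcal{C}_{N}=\{\mathcal{A}_{N},\overline{\mathcal{A}_{N}}\}$. First I would note that, as $<T>\cong\mathbb{Z}_{N}$ and every subgroup of a finite cyclic group is cyclic, one has $\mathcal{G}^{(1)}_{N}=<T_{d_{0}}>$ for some divisor $d_{0}\mid N$, so that $|\mathcal{G}^{(1)}_{N}|=N/d_{0}$. The key observation is that every nontrivial $T_{d}$ moves every point of $\mathbb{Z}_{N}$, since $T_{d}(x)=x$ forces $d=0$; hence each $<T_{d_{0}}>$-orbit on $\mathbb{Z}_{N}$ has exactly $|\mathcal{G}^{(1)}_{N}|$ elements.

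Next I would check that every element of $\mathcal{G}^{(1)}_{N}$ in fact preserves each of the blocks $\mathcal{A}_{N}$ and $\overline{\mathcal{A}_{N}}$ rather than interchanging them. Since every element of $\mathcal{G}_{N}$ is a bijection of $\mathbb{Z}_{N}$ fixing the bipartition $\mathcal{C}_{N}$, an interchange of the two blocks is only possible when $|\mathcal{A}_{N}|=|\overline{\mathcal{A}_{N}}|$; but the covering $\mathcal{A}_{N}$ always contains $Orb_{D_{n}}(0)$, i.e.\ all even residues, and (outside a few small degenerate values of $N$) at least one odd residue coming from some $\mathcal{Q}_{k}$, or from an odd prime factor of $N$ in the cyclotomic case, so $|\mathcal{A}_{N}|>N/2\ge|\overline{\mathcal{A}_{N}}|$ and a cardinality-preserving interchange is impossible. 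Hence $T_{d_{0}}$ preserves both blocks, and by freeness each of $\mathcal{A}_{N}$ and $\overline{\mathcal{A}_{N}}$ is a disjoint union of $<T_{d_{0}}>$-orbits; therefore $|\mathcal{G}^{(1)}_{N}|$ divides $|\mathcal{A}_{N}|$ and $|\overline{\mathcal{A}_{N}}|$. In the residual degenerate case where some translation does interchange the two blocks one simply takes $\alpha=N$, $\beta=0$, for which both conclusions hold vacuously.

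Finally I would set $\alpha:=|\mathcal{A}_{N}|$ and $\beta:=|\overline{\mathcal{A}_{N}}|$: since $\mathcal{C}_{N}$ partitions $\mathbb{Z}_{N}$ we have $\alpha+\beta=N$, and by the previous step $|\mathcal{G}^{(1)}_{N}|\mid(\alpha,\beta)$, which is the first assertion. For the ``moreover'' I would use that $(\alpha,\beta)\mid\alpha+\beta=N$, so that $<T_{N/(\alpha,\beta)}>$ is the unique subgroup of $<T>$ of order $(\alpha,\beta)$, and that in a cyclic group one cyclic subgroup lies inside another exactly when its order divides the other's; hence $|\mathcal{G}^{(1)}_{N}|\mid(\alpha,\beta)$ yields $\mathcal{G}^{(1)}_{N}=<T_{d_{0}}> \le <T_{N/(\alpha,\beta)}>$, which is exactly the coarse-partition companion of estimate (8.1). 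The hard part is the no-interchange argument of the second paragraph, which is the only place where the structural description of $\mathcal{A}_{N}$ from Section 4 is actually needed; the remaining steps are routine divisibility bookkeeping.
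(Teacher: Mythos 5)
Your argument is correct and does prove the statement exactly as written, but it is not the paper's proof, and the difference matters downstream. The paper's one-line proof sets $\alpha=|\mathcal{A}_{N}\cap Orb_{D_{N/2}}(1)|$ --- only the \emph{odd} part of the covering --- and $\beta=|\overline{\mathcal{A}_{N}}|$, relying on the observation recorded just before the statement that any $T_{d}\in\mathcal{G}^{(1)}_{N}$ with $d$ even preserves parity and therefore acts freely on $\mathcal{A}_{N}\cap Orb_{D_{N/2}}(1)$ as well as on $\overline{\mathcal{A}_{N}}$. With that choice $\alpha+\beta=N/2$, not $N$ (the ``$\alpha+\beta=N$'' in the statement is a slip: the remark that follows applies the criterion to $N=2p$ with $\alpha+\beta=p$), whereas your $\alpha=|\mathcal{A}_{N}|$, $\beta=|\overline{\mathcal{A}_{N}}|$ do sum to $N$. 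The price is sharpness: your gcd is $(|\mathcal{A}_{N}|,|\overline{\mathcal{A}_{N}}|)=(N,|\overline{\mathcal{A}_{N}}|)$, which is in general strictly larger than the paper's $(|\mathcal{A}_{N}\cap Orb_{D_{N/2}}(1)|,|\overline{\mathcal{A}_{N}}|)$; for $N=24$, for instance, you get $(16,8)=8$ and the bound $\mathcal{G}^{(1)}_{24}\le<T_{3}>$, while the paper gets $(4,8)=4$ and the tight bound $\mathcal{G}^{(1)}_{24}\le<T_{6}>$. So you have proved a genuinely weaker criterion, although one that still formally satisfies the statement. Two further remarks: your second paragraph, ruling out an interchange of the two blocks, is care the paper never takes (it implicitly identifies $T_{d}\in\mathcal{G}_{N}$ with $T_{d}(\overline{\mathcal{A}_{N}})=\overline{\mathcal{A}_{N}}$, as in the proof of Lemma 7.1), and your fallback $\alpha=N$, $\beta=0$ for the few small $N$ is legitimate but vacuous. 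To recover the paper's actual criterion, replace $|\mathcal{A}_{N}|$ by $|\mathcal{A}_{N}\cap Orb_{D_{N/2}}(1)|$ and add one sentence on parity preservation; the rest of your divisibility bookkeeping goes through unchanged.
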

\begin{proof}
	It is enough to put $\alpha=|\mathcal{A}_{N}\cap Orb_{D_{\frac{N}{2}}}(1) |$ and $\beta=|\overline{ \mathcal{A}_{N} } |$.
\end{proof}	
\quad

It is appropriate to make a few remarks about this criterion

\begin{os}
As with Criterion  8.2, there are disadvantages and advantages depending on the nature of the even number $N$.

Certainly, if the number of solutions of the pair of integers $\alpha, \beta$ of (8.8) is small compared to the generators of $<T_{2}>$, and generators of the whole group $<T_{2}>$ are excluded,  we have a relatively good advantage. For example, if the solutions include instead include those for which $(\alpha, \beta)= \frac{N}{2}$, we get exactly the element $T_{2}$ as a possible generator of $\mathcal{G}^{(1)}_{N}$ ; which tells us nothing about such a group unless we compute directly $|\mathcal{A}_{N}\cap Orb_{D_{\frac{N}{2}}}(1) |$ and $|\overline{ \mathcal{A}_{N} }|$ which, can as you can imagine, can get very complicated. But, thanks to this criterion it is easy to prove that $\mathcal{G}^{(1)}_{2p} =<\mathbb{I}_{\mathbb{Z}_{2p} }>$ with $p>3$ prime number.	 In fact, if $\alpha+\beta=p$ you will  necessarily have that $(\alpha,\beta)=1$ if $\alpha \neq 0$ and $\beta \neq 0$ and also the cases $\alpha=0$ or $\beta=0$ are excluded due to the fact that $N=2p$ and with $p>3$.
\end{os}	

\quad

As in Criterion 8.3 we determine which possible groups can estimate the group $ \mathcal{G}_{N}\cap U(\mathbb{Z}_{N})$,  but unlike the group $\mathcal{G}^{(1)}_{N}$, which is cyclic, the group $ \mathcal{G}_{N}\cap U(\mathbb{Z}_{N})$ is a subgroup of $U(\mathbb{Z}_{N})$ ( again, unless isomorphisms), which in turn is not always a cyclic group, rather it is a direct product of appropriate cyclic groups. So beyond estimating the cardinality of $ \mathcal{G}_{N}\cap U(\mathbb{Z}_{N})$ to know which group contains it, we have to do calculations other than the criteria used here. In particular, we will determine such groups in the case of $N$ composite number.

\quad

Given $H= \mathcal{G}_{N}\cap U(\mathbb{Z}_{N})$ we have that the action of $H$ on $\mathcal{A}_{N}\cap U(\mathbb{Z}_{N})$ and on $\overline{\mathcal{A}_{N}}$ is free; for which $|H| | (|\mathcal{A}_{N}\cap U(\mathbb{Z}_{N})|, |\overline{\mathcal{A}_{N}}|)$ with $ \phi(N)=|\mathcal{A}_{N}\cap U(\mathbb{Z}_{N})|+|\overline{\mathcal{A}_{N}}|$, and this is possible since we remember that $N$ is a composite number.

\quad

Combining propositions (7.4) and  (7.6) with Criterion (8.8) and with what we have seen so far, we obtain for an even number $N$ of the type $N=2^{k}\overline{N}$ with $k \ge 1$ and $\overline{N}\ge 1$ with $\overline{N}$ odd the following equalities.

\begin{equation}
	For \hspace{0.1cm} k>1 \hspace{0.2cm}\mathcal{G}_{2^{k}\overline{N}} \leq <T_{2^{k-1}\overline{N}}f_{1+2^{k-1}\overline{N}}> \times (< T_{\frac{2^{k}\overline{N}}{(|\mathcal{A}_{2^{k}\overline{N}}\cap Orb_{D_{2^{k-1}\overline{N}}}(1) |, |\overline{ \mathcal{A}_{2^{k}\overline{N}} } |)}}  	>\rtimes H)  \hspace{0.2cm} with \hspace{0.2cm} H= \mathcal{G}_{2^{k}\overline{N}}\cap U(\mathbb{Z}_{2^{k}\overline{N}}) 
\end{equation}

$$ if \hspace{0.1cm}	T_{2^{k-1}\overline{N}},\hspace{0.1cm} f_{1+2^{k-1}\overline{N}}\hspace{0.1cm} \notin \mathcal{G}_{2^{k}\overline{N}}$$

\begin{equation}
	For \hspace{0.1cm} k>1 \hspace{0.2cm}\mathcal{G}_{2^{k}\overline{N}} \leq  (< T_{\frac{2^{k}\overline{N}}{(|\mathcal{A}_{2^{k}\overline{N}}\cap Orb_{D_{2^{k-1}\overline{N}}}(1) |, |\overline{ \mathcal{A}_{2^{k}\overline{N}} } |)}}  	>\rtimes H)  \hspace{0.2cm} with \hspace{0.2cm} H= \mathcal{G}_{2^{k}\overline{N}}\cap U(\mathbb{Z}_{2^{k}\overline{N}}) 	
\end{equation}
$$ if \hspace{0.1cm}	T_{2^{k-1}\overline{N}},\hspace{0.1cm} f_{1+2^{k-1}\overline{N}}\hspace{0.1cm} \in \mathcal{G}_{2^{k}\overline{N}}$$

\begin{equation}
	for \hspace{0.1cm} k=1 \hspace{0.2cm}\mathcal{G}_{2\overline{N}} \leq < T_{\frac{2\overline{N}}{(|\mathcal{A}_{2\overline{N}}\cap Orb_{D_{\frac{2\overline{N}}{2}}}(1) |, |\overline{ \mathcal{A}_{2\overline{N}} } |)}} 	>\rtimes H  \hspace{0.2cm} with \hspace{0.2cm} H= \mathcal{G}_{2\overline{N}}\cap U(\mathbb{Z}_{2\overline{N}}),
\end{equation}	

with $|H| \mid (|\mathcal{A}_{2^{k}\overline{N}}\cap U(\mathbb{Z}_{2^{k}\overline{N}}) |,|\overline{ \mathcal{A}_{2^{k}\overline{N}} } | ) \hspace{0.1cm} \forall k >0$ and $\overline{N}$ a composite number.

\quad
We now introduce other types of even numbers $N$ for which surely the associated dihedral sieve $\mathcal{C}_{N}$ is Goldbach's.

\begin{definition}
	An even number $N$ that admits in its dihedral sieve $\mathcal{C}_{N}$ a single orbit of the type $Orb_{D_{\bigr[\frac{N}{q}\bigl]} }(2q)$ with $q \nmid N$ and $q<[\sqrt{N}]$ is called $Mono- Orbital$ or in short $m. o$.
\end{definition}
For such even numbers $N$ the following property applies.

\begin{prop}
For an even $m.o$ number $N$ the sieve $\mathcal{C}_{N}$ is Goldbach's.	
\end{prop}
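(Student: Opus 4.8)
The plan is to exhibit a concrete element of $\overline{\mathcal{A}_N}$, namely the single prime $q$ that enters the construction of $\mathcal{C}_N$ with $q\nmid N$. First I would record the precise shape of the covering of a mono-orbital number: by Definition 8.10 the only $\mathcal{Q}$-type orbit in the sieve is the one attached to that $q$, so from Section 4
$$\mathcal{A}_N=\Big(\bigcup_{p\mid N,\ p\ \mathrm{prime},\ p\le[\sqrt{N}]}p\,\mathbb{Z}_N\Big)\ \cup\ \mathcal{Q},\qquad \mathcal{Q}=Orb_{D_{[N/q]-1}}(2q)=\big\{\pm(2+m)q\ \mathrm{mod}\ N:\ 0\le m\le[N/q]-2\big\},$$
where $p\,\mathbb{Z}_N=Orb_{D_{N/p}}(0)=\{x\in\mathbb{Z}_N:\ p\mid x\}$ because the action (4.3) sends $(\rho^k,x)$ to $x+kp$ and $p\mid N$. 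It then suffices to show $q\notin\mathcal{A}_N$: since $0<q<N$ this yields $q\in\overline{\mathcal{A}_N}$, hence $\overline{\mathcal{A}_N}\neq\emptyset$, i.e.\ $\mathcal{C}_N$ is Goldbach's.

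Next I would show that $q$ avoids the "multiples" part of $\mathcal{A}_N$. As $N$ is even, $2\mid N$, so $q\neq 2$ and $q$ is an odd prime; in particular $q\notin 2\,\mathbb{Z}_N$. For an odd prime $p$ with $p\mid N$ and $p\le[\sqrt N]$ we have $p\neq q$ (because $p\mid N$ while $q\nmid N$), hence $p\nmid q$ in $\mathbb{Z}$, and since $0<q<N$ this gives $q\notin p\,\mathbb{Z}_N$. Thus $q$ lies in no orbit $Orb_{D_{N/p}}(0)$.

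Finally I would rule out $q\in\mathcal{Q}$ by a short congruence computation: if $q\equiv\varepsilon(2+m)q\pmod N$ with $\varepsilon\in\{\pm1\}$ and $0\le m\le[N/q]-2$, then $\gcd(q,N)=1$ forces $N\mid(1+m)$ when $\varepsilon=+1$ and $N\mid(3+m)$ when $\varepsilon=-1$. But $1\le 1+m\le[N/q]-1$ and $3\le 3+m\le[N/q]+1$, and since $q\ge 3$ we have $[N/q]+1\le N/3+1<N$, so neither $1+m$ nor $3+m$ is a nonzero multiple of $N$ — a contradiction. Hence $q\notin\mathcal{Q}$, and together with the previous step $q\notin\mathcal{A}_N$, which proves the proposition. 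The argument is elementary; the only points that need care are reading off the exact range of multipliers defining $\mathcal{Q}$ from the construction in Section 4, and noting that it is precisely the $m.o.$ hypothesis — one single $\mathcal{Q}$-orbit — that prevents some other orbit $\mathcal{Q}_{q'}$ from swallowing $q$.
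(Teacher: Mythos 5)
Your proof is correct, but it follows a genuinely different route from the paper's. The paper argues by contradiction: if $\overline{\mathcal{A}_N}=\emptyset$, then every unit of $\mathbb{Z}_N$ must lie in the unique orbit $Orb_{D_{[N/q]-1}}(2q)$, which forces $U(\mathbb{Z}_N)=\langle q\rangle$ to be cyclic, hence $N\in\{2,4,2p^k\}$; the contradiction is then extracted from the fact that only the two primes $p$ and $q$ could lie below $\sqrt{N}$. You instead exhibit an explicit witness in the complement, namely the single exceptional prime $q$ itself: it is not a multiple of any prime dividing $N$, and the congruences $q\equiv\pm(2+m)q\pmod N$ are killed by $\gcd(q,N)=1$ together with the bound $[N/q]+1<N$, while the mono-orbital hypothesis guarantees there is no second orbit $\mathcal{Q}_{q'}$ that could contain $q$ (via $N-q=cq'$). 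Your argument is shorter, constructive, and avoids the classification of $N$ with cyclic unit group; it also implicitly shows that $(q,N-q)$ is itself a Goldbach pair for every $m.o.$ number, which is more than nonemptiness. What the paper's route buys is a tighter connection with the unit-group structure of $\mathbb{Z}_N$ that the rest of Section 8 exploits (e.g.\ the $\mathcal{G}_N$-cyclotomic case), but as a proof of this proposition yours is the cleaner one. The only point to flag is that both you and the paper silently identify ``$\mathcal{C}_N$ is Goldbach's'' with ``$\overline{\mathcal{A}_N}\neq\emptyset$'' (as the paper does explicitly in Proposition 8.6(1)), which is consistent with the paper's usage of Definition 4.5.
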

\begin{proof}
	Assume absurdly that this is not the case i.e., that $\overline{ \mathcal{A}_{N}}=\emptyset$ from this it follows that $Orb_{D_{\bigr[\frac{N}{q}\bigl]} }(2q) \supset U(\mathbb{Z}_{N})$ then $U(\mathbb{Z}_{N})=<q>$ ; in fact if $\nu \in U(\mathbb{Z}_{N})$ this implies that $\nu=wq^{t}$ for an appropriate $t>0$ and $w \in U(\mathbb{Z}_{N})$. If $w=1$ we have done otherwise if $w \neq 1$ we can assume , unless of rearranging the factors of $\nu$, that $t$ is the maximal integer for which $\nu$ can be decomposed as a power of $q$ but this is absurd, since $w$ can also be factorized as a power of $q$ and an appropriate invertible $u$ . For which $U(\mathbb{Z}_{N})$ is cyclic this implies that $N=2,4, 2p^{k}$ for $k>0$ with $p$ odd prime. Excluding $2,4$ where trivially it is verified that the dihedral sieve is Goldbach's, we are in the case $N=2p^{k}$. Then there exist only two primes $p$ and $q$ less than $\sqrt{N}$ in addition to $2$ and from here we get the ultimate absurdity that proves the thesis, since the only cases would be in the sequence $1,2,3,4,5,6$ where the primes in question are $3,5$ and for a $k>\log_{3}(18)$ or $k>\log_{5}(18)$ to get $[\sqrt{N}]>6$. In the case $k\le \log_{3}(18)$ and $k \le \log_{5}(18)$ it is obvious that the sieve is Goldbach's.

\end{proof}

\quad

Now let's look at some examples of mono-orbital numbers abbreviated as $m.o$.

\begin{example}
It can be seen in an elementary way that the numbers $90, 120$ are $m.o$. Even if we had not verified this condition, however, it should be said that it is obvious anyway that the respective sieves are dihedral; they are very small numbers.
It is probably interesting to note is that since $120$ is divisible by $4$ by Proposition (7.4), the group $\mathcal{G}_{120}\cong V$ and instead since $90$ is only divisible by $2$, then $\mathcal{G}_{90}\cong \mathbb{Z}_{2}$. After a series of relatively simple smaller calculations on numbers smaller than 120 using the criteria or methods of calculation used so far, it turns out that even $N$ numbers that are not $ \mathcal{G}_{N}$-cyclotomic all enjoy this property. This  will be discussed later.

\end{example}

\quad

Thus we have seen two types of even compound numbers that surely have a dihedral sieve of goldbach. Having similar characteristics i.e., few orbits of the type $Orb_{D_{\bigr[\frac{N}{q}\bigl]} }(2q)$ specifically for a $ \mathcal{G}_{N}$-cyclotomic has no orbit while for an $m.o$ number we have exactly one orbit. From this observation it may be useful to combine these two types of numbers into one definition.
	
\quad

\begin{definition}
An even number that has at most one orbit of the type $Orb_{D_{\bigr[\frac{N}{q}\bigl]} }(2q)$ is called $Quasi-Mono-Orbital$ for short $q.m.o$.
\end{definition}	

\quad

The following property of such numbers can be easily derived
\begin{prop}
	Given an even number $q.m.o$ of the type $2^{k}\overline{N}$ with $k>1$ and $\overline{N}$ odd then $2^{j}\overline{N}$ is $q.m.o$ with $1<j<k$.
\end{prop}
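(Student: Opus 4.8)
The plan is to show that the orbit structure of the dihedral sieve $\mathcal{C}_{2^j\overline{N}}$ is controlled by the set of primes $q$ with $q\nmid 2^j\overline{N}$ and $q\le[\sqrt{2^j\overline{N}}]$, and that passing from $N=2^k\overline{N}$ to $2^j\overline{N}$ with $1<j<k$ does not create new ``$q$-type'' orbits $Orb_{D_{[\frac{M}{q}]}}(2q)$. First I would unwind the definition of $q.m.o.$: the number $N=2^k\overline{N}$ has at most one index $k'$ in the list (4.2) with $q_{k'}\nmid N$, i.e. at most one odd prime $q\le[\sqrt{N}]$ that does not divide $N$. The key observation is that the odd part of $2^j\overline{N}$ is the same $\overline{N}$ for every $j\ge 1$, so a prime $q$ divides $2^j\overline{N}$ if and only if $q\mid \overline{N}$ (for odd $q$), exactly as for $N$ itself. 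Hence the set of odd primes that fail to divide $2^j\overline{N}$ is identical to the set of odd primes failing to divide $N$.

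The only thing that changes when replacing $k$ by $j<k$ is the bound $[\sqrt{2^j\overline{N}}]<[\sqrt{N}]$, so the relevant primes for $2^j\overline{N}$ form a \emph{subset} of those for $N$: if $q\le[\sqrt{2^j\overline{N}}]$ then $q\le[\sqrt{N}]$. Therefore the number of $q$-type orbits contributing to $\mathcal{C}_{2^j\overline{N}}$ — namely $\#\{q \text{ odd prime}: q\nmid \overline{N},\ q\le[\sqrt{2^j\overline{N}}]\}$ — is at most $\#\{q \text{ odd prime}: q\nmid \overline{N},\ q\le[\sqrt{N}]\}\le 1$, the last inequality holding because $N$ is $q.m.o.$ This gives that $2^j\overline{N}$ has at most one orbit of the type $Orb_{D_{[\frac{2^j\overline{N}}{q}]}}(2q)$, which is precisely the definition of $q.m.o.$ for $2^j\overline{N}$.

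I would then just need to check the two boundary bookkeeping points, that $1<j$ guarantees $2^j\overline{N}$ is even (so ``$q.m.o.$'' is defined for it) and that the indexing (4.2) is applied with $2^j\overline{N}$ in place of $2n$ consistently; both are immediate. The main obstacle, such as it is, is purely notational: making sure that ``orbit of type $Orb_{D_{[\frac{M}{q}]}}(2q)$'' is counted by the primes $q$ in the list (4.2) for $M=2^j\overline{N}$ and not inadvertently by the divisor-primes $p_j\mid M$ (which always contribute their own orbits and are irrelevant to the $q.m.o.$ count); once the counting is pinned down, the argument is a one-line monotonicity statement in $j$ via $[\sqrt{2^j\overline{N}}]\le[\sqrt{2^k\overline{N}}]$. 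No deep input is required — only that the odd part is a $j$-independent invariant and that the square-root cutoff is monotone.
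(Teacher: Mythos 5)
Your argument is correct: since $\overline{N}$ is the common odd part, an odd prime $q$ fails to divide $2^{j}\overline{N}$ exactly when it fails to divide $2^{k}\overline{N}$, and $[\sqrt{2^{j}\overline{N}}]\le[\sqrt{2^{k}\overline{N}}]$ for $j<k$, so the primes generating orbits of type $Orb_{D_{[\frac{M}{q}]}}(2q)$ for $M=2^{j}\overline{N}$ form a subset of those for $N=2^{k}\overline{N}$ and hence number at most one. The paper states this proposition without any proof (it is merely described as ``easily derived''), and your monotonicity-of-the-cutoff argument is exactly the natural one the statement presupposes.
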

\quad
We see now that there is a relationship between the elements of $T_{d}$ and $f_{\nu}$ belonging to $\mathcal{G}_{N}$.

\begin{prop}
If $T_{2d} \in \mathcal{G}_{N}$ then $f_{1+2dt} \in \mathcal{G}_{N}\cap U(\mathbb{Z}_{N})\hspace{0.2cm} \forall t \in \{ 1, ...,t,..., \frac{N}{2d}-1 \}$.
\quad
If $f_{1+2dt} \notin \mathcal{G}_{N}\cap U(\mathbb{Z}_{N})$ then $T_{2d}\notin \mathcal{G}_{N}$ for an opportune $d \mid N$.
\end{prop}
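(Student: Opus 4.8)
The plan is to reduce both assertions to one observation: multiplication by $1+2dt$ coincides, point by point, with a power of the translation $T_{2d}$. Recall that $\widehat{Aut(D_n)}\cong Aff(\mathbb{Z}_{N})$, with $T_{d}\colon x\mapsto x+d$ and $f_{\nu}\colon x\mapsto\nu x$ for $\nu\in U(\mathbb{Z}_{N})$, and that since $\mathcal{A}_{N}$ contains $Orb_{D_{n}}(0)$ (the even residues) while $\overline{\mathcal{A}_{N}}$ is a proper subset of the odd residues, a parity‑preserving $g\in\widehat{Aut(D_n)}$ lies in $\mathcal{G}_{N}$ exactly when $g(\overline{\mathcal{A}_{N}})=\overline{\mathcal{A}_{N}}$ (equivalently $g(\mathcal{A}_{N})=\mathcal{A}_{N}$). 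I will assume, as throughout this section, that $\mathcal{C}_{N}$ is a genuine Goldbach sieve (so $\overline{\mathcal{A}_{N}}\neq\emptyset$), that $2d\mid N$ (so that the index set $\{1,\dots,\frac{N}{2d}-1\}$ makes sense), and that $N>6$ — the value $N=6$, where $\overline{\mathcal{A}_{6}}$ is all of the odd residues and $N/2=3$ is prime, is genuinely exceptional. The core identity is then
\[
 f_{1+2dt}(r)=(1+2dt)r=r+2d(tr)=(T_{2d})^{tr}(r)\qquad(r\in\mathbb{Z}_{N}),
\]
where $(T_{2d})^{k}=T_{2dk}$ and the exponent is read modulo $o(T_{2d})=N/2d$.

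Assume $T_{2d}\in\mathcal{G}_{N}$. Since $\mathcal{G}_{N}$ is a group, $(T_{2d})^{tr}\in\mathcal{G}_{N}$ for every $r$, so $(T_{2d})^{tr}(\overline{\mathcal{A}_{N}})=\overline{\mathcal{A}_{N}}$; applying this to $r\in\overline{\mathcal{A}_{N}}$ and using the identity gives $f_{1+2dt}(r)\in\overline{\mathcal{A}_{N}}$, i.e. $f_{1+2dt}(\overline{\mathcal{A}_{N}})\subseteq\overline{\mathcal{A}_{N}}$. Next I must check that $1+2dt\in U(\mathbb{Z}_{N})$, which is the only place the Goldbach structure intervenes. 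Recall (as in the proof of Lemma 7.1) that every element of $\overline{\mathcal{A}_{N}}$ is a unit of $\mathbb{Z}_{N}$, except possibly $N/2$ when $N/2$ is prime. If $N/2$ is not prime, then $\overline{\mathcal{A}_{N}}\subseteq U(\mathbb{Z}_{N})$, and picking any $r\in\overline{\mathcal{A}_{N}}$ we get $(1+2dt)r=f_{1+2dt}(r)\in U(\mathbb{Z}_{N})$ with $r\in U(\mathbb{Z}_{N})$, hence $1+2dt\in U(\mathbb{Z}_{N})$. If instead $N/2=p$ is prime, then $N=2p$ and, by the discussion following Proposition 8.1 (and Remark 8.2), $\mathcal{G}_{2p}^{(1)}=\langle\mathbb{I}_{\mathbb{Z}_{N}}\rangle$ for $p>3$, so $T_{2d}\in\mathcal{G}_{2p}$ forces $T_{2d}=\mathbb{I}_{\mathbb{Z}_{N}}$, i.e. $2d=N$; then $N/2d=1$ and the index set for $t$ is empty, so the statement holds vacuously.

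Knowing $\nu:=1+2dt\in U(\mathbb{Z}_{N})$, $f_{\nu}$ is a bona fide element of $\widehat{Aut(D_n)}$, hence a bijection of $\mathbb{Z}_{N}$; together with $f_{\nu}(\overline{\mathcal{A}_{N}})\subseteq\overline{\mathcal{A}_{N}}$ and finiteness of $\overline{\mathcal{A}_{N}}$ this forces $f_{\nu}(\overline{\mathcal{A}_{N}})=\overline{\mathcal{A}_{N}}$, and then $f_{\nu}(\mathcal{A}_{N})=\mathbb{Z}_{N}\setminus\overline{\mathcal{A}_{N}}=\mathcal{A}_{N}$, so $f_{\nu}(\mathcal{C}_{N})=\mathcal{C}_{N}$; thus $f_{1+2dt}\in\mathcal{G}_{N}$, and being a dilation by a unit it lies in $\mathcal{G}_{N}\cap U(\mathbb{Z}_{N})$, which is the first assertion. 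The second assertion is its contrapositive: if $f_{1+2dt}\notin\mathcal{G}_{N}\cap U(\mathbb{Z}_{N})$ for some $t$ in the allowed range, then $T_{2d}\notin\mathcal{G}_{N}$, where $d$ is any integer with $2d\mid N$ (in particular $d\mid N$). The main obstacle is not the symmetry argument — which is just the pointwise identity $f_{1+2dt}=(T_{2d})^{tr}$ — but the verification that $1+2dt$ is invertible: this is precisely where the degenerate case $N=6$ must be removed and where one must invoke the arithmetic of $\overline{\mathcal{A}_{N}}$ (its elements being units apart from a possible prime $N/2$) together with the earlier determination of $\mathcal{G}_{2p}^{(1)}$.
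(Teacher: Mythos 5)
Your proof is correct and follows essentially the same route as the paper's: the entire argument rests on the pointwise identity $(1+2dt)r=r+2d(tr)=(T_{2d})^{tr}(r)$, which is exactly the paper's observation $q_i(1+2d)=q_i+2q_id=q_j$, followed by the deduction $1+2d=q_jq_i^{-1}\in U(\mathbb{Z}_N)$. The only difference is that you treat the unit-verification step more carefully than the paper does — handling the possible non-unit $N/2\in\overline{\mathcal{A}_N}$ via the triviality of $\mathcal{G}_{2p}^{(1)}$ and excluding the degenerate small $N$ (where, e.g.\ for $N=6$, the statement as written actually fails) — whereas the paper simply divides by $q_i$ without checking its invertibility.
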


\begin{proof}
	If there exists $T_{2d} \in \mathcal{G}_{N}$ then for each $q_{i} \in \overline{\mathcal{A}_{N}}$ we have that there exists a $q_{j}  \in \overline{\mathcal{A}_{N}}$ such that $q_{i}(1+2d)=q_{j}$. In fact, $q_{i}(1+2d)= q_{i}+2 q_{i}d=q_{j}$ since by assumption $T_{2d} \in \mathcal{G}_{N}$. Hence $(1+2d)=q_{j}q_{i}^{-1}$ and thus $(1+2d) \in U(\mathbb{Z}_{N})$ and it is easy to deduce this for $(1+2td)$ with $t \in  \{ 1, ...,t,..., \frac{N}{2d}-1 \}$.
		This implication just proved is a necessary condition of the fact that there exists a $T_{2d} \in \mathcal{G}_{N}$ but not sufficient. However, we can observe that if $f_{1+2dt} \notin \mathcal{G}_{N}$ surely $T_{2dt} \notin \mathcal{G}_{N}$, which implies that $T_{2d} \notin \mathcal{G}_{N}$.

\end{proof}	

\quad

By means of Proposition 8.14, the following property can be shown.

\begin{prop}
	If $\mathcal{G}_{2^{k}\overline{N}}\cap U(\mathbb{Z}_{2^{k}\overline{N}})=<f_{-1}>$ with $\overline{N}$ odd, then$$\mathcal{G}_{2^{k}\overline{N}}=\begin{cases} <f_{-1}> \hspace{0.2cm} if \hspace{0.2cm} k=1 \\<T_{2^{k-1}\overline{N}}f_{1+2^{k-1}\overline{N}}>\times <f_{-1}> \hspace{0.2cm} otherwise \end{cases}.$$
\end{prop}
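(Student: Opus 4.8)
The plan is to split into the cases $k=1$ and $k\ge 2$, and in each case first use the structural results of Section 7 to write $\mathcal{G}_N$ (with $N=2^k\overline N$, $n=N/2$) as a product of a translation part with its unit part, then show that the hypothesis $\mathcal{G}_N\cap U(\mathbb{Z}_N)=\langle f_{-1}\rangle$ collapses the translation part. Throughout I use $\widehat{Aut(D_n)}\cong Aff(\mathbb{Z}_N)$, so every element of $\mathcal{G}_N$ is uniquely $T_m f_\nu$; I write $\mathcal{G}^{(1)}_N=\langle T\rangle\cap\mathcal{G}_N$, which, being a subgroup of the cyclic group $\langle T_1\rangle\cong\mathbb{Z}_N$, equals $\langle T_e\rangle$ for some $e\mid N$, and since an odd translation interchanges the two $D_n$-orbits of $\mathbb{Z}_N$ (hence cannot fix a Goldbach sieve, whose complement lies inside one of them) we must have $e$ even, i.e. $\mathcal{G}^{(1)}_N\le\langle T_2\rangle$.

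For $k=1$: Proposition 7.6 applies and tells us $\mathcal{G}_N$ contains no $T_m f_\nu$ with $T_m\notin\mathcal{G}_N$ and $f_\nu\notin\mathcal{G}_N$; hence in any $T_m f_\nu\in\mathcal{G}_N$ both $T_m,f_\nu\in\mathcal{G}_N$, so $\mathcal{G}_N$ is generated by $\mathcal{G}^{(1)}_N$ and $\mathcal{G}_N\cap U(\mathbb{Z}_N)=\langle f_{-1}\rangle$, and as the translation subgroup is normal in $Aff(\mathbb{Z}_N)$ and meets $U(\mathbb{Z}_N)$ trivially this yields $\mathcal{G}_N=\mathcal{G}^{(1)}_N\rtimes\langle f_{-1}\rangle$. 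It then remains to show $\mathcal{G}^{(1)}_N=\{\mathbb{I}_{\mathbb{Z}_N}\}$: if $T_{2d}\in\mathcal{G}^{(1)}_N$ with $2d\mid N$ and $0<2d<N$, then $d\mid\overline N$ and $d<\overline N$, while Proposition 8.14 forces $f_{1+2d}\in\mathcal{G}_N\cap U(\mathbb{Z}_N)=\langle f_{-1}\rangle$, i.e. $1+2d\equiv\pm1\pmod N$; the value $+1$ is impossible, and $-1$ gives $2d=2\overline N-2$, so $d=\overline N-1$, which together with $d\mid\overline N$ forces $\overline N-1\mid1$ and hence $\overline N=2$, contradicting $\overline N$ odd. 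Therefore $\mathcal{G}_N=\langle f_{-1}\rangle$.

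For $k\ge 2$: I would first show $T_{2^{k-1}\overline N},f_{1+2^{k-1}\overline N}\notin\mathcal{G}_N$. Indeed $1+2^{k-1}\overline N$ is a unit and is $\not\equiv\pm1\pmod N$ (for $N\ne4$), so $f_{1+2^{k-1}\overline N}\notin\langle f_{-1}\rangle=\mathcal{G}_N\cap U(\mathbb{Z}_N)$, hence $f_{1+2^{k-1}\overline N}\notin\mathcal{G}_N$; and if $T_{2^{k-1}\overline N}\in\mathcal{G}_N$, Proposition 8.14 (with the index $t$ ranging over the single value $1$) would put $f_{1+2^{k-1}\overline N}$ in $\mathcal{G}_N\cap U(\mathbb{Z}_N)$, again absurd. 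Proposition 7.4 then gives $\mathcal{G}_N=\langle z\rangle\times(\langle T_{2m}\rangle\rtimes H)$ with $z=T_{2^{k-1}\overline N}f_{1+2^{k-1}\overline N}$ central of order $2$ (Remark 7.5), $H<U(\mathbb{Z}_N)$ and $\langle T_{2m}\rangle<\langle T_2\rangle$, so $2m\mid N$. As before, a nontrivial $T_{2m}$ would force via Proposition 8.14 that $f_{1+2m}\in\mathcal{G}_N\cap U(\mathbb{Z}_N)=\langle f_{-1}\rangle$, whence $2m=N-2$; but $2m\mid N$ then gives $N-2\mid2$, i.e. $N\le4$ — excluded. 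Hence $\langle T_{2m}\rangle$ is trivial, $\mathcal{G}_N=\langle z\rangle\times H$, and since $z$ has nonzero translation part $\mathcal{G}_N\cap U(\mathbb{Z}_N)=H$, so $H=\langle f_{-1}\rangle$ and $\mathcal{G}_N=\langle T_{2^{k-1}\overline N}f_{1+2^{k-1}\overline N}\rangle\times\langle f_{-1}\rangle$.

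The step on which everything hinges, and the main obstacle, is the collapse of the translation factor, which rests on Proposition 8.14; its conclusion tacitly requires $1+2d$ to be a unit, i.e. requires $\overline{\mathcal{A}_N}$ to contain a unit that $T_{2d}$ carries to another unit. This holds for all $N$ of interest — apart from the single possible non-unit $N/2$ when $N/2$ is prime, the complement $\overline{\mathcal{A}_N}$ consists of primes $p\nmid N$, which are units — so the argument goes through once the few tiny degenerate values (such as $N=4,6$, where the asserted decomposition in fact fails) are excluded, either by hypothesis or by direct inspection. The remaining bookkeeping — placing $\langle T_{2m}\rangle$ inside $\langle T_1\rangle\cong\mathbb{Z}_N$ to get $2m\mid N$, and reading $\mathcal{G}_N\cap U(\mathbb{Z}_N)$ off the direct product $\langle z\rangle\times H$ — is routine.
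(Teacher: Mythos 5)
Your proof follows essentially the same route as the paper's: Proposition 7.6 (resp.\ 7.4 for $k\ge 2$) to reduce $\mathcal{G}_{N}$ to a product of its translation part with $\langle f_{-1}\rangle$, and then Proposition 8.14 to force the translation part to collapse. You are in fact more careful than the paper's own two-line sketch: your observations that Proposition 8.14 tacitly requires $1+2d$ to be a unit, and that small values such as $N=6$ (where $\mathcal{G}_{6}=\langle T_{2}\rangle\rtimes\langle f_{-1}\rangle$ satisfies the hypothesis but not the conclusion) must be excluded, point to genuine gaps in the statement and proof as printed.
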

\begin{proof}

If $k=1$ it suffices to show by Proposition 7.6 that $ \mathcal{G}_{2^{k}\overline{N}}^{(1)}=<\mathbb{I}_{\mathbb{Z}_{2^{k}\overline{N}}}>$. If not then there exists a $T_{2d} \in  \mathcal{G}_{2\overline{N}}^{(1)}$ where $d \neq \pm 1$ with $d\geq 2 \hspace{0.2cm} d \mid 2\overline{N}$. Then by 8.14 we have that $f_{1+2d} \neq f_{-1} \in \mathcal{G}_{2\overline{N}}$ hence the absurdity against the assumptions . The case $k>1$ is similar in proof and in addition 7.4 applies.
\end{proof}

\quad

Now let us see some consequences of the fact that the group $ \mathcal{G}_{N}^{(1)}$ is nontrivial. First we begin by defining the following set $$\overline{\mathcal{A}_{m,N}}:=\{ q_{l} \in \mathbb{Z}_{N} \hspace{0.2cm} prime\hspace{0.2cm} or \hspace{0.2cm}1\hspace{0.2cm}  and \hspace{0.2cm}  q_{l}<m \hspace{0.2cm}in \hspace{0.2cm}\mathbb{Z}  \hspace{0.2cm}|\hspace{0.2cm}  m-q_{l} \hspace{0.2cm} prime \hspace{0.2cm}or \hspace{0.2cm}1\hspace{0.2cm} and \hspace{0.2cm}  m-q_{l}<m \hspace{0.2cm}in \hspace{0.2cm}\mathbb{Z} \},$$ 
 with $m$ even and $m<N$.
 
 It can be easily seen that there is a biunivocal correspondence between $\overline{\mathcal{A}_{m,N}}$ and $\overline{\mathcal{A}_{m}}$ in such a way
 
 $$\overline{\mathcal{A}_{m,N}} \longrightarrow \overline{\mathcal{A}_{m}}  $$
 $$ \hspace{0.4cm} q_{l} \mapsto q_{l}$$
 $$  m- q_{l} \mapsto -q_{l}.$$
	
\quad

We now have the necessary elements to see what consequences we need to deal with.

\begin{prop}
If there exists a $T_{2d\alpha}  \in \mathcal{G}_{N}^{(1)}$ with $2d \mid N$ then we have that $\overline{\mathcal{A}_{2d\alpha,N}}\cap \overline{\mathcal{A}_{N}} \neq \emptyset$ and in particular such a set is symmetrical for $T_{2d\alpha}f_{-1}.$	
\end{prop}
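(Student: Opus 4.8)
The plan is to exhibit an explicit element of $\overline{\mathcal{A}_{2d\alpha,N}}\cap\overline{\mathcal{A}_N}$. Write $m:=2d\alpha$, which we regard as a nonzero even residue, $0<m<N$ (that $m$ is even, and that $\overline{\mathcal{A}_N}\neq\emptyset$, is recorded in Remark 7.2: a nontrivial element of $\mathcal{G}_N^{(1)}$ forces $\mathcal{C}_N$ to be Goldbach's). Put $g:=\gcd(m,N)$; since $m$ and $N$ are both even, $g$ is even. Throughout I use only two inputs: (i) $T_m$, and hence also the involution $T_mf_{-1}\colon x\mapsto m-x$ of $\mathbb{Z}_N$, belong to $\mathcal{G}_N$ and therefore stabilise $\overline{\mathcal{A}_N}$; (ii) $\overline{\mathcal{A}_N}$ consists of odd residues, each equal to $1$ or a prime and with $N$-complement again equal to $1$ or a prime. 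For (ii), the even residues all lie in $\mathcal{A}_N$, since they form $Orb_{D_{N/2}}(0)$ for the prime $2\mid N$, and the remaining description is the Goldbach character of $\mathcal{C}_N$ (Definition 4.5 together with the construction of Section 4.1).

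The first step is to find an element of $\overline{\mathcal{A}_N}$ lying in the integer interval $\{1,\dots,m-1\}$. Choose any $r_0\in\overline{\mathcal{A}_N}$; by (ii) it is odd. The $T_m$-orbit of $r_0$ equals the full residue class $\{s\in\mathbb{Z}_N:s\equiv r_0\pmod g\}$, because $\langle m\rangle=g\,\mathbb{Z}_N$ inside $\mathbb{Z}_N$, and this orbit is contained in $\overline{\mathcal{A}_N}$ because $T_m\in\mathcal{G}_N$. Let $r$ be the representative of this class in $\{0,\dots,g-1\}$: since $r_0$ is odd and $g$ is even, $r$ is odd, hence $r\neq 0$ and $1\le r\le g-1\le m-1$, while $r\in\overline{\mathcal{A}_N}$. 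This is the one point needing care --- keeping straight the reduction modulo $g$ carried out inside $\mathbb{Z}_N$ against the integer-interval condition defining $\overline{\mathcal{A}_{m,N}}$ --- and it is exactly the parity observation ($\overline{\mathcal{A}_N}$ all odd, $g$ even, so $g\nmid r_0$) that drops $r$ strictly inside $\{1,\dots,m-1\}$ with no case analysis.

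The second step is to check that $r\in\overline{\mathcal{A}_{m,N}}$. By (ii), $r$ is $1$ or a prime. Applying $T_mf_{-1}\in\mathcal{G}_N$ gives $(T_mf_{-1})(r)\in\overline{\mathcal{A}_N}$, and since $(T_mf_{-1})(r)$ is the residue of $m-r$ while $1\le m-r\le m-1<N$, that residue is represented by the integer $m-r$, which is therefore $1$ or a prime. Thus $1\le r\le m-1$ with $r$ and $m-r$ each equal to $1$ or a prime, i.e. $r\in\overline{\mathcal{A}_{2d\alpha,N}}$ by definition; so $r\in\overline{\mathcal{A}_{2d\alpha,N}}\cap\overline{\mathcal{A}_N}\neq\emptyset$.

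Finally, for the symmetry assertion, I would observe that $T_{2d\alpha}f_{-1}$ stabilises $\overline{\mathcal{A}_{2d\alpha,N}}\cap\overline{\mathcal{A}_N}$: it stabilises $\overline{\mathcal{A}_N}$ as a product of two elements of $\mathcal{G}_N$, and it stabilises $\overline{\mathcal{A}_{2d\alpha,N}}$ because for $q_l\in\{1,\dots,m-1\}$ one has $(T_mf_{-1})(q_l)=m-q_l\in\{1,\dots,m-1\}$ and the defining requirement that $q_l$ and $m-q_l$ each equal $1$ or a prime is symmetric in $q_l\leftrightarrow m-q_l$. As $(T_{2d\alpha}f_{-1})^2=\mathbb{I}_{\mathbb{Z}_N}$, this realises $\overline{\mathcal{A}_{2d\alpha,N}}\cap\overline{\mathcal{A}_N}$ as invariant under the reflection $T_{2d\alpha}f_{-1}$, which is what was to be shown. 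Beyond the bookkeeping flagged in the first step there is no real obstacle.
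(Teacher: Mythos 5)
Your proof is correct and follows essentially the same route as the paper's: both locate an element of $\overline{\mathcal{A}_{N}}$ lying below $2d\alpha$ in $\mathbb{Z}$ by descending along the $T_{2d\alpha}$-orbit of a point of $\overline{\mathcal{A}_{N}}$, and then use the negation symmetry (the paper via the identity $2d\alpha-r_{i}=2d\alpha(1+s)+q_{j}$ with $q_{j}=-q_{i}$, you via the involution $T_{2d\alpha}f_{-1}$) to conclude that $2d\alpha-r$ is also $1$ or a prime, whence $r\in\overline{\mathcal{A}_{2d\alpha,N}}\cap\overline{\mathcal{A}_{N}}$. Your reduction modulo $\gcd(2d\alpha,N)$ combined with the parity observation makes explicit a point the paper leaves implicit, namely that the small representative is nonzero.
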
	
\begin{proof}
	I consider an element $q_{i} \in \overline{\mathcal{A}_{N}}$ and $q_{i}>2d\alpha$ surely we have by hypothesis that $q_{i} =2d\alpha s + r_{i}$ where $r_{i}$ is a prime or 1 and $r_{i}<2d\alpha $ in $\mathbb {Z}$ for an appropriate integer $s$. If, on the other hand, $q_{i}<2d\alpha$ there would surely have existed another $q_{j}>2d\alpha$ and the role of $r_{i}$ would have been assumed by $q_{i}$. So we can safely assume without losing the generality of the proof that $q_{i}>2d\alpha$ due to the hypothesis. 
	So we have that 
	$$2d\alpha-r_{i}=2d\alpha-(q_{i}-2d\alpha s)< 2d\alpha\hspace{0.1cm} in \hspace{0.1cm} \mathbb {Z},$$
	
	$$ 2d\alpha-r_{i}=  2d\alpha- q_{i}+2d\alpha s=2d\alpha(1+s)- q_{i}= 2d\alpha(1+s)+q_{j}.                                                     $$
	Where $q_{j}=-q_{i}$ and $q_{j} \in ´\overline{\mathcal{A}_{N}}$ being $ q_{i} \in ´\overline{\mathcal{A}_{N}} $. Then $2d\alpha-r_{i}$ is a prime number or 1 and is less than $2d\alpha$ in $\mathbb{Z}$ for which we obtained the thesis. 
	The symmetry of the set$\overline{\mathcal{A}_{2d\alpha,N}}\cap \overline{\mathcal{A}_{N}}$ for the element $T_{2d\alpha}f_{-1}$ is immediate .
	
\end{proof}

\quad

As for Proposition 8.16, the reverse is not always true. In fact, if we have that $ \overline{\mathcal{A}_{2}}=\{ 1\}$ and $1 \in \overline{\mathcal{A}_{12}}$ we then have that $\overline{\mathcal{A}_{2, 12}}\cap\overline{\mathcal{A}_{12}}\neq \emptyset$ and is symmetric with respect to $T_{2}f_{-1}$, but $T_{2}(1)=3 \notin \overline{\mathcal{A}_{12}}.$

\quad
Let us see what sufficient conditions lead to the fact that a given element of $T_{2d\alpha} \notin \mathcal{G}_{N}^{(1)}.$

\begin{prop}
	Given a $2d \mid N$, we have that if 
	$\overline{\mathcal{A}_{2d\alpha,N}}\cap \overline{\mathcal{A}_{N}} = \emptyset$ then $T_ {2d\alpha} \notin \mathcal{G}_{N}^{(1)}$. If $\overline{\mathcal{A}_{2d\alpha,N}}\cap \overline{\mathcal{A}_{N}}\neq \emptyset$ the same consequence comes if that set is not symmetrical to $T_{2d\alpha}f_{-1}.$
\end{prop}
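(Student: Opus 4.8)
The plan is to recognize that this statement is the logical contrapositive of Proposition~8.17, so that nothing new has to be proved beyond a clean restatement. Proposition~8.17 asserts, under the standing hypothesis $2d \mid N$, that $T_{2d\alpha}\in \mathcal{G}_{N}^{(1)}$ forces the conjunction: $\overline{\mathcal{A}_{2d\alpha,N}}\cap \overline{\mathcal{A}_{N}} \neq \emptyset$ \emph{and} this set is symmetric for $T_{2d\alpha}f_{-1}$. An implication of the shape $P \Rightarrow (Q \wedge R)$ is equivalent to $(\neg Q \vee \neg R) \Rightarrow \neg P$, and splitting the hypothesis $\neg Q \vee \neg R$ according to whether $Q$ holds gives exactly the two clauses of Proposition~8.18: if $\overline{\mathcal{A}_{2d\alpha,N}}\cap \overline{\mathcal{A}_{N}} = \emptyset$ (the case $\neg Q$), then $T_{2d\alpha}\notin \mathcal{G}_{N}^{(1)}$; and if the intersection is nonempty but fails to be symmetric for $T_{2d\alpha}f_{-1}$ (the case $Q \wedge \neg R$), the same conclusion follows.

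For a self-contained write-up I would argue by contradiction. Suppose $T_{2d\alpha}\in \mathcal{G}_{N}^{(1)}$. Since $\mathcal{G}_{N}^{(1)}=<T>\cap \mathcal{G}_{N}\le \mathcal{G}_{N}$, the translation $T_{2d\alpha}$ fixes the sieve $\mathcal{C}_{N}=\{\mathcal{A}_{N}, \overline{\mathcal{A}_{N}}\}$; as $\overline{\mathcal{A}_{N}}$ is a proper nonempty subset of $\mathbb{Z}_{N}$ it must be fixed setwise, i.e. $T_{2d\alpha}(\overline{\mathcal{A}_{N}})=\overline{\mathcal{A}_{N}}$. The hypotheses of Proposition~8.17 are then met verbatim (namely $2d\mid N$ together with $T_{2d\alpha}\in \mathcal{G}_{N}^{(1)}$), so that proposition yields both $\overline{\mathcal{A}_{2d\alpha,N}}\cap \overline{\mathcal{A}_{N}}\neq \emptyset$ and the $T_{2d\alpha}f_{-1}$-symmetry of this set. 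Each of these two facts directly contradicts one of the two standing hypotheses of Proposition~8.18, which settles both cases.

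There is no substantive obstacle here: all the combinatorial work — transporting a translation-symmetry of $\overline{\mathcal{A}_{N}}$ to a statement about $\overline{\mathcal{A}_{2d\alpha,N}}$ through the bijection $\overline{\mathcal{A}_{m,N}}\to \overline{\mathcal{A}_{m}}$, and verifying the induced $f_{-1}$-symmetry — is already carried out in Proposition~8.17. The only point meriting a word of care is the meaning of ``symmetric'' in the degenerate situation $\overline{\mathcal{A}_{2d\alpha,N}}\cap \overline{\mathcal{A}_{N}}=\emptyset$; I would keep that case separate (as the statement itself does), remarking that the empty set is vacuously $T_{2d\alpha}f_{-1}$-invariant, so the first clause genuinely covers the case $\neg Q$ rather than being absorbed into the second.
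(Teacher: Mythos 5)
Your proposal is correct and coincides with the paper's (implicit) argument: the paper states this proposition without any proof precisely because it is the immediate contrapositive of the preceding proposition (8.16 in the paper's numbering, which you cite as 8.17), exactly as you observe. Your extra remark that the empty set is vacuously $T_{2d\alpha}f_{-1}$-invariant, so the two clauses correspond cleanly to the cases $\neg Q$ and $Q\wedge\neg R$, is a sensible clarification but adds nothing that needs proving.
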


\quad

We have just seen that if $T_{2d\alpha} \in \mathcal{G}_{N}^{(1)}$ then to each $q_{i} \in \overline{\mathcal{A}_{N}}$ we can associate with it some $r_{j}, 2d\alpha -r_{j} \in \overline{\mathcal{A}_{2d\alpha,N}} \cap \overline{\mathcal{A}_{N}}$ and this implies that $\overline{\mathcal{A}_{N}} \subset \bigcup_{j=0}^{\frac{N}{2d}-1}T_{2d\alpha j} (\overline{\mathcal{A}_{2d\alpha,N}}\cap \overline{\mathcal{A}_{N}}).$
		
 And this is easy to verify because of what we have just seen; the opposite inclusion is trivial, so the following proposition holds
 
 \begin{prop}
 Let $\alpha$ be such that $(\alpha,N)=1$. 	
 If $T_{2d\alpha } \in \mathcal{G}_{N} \rightarrow \overline{\mathcal{A}_{N}} = \bigcup_{j=0}^{\frac{N}{2d}-1}T_{2d\alpha j} (\overline{\mathcal{A}_{2d\alpha,N}}\cap \overline{\mathcal{A}_{N}}).$
 \end{prop}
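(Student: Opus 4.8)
The plan is to prove the asserted equality by double inclusion; the inclusion $\supseteq$ is immediate, while $\subseteq$ is essentially a repackaging of the constructive argument already carried out in the proof of Proposition 8.16, now combined with the coprimality hypothesis $(\alpha,N)=1$. Before starting I would record the two facts that pin down the index range $\{0,\dots,\frac{N}{2d}-1\}$. First, since $T_{2d\alpha}$ is a translation, $T_{2d\alpha}\in\mathcal{G}_{N}$ forces $T_{2d\alpha}\in\mathcal{G}_{N}^{(1)}=\langle T\rangle\cap\mathcal{G}_{N}$, hence $\langle T_{2d\alpha}\rangle\subseteq\mathcal{G}_{N}^{(1)}$, and every power $T_{2d\alpha j}$ fixes the block $\overline{\mathcal{A}_{N}}$ of the sieve (being an even translation lying in $\mathcal{G}_{N}$ it cannot interchange $\mathcal{A}_{N}$ and $\overline{\mathcal{A}_{N}}$; cf.\ Remark 7.2, using $\overline{\mathcal{A}_{N}}\neq\emptyset$). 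Second, because $2d\mid N$ and $(\alpha,N)=1$ we get $\gcd(2d\alpha,N)=\gcd(2d,N)=2d$, so $\langle T_{2d\alpha}\rangle=\langle T_{2d}\rangle$ is exactly the group of translations by multiples of $2d$, of order $\frac{N}{2d}$, and $\{T_{2d\alpha j}\}_{j=0}^{\frac{N}{2d}-1}$ enumerates it without repetition; this is the only point at which $(\alpha,N)=1$ is used. The inclusion $\supseteq$ then follows at once, since for each $j$ one has $T_{2d\alpha j}\bigl(\overline{\mathcal{A}_{2d\alpha,N}}\cap\overline{\mathcal{A}_{N}}\bigr)\subseteq T_{2d\alpha j}(\overline{\mathcal{A}_{N}})=\overline{\mathcal{A}_{N}}$, so the whole union lies in $\overline{\mathcal{A}_{N}}$.

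For $\subseteq$ I would fix $q_{i}\in\overline{\mathcal{A}_{N}}$ and use the invariance of $\overline{\mathcal{A}_{N}}$ under $\langle T_{2d\alpha}\rangle=\langle T_{2d}\rangle$ to choose, in the $\langle T_{2d}\rangle$-orbit of $q_{i}$, the representative $r_{i}$ with $0<r_{i}<2d\le 2d\alpha$; then $r_{i}\in\overline{\mathcal{A}_{N}}$, so $r_{i}$ is prime or $1$, and $q_{i}=T_{2d\alpha j}(r_{i})$ for a suitable $j\in\{0,\dots,\frac{N}{2d}-1\}$. It remains to verify $r_{i}\in\overline{\mathcal{A}_{2d\alpha,N}}$, that is, that $2d\alpha-r_{i}$ is prime or $1$ and strictly below $2d\alpha$: the inequality is clear since $r_{i}\ge 1$, and for the rest I would reuse the computation of Proposition 8.16, namely, writing $q_{i}=2d\alpha s+r_{i}$ one has $2d\alpha-r_{i}\equiv 2d\alpha(1+s)+(N-q_{i})\pmod{N}$, and $N-q_{i}=-q_{i}\in\overline{\mathcal{A}_{N}}$ by the symmetry of $\overline{\mathcal{A}_{N}}$, so $2d\alpha-r_{i}$ again lies in the $\langle T_{2d\alpha}\rangle$-orbit of an element of $\overline{\mathcal{A}_{N}}$ and is therefore prime or $1$. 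Hence $r_{i}\in\overline{\mathcal{A}_{2d\alpha,N}}\cap\overline{\mathcal{A}_{N}}$ and $q_{i}\in T_{2d\alpha j}\bigl(\overline{\mathcal{A}_{2d\alpha,N}}\cap\overline{\mathcal{A}_{N}}\bigr)$, which completes this inclusion.

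The main obstacle I anticipate is purely bookkeeping: keeping apart integer representatives and classes in $\mathbb{Z}_{N}$. One tacitly needs $2d\alpha<N$ for $\overline{\mathcal{A}_{2d\alpha,N}}$ to be defined at all and for the phrase ``$2d\alpha-r_{i}$ prime or $1$'' to be an assertion about honest positive integers (otherwise $\alpha$ should first be replaced by a suitable residue, and the degenerate case $N=2d$, in which $T_{2d\alpha}=\mathbb{I}$ and $\overline{\mathcal{A}_{N,N}}$ is meaningless, must be set aside); and there is the familiar low-complexity edge case in which $N-1$ is prime and sits in $\overline{\mathcal{A}_{N}}$ next to $1$. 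Both points are handled exactly as in Propositions 8.16 and 8.17 and do not affect the structure of the argument.
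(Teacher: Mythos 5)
Your proof is correct and follows essentially the same route as the paper: the forward inclusion is obtained by re-running the constructive argument of Proposition 8.16 (reducing each $q_i\in\overline{\mathcal{A}_{N}}$ modulo the translation subgroup to a representative lying in $\overline{\mathcal{A}_{2d\alpha,N}}\cap\overline{\mathcal{A}_{N}}$), and the reverse inclusion is immediate from the invariance of $\overline{\mathcal{A}_{N}}$ under $\langle T_{2d\alpha}\rangle$. Your explicit observation that $(\alpha,N)=1$ together with $2d\mid N$ gives $\langle T_{2d\alpha}\rangle=\langle T_{2d}\rangle$ of order $N/2d$, which justifies the index range, is a useful detail the paper leaves implicit.
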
		

\quad
In truth, the reverse is true.

\begin{prop}
Given a $\alpha$ such that $(\alpha,N)=1$ and $2d \mid N$. Then  $$T_{2d\alpha } \in \mathcal{G}_{N} \iff \overline{\mathcal{A}_{N}} = \bigcup_{j=0}^{\frac{N}{2d}-1}T_{2d\alpha j} (\overline{\mathcal{A}_{2d\alpha,N}}\cap \overline{\mathcal{A}_{N}})$$
\end{prop}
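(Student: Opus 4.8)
The implication ($\Rightarrow$) is precisely Proposition 8.19, so the only thing to establish is ($\Leftarrow$). Write $k := \frac{N}{2d}$ and $B := \overline{\mathcal{A}_{2d\alpha,N}} \cap \overline{\mathcal{A}_N}$, so that the hypothesis reads $\overline{\mathcal{A}_N} = \bigcup_{j=0}^{k-1} T_{2d\alpha j}(B)$; the goal is to deduce $T_{2d\alpha} \in \mathcal{G}_N$, i.e. $T_{2d\alpha}(\mathcal{C}_N) = \mathcal{C}_N$.

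First I would note that $T_{2d\alpha}$ genuinely lies in $\widehat{Aut(D_n)}$: by the last corollary of Section 3.1 one has $\widehat{Aut(D_n)} \cong \langle T_1\rangle \rtimes \langle f_\nu\rangle$, which contains every translation $T_k$ of $\mathbb{Z}_N$. Next I would pin down the arithmetic of $T_{2d\alpha}$. Since $2d \mid N$ and $(\alpha,N)=1$, we have $\gcd(2d\alpha,N)=2d$, so $T_{2d\alpha}$ has order exactly $k = \frac{N}{2d}$, whence $\langle T_{2d\alpha}\rangle = \{T_{2d\alpha j} : 0 \le j \le k-1\}$ and, more importantly, $T_{2d\alpha k} = T_{\alpha N} = T_0 = \mathbb{I}_{\mathbb{Z}_N}$ — this last identity being the only arithmetic fact actually used below.

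Then I would simply push $T_{2d\alpha}$ through the hypothesis. Applying the bijection $T_{2d\alpha}$ to both sides of $\overline{\mathcal{A}_N} = \bigcup_{j=0}^{k-1} T_{2d\alpha j}(B)$ yields
\[
T_{2d\alpha}(\overline{\mathcal{A}_N}) = \bigcup_{j=0}^{k-1} T_{2d\alpha(j+1)}(B) = \bigcup_{j=1}^{k} T_{2d\alpha j}(B),
\]
and since the $j=k$ term equals $T_{2d\alpha k}(B) = \mathbb{I}_{\mathbb{Z}_N}(B) = B$, which is the $j=0$ term, this last union is again $\bigcup_{j=0}^{k-1} T_{2d\alpha j}(B) = \overline{\mathcal{A}_N}$. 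Hence $T_{2d\alpha}(\overline{\mathcal{A}_N}) = \overline{\mathcal{A}_N}$; being a bijection of $\mathbb{Z}_N$, $T_{2d\alpha}$ then also fixes $\mathcal{A}_N = \mathbb{Z}_N \setminus \overline{\mathcal{A}_N}$, so $T_{2d\alpha}(\mathcal{C}_N) = \{\mathcal{A}_N, \overline{\mathcal{A}_N}\} = \mathcal{C}_N$, i.e. $T_{2d\alpha} \in \mathcal{G}_N$.

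I expect no genuine obstacle here: the hypothesis literally displays $\overline{\mathcal{A}_N}$ as the union of the $\langle T_{2d\alpha}\rangle$-translates of $B$, with the cycle closing up through $T_{2d\alpha k} = \mathbb{I}_{\mathbb{Z}_N}$, and this forces $T_{2d\alpha}$-invariance. The only two points deserving a line of justification are the order computation for $T_{2d\alpha}$ (where both hypotheses $2d \mid N$ and $(\alpha,N)=1$ are used) and the elementary remark that a permutation of $\mathbb{Z}_N$ fixing one block of the bipartition $\mathcal{C}_N$ automatically fixes the other.
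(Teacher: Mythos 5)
Your argument is correct and is essentially the paper's own proof: the paper likewise applies a power of $T_{2d\alpha}$ to the union $\bigcup_{j=0}^{\frac{N}{2d}-1}T_{2d\alpha j}(\overline{\mathcal{A}_{2d\alpha,N}}\cap\overline{\mathcal{A}_{N}})$, reindexes cyclically, and concludes that $\overline{\mathcal{A}_{N}}$ is fixed. You are in fact slightly more careful than the paper, which only asserts $T_{2d\alpha j'}(\overline{\mathcal{A}_{N}})=\overline{\mathcal{A}_{N}}$ ``for some $j'$'' whereas you take $j'=1$ explicitly and justify the closing of the cycle via the order of $T_{2d\alpha}$; note only that the forward implication you invoke is the preceding proposition (8.18), not 8.19 itself.
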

\begin{proof}
	Given  $ \overline{\mathcal{A}_{N}} = \bigcup_{j=0}^{\frac{N}{2d}-1}T_{2d\alpha j} (\overline{\mathcal{A}_{2d\alpha,N}}\cap \overline{\mathcal{A}_{N}})$ we have that, for some $j^{'}$ , $$T_{2d\alpha j^{'}}(\overline{\mathcal{A}_{N}})=\bigcup_{j=0}^{\frac{N}{2d}-1}T_{2d\alpha (j+j^{'})} (\overline{\mathcal{A}_{2d\alpha,N}}\cap \overline{\mathcal{A}_{N}})=\bigcup_{j=0}^{\frac{N}{2d}-1}T_{2d\alpha j} (\overline{\mathcal{A}_{2d\alpha,N}}\cap \overline{\mathcal{A}_{N}})$$
	
	i.e.
	
	$$T_{2d\alpha j^{'}}(\overline{\mathcal{A}_{N}})=\overline{\mathcal{A}_{N}}.$$

\end{proof}

\quad

We have from Proposition 8.19 the following corollaries

\begin{coro}
Given a $\alpha$ such that $(\alpha,N)=1$ and $2d \mid N$.	$$T_{2d\alpha } \in \mathcal{G}_{N}\rightarrow \overline{\mathcal{A}_{N}}=\bigcup_{l=0}^{\frac{N}{(2d\alpha j, N)}-1}T_{2d\alpha jl} (\overline{\mathcal{A}_{2d\alpha j,N}}\cap \overline{\mathcal{A}_{N}}) \hspace{0.2cm} \forall j \in \{ 0,1,...,\frac{N}{2d}-1 \}.$$
\end{coro}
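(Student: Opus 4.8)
The plan is to obtain Corollary 8.21 from Proposition 8.19, the essential point being that $\mathcal{G}_{N}=\widehat{Aut(D_n)}^{\mathcal{C}(D_{p,q}^{2n};x_{0,q})}$ is a subgroup of $\widehat{Aut(D_n)}$ (Section 6), hence closed under powers. Fix $j\in\{1,\dots,\frac{N}{2d}-1\}$; the case $j=0$ is degenerate, since then $T_{2d\alpha j}=\mathbb{I}_{\mathbb{Z}_{N}}$ and $2d\alpha j\equiv 0$, so all the content lies in $j\ge 1$. Because translations compose by $T_{a}T_{b}=T_{a+b}$, we have $T_{2d\alpha j}=(T_{2d\alpha})^{j}$, and so the hypothesis $T_{2d\alpha}\in\mathcal{G}_{N}$ forces $T_{2d\alpha j}\in\mathcal{G}_{N}$.

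Next I would put $T_{2d\alpha j}$ into the normal form required by Proposition 8.19, i.e. write $T_{2d\alpha j}=T_{g\alpha'}$ with $g\mid N$, $g$ even, and $(\alpha',N)=1$. Set $g:=(2d\alpha j,N)$; since $2d\alpha j$ and $N$ are both even and $g\mid N$, $g$ is an even divisor of $N$, and $g<N$ for the chosen $j$ (because $(\alpha,N)=1$ and $2d\mid N$ give $2d\alpha j\equiv 0\pmod N$ only when $\frac{N}{2d}\mid j$). Writing $c:=(2d\alpha j\bmod N)/g$, one has $(c,N/g)=1$, so by the Chinese Remainder Theorem one may choose an integer $\alpha'$ with $\alpha'\equiv c\pmod{N/g}$ and $(\alpha',N)=1$: for primes $p\mid N$ that divide $N/g$ this is forced by $(c,N/g)=1$, while for the finitely many primes $p\mid g$ with $p\nmid N/g$ one simply imposes $\alpha'\equiv 1\pmod p$. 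Then $g\alpha'\equiv 2d\alpha j\pmod N$, hence $T_{g\alpha'}=T_{2d\alpha j}\in\mathcal{G}_{N}$.

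Now I would apply Proposition 8.19 to $T_{g\alpha'}\in\mathcal{G}_{N}$, with $g$ playing the role of $2d$ and $\alpha'$ the role of $\alpha$:
$$\overline{\mathcal{A}_{N}}=\bigcup_{l=0}^{\frac{N}{g}-1}T_{g\alpha' l}\bigl(\overline{\mathcal{A}_{g\alpha',N}}\cap\overline{\mathcal{A}_{N}}\bigr).$$
Finally I would translate back using $g\alpha'\equiv 2d\alpha j\pmod N$: this gives $\frac{N}{g}=\frac{N}{(2d\alpha j,N)}$, $T_{g\alpha' l}=T_{2d\alpha j l}$ for every $l$, and $\overline{\mathcal{A}_{g\alpha',N}}=\overline{\mathcal{A}_{2d\alpha j,N}}$, since $\overline{\mathcal{A}_{m,N}}$ depends only on the residue of $m$ modulo $N$ (which is even, $N$ being even). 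Substituting into the displayed identity yields exactly the equality asserted in Corollary 8.21 for the index $j$; as $j$ was arbitrary, the corollary follows.

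The only genuine obstacle is the middle step: Proposition 8.19 is phrased for a translation of the exact form $T_{2d\alpha}$ with $2d\mid N$ and $(\alpha,N)=1$, whereas a general power $T_{2d\alpha j}$ need not visibly be of that shape, so one must argue it can always be rewritten as $T_{g\alpha'}$ with $g=(2d\alpha j,N)\mid N$ and $(\alpha',N)=1$ — which is the Chinese-remainder adjustment above. Everything else (closure of $\mathcal{G}_{N}$ under composition, invariance of $\overline{\mathcal{A}_{m,N}}$ under reduction of $m$ modulo $N$) is routine bookkeeping.
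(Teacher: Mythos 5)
Your proposal is correct and follows the same route the paper intends: the paper offers no explicit proof and simply derives the corollary from Proposition 8.19, which is exactly what you do by noting $T_{2d\alpha j}=(T_{2d\alpha})^{j}\in\mathcal{G}_{N}$ and then applying that proposition to each power. Your gcd-plus-Chinese-Remainder normalization of $T_{2d\alpha j}$ into the form $T_{g\alpha'}$ with $g=(2d\alpha j,N)\mid N$ and $(\alpha',N)=1$ fills in a detail the paper leaves implicit, and your remark that the $j=0$ term is degenerate correctly flags a defect of the statement itself rather than of your argument.
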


\quad

\begin{coro}
Given a $\alpha$ such that $(\alpha,N)=1$ and $2d \mid N$.
$$<T_{2d\alpha}	> <  \mathcal{G}_{N} \iff \overline{\mathcal{A}_{N}}=\bigcup_{l=0}^{\frac{N}{(2d\alpha j, N)}-1}T_{2d\alpha jl} (\overline{\mathcal{A}_{2d\alpha j,N}}\cap \overline{\mathcal{A}_{N}}) \hspace{0.2cm} \forall j \in \{ 0,1,...,\frac{N}{2d}-1 \}.$$
\end{coro}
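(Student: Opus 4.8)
The plan is to reduce the claim to Proposition 8.19 together with Corollary 8.21; no fresh combinatorial analysis of $\overline{\mathcal{A}_{N}}$ is required beyond what those two results already supply. The one elementary fact to have on hand is that $(2d\alpha,N)=2d$: since $2d\mid N$, writing $N=2dk$ gives $(2d\alpha,N)=2d\,(\alpha,k)$, and $(\alpha,k)\mid(\alpha,N)=1$ forces $(\alpha,k)=1$. In particular $\tfrac{N}{(2d\alpha,N)}=\tfrac{N}{2d}$, so the $j=1$ instance of the displayed union
$$\overline{\mathcal{A}_{N}}=\bigcup_{l=0}^{\frac{N}{2d}-1}T_{2d\alpha l}\bigl(\overline{\mathcal{A}_{2d\alpha,N}}\cap\overline{\mathcal{A}_{N}}\bigr)$$
is exactly the equality appearing on the right-hand side of Proposition 8.19.

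For the forward implication, I would note that containment $<T_{2d\alpha}> < \mathcal{G}_{N}$ forces $T_{2d\alpha}\in\mathcal{G}_{N}$, and then invoke Corollary 8.21 verbatim: its hypotheses $(\alpha,N)=1$ and $2d\mid N$ hold, and its conclusion is precisely the asserted family of equalities for all $j\in\{0,1,\dots,\tfrac{N}{2d}-1\}$. For the converse, I would specialise the assumed family of equalities to the index $j=1$; by the gcd computation above this clause reads as the display shown, hence coincides with the criterion of Proposition 8.19, so $T_{2d\alpha}\in\mathcal{G}_{N}$. Since $\mathcal{G}_{N}$ is a subgroup of $\widehat{Aut(D_n)}$, it is closed under the group operation and therefore contains every power of $T_{2d\alpha}$, i.e. $<T_{2d\alpha}> < \mathcal{G}_{N}$.

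The main (indeed only) point that needs any care is the identification $(2d\alpha,N)=2d$, which is what makes the $j=1$ clause of the displayed condition line up on the nose with the hypothesis of Proposition 8.19; everything else is a direct quotation of the two earlier results together with the trivial remark that, for the subgroup $\mathcal{G}_{N}$, membership of the single element $T_{2d\alpha}$ is equivalent to containment of the cyclic subgroup it generates. So there is essentially no genuine obstacle here: the corollary is a repackaging of Proposition 8.19 and Corollary 8.21.
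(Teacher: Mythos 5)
Your argument is correct and follows the route the paper itself intends (the corollary is stated there without proof, as an immediate consequence of Proposition 8.19 and the one-way corollary preceding it): the forward direction reduces to membership of $T_{2d\alpha}$ plus that one-way corollary, and the converse follows by specialising to $j=1$, where your computation $(2d\alpha,N)=2d$ correctly identifies that clause with the criterion of Proposition 8.19. One slip to fix: the result you cite as ``Corollary 8.21'' for the forward implication is the very statement being proved, so as written that reference is circular; you clearly intend the immediately preceding one-directional corollary (8.20), and the label should be corrected accordingly.
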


\quad

Returning to Proposition 8.17, this should be understood as a criterion for determining whether there exist $T_{2d\alpha} \in \mathcal{G}_{N}$ and by negating the thesis of the last two corollaries, we can enrich criterion with other possibilities. For example, if $\overline{\mathcal{A}_{N}} \neq \bigcup_{j=0}^{\frac{N}{2d}-1}T_{2d\alpha j} (\overline{\mathcal{A}_{2d\alpha, N}}\cap \overline{\mathcal{A}_{N}})$ this may imply that $\bigcup_{j=0}^{\frac{N}{2d}-1}T_{2d\alpha j} (\overline{\mathcal{A}_{2d\alpha, N}}\cap \overline{\mathcal{A}_{N}}) \subset \overline{\mathcal{A}_{N}} $ or it may imply that there exists some $q_{i} \in \overline{\mathcal{A}_{2d\alpha,N}} \cap \overline{\mathcal{A}_{N}}$ such that $T_{2d\alpha j}(q_{i} )=\pm m$ with $m$ a multiple in $\mathbb{Z}$ for an appropriate $j$.

\quad
So we have the following criterion

\begin{prop}
	
Given a $\alpha$ such that $(\alpha,N)=1$ and $2d \mid N$, 	$T_{2d\alpha } \notin \mathcal{G}_{N}$ if
\begin{enumerate}
	\item $\overline{\mathcal{A}_{2d\alpha,N}}\cap \overline{\mathcal{A}_{N}} = \emptyset$ or
	
	\item  $\overline{\mathcal{A}_{2d\alpha,N}}\cap \overline{\mathcal{A}_{N}} \neq \emptyset$ but it is not symmetrical with respect to  $T_{2d\alpha }f_{-1 }$ or
	
	\item If  (2) holds and $\bigcup_{j=0}^{\frac{N}{2d}-1}T_{2d\alpha j} (\overline{\mathcal{A}_{2d\alpha, N}}\cap \overline{\mathcal{A}_{N}}) \subset \overline{\mathcal{A}_{N}} $ or
	
	\item  $\overline{\mathcal{A}_{N}}\neq \bigcup_{j=0}^{\frac{N}{2d}-1}T_{2d\alpha j} (\overline{\mathcal{A}_{2d\alpha, N}}\cap \overline{\mathcal{A}_{N}}).$
\end{enumerate}
\end{prop}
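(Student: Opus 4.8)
The plan is to derive all four clauses as direct consequences of results already established in this section, so that the proposition is essentially a dictionary of the earlier criteria restated in contrapositive form; no new computation should be required. The one preliminary remark I would make is that $T_{2d\alpha}$ is a translation, so that $T_{2d\alpha}\in\mathcal{G}_N$ if and only if $T_{2d\alpha}\in\langle T\rangle\cap\mathcal{G}_N=\mathcal{G}_N^{(1)}$; moreover, since $(\alpha,N)=1$ and $2d\mid N$ we have $\langle T_{2d\alpha}\rangle=\langle T_{2d}\rangle$, which is what legitimizes indexing the unions in (3) and (4) over the $\frac{N}{2d}$ values $j=0,\dots,\frac{N}{2d}-1$.

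Clauses (1) and (2) I would dispatch simply by quoting Proposition 8.17: if $T_{2d\alpha}\in\mathcal{G}_N^{(1)}$ then it permutes $\overline{\mathcal{A}_N}$, and the pairing $q_i\mapsto\{r_i,\,2d\alpha-r_i\}$ furnished by Proposition 8.16 forces $\overline{\mathcal{A}_{2d\alpha,N}}\cap\overline{\mathcal{A}_N}$ to be both nonempty and invariant under $T_{2d\alpha}f_{-1}$; negating either conclusion gives exactly (1) or (2).

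For (4) I would invoke the forward implication of Proposition 8.19 (equivalently Proposition 8.18): membership $T_{2d\alpha}\in\mathcal{G}_N$ forces the equality $\overline{\mathcal{A}_N}=\bigcup_{j=0}^{\frac{N}{2d}-1}T_{2d\alpha j}(\overline{\mathcal{A}_{2d\alpha,N}}\cap\overline{\mathcal{A}_N})$, so any violation of that equality rules $T_{2d\alpha}$ out. Clause (3) is then a recognizable special case of (4): when (2) holds the set $\overline{\mathcal{A}_{2d\alpha,N}}\cap\overline{\mathcal{A}_N}$ is nonempty and symmetric, and the extra hypothesis that $\bigcup_{j}T_{2d\alpha j}(\overline{\mathcal{A}_{2d\alpha,N}}\cap\overline{\mathcal{A}_N})$ is a \emph{proper} subset of $\overline{\mathcal{A}_N}$ already contradicts that equality. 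I would still list (3) separately because, as observed in the paragraph just before the statement, a failure of the equality in (4) falls into exactly two shapes: either a proper containment (clause (3)) or the situation in which some $T_{2d\alpha j}$ carries an element of $\overline{\mathcal{A}_{2d\alpha,N}}\cap\overline{\mathcal{A}_N}$ onto a nonunit multiple $\pm m$ — and making that dichotomy explicit is the only mildly delicate point. Otherwise there is no real obstacle: the substantive content lives entirely in Propositions 8.16--8.19, which are taken as given.
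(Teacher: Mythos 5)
Your proposal is correct and matches the paper's treatment: the paper gives no separate proof for this proposition, justifying it instead by the paragraph immediately preceding it, which (exactly as you do) obtains clauses (1)--(2) from Proposition 8.17, clause (4) by negating the conclusion of Propositions 8.18--8.19 and their corollaries, and clause (3) as the proper-containment branch of the dichotomy describing how the equality in (4) can fail. Your added remarks (that $T_{2d\alpha}\in\mathcal{G}_N$ iff $T_{2d\alpha}\in\mathcal{G}_N^{(1)}$, and that $\langle T_{2d\alpha}\rangle=\langle T_{2d}\rangle$ since $(\alpha,N)=1$) are accurate and only make explicit what the paper leaves implicit.
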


\quad

We begin by applying this criterion on a property already demonstrated several times i.e. $\mathcal{G}_{2p}^{(1)}=<\mathbb{I}_{\mathbb{Z}_{2p}}>$ for prime numbers $p>3$. We know that if $\mathcal{ G }_{2p}^{(1)}\neq<\mathbb{I}_{\mathbb{Z}_{2p}}>$ then $\mathcal{ G }_{2p}^{(1)}=<T_{2}>$.
Then we apply the criterion on $T_{2}$; if $1 \notin \overline{\mathcal{A}_{2p}} $ we're done otherwise we have that  $\overline{\mathcal{A}_{2,2p}}\cap \overline{\mathcal{A}_{2p}} \neq \emptyset$ 
and it is quite simple to verify that such a set is symmetric with respect to $T_{2}f_{-1 }$, but we will have that $T_{8}(1)=9$. Thus, we are left with (3) and (4) in Criterion (8.22), but since $2p \geq 10$ it is very easy to verify that $\overline{\mathcal{A}_{2p}}\neq \bigcup_{j=0}^{p-1}T_{2 j} (\overline{\mathcal{A}_{2, 2p}}\cap \overline{\mathcal{A}_{2p}}).$

\quad

Now let us apply Criterion (8.22) to the last proposition of this section

\begin{prop}
	For Goldbach sieves $\mathcal{C}_{N}$ with $N \geq 10$, it is verified that $T_{2 } \notin \mathcal{G}_{N}.$
\end{prop}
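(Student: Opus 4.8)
The plan is to invoke Criterion~8.22 with the choice $d=1$ and $\alpha=1$. These are admissible since $N$ is even, so $2d=2$ divides $N$, and $(\alpha,N)=(1,N)=1$; with this choice $T_{2d\alpha}$ is exactly the element $T_2$, which does belong to the ambient group $\widehat{Aut(D_n)}\cong Aff(\mathbb{Z}_{N})$ (it is the translation $x\mapsto x+2=T_1^2$), so ``$T_2\in\mathcal{G}_N$?'' is precisely the question the criterion addresses. It then suffices to verify that one of the conditions $(1)$--$(4)$ of Criterion~8.22 holds for $T_2$, and I will aim at condition $(4)$.

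First I would compute $\overline{\mathcal{A}_{2,N}}$ straight from the definition of $\overline{\mathcal{A}_{m,N}}$ with $m=2$: the only integer that is prime or $1$ and is $<2$ in $\mathbb{Z}$ is $1$, and for it $2-1=1$ is again prime or $1$ and $<2$; hence $\overline{\mathcal{A}_{2,N}}=\{1\}$ as a subset of $\mathbb{Z}_N$. Consequently $\overline{\mathcal{A}_{2,N}}\cap\overline{\mathcal{A}_N}$ equals $\{1\}$ when $N-1$ is prime (so $1\in\overline{\mathcal{A}_N}$) and $\emptyset$ otherwise. Since $T_{2j}(1)=1+2j$ and $j$ runs over $\{0,\dots,\frac{N}{2}-1\}$, we get $\bigcup_{j}T_{2j}(\{1\})=\{1,3,5,\dots,N-1\}$, the set of all odd residues of $\mathbb{Z}_N$, while $\bigcup_{j}T_{2j}(\emptyset)=\emptyset$. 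Thus in every case the union $\bigcup_{j=0}^{N/2-1}T_{2j}(\overline{\mathcal{A}_{2,N}}\cap\overline{\mathcal{A}_N})$ is either empty or the full set of odd residues.

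To conclude via condition $(4)$ I must show this union is $\neq\overline{\mathcal{A}_N}$. If it is empty this is immediate, because $\mathcal{C}_N$ is a Goldbach sieve, i.e., $\overline{\mathcal{A}_N}\neq\emptyset$. If it is the set of all odd residues, it is enough to produce one odd residue lying in $\mathcal{A}_N$; the natural choice is $9$. Indeed $N\ge 10$ forces $3\le[\sqrt{N}]$, so the prime $3$ occurs among the primes that build $\mathcal{C}_N$: if $3\mid N$ then $3=p_j$ for some $j$ and $9\in Orb_{D_{\frac{N}{3}}}(0)\subseteq\mathcal{A}_N$; if $3\nmid N$ then $3=q_k$ for some $k$, the range condition $1\le[\frac{N}{3}]-2$ holds (again since $N\ge 10$), so $9=(2+1)\cdot 3\in\mathcal{Q}_k=f\bigl(D_{[\frac{N}{3}]-1}\bigr)$ and hence $9\in Orb_{D_{[\frac{N}{3}]-1}}(2\cdot 3)\subseteq\mathcal{A}_N$. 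In both cases $9\in\mathcal{A}_N$, so $9\notin\overline{\mathcal{A}_N}$, while $9$ is odd and $0<9<N$; therefore $\overline{\mathcal{A}_N}$ is a proper subset of the odd residues, condition $(4)$ of Criterion~8.22 is met, and $T_2\notin\mathcal{G}_N$.

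There is no deep obstacle here; the two points that need care are reading off $\overline{\mathcal{A}_{2,N}}=\{1\}$ correctly from the somewhat elaborate definition of $\overline{\mathcal{A}_{m,N}}$, and checking that $9\in\mathcal{A}_N$ for every even $N\ge 10$ — which is exactly the case split according to whether $3$ divides $N$, together with the verification of the range $0\le m\le[\frac{N}{q}]-2$ in the construction of $\mathcal{Q}_k$. The underlying moral, equivalent to the chain of conditions in Criterion~8.22, is simply that $\overline{\mathcal{A}_N}$ is contained in the odd residues of $\mathbb{Z}_N$ but omits the odd residue $9$, and therefore cannot be invariant under translation by $2$.
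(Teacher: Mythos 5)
Your proposal is correct and follows essentially the same route as the paper: apply Criterion 8.22 to $T_2$, observe that $\overline{\mathcal{A}_{2,N}}\cap\overline{\mathcal{A}_N}$ is either empty or $\{1\}$, and in the latter case note that the $T_2$-orbit of $1$ contains $9$, which lies in $\mathcal{A}_N$ because $9$ is a multiple of $3$ and $3\le[\sqrt{N}]$ for $N\ge 10$ (the paper compresses this to ``$T_8(1)=9$''). Your write-up is somewhat more careful than the paper's, in particular in verifying that $9$ actually belongs to the covering in both the $3\mid N$ and $3\nmid N$ cases, but the underlying argument is identical.
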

	\begin{proof}
		In the case $N=10$ we are in the equals of the type $N=2p$ with $p$ prime and in this case $p=10$ already proved above. So we see this for $N>10$; if $1 \notin \overline{\mathcal{A}_{N}}$ then it is obvious otherwise we are in the case $\overline{\mathcal{A}_{2, N}}\cap \overline{\mathcal{A}_{N}}\neq \emptyset$ which in this case is symmetric with respect to $T_{2 }f_{-1 }$ but we have that $T_{8}(1)=9$ and so we have the thesis.
	\end{proof}
\quad

Now let's look at examples where we can apply criterion (8.22). In the first example we will analyze the cases $N<10$ not covered in Proposition 8.23.

\begin{example}
	For $N=2$ we have that $\overline{\mathcal{A}_{2}}=\{ 1 \}$, $\mathcal{A}_{2}=\{ 0 \}$, and furthermore we have that $<T_{2}>=<\mathbb{I}_{Z_{2}}>$. Therefore trivially $T_{2} \in \mathcal{G}_{2}$.
	\quad
	
	For $N=4$ we have that $\overline{\mathcal{A}_{4}}=\{ 1, 3 \}$, furthermore $T_{2}(1)=3$, $T_{2}(3)=5=1$, and furthermore $<T_{2}>=\{\mathbb{I}_{Z_{4}}, T_{2} \}$. Therefore $T_{2} \in \mathcal{G}_{4}$.
	
	\quad 
	For $N=6$ we have that $\overline{\mathcal{A}_{6}}=\{ 1,5, 3 \}$, $T_{2}(1)=3$, $T_{4}(1)=5$ so $T_{2} \in \mathcal{G}_{6}$.
	\quad
	And we have arrived at the limiting case $N=8$. Here we have that $\overline{\mathcal{A}_{8}}=\{ 1, 3, 5,7 \}$, $T_{2}(1)=3$, $T_{2}(3)=5$, $T_{2}(5)=7$, $T_{2}(7)=1$ so $T_{2} \in \mathcal{G}_{8}.$
	
\end{example}

\begin{example}
	In this last example we exploit a fact that can be easily deduced for even numbers that are powers of $2$ and that is if $T_{2^{k-1} } \notin \mathcal{G}_{2^{k} } \rightarrow \mathcal{G}_{2^{k} }^{(1)}=<\mathbb{I}_{\mathbb{Z}_{2^{k}} }>$ for $k>0$.
	In particular we apply it for $k=6$ i.e. $N=128$. It is verified that $\overline{\mathcal{A}_{128}}=\{ 19, 31, 61,109,97,67 \}$ while $\overline{\mathcal{A}_{64,128}}=\{ 1, 3, 5,11,17,23,63,61,59,53,47, 41 \}$ , then $\overline{\mathcal{A}_{128}}\cap\overline{\mathcal{A}_{64,128}}=\{ 61\}$ the latter set is not symmetrical with respect to $T_{64 }f_{-1 }$ . Hence by Criterion 8.22 we have that $T_{64} \notin \mathcal{G}_{128 } \rightarrow \mathcal{G}_{128}^{(1)}=<\mathbb{I}_{\mathbb{Z}_{ 128} }>$.
\end{example}

\quad
\section{Final conclusions and conjectures}
\quad
Calculating let's say by hand without the help of calculators, it can be seen always for small numbers that for even $N$ numbers, it can be seen that if $N$ is not $\mathcal{G}_{N}$-cyclotomic then it will be that $$\mathcal{G}_{N}\cong \begin{cases} V \hspace{0.2cm}if \hspace{0.2cm}4 \mid N \\ \mathbb{Z}_{2} \hspace{0.2cm} otherwise \end{cases}$$
\quad
We can conjecture that this is true for every even $N$ number that is not $\mathcal{G}_{N}$-cyclotomic i.e.

\begin{prop}
\quad\quad
	
\title{STRONG CONJECTURE}.	

\quad\quad
For every even number $N$ that is not $\mathcal{G}_{N}$-cyclotomic we have that
 $$\mathcal{G}_{N}\cong \begin{cases} V \hspace{0.2cm}if \hspace{0.2cm}4 \mid N \\ \mathbb{Z}_{2} \hspace{0.2cm} otherwise \end{cases}.$$
\end{prop}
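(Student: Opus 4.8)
The plan is to exploit the identification $\widehat{Aut(D_n)}\cong Aff(\mathbb{Z}_N)$ of Section 3.1 and the splitting of $Aff(\mathbb{Z}_N)$ into translations $T_d$ and homotheties $f_\nu$, and to show that for a non-$\mathcal{G}_N$-cyclotomic $N$ the group $\mathcal{G}_N$ meets each of the three ``strata'' (pure translations, pure homotheties, genuinely mixed affinities) in the smallest possible way. Assuming Goldbach's conjecture — so that $\overline{\mathcal{A}_N}\neq\emptyset$ and $\mathcal{C}_N$ really is a Goldbach sieve — I would reduce the statement to the two assertions: (A) $\mathcal{G}_N^{(1)}=\langle T\rangle\cap\mathcal{G}_N=\langle\mathbb{I}_{\mathbb{Z}_N}\rangle$, and (B) $H:=\mathcal{G}_N\cap U(\mathbb{Z}_N)=\langle f_{-1}\rangle\cong\mathbb{Z}_2$. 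Granting (A) and (B), write $N=2^k\overline N$ with $\overline N$ odd. If $k=1$, Proposition 7.6 forbids any mixed element $T_mf_\nu$ with $T_m,f_\nu\notin\mathcal{G}_N$, hence $\mathcal{G}_N=\langle\mathcal{G}_N^{(1)},H\rangle=\langle f_{-1}\rangle\cong\mathbb{Z}_2$. If $k\geq 2$, then (A) gives $T_{2^{k-1}\overline N},f_{1+2^{k-1}\overline N}\notin\mathcal{G}_N$, so Proposition 7.4 applies and yields $\mathcal{G}_N=\langle T_{2^{k-1}\overline N}f_{1+2^{k-1}\overline N}\rangle\times(\langle\mathbb{I}_{\mathbb{Z}_N}\rangle\rtimes\langle f_{-1}\rangle)\cong\mathbb{Z}_2\times\mathbb{Z}_2=V$; here the first factor is $Z(\widehat{Aut(D_n)})$ by Remark 7.5, and one checks directly that $T_{2^{k-1}\overline N}f_{1+2^{k-1}\overline N}$ fixes every odd residue of $\mathbb{Z}_N$, hence all of $\overline{\mathcal{A}_N}$ (which consists only of odd residues, since $2=1\cdot p_0\in\mathcal{A}_N$ always), so it lies in $\mathcal{G}_N$ whenever $4\mid N$. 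Thus everything is concentrated in (A) and (B).

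For (B), one inclusion is immediate and unconditional: $r$ is prime (or lies in $\{1,N-1\}$) exactly when $N-r$ is, so $\overline{\mathcal{A}_N}$ is stable under negation, $f_{-1}\in\mathcal{G}_N$, and $\langle f_{-1}\rangle\leq H$. The reverse inclusion is the core difficulty. Proposition 8.6 already gives $H\subsetneq U(\mathbb{Z}_N)$ for non-cyclotomic $N>6$; the task is to upgrade ``proper subgroup'' to ``order two''. The mechanism I would use is that $f_\nu(\overline{\mathcal{A}_N})=\overline{\mathcal{A}_N}$ forces $\nu$ to carry the least element of $\overline{\mathcal{A}_N}$ (which is $1$, or else a small prime $p$ with $N-p$ prime) to another element of $\overline{\mathcal{A}_N}$, whereas the image $\nu p$ is a generic reduced residue modulo $N$, composite in $\mathbb{Z}$ for all $\nu\not\equiv\pm1$ unless $N$ is $\mathcal{G}_N$-cyclotomic, hence expelled from $\mathcal{A}_N$; a Bertrand-type lower bound (as already used in the proof of Proposition 8.6), sharpened by prime-gap estimates in the extremal cases, is what would make this quantitative.

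For (A), I would assemble the finitary criteria of Section 8. Proposition 8.23 already yields $T_2\notin\mathcal{G}_N$ for every Goldbach sieve with $N\geq10$, so $\mathcal{G}_N^{(1)}$ is a proper subgroup of the cyclic group $\langle T_2\rangle$; to rule out the remaining candidates $\langle T_{2d}\rangle$ with $d\mid N/2$, $d>1$, one invokes Proposition 8.17 together with Corollaries 8.20--8.21: membership $T_{2d\alpha}\in\mathcal{G}_N$ forces the nested identity $\overline{\mathcal{A}_N}=\bigcup_j T_{2d\alpha j}(\overline{\mathcal{A}_{2d\alpha,N}}\cap\overline{\mathcal{A}_N})$ and the $T_{2d\alpha}f_{-1}$-symmetry of $\overline{\mathcal{A}_{2d\alpha,N}}\cap\overline{\mathcal{A}_N}$, and one shows this collapses — the intersection is empty, or not symmetric, or the union omits some $q_i\in\overline{\mathcal{A}_N}$, or (as in Example 8.26) some $T_{2d\alpha j}$ lands on a multiple of a prime dividing $N$. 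The special families of Section 8 serve as warm-ups: $N=2p$ with $p>3$ prime via Propositions 8.1 and 8.3 and the $T_8(1)=9$ trick; $N=2^k$ via the implication ``$T_{2^{k-1}}\notin\mathcal{G}_{2^k}\Rightarrow\mathcal{G}_{2^k}^{(1)}$ trivial'' of Example 8.26; the $m.o.$ and $q.m.o.$ numbers via Propositions 8.11--8.13; and the remaining composite $N$ are confined by the inclusions (8.9)--(8.12).

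The main obstacle is precisely what makes the statement worth stating: (A) and (B) together assert that the Goldbach partition of $\mathbb{Z}_N$ admits no affine symmetry beyond negation (and, when $4\mid N$, the forced central involution) for every non-$\mathcal{G}_N$-cyclotomic $N$, and already the hypothesis $\overline{\mathcal{A}_N}\neq\emptyset$ is Goldbach's strong conjecture itself, as observed in Section 9. An unconditional proof would therefore in particular reprove Goldbach, and a ``no hidden multiplicative symmetry'' statement of the strength of (B) appears to be, if anything, still further from reach. Accordingly the realistic deliverable is: the reduction above, pinning the whole problem to (A) and (B); a proof of (A) along the lines sketched, which comes down to finitary and prime-gap inputs of the kind already admitted in the paper; and a proof of (B) conditional on Goldbach together with a Bertrand/prime-gap hypothesis — leaving the unconditional (B), and hence the Strong Conjecture in full generality, open.
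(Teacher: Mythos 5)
You were asked to prove a statement that the paper itself does not prove: Proposition 9.1 is explicitly labelled a \emph{conjecture}, supported only by hand computations for small even $N$, and the paper concedes (Proposition 9.2) that its truth would essentially entail Goldbach's strong conjecture. So there is no proof in the paper to compare against, and your proposal --- which ends by conceding that the unconditional statement remains open --- is, correctly, not a proof either. What you do supply, and what goes somewhat beyond what the paper records, is a sound reduction: granting (A) $\mathcal{G}_N^{(1)}=<\mathbb{I}_{\mathbb{Z}_N}>$ and (B) $\mathcal{G}_N\cap U(\mathbb{Z}_N)=<f_{-1}>$, Proposition 7.6 gives $\mathcal{G}_N=\mathcal{G}_N^{(1)}\rtimes H\cong\mathbb{Z}_2$ when $N\equiv 2 \pmod 4$, and Proposition 7.4 together with your direct check that $T_{N/2}f_{1+N/2}$ fixes every odd residue (hence all of $\overline{\mathcal{A}_N}$, which contains no even residue since the even numbers form $Orb_{D_{N/2}}(0)\subseteq\mathcal{A}_N$) gives $\mathcal{G}_N\cong V$ when $4\mid N$. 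The unconditional inclusion $<f_{-1}>\leq\mathcal{G}_N$ is also correct, since $\overline{\mathcal{A}_N}$ is stable under negation.

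The genuine gaps are exactly where you flag them, but they deserve to be named more sharply. For (B), the mechanism ``$f_\nu$ must send the least element of $\overline{\mathcal{A}_N}$ to another element, and $\nu\cdot 1=\nu$ is composite for $\nu\not\equiv\pm1$ unless $N$ is cyclotomic'' does not work: $f_\nu\in\mathcal{G}_N$ only requires $\nu$ to permute $\overline{\mathcal{A}_N}$, and $\overline{\mathcal{A}_N}$ typically contains many units besides $\pm1$ (every Goldbach pair of $N$ contributes two of them), so tracking the image of a single element cannot exclude a nontrivial permutation of the Goldbach pairs; no Bertrand-type bound repairs this, and Proposition 8.6 only yields $H\neq U(\mathbb{Z}_N)$, not $|H|=2$. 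For (A), Propositions 8.17 and 8.22 and Corollaries 8.20--8.21 are one-directional exclusion criteria whose hypotheses must be checked separately for each divisor $2d$ of $N$ and each $N$; the paper carries this out only for $T_2$ (Proposition 8.23) and for isolated cases such as $N=128$, so ``one shows this collapses'' is precisely the unproved content rather than a step. Finally, the whole discussion presupposes $\overline{\mathcal{A}_N}\neq\emptyset$, which for general non-cyclotomic $N$ is Goldbach's conjecture itself. In short: your reduction to (A) and (B) is correct and worth keeping, but (A) and (B) are not established here, and the statement remains a conjecture both in the paper and after your proposal.
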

\quad
It would immediately come to say that if the Strong Conjecture applies then Goldbach's Strong Conjecture applies. But care must be taken; Goldbach's sieve, as we have defined it (see def (4.5)), also contemplates the $1$, $N-1$ case where $N-1$ is a prime number, and should it happen that $\overline{\mathcal{A}_{N}}=\{1,N-1\}$ Goldbach's strong conjecture would not be satisfied. Therefore it is more correct to state that

\begin{prop}
	If the $STRONG\hspace{0.2cm} CONJECTURE$ holds for some $N$ such that $ \{1,N-1\}\subset \overline{\mathcal{A}_{N}}$ or $ 1 \notin \overline{\mathcal{A}_{N}}$ then for $N$ Goldbach's strong conjecture holds .
\end{prop}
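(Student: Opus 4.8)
The plan is to read off Goldbach's assertion for $N$ from the combinatorics of the set $\overline{\mathcal{A}_N}$, calling on the STRONG CONJECTURE only to guarantee that this set is non-empty. First I would make explicit what $\overline{\mathcal{A}_N}$ is. By the construction of $\mathcal{A}_N$ in Section 4.1, a residue $r$, represented in $\{1,\dots,N-1\}$, lies in $\mathcal{A}_N$ whenever $r$ is even (it then lies in $Orb_{D_{N/2}}(0)$, since $2\mid N$) or $r$ is an odd composite: in the latter case the least prime factor $p\le\sqrt N$ of $r$ either divides $N$, so that $r\in Orb_{D_{N/p_j}}(0)$, or it does not, so that $r$ lies in one of the sets $\mathcal{Q}_k=Orb_{D_{[N/q_k]-1}}(2q_k)$. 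Since $\mathcal{A}_N$ is moreover stable under $x\mapsto -x$ (noted in the proof of Lemma 7.1), and in accordance with Definition 4.5, the elements of $\overline{\mathcal{A}_N}$ are precisely the residues $r$ for which both $r$ and $N-r$ are prime or equal to $1$. I would then record two consequences: (a) any element of $\overline{\mathcal{A}_N}$ different from $1$ and $N-1$ is a prime $r$ with $2\le r\le N-2$ whose complement $N-r$ is again a prime; (b) $1\in\overline{\mathcal{A}_N}$ if and only if $N-1\in\overline{\mathcal{A}_N}$, if and only if $N-1$ is prime, so $\overline{\mathcal{A}_N}\cap\{1,N-1\}$ is either empty or all of $\{1,N-1\}$. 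By (b), the hypothesis ``$\{1,N-1\}\subset\overline{\mathcal{A}_N}$ or $1\notin\overline{\mathcal{A}_N}$'' is simply a restatement of $\overline{\mathcal{A}_N}\neq\{1,N-1\}$.

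Next I would use the STRONG CONJECTURE to exclude $\overline{\mathcal{A}_N}=\emptyset$. The conjecture forces $\mathcal{G}_N\cong V$ or $\mathbb{Z}_2$, so $\big|\mathcal{G}_N\big|\le 4$, whereas $\big|\widehat{Aut(D_n)}\big|=N\phi(N)>4$ for every even $N$ to which the conjecture pertains; hence $\mathcal{G}_N=\widehat{Aut(D_n)}^{\mathcal{C}_N}$ is a proper subgroup of $\widehat{Aut(D_n)}$. By the observation at the end of Section 6 --- a proper symmetry subgroup forces an incomplete covering --- this gives $\mathcal{A}_N\subsetneq\mathbb{Z}_N$, i.e. $\overline{\mathcal{A}_N}\neq\emptyset$, so $\mathcal{C}_N$ is a genuine Goldbach sieve.

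To assemble the conclusion, observe that $\overline{\mathcal{A}_N}$ is non-empty and unequal to $\{1,N-1\}$, and that it cannot be contained in $\{1,N-1\}$: were it so, it would equal $\overline{\mathcal{A}_N}\cap\{1,N-1\}$, which by (b) is $\emptyset$ or $\{1,N-1\}$, both excluded. So there is an $r\in\overline{\mathcal{A}_N}$ with $r\notin\{1,N-1\}$; by (a), $r$ is prime, $2\le r\le N-2$, and $N-r$ is prime, whence $N=r+(N-r)$ is a sum of two primes. That is exactly Goldbach's strong conjecture for $N$.

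The load-bearing step is the middle one, ``$\mathcal{G}_N$ of order at most $4\ \Rightarrow$ the covering $\mathcal{A}_N$ is incomplete'': it is the only place the STRONG CONJECTURE is consumed, and it rests on the Section 6 observation together with the order count $N\phi(N)>4$. Everything else is bookkeeping on the sieve, the one mildly delicate point being to verify cleanly that every odd composite $c$ with $1<c<N$ is caught --- treating separately the case where the least prime factor of $c$ divides $N$ and the case where it does not, and keeping track of the boundary range $0\le m\le[N/q_k]-2$ in the definition of $\mathcal{C}_k$ and $\mathcal{C}_{-k}$. I would also remark, although it is not needed, that under the first alternative $\{1,N-1\}\subsetneq\overline{\mathcal{A}_N}$ the conclusion is immediate without the STRONG CONJECTURE, which is really used only to dispose of the possibility $\overline{\mathcal{A}_N}=\emptyset$ under the second alternative.
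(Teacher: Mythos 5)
Your argument is correct and follows the same route the paper intends (the paper states this proposition with no formal proof, only the informal paragraph preceding it): the STRONG CONJECTURE is consumed solely to make $\mathcal{G}_N$ a proper subgroup of $\widehat{Aut(D_n)}$, whence $\overline{\mathcal{A}_N}\neq\emptyset$ by the Section 6 observation, while the stated hypothesis, read with $\subset$ strict, is exactly the exclusion of the degenerate case $\overline{\mathcal{A}_N}=\{1,N-1\}$ that the paper warns about. The one point worth flagging is that the conjecture is quantified only over $N$ that are not $\mathcal{G}_N$-cyclotomic, so your order bound $\big|\mathcal{G}_N\big|\le 4$ is unavailable for cyclotomic $N$; but in that case Proposition 8.6 already gives $\overline{\mathcal{A}_N}=U(\mathbb{Z}_N)\neq\emptyset$, so your endgame goes through unchanged.
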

\quad
The strong conjecture hypothesis perhaps is very stringent, we could conjecture weaken the claims even though the conjectured claims will turn out to be of a remarkable magnitude namely

\begin{prop}
	\quad\quad
	
	\title{WEAK CONJECTURE}

	\quad\quad
	For every even $N>2$ we have that $\mathcal{G}_{N}\neq Aff(\mathbb{Z}_{N})$(see Corollary (3.24)).
\end{prop}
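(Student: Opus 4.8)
The plan is to exploit the parity of residues to force $\mathcal{G}_{N}$ into a proper subgroup of $Aff(\mathbb{Z}_{N})$. By Corollary 3.24 we have $\mathcal{G}_{N}\le\widehat{Aut(D_{n})}^{S}\cong Aff(\mathbb{Z}_{N})$ (with $N=2n$), and every element of $Aff(\mathbb{Z}_{N})$ is an affinity $x\mapsto\nu x+t$ with $\nu\in U(\mathbb{Z}_{N})$; since $N$ is even, $\nu$ is odd, so such a map preserves the parity of a residue when $t$ is even and reverses it when $t$ is odd. The assignment ``parity of the translation part'' is a surjective homomorphism $Aff(\mathbb{Z}_{N})\to\mathbb{Z}_{2}$ (it is additive in $t$ because $\nu$ is odd), whose kernel $K=\langle T_{2}\rangle\rtimes\langle f_{\nu}\rangle$ has index $2$.

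Next I would observe that $\mathcal{A}_{N}$ contains every even residue of $\mathbb{Z}_{N}$: indeed $2=p_{0}\mid N$, and the orbit $Orb_{D_{N/2}}(0)$, one of the pieces of the covering, is exactly the set of even residues. Consequently, if $g\in\mathcal{G}_{N}$ reverses parity, then $g(\mathcal{A}_{N})$ contains all odd residues, whereas $\overline{\mathcal{A}_{N}}$ consists only of odd residues; since $g$ permutes $\{\mathcal{A}_{N},\overline{\mathcal{A}_{N}}\}$, either $g(\mathcal{A}_{N})=\mathcal{A}_{N}$, forcing $\mathcal{A}_{N}=\mathbb{Z}_{N}$, or $g(\mathcal{A}_{N})=\overline{\mathcal{A}_{N}}$, forcing $\mathcal{A}_{N}$ to be precisely the set of even residues. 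Hence, as soon as $\mathcal{A}_{N}$ is neither $\mathbb{Z}_{N}$ nor the set of even residues, every element of $\mathcal{G}_{N}$ preserves parity, i.e.\ $\mathcal{G}_{N}\le K\subsetneq Aff(\mathbb{Z}_{N})$, which is the claim. (The converse holds too: in those two degenerate cases every affinity permutes the two blocks, so $\mathcal{G}_{N}=Aff(\mathbb{Z}_{N})$; thus the statement is in fact \emph{equivalent} to ``$\mathcal{A}_{N}$ is neither $\mathbb{Z}_{N}$ nor the set of even residues'', and $N=4,6,8$ are genuine exceptions — there $\mathcal{A}_{N}$ is exactly the set of even residues — so the assertion should be read for $N\ge 10$, in line with the hypothesis of Proposition 8.23.)

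It then remains to establish, for even $N\ge 10$, the two non-degeneracy facts. First, $\mathcal{A}_{N}$ is not the set of even residues: since $3<\sqrt{N}$, either $3\mid N$, and then the odd residue $3\in Orb_{D_{N/3}}(0)\subseteq\mathcal{A}_{N}$, or $3\nmid N$, and then $3$ is one of the $q_{k}$ and the odd residue $9=3\cdot 3$ lies in $\mathcal{Q}_{3}\subseteq\mathcal{A}_{N}$ (using $9<N$); either way $\mathcal{A}_{N}$ contains an odd residue. Second, $\mathcal{A}_{N}\neq\mathbb{Z}_{N}$, that is $\overline{\mathcal{A}_{N}}\neq\emptyset$: unwinding Definition 4.5 one checks that an odd residue $x$ with $0<x<N$ lies in $\overline{\mathcal{A}_{N}}$ exactly when both $x$ and $N-x$ are equal to $1$ or are prime, so $\overline{\mathcal{A}_{N}}\neq\emptyset$ is equivalent to ``$N$ is a sum of two primes, or $N-1$ is prime''; for $\mathcal{G}_{N}$-cyclotomic $N$ this is part (1) of Proposition 8.5, and for the remaining $N$ it reduces, via a prime $q\le[\sqrt{N}]$ with $q\nmid N$, to the very same assertion.

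The step I expect to be the main obstacle is precisely this last fact, $\overline{\mathcal{A}_{N}}\neq\emptyset$ for every even $N\ge 10$. It is a weak Goldbach-type statement: it follows from Goldbach's conjecture, it is verified far past any explicit range, and it can be forced for broad families of $N$ (for instance whenever $N-1$ is prime, or whenever the gap $N-P$ below the largest prime $P<N$ is $1$ or prime, which by known bounds on prime gaps covers every $N$ one can actually test), but a proof valid for all $N$ is not within reach of the methods available here. Granting it, the reduction above closes the argument: for even $N\ge 10$ the covering $\mathcal{A}_{N}$ is neither $\mathbb{Z}_{N}$ nor the set of even residues, so $\mathcal{G}_{N}\le K$, and therefore $\mathcal{G}_{N}\neq Aff(\mathbb{Z}_{N})$.
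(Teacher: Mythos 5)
There is a genuine gap, and it is the one you yourself flag: the step ``$\overline{\mathcal{A}_{N}}\neq\emptyset$ for every even $N\ge 10$'' is not established, and without it the argument proves nothing. Indeed your own reduction shows the implication cannot be avoided: if $\overline{\mathcal{A}_{N}}=\emptyset$ then the sieve is the trivial bipartition $\{\mathbb{Z}_{N},\emptyset\}$, every element of $\widehat{Aut(D_{n})^{S}}$ fixes it, and $\mathcal{G}_{N}=Aff(\mathbb{Z}_{N})$. So the statement you are asked to prove is, modulo your (correct) parity analysis, \emph{equivalent} to the assertion that every even $N$ is a sum of two primes or has $N-1$ prime --- a Goldbach-type statement. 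This is consistent with the paper itself: the proposition is labelled ``WEAK CONJECTURE'', no proof is offered anywhere in the text, and Propositions 9.4 and 9.5 record precisely that it is sandwiched between Goldbach's strong conjecture and a lightly restricted form of it. A blind ``proof'' of this statement cannot succeed by elementary means, and yours does not: it is a conditional reduction, openly resting on an unproved number-theoretic hypothesis.

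That said, the reduction itself is sound and worth keeping as a remark rather than a proof. The parity homomorphism $Aff(\mathbb{Z}_{N})\to\mathbb{Z}_{2}$, the observation that $Orb_{D_{N/2}}(0)$ is exactly the even residues, and the conclusion that $\mathcal{G}_{N}$ lies in the index-$2$ kernel once $\mathcal{A}_{N}$ is neither $\mathbb{Z}_{N}$ nor the set of even residues, are all correct, and your verification that $\mathcal{A}_{N}$ contains an odd residue for $N\ge 10$ (via $3$ or $9$) is fine. One small point of disagreement with the paper: in Section 6 and in Example 8.24 the author treats ``fixing the sieve'' as fixing each block individually ($f(\mathcal{A}_{N})=\mathcal{A}_{N}$), not merely permuting the unordered pair; under that convention your claimed exceptions $N=4,6,8$ (where $\mathcal{A}_{N}$ is exactly the even residues) disappear, since a parity-reversing affinity then cannot fix $\mathcal{A}_{N}$, and the only genuine failure mode of the conjecture is $\overline{\mathcal{A}_{N}}=\emptyset$. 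Either way the essential obstacle is unchanged: the nonemptiness of $\overline{\mathcal{A}_{N}}$ is exactly the open content of the conjecture, and it is not proved here or in the paper.
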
	

 The weak conjecture is a necessary condition for Goldbach's strong conjecture i.e.

\quad

\begin{prop}
	If Goldbach's strong conjecture is true for an even number $N>2$ then the $WEAK\hspace{0.2cm} CONJECTURE$ is true.
\end{prop}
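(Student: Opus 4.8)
The cleanest route is a direct cardinality argument rather than trying to exhibit by hand a symmetry that lies outside $\mathcal{G}_{N}$. Recall from Corollary 3.24 that $\widehat{Aut(D_n)}\cong Aff(\mathbb{Z}_{N})$ has order $N\phi(N)$ and is realised as $<T_{1}>\rtimes<f_{\nu}\mid\nu\in U(\mathbb{Z}_{N})>$, with the translation subgroup $<T_{1}>=\{T_{c}\mid c\in\mathbb{Z}_{N}\}$ normal. I would restrict to $\mathcal{G}_{N}\le\widehat{Aut(D_n)}$ the projection $T_{c}f_{\nu}\mapsto\nu$ onto the linear part; by the first isomorphism theorem $|\mathcal{G}_{N}|=|\mathcal{G}_{N}\cap<T_{1}>|\cdot|\pi(\mathcal{G}_{N})|\le|\mathcal{G}_{N}\cap<T_{1}>|\cdot\phi(N)$. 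Hence it is enough to show that $\mathcal{G}_{N}$ contains no translation $T_{d}$ by an odd amount: then $\mathcal{G}_{N}\cap<T_{1}>\le<T_{2}>$, whose order is $N/2$, so $|\mathcal{G}_{N}|\le\frac{N}{2}\phi(N)<N\phi(N)=|Aff(\mathbb{Z}_{N})|$ and therefore $\mathcal{G}_{N}\ne Aff(\mathbb{Z}_{N})$, i.e. the WEAK CONJECTURE holds for $N$.

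Assume the Goldbach hypothesis: $N=p+q$ with $p\le q$ prime. Two facts are needed. First, $\overline{\mathcal{A}_{N}}\ne\emptyset$, because a prime occurring in a Goldbach representation is never crossed out by the dihedral sieve: if $p\mid N$ then $p\mid q$, so $p=q$ and $N=2p$ with $p=N/2>[\sqrt{N}]$ once $N>4$, hence $p$ is not one of the sieving primes $p_{j}$ and $p\notin p_{j}\mathbb{Z}_{N}$; and $p\notin Orb_{D_{[N/q_{k}]-1}}(2q_{k})$ for any $k$, since that orbit is $\{\pm jq_{k}\mid 2\le j\le[N/q_{k}]\}$ and $p\equiv\pm jq_{k}$ with $j\ge2$ would force the prime $q_{k}$ to divide the prime $p$ or the prime $q=N-p$ as a proper factor. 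Thus $p\in\overline{\mathcal{A}_{N}}$, and by the $x\mapsto N-x$ symmetry of the sieve so is $q$. Second, for $N\ge10$ one has $3\le[\sqrt{N}]$, so $3$ enters the sieve: if $3\mid N$ then $3\in 3\mathbb{Z}_{N}\subseteq\mathcal{A}_{N}$, while if $3\nmid N$ then $3$ is one of the $q_{k}$ and $9=3\cdot3\in Orb_{D_{[N/3]-1}}(6)\subseteq\mathcal{A}_{N}$; either way an odd residue lies in $\mathcal{A}_{N}$, so $\mathcal{A}_{N}$ strictly contains the set $E$ of even residues.

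Now the exclusion of odd translations. Since $2\mid N$, the prime $2$ is a sieving prime and $2\mathbb{Z}_{N}=E\subseteq\mathcal{A}_{N}$, so $\overline{\mathcal{A}_{N}}\subseteq O$, the odd residues. A translation $T_{d}$ with $d$ odd carries $E$ bijectively onto $O$. If $T_{d}\in\mathcal{G}_{N}$ it preserves the unordered pair $\{\mathcal{A}_{N},\overline{\mathcal{A}_{N}}\}$, so either $T_{d}(\mathcal{A}_{N})=\mathcal{A}_{N}$, whence $\mathcal{A}_{N}\supseteq T_{d}(E)=O$ and $\mathcal{A}_{N}=\mathbb{Z}_{N}$, contradicting $\overline{\mathcal{A}_{N}}\ne\emptyset$; or $T_{d}(\mathcal{A}_{N})=\overline{\mathcal{A}_{N}}$, whence $\overline{\mathcal{A}_{N}}\supseteq O$, so $\overline{\mathcal{A}_{N}}=O$ and $\mathcal{A}_{N}=E$, contradicting $\mathcal{A}_{N}\supsetneq E$. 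Hence $\mathcal{G}_{N}$ has no odd translation, and by the first paragraph $\mathcal{G}_{N}\ne Aff(\mathbb{Z}_{N})$.

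The main obstacle is the pair of claims in the second paragraph. The first — that the dihedral sieve never erases a genuine prime pair $\{p,N-p\}$ — is the conceptual heart of the matter and is morally the content of Definition 4.5, but it has to be made explicit through the divisibility bookkeeping indicated above. The second is a short but fiddly orbit computation, where one must be careful about the index range $2\le j\le[N/q_{k}]$ and about reduction modulo $N$. One should also fix the scope: the argument uses $\mathcal{A}_{N}\supsetneq E$, which is valid for every even $N\ge10$; the finitely many smaller even $N$ are degenerate for the dihedral construction (no odd sieving prime below $\sqrt{N}$) and must be inspected separately, so the statement is to be read with the harmless proviso that $N$ avoids these exceptional small values.
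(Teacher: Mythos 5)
Your argument is essentially correct for $N\ge 10$, and it fills a real gap: the paper states this proposition without any proof, so there is nothing to compare it against line by line. Your two key inputs are exactly the ones that have to be supplied and that the paper never makes explicit: (i) a prime pair $p+q=N$ with $p,q$ odd genuinely survives the dihedral sieve (your divisibility bookkeeping with the orbits $\{\pm jq_{k}\mid 2\le j\le [N/q_{k}]\}$ is sound, and for even $N\ge 6$ every Goldbach representation automatically uses two odd primes, so the case $p=2$ never arises), hence $\overline{\mathcal{A}_{N}}\neq\emptyset$; and (ii) $\mathcal{A}_{N}$ strictly contains the even residues once $3\le[\sqrt{N}]$. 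From these your exclusion of odd translations and the order count $|\mathcal{G}_{N}|\le\frac{N}{2}\phi(N)<N\phi(N)$ are correct (indeed the count is more than you need: once $T_{1}\notin\mathcal{G}_{N}$ you are already done). Be aware that the paper's own Proposition 8.23 gives a shorter route to the same conclusion: once (i) is established the sieve is Goldbach's, so for $N\ge 10$ one has $T_{2}\notin\mathcal{G}_{N}$, and since $T_{2}\in Aff(\mathbb{Z}_{N})$ this immediately yields $\mathcal{G}_{N}\neq Aff(\mathbb{Z}_{N})$. Your route is independent of 8.23 and self-contained, which is a reasonable trade-off.

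The one place where you are too optimistic is the phrase ``harmless proviso.'' For $N=4,6,8$ there is no odd sieving prime below $\sqrt{N}$, so $\mathcal{A}_{N}$ is exactly the set of even residues and $\overline{\mathcal{A}_{N}}$ exactly the odd ones; every element of $Aff(\mathbb{Z}_{N})$ preserves or swaps this parity bipartition, so $\mathcal{G}_{N}=Aff(\mathbb{Z}_{N})$ there. Since $4=2+2$, $6=3+3$ and $8=3+5$ all satisfy Goldbach, and by Definition 4.5 these sieves are still ``Goldbach's'' (via the pair $1,N-1$ or a genuine prime pair), these three values are actual counterexamples to the proposition as literally stated, not merely cases awaiting inspection. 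The correct conclusion of your argument is that the proposition holds for every even $N\ge 10$, and that the restriction $N\ge 10$ (or an amendment of the WEAK CONJECTURE itself) is necessary; inspecting the small cases does not rescue them.
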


\quad
While for the sufficient condition we need to make the necessary clarifications as we did for the strong conjecture.

\begin{prop}
If the $WEAK\hspace{0.2cm} CONJECTURE$ holds for some $N$ such that $ \{1,N-1\}\subset \overline{\mathcal{A}_{N}}$ or $ 1 \notin \overline{\mathcal{A}_{N}}$ then for $N$ Goldbach's strong conjecture holds.
\end{prop}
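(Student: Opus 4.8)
The plan is to deduce the nonemptiness of the complement $\overline{\mathcal{A}_{N}}$ from the WEAK CONJECTURE by a degeneration argument, and then to read a genuine prime decomposition of $N$ off of a suitable element of $\overline{\mathcal{A}_{N}}$; the side hypothesis is exactly what is needed to avoid the pathological situation $\overline{\mathcal{A}_{N}}=\{1,N-1\}$. Throughout write $N=2n$, and recall from Corollary 3.24 that $\widehat{Aut(D_{n})}\cong Aff(\mathbb{Z}_{N})$, while by definition $\mathcal{G}_{N}$ is the subgroup of those $f\in\widehat{Aut(D_{n})}$ with $f(\mathcal{C}_{N})=\mathcal{C}_{N}$.

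The first step I would carry out is the observation that if $\overline{\mathcal{A}_{N}}=\emptyset$ then $\mathcal{C}_{N}=\{\mathbb{Z}_{N},\emptyset\}$ is the trivial bipartition, which is fixed by every bijection of $\mathbb{Z}_{N}$ and in particular by every element of $\widehat{Aut(D_{n})}$; hence $\mathcal{G}_{N}=\widehat{Aut(D_{n})}\cong Aff(\mathbb{Z}_{N})$, contradicting the WEAK CONJECTURE. So the WEAK CONJECTURE for $N$ forces $\overline{\mathcal{A}_{N}}\neq\emptyset$. The second step is to interpret this through Definition 4.5 and the construction of $\mathcal{C}_{N}$: the elements of $\overline{\mathcal{A}_{N}}$ are in bijection with the prime pairs $\{q,N-q\}$ realizing Goldbach for $N$, together with the pair $\{1,N-1\}$ when $N-1$ is prime; equivalently, $r\in\overline{\mathcal{A}_{N}}$ exactly when $r$ and $N-r$ are both prime or equal to $1$, a condition symmetric under $r\mapsto N-r$. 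Now I would split on the two clauses of the hypothesis. If $1\notin\overline{\mathcal{A}_{N}}$, then by symmetry $N-1\notin\overline{\mathcal{A}_{N}}$ as well, so no element of the nonempty set $\overline{\mathcal{A}_{N}}$ equals $1$ or $N-1$; picking any $r\in\overline{\mathcal{A}_{N}}$ gives that $r$ and $N-r$ are both genuine primes, so $N=r+(N-r)$ is a sum of two primes. If instead $\{1,N-1\}\subsetneq\overline{\mathcal{A}_{N}}$ --- reading this inclusion as proper, which is the real content of the clause, since $\overline{\mathcal{A}_{N}}=\{1,N-1\}$ is precisely the case to be excluded --- choose $r\in\overline{\mathcal{A}_{N}}\setminus\{1,N-1\}$; then $r$ is prime (being in $\overline{\mathcal{A}_{N}}$ and different from $1$) and $N-r$ is prime (being different from $1$), so again $N=r+(N-r)$ exhibits Goldbach for $N$.

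The computations involved are all routine, and I do not expect a serious obstacle; the one point that demands care is the second step, namely pinning down exactly which residues of $\mathbb{Z}_{N}$ lie in $\overline{\mathcal{A}_{N}}$ --- that is, checking that the $\pm$-orbits $\mathcal{Q}_{k}$ appearing in the construction of $\mathcal{C}_{N}$ really do remove every $r$ for which $r$ or $N-r$ fails to be prime or $1$, so that Definition 4.5 applies verbatim and the exceptional pair $\{1,N-1\}$ is accounted for correctly. Once this identification of $\overline{\mathcal{A}_{N}}$ is secured, the proposition follows from the degeneration argument of the first step together with the case split above.
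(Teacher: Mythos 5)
The paper states this proposition without any proof (as with the other assertions of Section~9), so there is no argument of the author's to compare yours against; judged on its own, your proof is correct and supplies the missing justification. The two points you flag are indeed the only delicate ones, and both resolve in your favour: (i) the inclusion $\{1,N-1\}\subset\overline{\mathcal{A}_{N}}$ must be read as strict, which matches the paper's stated intent in the discussion preceding Proposition~9.2, where $\overline{\mathcal{A}_{N}}=\{1,N-1\}$ is singled out as precisely the case to be excluded; (ii) the identification of $\overline{\mathcal{A}_{N}}$ with the residues $r$ such that $r$ and $N-r$ are both prime or $1$ does hold for the sieve of Section~4.1, since every composite $r<N$ has a prime factor $p\le\sqrt{N}$ and then lies in $Orb_{D_{N/p}}(0)$ if $p\mid N$ or in the relevant $\mathcal{Q}_{k}$ if $p\nmid N$, while $\overline{\mathcal{A}_{N}}$ is closed under $r\mapsto -r$ because each orbit making up $\mathcal{A}_{N}$ is. Granting these, your degeneration step (an empty complement forces $\mathcal{G}_{N}=\widehat{{Aut(D_{n})}^{S}}\cong Aff(\mathbb{Z}_{N})$ by Corollary~3.24, contradicting the weak conjecture) together with the case split does produce a genuine pair of primes summing to $N$. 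One small observation: in the case $\{1,N-1\}\subsetneq\overline{\mathcal{A}_{N}}$ the weak conjecture is not actually used, since the strict inclusion already yields an element outside $\{1,N-1\}$; the conjecture only does work in the case $1\notin\overline{\mathcal{A}_{N}}$, where it is what guarantees $\overline{\mathcal{A}_{N}}\neq\emptyset$.
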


\end{document}